\renewcommand{\dcases}
 {
  \MT_start_cases:nnnn
    {\quad}
    {$\m@th\displaystyle##$\hfil}
    {$\m@th\displaystyle##$\hfil}
    {\lbrace}
 }
\numberwithin{equation}{section}
\newtheorem{Thm}{Theorem}[section]
\newtheorem{thm}{Theorem}[subsection]
\newtheorem{cor}[thm]{Corollary}
\newtheorem{Cor}[Thm]{Corollary}
\newtheorem{lemma}[thm]{Lemma}
\newtheorem{prop}[thm]{Proposition}
\theoremstyle{definition}
\newtheorem{defn}[thm]{Definition}
\newtheorem{Defn}[Thm]{Definition}
\newtheorem{eg}[thm]{Example}
\theoremstyle{remark}
\newtheorem{rem}[thm]{Remark}
\newtheorem{Rem}[Thm]{Remark}
\newcommand{\N}{\mathbb{N}}
\newcommand{\R}{\mathbb{R}}
\newcommand{\C}{\mathbb{C}}
\newcommand{\wsc}{\mathcal{SC}}
\newcommand{\gn}{\mathcal{N}}
\newcommand{\prob}{\mathbb{P}}
\newcommand{\E}{\mathbb{E}}
\newcommand{\var}{\operatorname{Var}}
\newcommand{\salg}[1]{\mathcal{#1}}
\newcommand{\asto}{\stackrel{\operatorname{a.s.}}{\to}}
\newcommand{\wto}{\stackrel{w}{\to}}
\newcommand{\dto}{\stackrel{d}{\to}}
\newcommand{\deq}{\stackrel{d}{=}}
\newcommand{\teq}{\stackrel{\tau}{=}}
\newcommand{\indc}[1]{\mathbbm{1}\{#1\}}
\newcommand{\trace}{\text{tr}}
\newcommand{\source}{\operatorname{src}}
\newcommand{\target}{\operatorname{tar}}
\newcommand{\inn}[2]{\langle #1 , #2 \rangle}
\newcommand{\interior}[1]{\accentset{\circ}{#1}}
\newcommand{\wtilde}[1]{\widetilde{#1}}
\newcommand{\ssim}{\mathord{\sim}}
\newcommand{\vin}{v_{\operatorname{in}}}
\newcommand{\vout}{v_{\operatorname{out}}}
\newcommand{\mbf}[1]{\mathbf{#1}}
\newcommand{\mfk}[1]{\mathfrak{#1}}
\newcommand{\op}[1]{\operatorname{#1}}
\newcommand{\matn}{\op{Mat}_n}
\newcommand{\ssum}{\displaystyle\sum}
\newcommand{\sgps}{*$-$\text{graph polynomials}}
\newcommand{\ssgm}[1]{\salg{G}\langle\mbf{#1}, \mbf{#1}^*\rangle}
\newcommand{\ssgp}[1]{\C\salg{G}\langle\mbf{#1}, \mbf{#1}^*\rangle}
\newcommand*{\Scale}[2][4]{\scalebox{#1}{$#2$}}
\DeclareRobustCommand{\SkipTocEntry}[5]{} 
\begin{document}
\author{Benson Au}
\date{\today}
\thanks{Research partially supported by a Julia B. Robinson Graduate Fellowship in Mathematics and by NSF grants DMS-0907630 and DMS-1512933}
\title{Traffic Distributions of Random Band Matrices}

\address{University of California, Berkeley\\
         Department of Mathematics\\
         970 Evans Hall \#3840\\
         Berkeley, CA 94720-3840}
\email{\href{mailto:bensonau@math.berkeley.edu}{bensonau@math.berkeley.edu}}

\subjclass[2010]{15B52; 46L53; 46L54; 60B20}
\keywords{Free probability; non-commutative probability; random band matrix; traffic probability; Wigner matrix}

\begin{abstract}\label{abstract}
We study random band matrices within the framework of traffic probability, an operadic non-commutative probability theory introduced by Male based on graph operations. As a starting point, we revisit the familiar case of the permutation invariant Wigner matrices and compare the situation to the general case in the absence of this invariance. Here, we find a departure from the usual free probabilistic universality of the joint distribution of independent Wigner matrices. We then show how the traffic space of Wigner matrices completely realizes the traffic central limit theorem. We further prove general Markov-type concentration inequalities for the joint traffic distribution of independent Wigner matrices. We then extend our analysis to random band matrices, as studied by Bogachev, Molchanov, and Pastur, and investigate the extent to which the joint traffic distribution of independent copies of these matrices deviates from the Wigner case.
\end{abstract}

\maketitle
\tableofcontents

\section{Introduction and main results}\label{intro}

For a real symmetric (or complex Hermitian) $n \times n$ matrix $\mbf{A}_n$, let $(\lambda_k(\mbf{A}_n) : 1 \leq k \leq n)$ denote the eigenvalues of $\mbf{A}_n$, counting multiplicity, arranged in a non-increasing order. We write $\mu(\mbf{A}_n)$ for the empirical spectral distribution (or ESD for short) of $\mbf{A}_n$, i.e.,
\[
\mu(\mbf{A}_n) = \frac{1}{n}\sum_{k=1}^n \delta_{\lambda_k(\mbf{A}_n)}, \qquad \lambda_1(\mbf{A}_n) \geq \cdots \geq \lambda_n(\mbf{A}_n).
\]
For a random matrix $\mbf{A}_n$, the ESD $\mu(\mbf{A}_n)$ then becomes a random probability measure on the real line $(\R,\salg{B}(\R))$. Wigner initiated the modern study of random matrices by proving the weak convergence of the ESD in expectation as the dimension $n \to \infty$ for a general class of random real symmetric matrices \cite{Wig55,Wig58}. We recall the so-called Wigner matrices, formulated deliberately in such a way below in order to suit our purposes later.

\begin{Defn}[Wigner matrix]\label{defn1.1}
Let $(X_{i, j})_{1 \leq i < j < \infty}$ and $(X_{i, i})_{1 \leq i < \infty}$ be independent families of i.i.d. random variables: the former, real-valued (resp., complex-valued), centered, and of unit variance; the latter, real-valued and of finite variance, i.e.,
\begin{equation}\label{eq:1.1}
\E X_{i, j} = 0, \quad \var(X_{i, j}) = \E |X_{i, j}|^2 = 1, \quad \text{and} \quad \var(X_{i, i})<\infty.
\end{equation}
Taken together, the two families $(X_{i, j})$ and $(X_{i, i})$ define a random real symmetric (resp., complex Hermitian) $n \times n$ matrix $\mbf{X}_n$ in a natural way, viz.
\[
\mbf{X}_n(i,j) = 
\begin{dcases}
X_{i,j} &\quad \text{if } i < j, \\
X_{i,i} &\quad \text{if } i=j.
\end{dcases}
\]
We call $\mbf{X}_n$ an \emph{unnormalized real (resp., complex) Wigner matrix}.

We introduce the standard normalization via a Hadamard-Schur product: let $\mbf{J}_n$ denote the $n \times n$ all-ones matrix, and define $\mbf{N}_n = n^{-1/2}\mbf{J}_n$. We call the random real symmetric (resp., complex Hermitian) $n \times n$ matrix $\mbf{W}_n$ defined by
\[
\mbf{W}_n = \mbf{N}_n \circ \mbf{X}_n = \frac{1}{\sqrt{n}}\mbf{X}_n
\]
a \emph{normalized real (resp., complex) Wigner matrix}. We simply refer to \emph{Wigner matrices} when the context is clear, or when considering the definition altogether.

We define the \emph{parameter} $\beta$ of a Wigner matrix as the pseudo-variance of its unnormalized strictly upper triangular entries so that
\[
\beta = \E \mbf{X}(i,j)^2 = \E X_{i, j}^2, \qquad \forall i < j.
\]
We note that a Wigner matrix is a real Wigner matrix iff its parameter $\beta=1$, and so we can specify a Wigner matrix by its parameter. We further note that the distribution of a Wigner matrix is invariant under conjugation by the permutation matrices iff its parameter $\beta \in \R$. This in turn is equivalent to the real and imaginary parts of $X_{i, j}$ being uncorrelated.

We often restrict to a special class of random variables within our Wigner matrices; thus, if $(X_{i, j})$ and $(X_{i, i})$ have finite moments of all orders, then we call both $\mbf{X}_n$ and  $\mbf{W}_n$ \emph{finite-moment Wigner matrices}. We distinguish the important case in which $(X_{i, j})$ and $(X_{i, i})$ are Gaussian by the term \emph{Gaussian Wigner matrix}.
\end{Defn}

In particular, Wigner identified the standard semicircle distribution $\mu_{SC}$ as the universal limiting spectral distribution (or LSD for short) of the Wigner matrices, where
\[
\mu_{SC}(dx) = \frac{1}{2\pi}(4-x^2)_+^{1/2}\, dx.
\]
Considerable work has since been done on the Wigner matrices and other classical random matrix ensembles, e.g., on questions related to maximal eigenvalues, central limit theorems, concentration inequalities, joint eigenvalue distribution, large deviations, eigenvalue spacing, and free probability. The recent monograph \cite{AGZ10} by Anderson, Guionnet, and Zeitouni provides an excellent introduction to this end.

Free probability, introduced by Voiculescu \cite{Voi85}, explains the distinguished role of the semicircle distribution. Motivated by the study of free group factors, Voiculescu defined a suitable notion of independence in the non-commutative probabilistic setting known as \emph{free independence}. Free analogues of classical constructions and operations from (commutative) probability theory abound: for example, the free central limit theorem (CLT), free convolution, free cumulants, and free entropy. In particular, the semicircle distribution, being the attractor in the free CLT, serves as the free analogue of the normal distribution. As with a normal distribution $\gn(m, \sigma^2)$, we specify a semicircular distribution by its mean $m$ and variance $\sigma^2$, writing $\wsc(m, \sigma^2)$ for the distribution
\[
\wsc(m, \sigma^2)(dx) = \frac{1}{2\pi\sigma^2}(4\sigma^2 - (x-m)^2)_+^{1/2}\, dx.
\]

Voiculescu showed that free independence describes the asymptotic behavior of the ESD for a large class of random matrices, such as those invariant in distribution under conjugation by the orthogonal matrices (in the real symmetric case) or the unitary matrices (in the complex Hermitian case) \cite{Voi91}. Wigner's semicircle law can thus be seen as a consequence of the free CLT. We refer the reader to the standard introductions to free probability \cite{VDN92,NS06}. 

On the other hand, many random matrix models of interest do not possess the aforementioned invariance, e.g., the adjacency matrices of random graphs. This consideration led Male to introduce a non-commutative probability theory that describes the asymptotic behavior of matrices invariant in distribution under conjugation by the permutation matrices \cite{Mal11}. Male termed the corresponding notion of independence \emph{traffic independence} and proved a traffic CLT that interpolates between the free CLT and the classical CLT. The family of free convolutions $(\wsc(0,\sigma_1^2) \boxplus \gn(0,\sigma_2^2): \sigma_1^2+\sigma_2^2=1)$ form the attractors in the traffic CLT, where the parameters $\sigma_i^2$ depend on the particular random variables in consideration. We identify a random matrix ensemble that plays the role of the Wigner matrices in the traffic setting, giving a complete realization to this interpolation.

In his review article \cite{Bai99}, Bai proposed the study of random matrix ensembles with additional linear algebraic structure: the random Hankel, Markov, and Toeplitz matrices. Bryc, Dembo, and Jiang proved the almost sure convergence of the ESD to certain universal distributions for these matrices \cite{BDJ06} (see also \cite{HM05}). We focus on the (random) Markov matrices, using a modified definition in order to absorb the dimensional normalization.

\begin{Defn}[Markov matrix]\label{defn1.2}
Let $(X_{i,j})_{1 \leq i < j < \infty}$ and $(X_{i,i})_{1 \leq i < \infty}$ be real-valued random variables as in Definition \hyperref[defn1.1]{1.1}. For $1 \leq i < j$, define $X_{j,i} = X_{i,j}$. We write $\mbf{W}_n$ for the corresponding Wigner matrix and $\mbf{D}_n=\op{deg}(\mbf{W}_n)$ for the diagonal matrix of row sums of $\mbf{W}_n$ so that
\[
\mbf{D}_n(i,i) = \sum_{j=1}^n \mbf{W}_n(i,j) = \frac{1}{\sqrt{n}} \sum_{j=1}^n X(i,j).
\]
We call the random real symmetric $n\times n$ matrix $\mbf{M}_n$ defined by
\[
\mbf{M}_n = \mbf{W}_n - \mbf{D}_n = \frac{1}{\sqrt{n}}
\begin{pmatrix}
-\ssum_{j\neq 1}^n X_{1,j} & X_{1,2}                   & X_{1,3} & \cdots                  &        & X_{1,n}                 \\
X_{2,1}                  & -\ssum_{j\neq 2}^n X_{2,j}  & X_{2,3} & \cdots                  &        & X_{2,n}                 \\
X_{3,1}                  & X_{3,2}                   & \ddots &                         &        & \vdots                 \\
\vdots                  & \vdots                   &        &                         &        &                        \\
X_{k,1}                  & X_{k,2}                   & \cdots & -\ssum_{j\neq k}^n X_{k,j}  & \cdots & X_{k,n}                  \\
\vdots                  & \vdots                   &        &                         & \ddots & \vdots                  \\
X_{n,1}                  & X_{n,2}                   & \cdots &                         &        & -\ssum_{j\neq n}^n X_{n,j}  
\end{pmatrix}
\]
a \emph{Markov matrix}.
\end{Defn}

\begin{Rem}\label{rem1.3}
The ``Markov'' in Definition \hyperref[defn1.2]{1.2} comes from the zero row-sums property of the matrix, a property shared by the infinitesimal generator of a continuous-time Markov process on a finite state space. This class of matrices also includes the graph Laplacian; however, the Laplacian of a random graph does not fit into our model as equation \eqref{eq:1.1} precludes the a.s.\@ non-negativity of $X_{i,j}$ (see, e.g., the works of Ding and Jiang \cite{DJ10} and Jiang \cite{Jia12.1,Jia12.2} on this related problem).
\end{Rem}

Theorem 1.3 in \cite{BDJ06} shows that the sequence of ESDs $\{\mu(\mbf{M}_n)\}_{n = 1}^\infty$ converges weakly almost surely to the free convolution $\wsc(0,1) \boxplus \gn(0,1)$, for which the authors give two proofs. The first proof relies on the method of moments, using a combinatorial characterization of the moments of $\wsc(0,1) \boxplus \gn(0,1)$ effected by the machinery of Bo{\.z}ejko and Speicher \cite{BS96}. The same machinery applies more generally to the free convolutions $\wsc(0,\sigma_1^2) \boxplus \gn(0,\sigma_2^2)$, but this characterization becomes unwieldy when $\sigma_1^2\neq\sigma_2^2$. The second proof relies on a comparison method, showing that the expected moments of $\mu(\mbf{M}_n)$ are asymptotically equivalent to the expected moments of $\mu(\mbf{M}_n')$, where $\mbf{M}_n' = \mbf{W}_n' - \mbf{D}_n'$ can be written as the difference of two \emph{independent} matrices. One can then appeal to a result of Pastur and Vasilchuk \cite{PV00} to prove the aforementioned convergence for the surrogate $\{\mu(\mbf{M}_n')\}_{n = 1}^\infty$.
 
The techniques for dealing with dependent random matrices can be often quite ad hoc; however, traffic probability provides a unifying framework for a large class of such matrices. In particular, the Markov matrices fit quite naturally into this framework, wherein they can be realized as graph polynomials of Wigner matrices. More generally, for $p,q\in\R$, let $\mbf{M}_{n,p,q} = p\mbf{W}_n + q\mbf{D}_n$, where $\mbf{W}_n$ and $\mbf{D}_n$ are as in the definition of a Markov matrix. Accordingly, we call the random real symmetric $n\times n$ matrix $\mbf{M}_{n,p,q}$ a \emph{$(p,q)$-Markov matrix} after Definition \hyperref[defn1.2]{1.2}. We show that independent finite-moment $(p,q)$-Markov matrices are asymptotically traffic independent with a stable universal limiting traffic distribution (or LTD for short). This allows us to pair a convenient Gaussian realization of our ensemble with the traffic CLT to show that the ESDs $\mu(\mbf{M}_{n,p,q})$ converge weakly almost surely to the free convolution $\wsc(0,p^2)\boxplus\gn(0,q^2)$, extending the result of Bryc, Dembo, and Jiang.

Free convolutions with semicircular distributions enjoy nice regularity properties. In particular, the work \cite{Bia97} of Biane implies that $\wsc(0,p^2)\boxplus\gn(0,q^2)$ is absolutely continuous with a bounded, continuous density (Corollary 2 and Proposition 5) that is analytic off of its zero set (Corollary 4). Moreover, Proposition A.3 in \cite{BDJ06} can be easily adapted to show that $\wsc(0,p^2)\boxplus\gn(0,q^2)$ has bounded support iff $q = 0$.

The weak convergence $\mu(\mbf{M}_{n,p,q}) \wto \wsc(0,p^2) \boxplus \gn(0,q^2)$ for all $p,q\in\R$ suggests that the matrices $\mbf{W}_n$ and $\mbf{D}_n$ are asymptotically free despite the fact that the latter matrix is completely determined by the former. One can prove that freeness does in fact govern the asymptotic behavior of $\mbf{W}_n$ and $\mbf{D}_n$ by working with the LTD of the Wigner matrices and exploiting the relationship between traffic independence and free independence; however, this behavior can be better seen as part of a much more general phenomenon for graph polynomials of random matrices.

For a tracial $*$-probability space $(\salg{A},\varphi)$, C\'{e}bron, Dahlqvist, and Male constructed a universal enveloping traffic space $(\salg{G}(\salg{A}),\tau)$ that extends the trace \cite{CDM16}. The authors further proved a coherent convergence property for $(\salg{G}(\salg{A}),\tau)$: if a family of unitarily invariant random matrices $\mathfrak{A}_n$ converges in $*$-distribution to a family of random variables $\mathfrak{a}$ in $(\salg{A},\varphi)$ and further satisfies a certain factorization property, then $\mathfrak{A}_n$ converges in traffic distribution to $\mathfrak{a}$ in $(\salg{G}(\salg{A}),\tau)$. This construction comes equipped with a canonical (free) independence structure: in the forthcoming work \cite{AM17}, we show that the traffic space $(\salg{G}(\salg{A}),\tau)$, regarded simply as a $*$-probability space, can be realized as the free product (in the sense of Voiculescu) of three natural unital $*$-subalgebras. Taken together with the results of \cite{Mal11,CDM16}, this gives another proof of the asymptotic freeness of $\mbf{W}_n$ and $\mbf{D}_n$.

Yet, in both cases, we rely crucially on the strong invariance property of our ensemble. The universality of \emph{non-invariant} ensembles constitutes a major ongoing program of research. We recall one prominent model of interest: the random band matrices.

\begin{Defn}[Band matrix]\label{defn1.4}
Let $(b_n)$ be a sequence of nonnegative integers. We write $\mbf{B}_n$ for the corresponding $n \times n$ band matrix of ones with band width $b_n$, i.e.,
\[
\mbf{B}_n(i,j) = \indc{|i-j| \leq b_n}.
\]
Let $\mbf{X}_n$ be an unnormalized Wigner matrix. We call the random matrix $\mbf{\Xi}_n$ defined by
\[
\mbf{\Xi}_n = \mbf{B}_n \circ \mbf{X}_n
\]
an \emph{unnormalized random band matrix}.  We introduce a normalization based on the growth rate of the band width $b_n$. We say that $(b_n)$ is of \emph{slow growth} (resp., \emph{proportional growth}) if
\[
\lim_{n \to \infty} b_n = \infty \quad \text{and} \quad b_n = o(n) \qquad (\text{resp., } \lim_{n \to \infty} \frac{b_n}{n} = c \in (0,1]),
\]
in which case we use the normalization
\[
\mbf{\Upsilon}_n = (2b_n)^{-1/2}\mbf{J}_n \qquad (\text{resp., } \mbf{\Upsilon}_n = (2c-c^2)^{-1/2}n^{-1/2}\mbf{J}_n).
\]
We call $c$ the \emph{proportionality constant}: we say that $(b_n)$ is of \emph{full proportion} if $c=1$ and \emph{proper} otherwise. For a fixed band width $b_n \equiv b$, we use the normalization $\mbf{\Upsilon}_n = (2b+1)^{-1/2}\mbf{J}_n$. In any case, we call the random matrix $\mbf{\Theta}_n$ defined by
\[
\mbf{\Theta}_n = \mbf{\Upsilon}_n \circ \mbf{\Xi}_n
\]
a \emph{normalized random band matrix}. We simply refer to random band matrices (or RBMs for short) when the context is clear, or when considering the definition altogether.
\end{Defn}

Following Wigner, one expects universality to hold for any large quantum system of sufficient complexity (see \cite{Meh04} for more on this perspective; see \cite{BEYY16,EY16} and the references therein for progress in this direction). In particular, a fundamental conjecture of Fyodorov and Mirlin proposes a dichotomy for the local spectral statistics of RBMs \cite{FM91}: random matrix theory statistics (weak disorder) for large band widths; Poisson statistics (strong disorder) for small band widths; and a sharp transition around the critical value $b_n = \sqrt{n}$ (again, we refer the reader to \cite{BEYY16,EY16} for progress in this direction).

At the macroscopic level, Bogachev, Molchanov, and Pastur proved that the class of band widths in Definition \hyperref[defn1.4]{1.4} determine the global universality classes of the RBMs \cite{BMP91}: for slow growth RBMs, $\mu(\mbf{\Theta}_n)$ converges to the semicircle distribution $\mu_{SC}$; for proportional growth RBMs of proper proportion, $\mu(\mbf{\Theta}_n)$ converges to a non-semicircular distribution $\mu_c$ of bounded support; and for fixed band width RBMs having a symmetric distribution for the entries, $\mu(\mbf{\Theta}_n)$ converges to a non-universal symmetric distribution $\mu_b$. The authors further proved a continuity result for these distributions, namely,
\begin{equation}\label{eq:1.2}
\lim_{c \to 0^+} \mu_c = \lim_{c \to 1^-} \mu_c = \mu_{SC} \quad \text{and} \quad \lim_{b \to \infty} \mu_b = \mu_{SC}.
\end{equation}

The work \cite{BMP91} considered the distribution of a single RBM: naturally, this invites the question of the joint distribution of such matrices. Shlyakhtenko showed that freeness \emph{with amalgamation} in the context of \emph{operator-valued} free probability governs what he called Gaussian RBMs \cite{Shl96}; otherwise, to our knowledge, RBMs have not received much attention from the non-commutative probabilistic perspective. Nevertheless, we show that the framework of traffic probability allows for effective, tractable computations in multiple RBMs. Our main result identifies the joint LTD of independent RBMs of possibly mixed band width types.
\begin{Thm}\label{thm1.5}
Let $\salg{X}_n = (\mbf{X}_n^{(i)})_{i \in I}$ be a family of independent unnormalized finite-moment Wigner matrices. We assume that the parameters $\beta_i \in \R$ and write $\salg{W}_n = (\mbf{W}_n^{(i)})_{i \in I}$ for the corresponding family of normalized Wigner matrices. Consider a family of band widths
\[
(b_n^{(i)})_{i \in I} = (b_n^{(i)})_{i \in I_1} \cup (b_n^{(i)})_{i \in I_2} \cup (b_n^{(i)})_{i \in I_3} \cup (b_n^{(i)})_{i \in I_4}
\]
of slow growth, proper proportion, full proportion, and fixed band width respectively, and form the corresponding family of normalized RBMs $\salg{O}_n = (\mbf{\Theta}_n^{(i)})_{i \in I}$. Then the family $\salg{O}_n$ converges in traffic distribution. In fact, the LTDs of the families $(\mbf{\Theta}_n^{(i)})_{i \in I_1 \cup I_3}$ and $(\mbf{W}_n^{(i)})_{i \in I_1 \cup I_3}$ are identical, the latter already being known from \cite{Mal11}. 
\end{Thm}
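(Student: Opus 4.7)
The plan is to apply the method of moments to the traffic state. For any $*$-graph monomial $T$ with edges labeled by matrices from $\salg{O}_n$, the value $\tau_n(T)$ expands as a normalized sum over vertex labelings $\phi: V(T) \to [n]$ of expectations of products of matrix entries. Entrywise, each $\mbf{\Theta}_n^{(i)}$ factorizes as the deterministic normalization times the band indicator times the raw Wigner entry, so I would split the computation accordingly: the expectation acts only on the $\mbf{X}_n^{(i_e)}$ factors, while the normalizations and band indicators produce a deterministic multiplicative weight depending on $\phi$.

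First, by independence of the families $\salg{X}_n$ and of the entries within each $\mbf{X}_n^{(i)}$, the joint moment factors across colors and across equivalence classes of $\phi$-collapsed edge pairs; the reality of the parameters $\beta_i$ underwrites the permutation invariance needed for the traffic framework and permits the standard Wigner-style reduction to colored test graphs $\bar{T}$ in which, on each vertex pair, every color must appear in multiplicity $\geq 2$ (diagonal loops excepted, since the diagonal entries need not be centered). The finite-moment hypothesis guarantees that the surviving joint moments are finite constants independent of $n$. The problem reduces to controlling, for each $\bar{T}$, the weighted count
\[
\frac{1}{n} \prod_{i \in I} \bigl(\nu_n^{(i)}\bigr)^{|E_i(\bar{T})|} \cdot \#\bigl\{\phi: V(\bar{T}) \hookrightarrow [n] : |\phi(u)-\phi(v)| \leq b_n^{(i_e)} \ \forall e=(u,v) \in E_i(\bar{T})\bigr\},
\]
where $\nu_n^{(i)}$ denotes the scalar normalization of $\mbf{\Theta}_n^{(i)}$ and $E_i(\bar{T})$ denotes the color-$i$ edges of $\bar{T}$.

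Second, I would analyze each band-type regime separately. For $i \in I_3$ (full proportion with $c=1$), $\nu_n^{(i)} = n^{-1/2}$ matches the Wigner normalization, and since $b_n^{(i)}/n \to 1$ the proportion of labelings on which the indicator differs from $1$ vanishes for any fixed $\bar{T}$, so the contribution agrees with its Wigner counterpart in the limit. For $i \in I_1$ (slow growth), the normalization $(2b_n^{(i)})^{-1/2}$ per edge is exactly balanced by the $\sim 2b_n^{(i)}$ admissible positions for a vertex adjacent through a color-$i$ edge to an already-placed vertex; since $b_n^{(i)} = o(n)$, the free vertices not pinned by band constraints still scale like $n$, and a tree-based counting argument (grow $\bar{T}$ vertex by vertex) produces precisely the dominant Wigner test graphs with matching weights. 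Together these two observations give the asserted equality of LTDs for $I_1 \cup I_3$ with the Wigner LTD from \cite{Mal11}. For $i \in I_2$ (proper proportion), the rescaled positions $\phi(v)/n$ populate $[0,1]$, and the weighted count converges via Riemann sums to the Lebesgue measure of $\{(x_v) \in [0,1]^{V(\bar{T})} : |x_u - x_v| \leq c^{(i_e)} \ \forall e\}$, which genuinely depends on $\bar{T}$ and the proportionality constants. For $i \in I_4$ (fixed band width), the displacements $\phi(u) - \phi(v)$ range over $\{-b^{(i_e)},\dots,b^{(i_e)}\}$, and the sum factorizes into a finite lattice count times a factor of $n$, again producing an explicit limit.

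The main obstacle will be the unified error analysis across mixed band types. One must show that only the test graphs $\bar{T}$ saturating the dimensional budget contribute in the limit, and that subleading contributions coming from incomplete pairings or from band constraints that force vertex identifications vanish uniformly in $n$. This is particularly delicate when a single $\bar{T}$ carries edges of several band-type colors simultaneously, since each vertex's available dimensional freedom is the minimum of the restrictions imposed by its incident colored edges, and tracking these minima across all four regimes at once requires careful power counting. Once this uniform bookkeeping is in place, the regime-by-regime analysis above delivers both convergence of $\tau_n(T)$ and the identification of the LTDs on $I_1 \cup I_3$ with the Wigner LTD.
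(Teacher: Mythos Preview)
Your approach is essentially the paper's: expand the (injective) traffic state over vertex labelings, factor the band indicators out of the Wigner expectations, reduce via the centered-entry argument to graphs whose edge classes carry each color with multiplicity $\geq 2$, and then analyze each band-width regime. Two points merit comment.

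First, a genuine gap in the fixed band width case. You claim the sum ``factorizes into a finite lattice count times a factor of $n$,'' but this presumes the summand is constant across admissible labelings $\phi$. It is not: even with $\beta_i \in \R$, once an edge class $[e]$ has multiplicity exceeding $2$ the contribution $\E\bigl[\prod_{e'\in[e]} \mbf{X}_n^{(\gamma(e'))}(\phi(e'))\bigr]$ can depend on the \emph{ordering} $\psi_\phi$ of the vertex labels, since higher mixed moments of complex entries need not be real. The paper stratifies the sum over orderings $\psi$; for each fixed $\psi$ the count $a_n^{(\psi)}$ of admissible labelings is superadditive in $n$ (concatenate labelings of $[n]$ and $[m]$ into $[n+m]$), and Fekete's lemma yields convergence of $a_n^{(\psi)}/n$. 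Without this or an equivalent device, convergence in the fixed band width regime is not established.

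Second, a methodological difference in the slow growth case. You propose a direct tree-based count; the paper instead routes through an intermediate \emph{periodic} RBM model (replacing $|i-j|$ by $|i-j|_n = \min(|i-j|, n-|i-j|)$). It first proves the Wigner LTD for periodic RBMs of arbitrary divergent band width---where every starting vertex sees the same number of admissible extensions, making the spanning-tree count exact rather than merely asymptotic---and then shows the discrepancy with the non-periodic slow-growth RBM vanishes because the two models differ only within band-width distance $o(n)$ of the matrix boundary. Your direct approach can be made to work, but the periodization cleanly separates the combinatorics from boundary effects and yields for free that periodic RBMs of \emph{any} divergent band width (not just slow growth) share the Wigner LTD.
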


The precise form of this LTD requires a good deal of preparation, and we do not state it here in the introduction (see Theorems \hyperref[thm4.3.3]{4.3.3} and \hyperref[thm4.4.1]{4.4.1}). Instead, we opt for a more familiar free probabilistic statement.

Knowledge of the traffic distribution, which is defined in terms of graph observables, can be difficult to interpret; however, the traffic distribution does encode the information of the usual $*$-distribution. For example, as a consequence of our earlier discussion on the Wigner matrices, we immediately obtain the following corollary:  

\begin{Cor}\label{cor1.6}
The family $(\mbf{\Theta}_n^{(i)})_{i \in I_1 \cup I_3}$ converges in distribution to a semicircular system. If we further assume that $\beta_i = 1$, then the augmented family $(\mbf{\Theta}_n^{(i)}, \deg(\mbf{\Theta}_n^{(i)}))_{i \in I_1 \cup I_3}$ also converges in distribution, where $(\mbf{\Theta}_n^{(i)})_{i \in I_1 \cup I_3}$ and $(\deg(\mbf{\Theta}_n^{(i)}))_{i \in I_1 \cup I_3}$ are asymptotically free and $(\deg(\mbf{\Theta}_n^{(i)}))_{i \in I_1 \cup I_3}$ converges in distribution to a Gaussian system.
\end{Cor}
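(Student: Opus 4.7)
The plan is to leverage Theorem \hyperref[thm1.5]{1.5} together with two structural observations. First, the $*$-distribution of a family is a specialization of its traffic distribution: the normalized trace is recovered by evaluating the traffic state on directed cycle graphs. Second, the degree matrix $\deg(\mbf{A})$ is itself the image of $\mbf{A}$ under a specific graph operation in Male's framework, namely the $*$-graph monomial with two vertices joined by an edge labelled by $\mbf{A}$ and both $\vin,\vout$ marked at the source. Consequently, the joint traffic distribution of $(\mbf{A}^{(i)}, \deg(\mbf{A}^{(i)}))_{i}$ is completely determined by the traffic distribution of $(\mbf{A}^{(i)})_{i}$ alone. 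Combined, these two observations reduce Corollary \hyperref[cor1.6]{1.6} to the corresponding asymptotic statement for the Wigner family $(\mbf{W}_n^{(i)})_{i \in I_1 \cup I_3}$ (or its augmentation by degrees for the second claim), since Theorem \hyperref[thm1.5]{1.5} identifies the LTDs.

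For the first claim, the asymptotic freeness of independent real-parameter Wigner matrices, together with the semicircularity of each marginal, is classical: it is Voiculescu's universality theorem \cite{Voi91} and alternatively follows by direct inspection of Male's LTD \cite{Mal11}. This immediately transfers via Theorem \hyperref[thm1.5]{1.5} to asymptotic freeness and semicircularity for $(\mbf{\Theta}_n^{(i)})_{i \in I_1 \cup I_3}$.

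For the second claim, with each $\beta_i = 1$, the diagonal entries of $\deg(\mbf{W}_n^{(i)}) = \mbf{D}_n^{(i)}$ are normalized sums of i.i.d.\@ real random variables that are independent across rows and across $i \in I_1 \cup I_3$; the multivariate classical CLT thus yields that $(\mbf{D}_n^{(i)})_{i \in I_1 \cup I_3}$ converges in $*$-distribution to an independent Gaussian system. The asymptotic freeness of the Wigner family from the degree family is then read off directly from the LTD of $(\mbf{W}_n^{(i)})_{i \in I_1 \cup I_3}$; this is precisely the strategy sketched in the paragraph preceding Definition \hyperref[defn1.4]{1.4}. Concretely, any mixed $*$-moment of the augmented family decomposes as a traffic state over a cycle whose edges carry either the ``identity'' or the ``degree'' monomial, and the freeness criterion (vanishing of alternating centered products) translates into a graph-combinatorial identity that is verified against the explicit Wigner LTD.

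The principal obstacle will be the careful bookkeeping in this last step: verifying the freeness criterion inside Male's framework requires identifying which injections of the resulting test graphs contribute non-trivially to the LTD. This is technical but routine once the structure of the Wigner LTD from \cite{Mal11} is in hand; a more conceptual alternative, available \emph{a posteriori}, is to invoke the free product decomposition of $(\salg{G}(\salg{A}), \tau)$ established in the forthcoming \cite{AM17} cited in the introduction.
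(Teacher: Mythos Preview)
Your overall strategy is exactly the paper's: Theorem \hyperref[thm1.5]{1.5} identifies the LTD of $(\mbf{\Theta}_n^{(i)})_{i \in I_1 \cup I_3}$ with that of $(\mbf{W}_n^{(i)})_{i \in I_1 \cup I_3}$; since both the $*$-distribution and the degree operation are functionals of the traffic distribution, the corollary reduces to the corresponding Wigner statements, which the paper handles in Section \hyperref[sec3.2]{3.2} (with the freeness of $\mbf{W}_n$ and $\mbf{D}_n$ either read off the LTD or deferred to \cite{AM17}, just as you indicate).

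There is one factual slip to fix. The diagonal entries $\mbf{D}_n^{(i)}(j,j) = n^{-1/2}\sum_k \mbf{X}_n^{(i)}(j,k)$ are \emph{not} independent across rows: the symmetry $\mbf{X}_n^{(i)}(j,k) = \mbf{X}_n^{(i)}(k,j)$ forces row $j$ and row $j'$ to share the variable $\mbf{X}_n^{(i)}(j,j')$. This does not break your argument, because independence across rows is irrelevant for the $*$-distribution. By permutation invariance the diagonal entries are \emph{identically distributed} across $j$, so
\[
\E\Big[\tfrac{1}{n}\trace\big(\textstyle\prod_\ell \mbf{D}_n^{(i_\ell)}\big)\Big]
= \E\Big[\textstyle\prod_\ell \mbf{D}_n^{(i_\ell)}(1,1)\Big],
\]
and these factors \emph{are} independent across the index $i$; the classical CLT then applies to each $\mbf{D}_n^{(i)}(1,1)$ separately. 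The paper itself avoids this subtlety by instantiating with a Gaussian Wigner realization and invoking universality of the LTD, then appealing to the traffic CLT; your route via the classical CLT is fine once the independence claim is corrected to identical distribution.
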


\begin{Rem}\label{rem1.7}
We do not make any assumptions on the relative rates of growth for the band widths $(b_n^{(i)})_{i \in I_1}$; thus, for example, it could be that $(b_n^{(i_1)}, b_n^{(i_2)}, b_n^{(i_3)}, b_n^{(i_4)})$ are each of slow growth with $b_n^{(i_1)}, b_n^{(i_2)} \ll \sqrt{n} \ll b_n^{(i_3)}, b_n^{(i_4)}$. In particular, perhaps not surprisingly, we fail to observe any sort of transition around the conjectured critical value for the local spectral statistics at the level of first order freeness.
\end{Rem}

In fact, Theorem \hyperref[thm1.5]{1.5} allows us to translate any statement about the limiting distribution (or, more generally, the limiting traffic distribution) of the Wigner matrices $(\mbf{W}_n^{(i)})_{i \in I_1 \cup I_3}$ to the RBMs $(\mbf{\Theta}_n^{(i)})_{i \in I_1 \cup I_3}$, culminating in Theorem \hyperref[thm4.3.6]{4.3.6}, which holds for general parameters $\beta_i \in \C$. This line of investigation, particularly along the traffic distribution, is further pursued in \cite{AM17}.

At the same time, our result shows that the traffic distribution, despite all of its additional structure, falls short of capturing even other macroscopic features. In particular, Theorem 3 in \cite{BMP91} implies that $\lambda_1(\mbf{\Theta}_n) \asto \infty$ for slow growth finite-moment RBMs, whereas Bai and Yin showed that $\lambda_1(\mbf{W}_n) \asto 2$ iff the entries of $\mbf{X}_n$ have finite fourth moments \cite{BY88}.  

Unfortunately, traffic probability has less to say about proportional growth RBMs and less still about fixed band width RBMs. We show that independent proportional growth (resp., fixed band width) RBMs are not asymptotically traffic independent unless $c = 1$ (resp., $b=0$). Nonetheless, we prove the traffic analogue of equation \eqref{eq:1.2}, showing that the continuity of the LSD in the band width extends to the LTD as well. Here, we find a subtle difference in how these limits are attained, leading into our analysis of mixed band width types.

We organize the paper as follows. Section \hyperref[sec2]{2} provides the necessary background in traffic probability following \cite{Mal11,CDM16}. Section \hyperref[sec3]{3} contains the results in our motivating discussion on the Wigner matrices. We further prove general Markov-type concentration inequalities for the traffic distribution of independent Wigner matrices, which allows us to upgrade our convergence to the almost sure sense. Section \hyperref[sec4]{4} treats the case of the RBMs, beginning with a preliminary version of our main result for \emph{periodic} RBMs. We work throughout in the context of finite-moment Wigner matrices $\mbf{X}_n$ with a slightly more general model that replaces the identically distributed assumption with a strong uniform control on the moments. Finally, we gather some miscellaneous results in the appendix.

\begin{center}
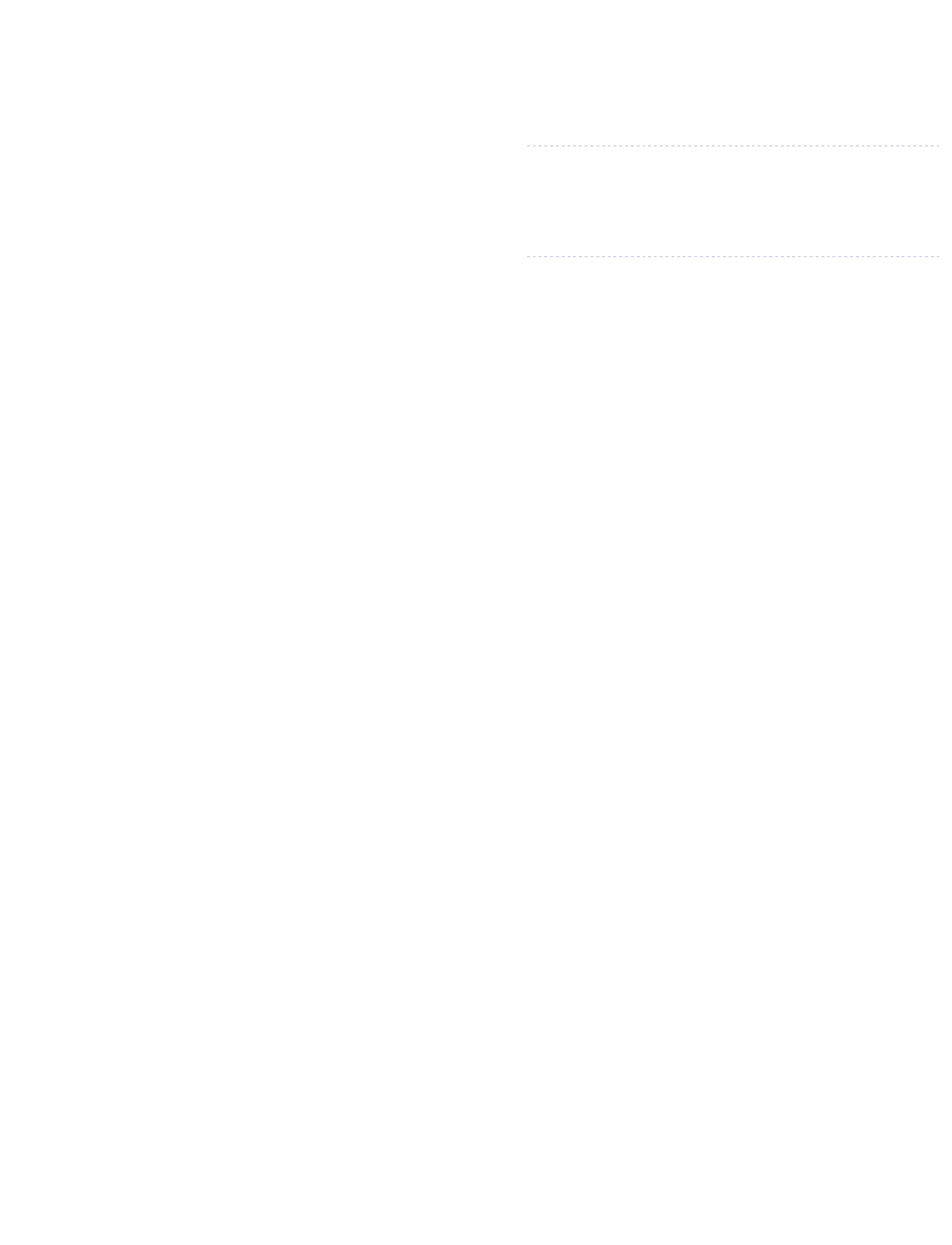
\end{center}

\begin{center}
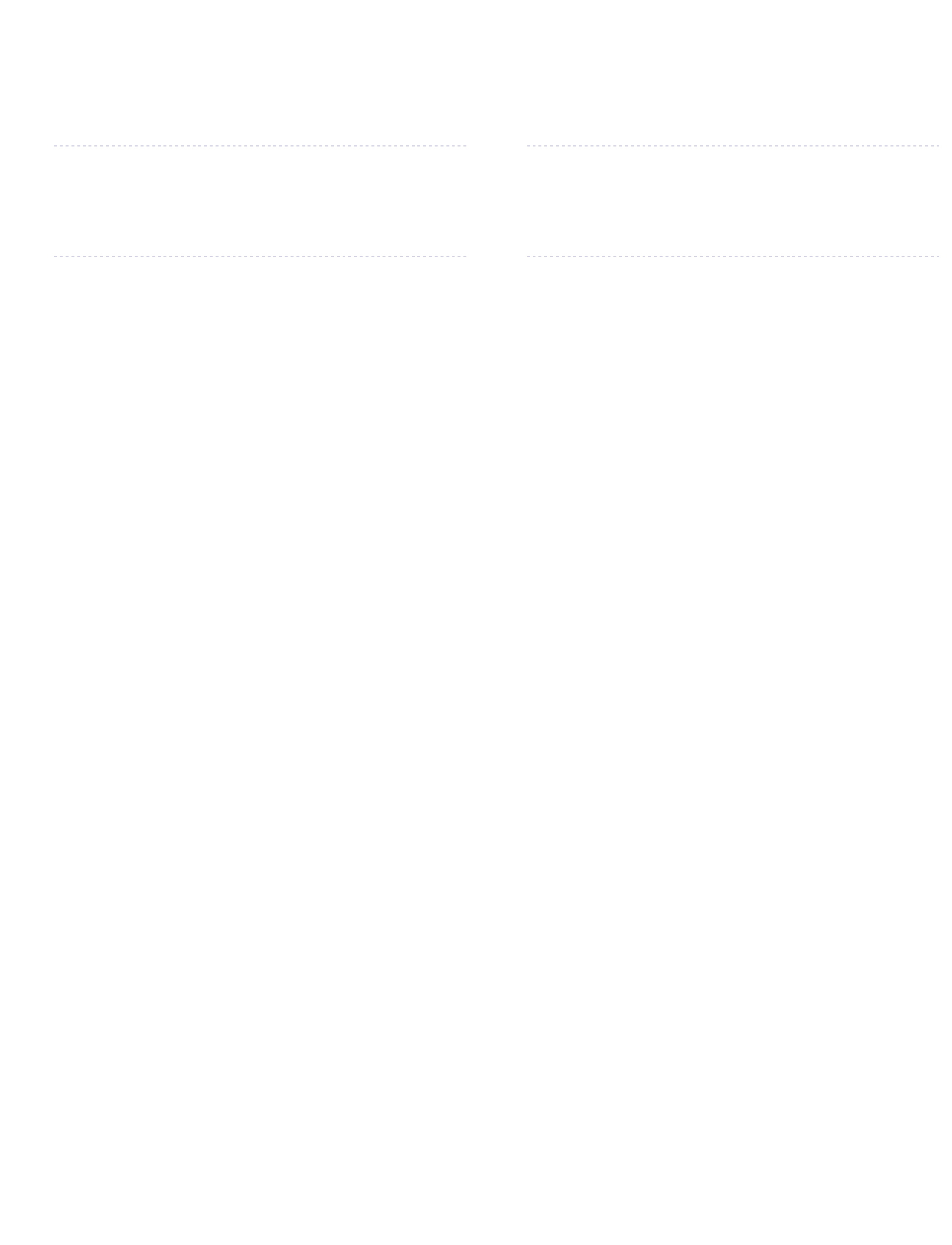
\end{center}

\begin{center}
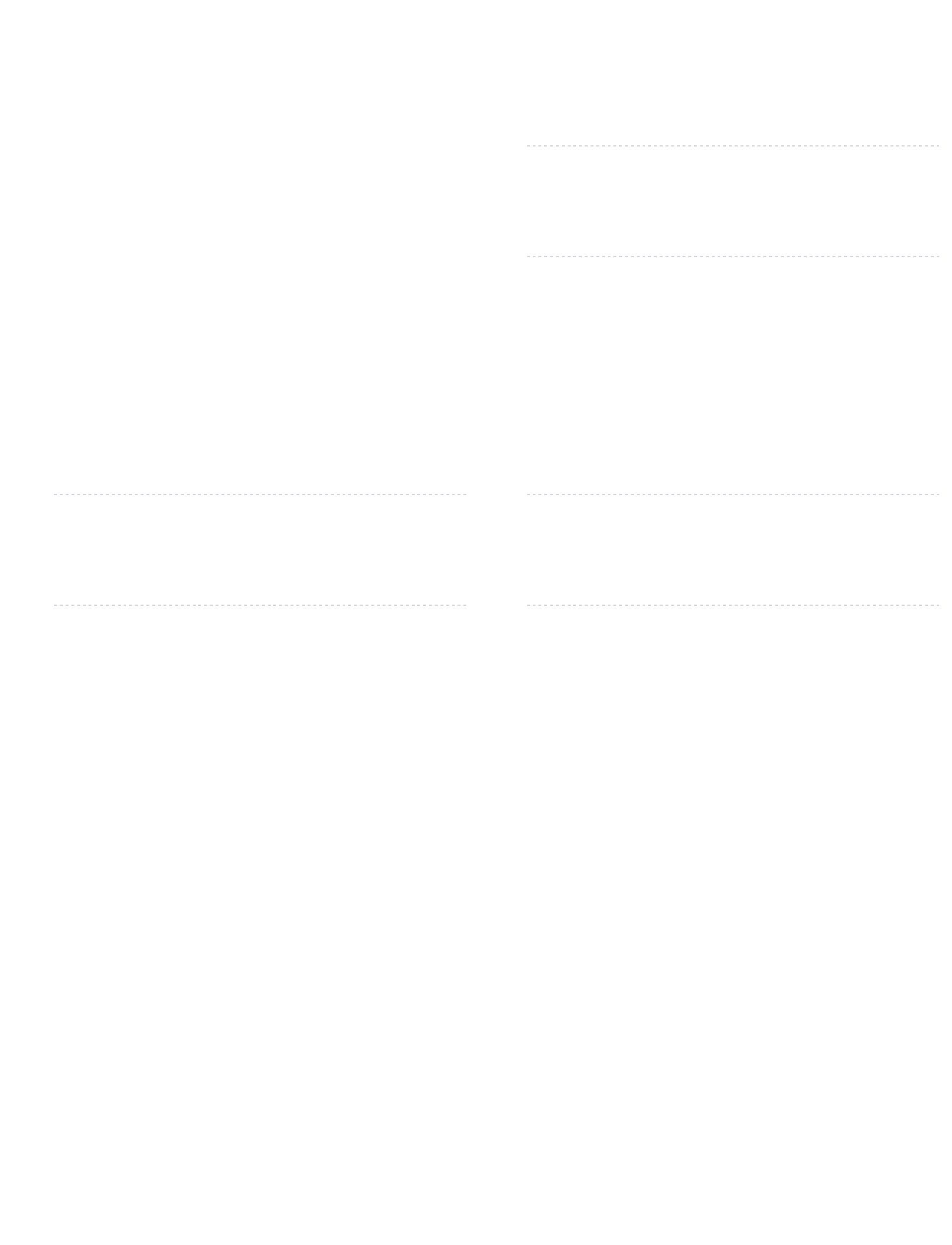
\end{center}

\noindent exceptionally small proportion $c = \frac{1}{1000}$ is actually further from a semicircular shape than the larger proportion $c = \frac{1}{10}$. This can be explained by the simple fact that the RBM in the case $c = \frac{1}{1000}$ has far fewer nontrivial entries than in the case $c = \frac{1}{10}$; or, put another way, there is simply not enough randomness for the convergence $\mu(\mbf{\Theta}_n^{(c)}) \wto \mu_c$ to take yet.

\addtocontents{toc}{\SkipTocEntry}
\subsection*{Acknowledgements}\label{acknowledgements}

The author thanks his advisor, Steve Evans, for his guidance, patience, and support; Alice Guionnet, for a helpful conversation in Montr\'{e}al on the occasion of the 2015 CRM-PIMS Summer School in Probability, during which the author was supported by MSRI; Camille Male, for many helpful comments and suggestions; Raj Rao Nadakuditi, for his insights on an earlier version of our paper and suggesting an investigation into RBMs on the occasion of the 2016 University of Michigan Summer School on Random Matrices; and the organizers of the summer schools, for their hospitality.

\section{Traffic probability}\label{sec2}

We begin with an exposition of \emph{traffic probability}; we refer the reader to \cite{Mal11,CDM16} for the definitive references. For the convenience of the reader, we recall in Section \hyperref[sec2.1]{2.1} the basic framework of \emph{non-commutative probability} following \cite{NS06}. Section \hyperref[sec2.2]{2.2} introduces the \emph{graph polynomials}, a combinatorial generalization of the non-commutative polynomials giving additional structure to the non-commutative probability spaces. As an example, Section \hyperref[sec2.3]{2.3} defines \emph{graph operations on matrices}, forming the prototype of a \emph{traffic space}. We define traffic spaces in full generality in Section \hyperref[sec2.4]{2.4} and devote Section \hyperref[sec2.5]{2.5} to the associated notion of \emph{traffic independence}.

\subsection{Non-commutative probability}\label{sec2.1}

Consider first the usual case of a measurable space $(\Omega, \salg{F})$. For a given probability measure $\prob$ on $(\Omega, \salg{F})$, we can form the (commutative) unital $*$-algebra $L^{\infty-}(\Omega, \salg{F}, \prob)$ of measurable complex-valued functions with finite moments of all orders, i.e.,
\[
L^{\infty-}(\Omega, \salg{F}, \prob) = \bigcap_{p=1}^\infty L^p(\Omega,\salg{F}, \prob).
\]
The expectation $\E: L^{\infty-}(\Omega, \salg{F}, \prob) \to \C$ recovers the probability measure $\prob$; thus, the passage from the probability space $(\Omega, \salg{F}, \prob)=((\Omega, \salg{F}), \prob)$ to the pair $(L^{\infty-}(\Omega, \salg{F}, \prob), \E)$ incurs no loss of information. This correspondence motivates the definition of a non-commutative probability space.

\begin{defn}[Non-commutative probability space]\label{defn2.1.1}
A \emph{non-commutative probability space} is a pair $(\salg{A}, \varphi)$ consisting of a unital algebra $\salg{A}$ over $\C$ equipped with a unital linear functional $\varphi: \salg{A} \to \C$. We call the elements $a \in \salg{A}$ \emph{non-commutative random variables} (or simply \emph{random variables}) and refer to $\varphi$ as the \emph{expectation functional}. If $\salg{A}$ has the additional structure of a $*$-algebra, then we say that $\varphi$ is \emph{positive} if $\varphi(a^*a) \geq 0$ for all $a \in \salg{A}$, in which case we term $\varphi$ a \emph{state} and $(\salg{A}, \varphi)$ a \emph{$*$-probability space}.
\end{defn}

\begin{eg}\label{eg2.1.2}
In keeping with the introduction, a classical probability space $(\Omega, \salg{F}, \prob)$ gives rise to a $*$-probability space $(L^{\infty-}(\Omega, \salg{F}, \prob), \E)$. We abstract another feature from the classical case: in the setting of a $*$-probability space $(\salg{A}, \varphi)$, we say that the expectation functional $\varphi$ is \emph{faithful} if $\varphi(a^*a)=0$ implies $a=0$.
\end{eg}

\begin{eg}\label{eg2.1.3}
Let $\matn(\C)$ denote the usual $*$-algebra of $n\times n$ complex matrices. The normalized trace $\frac{1}{n}\trace$ is clearly positive (indeed, faithful), giving rise to the $*$-probability space $(\matn(\C), \frac{1}{n}\trace)$. The trace of course vanishes on the commutators; in general, we say that the expectation functional $\varphi$ of a non-commutative probability space $(\salg{A},\varphi)$ is a \emph{trace} if $\varphi$ vanishes on the commutators of $\salg{A}$.
\end{eg}

\begin{eg}\label{eg2.1.4}
Combining the two previous examples, we obtain the $*$-probability space $(\matn(L^{\infty-}(\Omega, \salg{F}, \prob)), \E\frac{1}{n}\trace)$ of random matrices whose entries have finite moments of all orders equipped with the expected normalized trace. We leave it to the reader to verify that $\E\frac{1}{n}\trace$ is indeed a faithful trace.
\end{eg}

The distribution of a non-commutative random variable $a \in \salg{A}$ is defined as the pushforward of the expectation functional $\varphi$ by the element $a$. To make this precise, we introduce the \emph{non-commutative polynomials}. For an index set $I$, we write $\C\langle\mbf{x}\rangle$ (resp., $\C\langle\mbf{x}, \mbf{x}^*\rangle$) for the free unital algebra (resp., free unital $*$-algebra) on the indeterminates $\mbf{x} = (x_i)_{i\in I}$. Given a family of random variables $\mbf{a} = (a_i)_{i\in I}$ in a non-commutative probability space (resp., $*$-probability space) $(\salg{A}, \varphi)$, we have the usual evaluation map 
\[
\C\langle\mbf{x}\rangle \ni P \mapsto P(\mbf{a}) \in \salg{A} \quad (\text{resp., } \C\langle\mbf{x}, \mbf{x}^*\rangle \ni Q \mapsto Q(\mbf{a}) \in \salg{A}).
\]
This allows us to formalize

\begin{defn}[Joint distribution]\label{defn2.1.5}
Let $(\salg{A}, \varphi)$ be a non-commutative probability space. The \emph{joint distribution} of a family of random variables $\mbf{a} = (a_i)_{i\in I}$ in $\salg{A}$ is the linear functional
\[
\mu_{\mbf{a}}: \C\langle\mbf{x}\rangle \to \C, \qquad P \mapsto \varphi(P(\mbf{a})).
\]
If $(\salg{A},\varphi)$ has the additional structure of a $*$-probability space, we further define the \emph{joint $*$-distribution} of $\mbf{a}$ as the linear functional
\[
\nu_{\mbf{a}}: \C\langle\mbf{x}, \mbf{x}^*\rangle \to \C, \qquad Q \mapsto \varphi(Q(\mbf{a})).
\]
\end{defn}

\begin{defn}[Convergence in distribution]\label{defn2.1.6}
Let $(\salg{A}_n, \varphi_n)_{1 \leq n < \infty}$ and $(\salg{A},\varphi)$ be non-commutative probability spaces. Suppose that for each $n \in \N$ we have a family of random variables $\mbf{a}_n = (a_n^{(i)})_{i\in I}$ in $\salg{A}_n$. We say that the $\mbf{a}_n$ \emph{converge in distribution} to $\mbf{a} = (a_i)_{i\in I} \subset \salg{A}$ if the corresponding joint distributions $\mu_{\mbf{a}_n}$ converge pointwise to $\mu_{\mbf{a}}$, i.e.,
\[
\lim_{n\to\infty} \mu_{\mbf{a}_n}(P) = \mu_{\mbf{a}}(P), \qquad \forall P \in \C\langle\mbf{x}\rangle.
\]
If $(\salg{A}_n, \varphi_n)_{1 \leq n < \infty}$ and $(\salg{A}, \varphi)$ have the additional structure of a $*$-probability space, we may further say that the $\mbf{a}_n$ \emph{converge in $*$-distribution} to $\mbf{a}$ if the corresponding joint $*$-distributions $\nu_{\mbf{a}_n}$ converge pointwise to $\nu_{\mbf{a}}$, i.e.,
\[
\lim_{n\to\infty} \nu_{\mbf{a}_n}(Q) = \nu_{\mbf{a}}(Q), \qquad \forall Q \in \C\langle\mbf{x}, \mbf{x}^*\rangle.
\]
\end{defn}

We conclude with two notions of independence in the non-commutative probabilistic setting. To facilitate the definitions, we introduce some notation. For a collection $\salg{S} \subset \salg{A}$ of random variables in a non-commutative probability space $(\salg{A}, \varphi)$, we write $\interior{\salg{S}}=(a\in\salg{S}: \varphi(a)=0)$ for the subcollection (possibly empty) of \emph{centered} random variables.

\begin{defn}[Tensor independence]\label{defn2.1.7}
Let $(\salg{A}, \varphi)$ be a non-commutative probability space. We say that unital subalgebras $(\salg{A}_i: i \in I)$ of $\salg{A}$ are \emph{tensor independent} (or \emph{classically independent}) if the $\salg{A}_i$ commute and $\varphi$ is multiplicative across the $\salg{A}_i$ in the following sense: for any $k \geq 1$ and distinct indices $i(1), \ldots, i(k) \in I$,
\begin{equation}\label{eq:2.1}
\varphi\bigg(\prod_{j=1}^k a_{i(j)}\bigg) = \prod_{j=1}^k \varphi(a_{i(j)}), \qquad \forall a_{i(j)} \in \salg{A}_{i(j)}.
\end{equation}
\end{defn}

We note that the multiplicative property \eqref{eq:2.1} is equivalent to
\begin{equation*}\tag{$2.1^\prime$}\label{eq:2.1'}
\varphi\bigg(\prod_{j=1}^k a_{i(j)}\bigg) = 0, \qquad \forall a_{i(j)} \in \interior{\salg{A}}_{i(j)}.
\end{equation*}
We contrast this with

\begin{defn}[Free independence]\label{defn2.1.8}
Let $(\salg{A}, \varphi)$ be a non-commutative probability space. We say that unital subalgebras $(\salg{A}_i: i \in I)$ of $\salg{A}$ are \emph{freely independent} (or simply \emph{free}) if for any $k \geq 1$ and consecutively distinct indices $i(1) \neq i(2) \neq \cdots \neq i(k) \in I$,
\begin{equation}\label{eq:2.2}
\varphi\bigg(\prod_{j=1}^k a_{i(j)}\bigg) = 0, \qquad \forall a_{i(j)}\in\interior{\salg{A}}_{i(j)}.
\end{equation}
\end{defn}

We define the tensor independence (resp., free independence) of subsets $(\salg{S}_i: i\in I)$ of $\salg{A}$ as the tensor independence (resp., free independence) of the generated unital subalgebras $(\text{alg}(1_{\salg{A}},\salg{S}_i): i\in I)$. If $(\salg{A},\varphi)$ has the additional structure of a $*$-probability space, we may further define the \emph{$*$-tensor independence} (resp., \emph{$*$-free independence}) of $(\salg{S}_i: i\in I)$ as the tensor independence (resp., free independence) of the generated unital $*$-subalgebras $(*\text{-alg}(1_{\salg{A}},\salg{S}_i): i\in I)$.

The reader will no doubt notice that equations \eqref{eq:2.1'} and \eqref{eq:2.2} are identical; however, the admissible indices $i(j)$ to which they apply crucially differ. The corresponding CLTs, recorded below (see, e.g., Theorems 8.5 and 8.10 in \cite{NS06}), illustrate the considerable extent to which these two notions diverge.

\begin{thm}[CLTs, classical and free]\label{thm2.1.9}
Let $(a_n)$ be a sequence of identically distributed self-adjoint random variables in a $*$-probability space $(\salg{A},\varphi)$. Assume that the $a_n$ are centered with unit variance, i.e., $\varphi(a_n)=0$ and $\varphi(a_n^2)=1$, and write $s_n=\frac{1}{\sqrt{n}}\sum_{j=1}^n a_j$ for the normalized sum. We consider two cases:
\begin{enumerate}[label=(\roman*)]
\item If the $a_n$ are classically independent, then $(s_n)$ converges in distribution to a standard normal random variable, i.e.,
\[
\lim_{n\to\infty} \varphi(s_n^m) = \int_\R t^m \cdot \frac{1}{\sqrt{2\pi}} e^{-t^2/2}\, dt, \qquad \forall m \in \N.
\]
\item If the $a_n$ are freely independent, then $(s_n)$ converges in distribution to a standard semicircular random variable, i.e.,
\[
\lim_{n\to\infty} \varphi(s_n^m) = \int_{-2}^2 t^m \cdot \frac{1}{2\pi} \sqrt{4-t^2}\, dt, \qquad \forall m \in \N.
\]
\end{enumerate}
\end{thm}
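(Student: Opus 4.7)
In both cases the plan is to establish moment convergence, which determines the limit distribution uniquely (compact support handles the semicircular case, Carleman's condition the Gaussian case). By multilinearity, expand
\[
\varphi(s_n^m) = n^{-m/2} \sum_{(i_1,\ldots,i_m) \in [n]^m} \varphi(a_{i_1}\cdots a_{i_m}),
\]
and then determine which index configurations survive in the $n \to \infty$ limit under each notion of independence.

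\textbf{Classical case (i).} Commutativity together with classical independence implies that $\varphi(a_{i_1}\cdots a_{i_m})$ depends only on the partition $\pi \in \mathcal{P}(m)$ induced by the equality pattern of the indices, equaling $\prod_{B \in \pi} \varphi(a_1^{|B|})$ by identical distribution. The centering hypothesis $\varphi(a_1) = 0$ kills every $\pi$ having a singleton block, while the number of index tuples realizing a $\pi$ with $k$ blocks is $n(n-1)\cdots(n-k+1) \sim n^k$. Balancing against the prefactor $n^{-m/2}$ forces $k = m/2$, hence $m$ even with every block of size exactly two. Each pair contributes $\varphi(a_1^2) = 1$, and the total count $|\mathcal{P}_2(m)| = (m-1)!!$ is the $m$-th moment of a standard Gaussian.

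\textbf{Free case (ii).} The cleanest route uses free cumulants $(\kappa_k)$. The characterization of free independence by vanishing of mixed free cumulants, combined with multilinearity of $\kappa_k$ and identical distribution, yields
\[
\kappa_k(s_n,\ldots,s_n) = n^{-k/2} \sum_{i=1}^n \kappa_k(a_i,\ldots,a_i) = n^{1-k/2}\kappa_k(a_1).
\]
Hence $\kappa_1(s_n) = 0$, $\kappa_2(s_n) = \varphi(a_1^2) = 1$, and $\kappa_k(s_n) \to 0$ for all $k \geq 3$. The moment-cumulant formula
\[
\varphi(s_n^m) = \sum_{\pi \in \mathrm{NC}(m)} \prod_{B \in \pi} \kappa_{|B|}(s_n)
\]
then collapses in the limit onto the non-crossing pair partitions, giving $|\mathrm{NC}_2(m)| = C_{m/2}$, the $m$-th moment of the standard semicircle distribution.

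\textbf{Main obstacle.} The substantive difficulty lies in (ii). Unlike the commutative setting, the equality pattern of the indices no longer determines $\varphi(a_{i_1}\cdots a_{i_m})$; free cumulants are the combinatorial device that linearizes this dependence and forces vanishing of everything except the non-crossing pair contributions. Without this tool one could proceed by induction, directly applying the defining alternating centered moment condition of Definition 2.1.8 to group consecutive equal indices and peel off inner factors, but the book-keeping is considerably more intricate and the emergence of the non-crossing pair partitions becomes less transparent.
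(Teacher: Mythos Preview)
Your proof is correct and follows the standard moment/cumulant approach. Note, however, that the paper does not actually prove this theorem: it is stated as background with a citation to Theorems 8.5 and 8.10 of \cite{NS06}, so there is no ``paper's own proof'' to compare against. What you have written is essentially the argument one finds in that reference.
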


Finally, we recall one of the most basic (and frequently appearing) families of random variables in this framework.

\begin{defn}[Semicircular system]\label{defn2.1.10}
Let $(\salg{A}, \varphi)$ be a $*$-probability space. We say that a family of random variables $\mbf{a} = (a_i)_{i \in I}$ in $\salg{A}$ is a \emph{semicircular system} if $\mbf{a}$ is a family of freely independent self-adjoint standard semicircular random variables. Similarly, we say that $\mbf{a}$ is a \emph{Gaussian system} if $\mbf{a}$ is a family of classically independent self-adjoint standard normal random variables.
\end{defn}

\begin{rem}\label{rem2.1.11}
On a purely combinatorial level, tensor independence and free independence simply amount to rules for obtaining the expectation of non-commutative polynomials in independent random variables from the expectation of non-commutative polynomials in the individual random variables themselves. Naturally, one may then ask if there exist other such rules and hence other notions of independence in the non-commutative probabilistic setting. Speicher showed that if we require the rules to be suitably universal in an algebraic sense, then tensor independence and free independence remain the only candidates \cite{Spe97} (but do see \cite{BGS02,Mur03} for further reading). The distinct notion of traffic independence is consistent with this dichotomy precisely because it is defined in terms of the more general \emph{graph polynomials}, which we introduce in the next section.
\end{rem}

\subsection{Graph polynomials}\label{sec2.2}

To begin, we fix some notation. As before, we write $\mbf{x} = (x_i)_{i \in I}$ for a set of indeterminates. We implicitly assume a corresponding set of indeterminates $\mbf{x}^* = (x_i^*)_{i \in I}$ such that $(\mbf{x}, \mbf{x}^*) = (x_i, x_i^*)_{i \in I}$ forms a set of pairwise distinct indeterminates satisfying the natural involutive $*$-relation.

\begin{defn}[Graph monomial]\label{defn2.2.1}
A \emph{directed multigraph} (or \emph{multidigraph} for short) is a quadruple $G = (V, E, \source, \target)$ consisting of a (non-empty) set of vertices $V$, a set of edges $E$, and maps $\source, \target: E \to V$ specifying the source $\source(e)$ and target $\target(e)$ of each edge $e \in E$. 

A \emph{graph $T$ in $\mbf{x}$} is a multidigraph with edge labels in the indeterminates $\mbf{x} = (x_i)_{i \in I}$: formally, $T = (G,\gamma)$, where $\gamma: E \to I$ specifies the label $x_{\gamma(e)} \in \mbf{x}$ of each edge $e$. We may further define a \emph{$*$-graph in $\mbf{x}$} by including the additional information of a map $\varepsilon: E \to \{1, *\}$ to indicate the label $x_{\gamma(e)}^{\varepsilon(e)} \in (\mbf{x}, \mbf{x}^*)$. We obtain the \emph{conjugate} $\overline{T}$ of a $*$-graph $T$ by reversing the edges of $T$ and replacing the labels $x_{\gamma(e)}^{\varepsilon(e)}$ by $(x_{\gamma(e)}^{\varepsilon(e)})^*$ so that $\overline{T} = (V, E, \target, \source, \gamma, \varepsilon^*)$. For notational convenience, we often omit the source/target and simply write $T = (V, E, \gamma)$ (resp., $T = (V, E, \gamma, \varepsilon)$) when the context is clear.

A \emph{test graph in $\mbf{x}$} is a finite, connected graph in $\mbf{x}$. We define a \emph{$*$-test graph} analogously.

A \emph{bi-rooted graph $t$ in $\mbf{x}$} is a graph $T = (V, E, \gamma)$ in $\mbf{x}$ together with an ordered pair of distinguished (not necessarily distinct) vertices $(\vin, \vout) \in V^2$ whose coordinates we term the \emph{input} and the \emph{output} respectively. Formally, this amounts to a triple $t = (T, \vin, \vout)$. We define a \emph{bi-rooted $*$-graph} analogously. We obtain the \emph{transpose} $t^\intercal$ of $t$ by interchanging the input and the output so that $t^\intercal = (T, \vout, \vin)$. We further obtain the \emph{adjoint} $t^*$ of $t$ by taking the conjugate transpose (in either order) of $t$, i.e., $t^* = (\overline{T}, \vout, \vin)$. 

Finally, a \emph{graph monomial in $\mbf{x}$} is a bi-rooted test graph in $\mbf{x}$. We define a \emph{$*$-graph monomial} analogously.
\end{defn}

\begin{center}
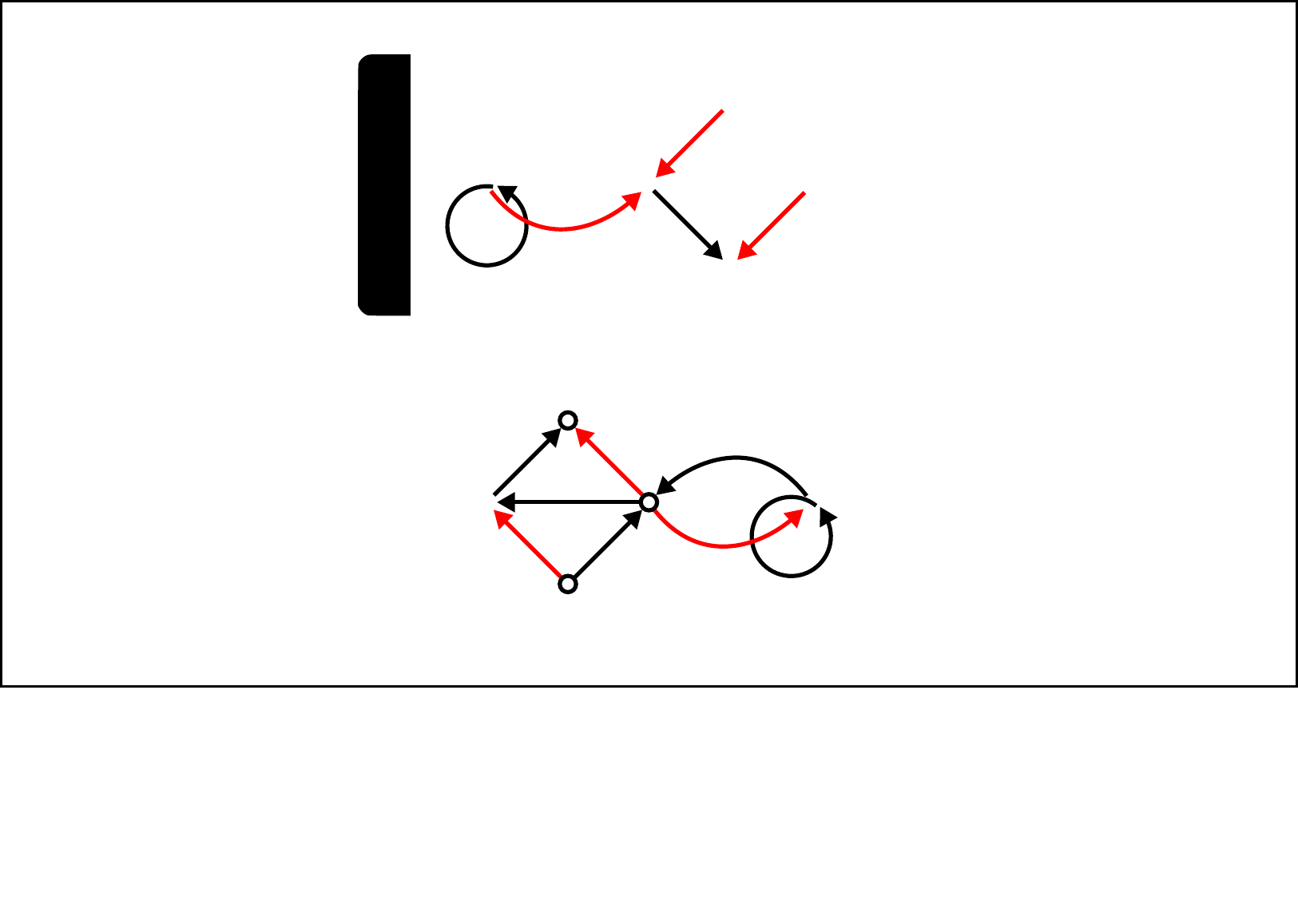
\end{center}

\begin{rem}\label{rem2.2.2}
We consider graphs as defined only up to the appropriate notion of isomorphism; thus, for example, we write $\ssgm{x}$ for the set of $*$-graph monomials in $\mbf{x}$ up to isomorphism of bi-rooted, edge-labeled multidigraphs. 
\end{rem}

We work exclusively in the context of a $*$-probability space in the sequel. To suit our needs, we develop the framework for $*$-graph polynomials. One may of course consider the simpler case of graph polynomials in parallel.

\begin{defn}[$*$-algebra of $*$-graph polynomials]\label{defn2.2.3}
Let $\ssgp{x}$ denote the complex vector space of finite linear combinations in $\ssgm{x}$, the elements of which we call the \emph{$*$-graph polynomials}. We give $\ssgp{x}$ the additional structure of a unital $*$-algebra over $\C$ as follows. For $*$-graph monomials $t_1 = (T_1, \vin^{(1)}, \vout^{(1)})$ and $t_2 = (T_2, \vin^{(2)}, \vout^{(2)})$, we define the product $t_1t_2$ as the concatenation of $t_1$ and $t_2$ by merging the output $\vout^{(2)}$ with the input $\vin^{(1)}$. Formally, $t_1t_2 = (T_3, \vin^{(2)}, \vout^{(1)})$, where $T_3$ corresponds to the $*$-graph obtained from the disjoint union of $T_1$ and $T_2$ by identifying the vertices $\vout^{(2)}$ and $\vin^{(1)}$. We extend this to a bilinear operation on the $*$-graph polynomials to obtain the multiplicative structure on $\ssgp{x}$. The $*$-graph monomial consisting of a single vertex (necessarily both the input and the output) serves as the identity element for this multiplication. As suggested by Definition \hyperref[defn2.2.1]{2.2.1}, we define the $*$-operation as giving the adjoint on the $*$-graph monomials so that $(t_1)^* = t_1^* = (\overline{T}_1, \vout^{(1)}, \vin^{(1)})$.
\end{defn}

\begin{rem}\label{rem2.2.4}
The right-to-left convention for $*$-graph monomials comes from the usual convention for function composition: if we imagine the input as the domain and the output as the codomain, then we see how the two notions align.
\end{rem}

\begin{center}
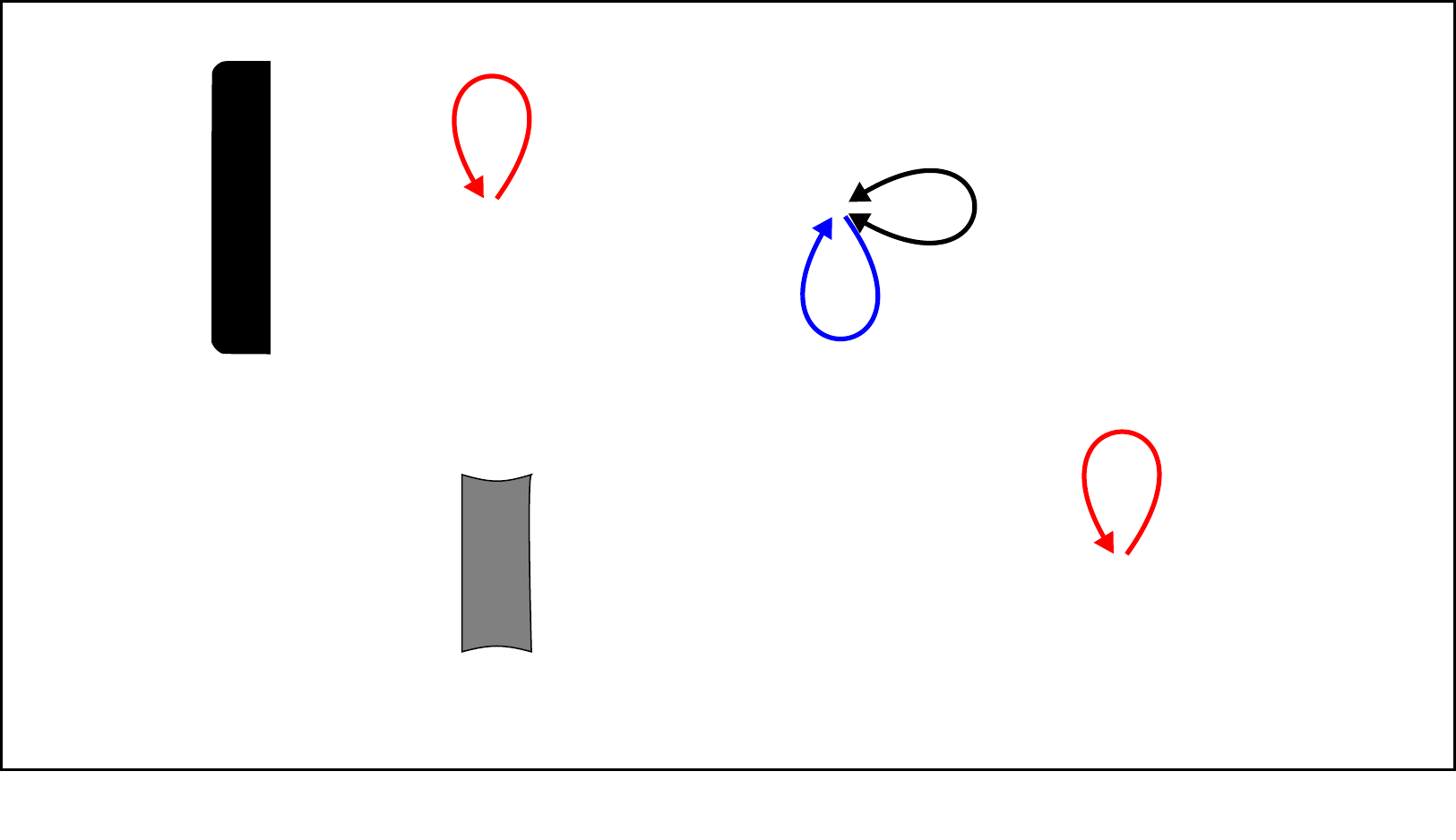
\end{center}

We can encode a $*$-monomial $P = x_{i(n)}^{\varepsilon(n)} \cdots x_{i(1)}^{\varepsilon(1)} \in \C\langle\mbf{x}, \mbf{x}^*\rangle$ as a $*$-graph monomial $t_P \in \ssgp{x}$ by considering each indeterminate in the product as labeling an edge directly below it in a directed path on $n+1$ vertices, right to left, starting at the input and ending at the output. Formally, $t_P = (T_P, v_1, v_{n+1})$, where
\[
T_P = ((v_j)_{j=1}^{n+1}, (e_k)_{k=1}^n, \source, \target, \gamma, \varepsilon), \quad \source(e_k) = v_k, \quad \target(e_k) = v_{k+1}, \quad \text{and} \quad \gamma(e_k) = i(k).
\]
The correspondence $P \mapsto t_P$ defines an embedding of unital $*$-algebras
\begin{equation}\label{eq:2.3}
\eta_{\mbf{x}}: \C\langle\mbf{x}, \mbf{x}^*\rangle \hookrightarrow \ssgp{x},
\end{equation}
hence the term $*$-graph polynomial. We extend our notation $t_P = \eta_{\mbf{x}}(P)$ to $P \in \C\langle\mbf{x}, \mbf{x}^*\rangle$.

\begin{center}
\begingroup%
  \makeatletter%
  \providecommand\color[2][]{%
    \errmessage{(Inkscape) Color is used for the text in Inkscape, but the package 'color.sty' is not loaded}%
    \renewcommand\color[2][]{}%
  }%
  \providecommand\transparent[1]{%
    \errmessage{(Inkscape) Transparency is used (non-zero) for the text in Inkscape, but the package 'transparent.sty' is not loaded}%
    \renewcommand\transparent[1]{}%
  }%
  \providecommand\rotatebox[2]{#2}%
  \ifx\svgwidth\undefined%
    \setlength{\unitlength}{468bp}%
    \ifx\svgscale\undefined%
      \relax%
    \else%
      \setlength{\unitlength}{\unitlength * \real{\svgscale}}%
    \fi%
  \else%
    \setlength{\unitlength}{\svgwidth}%
  \fi%
  \global\let\svgwidth\undefined%
  \global\let\svgscale\undefined%
  \makeatother%
  \begin{picture}(1,0.27307692)%
    \put(0,0){\includegraphics[width=\unitlength,page=1]{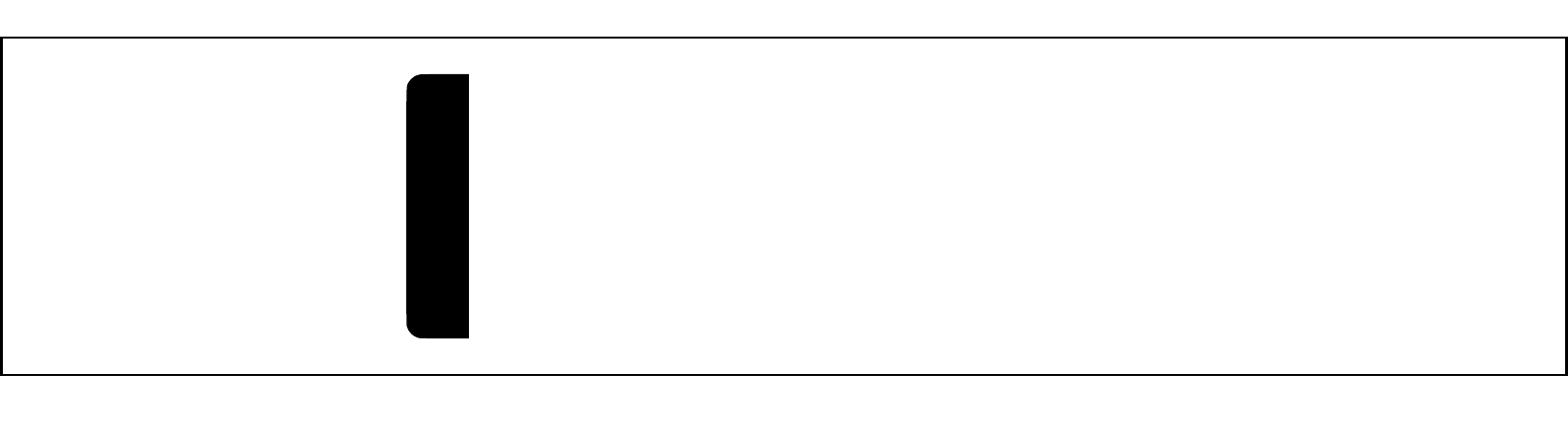}}%
    \put(-0.00141487,0.01911686){\color[rgb]{0,0,0}\makebox(0,0)[lt]{\begin{minipage}{0.99822193\unitlength}\raggedright Figure 6: An example of the embedding $\eta_{\mbf{x}}: \C\langle\mbf{x}, \mbf{x}^*\rangle \ni P \mapsto t_P \in \ssgp{x}$.\end{minipage}}}%
    \put(0.12170716,1.58554485){\color[rgb]{0,0,0}\makebox(0,0)[lt]{\begin{minipage}{0.11106842\unitlength}\raggedright \end{minipage}}}%
    \put(0.03997094,0.15173738){\color[rgb]{0,0,0}\makebox(0,0)[lt]{\begin{minipage}{0.50837182\unitlength}\raggedright $y^*xz^*x \mapsto t_{y^*xz^*x} = $\end{minipage}}}%
    \put(0.92962456,0.15195258){\color[rgb]{0,0,0}\rotatebox{-90}{\makebox(0,0)[lb]{\smash{in}}}}%
    \put(0,0){\includegraphics[width=\unitlength,page=2]{fig6_embed.pdf}}%
    \put(0.79835088,0.15577662){\color[rgb]{0,0,0}\makebox(0,0)[lb]{\smash{$x$}}}%
    \put(0.54862336,0.15577662){\color[rgb]{0,0,0}\makebox(0,0)[lb]{\smash{$x$}}}%
    \put(0.67348714,0.15577662){\color[rgb]{0,0,0}\makebox(0,0)[lb]{\smash{$z^*$}}}%
    \put(0.42376116,0.15724376){\color[rgb]{0,0,0}\makebox(0,0)[lb]{\smash{$y^*$}}}%
    \put(0.28612738,0.12082528){\color[rgb]{0,0,0}\rotatebox{90}{\makebox(0,0)[lb]{\smash{\textcolor{white}{out}}}}}%
  \end{picture}%
\endgroup%

\end{center}

We can also define a notion of substitution for $*$-graph polynomials that generalizes the corresponding notion for $*$-polynomials.

\begin{defn}[Substitution in $\sgps$]\label{defn2.2.5}
Let $t$ be a $*$-graph monomial in $\mbf{x}=(x_i)_{i\in I}$. Suppose that for each $i\in I$ we have a $*$-graph monomial $t_i$ in the indeterminates $\mbf{y}_i= (y_{i,j})_{j\in J_i}$. Then we may substitute the $(t_i)_{i\in I}$ for the indeterminates $\mbf{x}$ in $t$ by replacing the edges $e$ labeled by $x_{\gamma(e)}^{\varepsilon(e)}$ with the $*$-graph monomial $t_{\gamma(e)}^{\varepsilon(e)}$: simply identify the source (resp., target) of $e$ with the input (resp., output) of $t_{\gamma(e)}^{\varepsilon(e)}$. We denote the resulting $*$-graph monomial in the indeterminates $\mbf{y}=\bigcup_{i\in I} \mbf{y}_i =(y_{i,j})_{i\in I,j\in J_i}$ by $\op{Subs}_{\mbf{x},(\mbf{y}_i)_{i\in I}}(t, \bigtimes_{i\in I} t_i)$. We extend this operation to the $*$-graph polynomials in the obvious way to obtain the \emph{substitution map}
\[
\op{Subs}_{\mbf{x},(\mbf{y}_i)_{i\in I}}: \ssgp{x} \times \bigtimes_{i \in I}\C\salg{G}\langle\mbf{y}_i,\mbf{y}_i^*\rangle \to \ssgp{y}.
\]
\end{defn}

\begin{center}
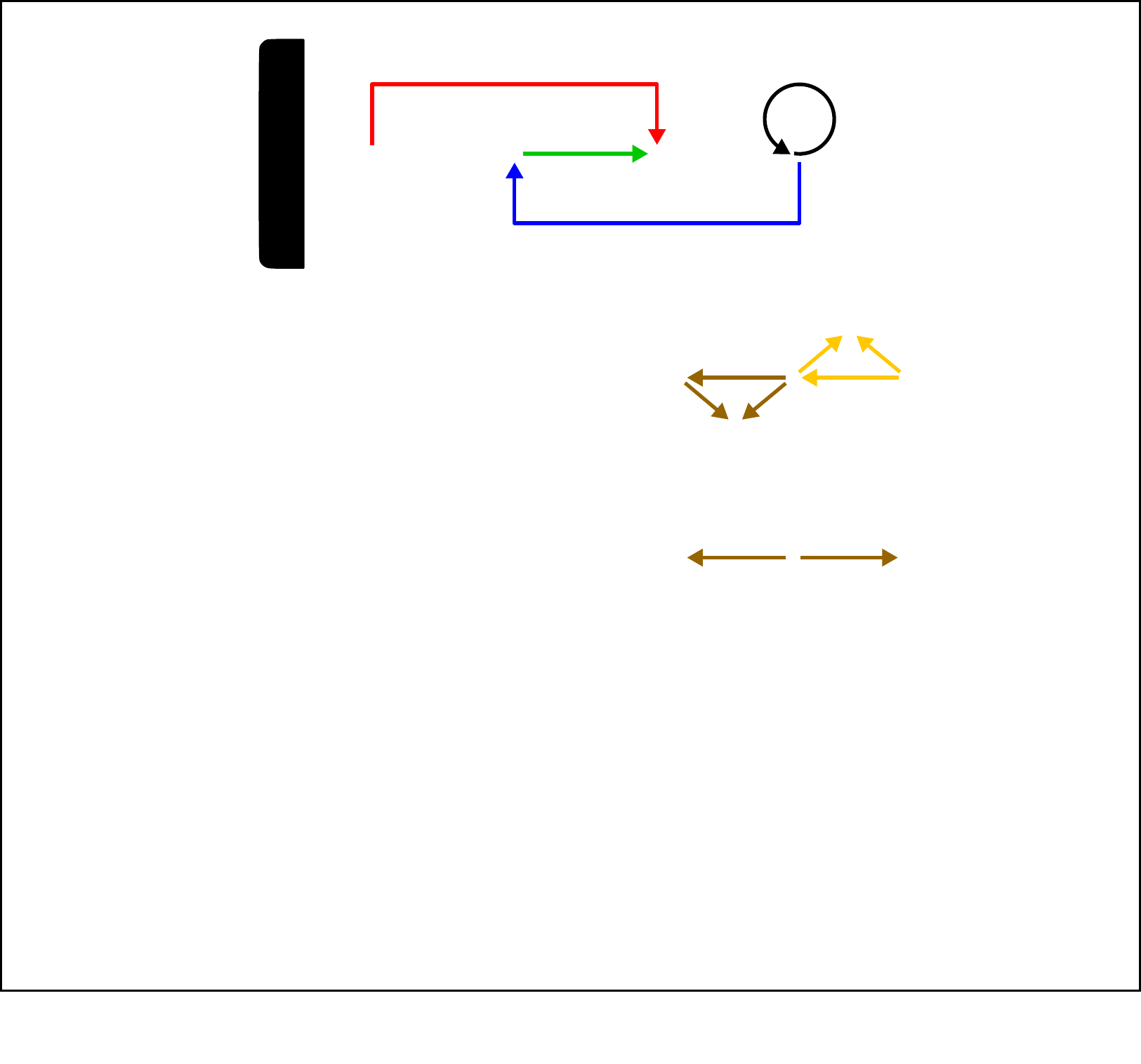
\end{center}

The substitution operation is of course associative, i.e., for indeterminates
\[
\mbf{x}=(x_i)_{i\in I}, \quad \mbf{y}=\bigcup_{i\in I} \mbf{y}_i =(y_{i,j})_{i\in I,j\in J}, \quad \text{and} \quad \mbf{z} = \bigcup_{i\in I} \mbf{z}_i = \bigcup_{i\in I} \bigcup_{j\in J_i} \mbf{z}_{i,j} = (z_{i,j,k})_{i\in I,j\in J_i,k\in K_{i,j}},
\]
the following diagram commutes:
\[
\begin{tikzcd}[row sep=3cm, column sep=4cm]
\Scale[.9]{\displaystyle\C\salg{G}\langle\mbf{x},\mbf{x}^*\rangle\times\bigtimes_{i\in I}\C\salg{G}\langle\mbf{y}_i,\mbf{y}_i^*\rangle\times\bigtimes_{i\in I,j\in J_i}\C\salg{G}\langle\mbf{z}_{i,j},\mbf{z}_{i,j}^*\rangle}\arrow{r}{\Scale[1]{\operatorname{id}\times\bigtimes_{i\in I} \operatorname{Subs}_{\mbf{y}_i,(\mbf{z}_{i,j})_{j\in J_i}}}} \arrow{d}[swap]{\Scale[1]{\operatorname{Subs}_{\mbf{x},(\mbf{y}_i)_{i\in I}}\times \operatorname{id}}}& \Scale[.9]{\displaystyle\C\salg{G}\langle\mbf{x},\mbf{x}^*\rangle\times\bigtimes_{i\in I}\C\salg{G}\langle\mbf{z}_i,\mbf{z}_i^*\rangle} \arrow{d}{\Scale[1]{\operatorname{Subs}_{\mbf{x},(\mbf{z}_i)_{i\in I}}}} \\
\Scale[.9]{\displaystyle\C\salg{G}\langle\mbf{y},\mbf{y}^*\rangle\times\bigtimes_{i\in I, j\in J_i}\C\salg{G}\langle\mbf{z}_{i,j},\mbf{z}_{i,j}^*\rangle} \arrow{r}[swap]{\Scale[1]{\operatorname{Subs}_{\mbf{y},(\mbf{z}_{i,j})_{i\in I, j\in J_i}}}}& \Scale[.9]{\C\salg{G}\langle\mbf{z},\mbf{z}^*\rangle}
\end{tikzcd}
\]

If, by a slight abuse of notation, we use the same notation
\[
\op{Subs}_{\mbf{x},(\mbf{y}_i)_{i\in I}}: \C\langle\mbf{x}, \mbf{x}^*\rangle \times \bigtimes_{i \in I} \C\langle \mbf{y}_i, \mbf{y}_i^*\rangle \to \C\langle\mbf{y}, \mbf{y}^*\rangle
\]
for the usual substitution of $*$-polynomials, then we also have the commutative diagram
\[
\begin{tikzcd}[row sep=3cm, column sep=3cm]
\Scale[1]{\displaystyle\C\langle\mbf{x},\mbf{x}^*\rangle\times\bigtimes_{i\in I}\C\langle\mbf{y}_i,\mbf{y}_i^*\rangle}\arrow{r}{\Scale[1]{\op{Subs}_{\mbf{x},(\mbf{y}_i)_{i\in I}}}} \arrow{d}[swap]{\Scale[1]{\eta_{\mbf{x}} \times \bigtimes_{i \in I} \eta_{\mbf{y}_i}}}& \Scale[1]{\displaystyle\C\langle\mbf{y},\mbf{y}^*\rangle} \arrow{d}{\Scale[1]{\eta_{\mbf{y}}}} \\
\Scale[1]{\displaystyle\C\salg{G}\langle\mbf{x},\mbf{x}^*\rangle\times\bigtimes_{i\in I} \C\salg{G}\langle\mbf{y}_i,\mbf{y}_i^*\rangle} \arrow{r}[swap]{\Scale[1]{\operatorname{Subs}_{\mbf{x},(\mbf{y}_{i})_{i \in I}}}}& \Scale[1]{\C\salg{G}\langle\mbf{y},\mbf{y}^*\rangle}
\end{tikzcd}
\]
which says that the substitution operation commutes with the embedding \eqref{eq:2.3}.

For natural reasons, one often prefers to consider the substitution operation as a function of the edges of a graph as opposed to the indeterminates. We can of course accomplish this by using the edge set as the indexing set for our indeterminates $\mbf{x} = (x_e)_{e \in E}$, but we opt for the more intrinsic notion of a \emph{$K$-graph operation} following \cite{CDM16}.

\begin{defn}[Graph operation]\label{defn2.2.6}
Let $g = (V, E, \source, \target, \vin, \vout, o)$ be a finite, connected, bi-rooted multidigraph together with an ordering of the edges $o: E \stackrel{\sim}{\to} [\#(E)]$. We interpret $g = g(\cdot_1, \ldots, \cdot_K)$ as a function of $K = \#(E)$ arguments, one for each edge $e \in E$, specified by the ordering $o$. We call such a graph $g$ a \emph{$K$-graph operation}. We denote the set of all $K$-graph operations by $\salg{G}_K$ and write $\salg{G} = \bigcup_{K \geq 0} \salg{G}_K$ for the set of all graph operations.

We define an action of the symmetric group $\mfk{S}_K$ on $\salg{G}_K$ by permuting the ordering of the edges. Formally, for $\sigma \in \mfk{S}_K$ and $g \in \salg{G}_K$, we define $g_\sigma$ as the $K$-graph operation $g_\sigma = (V, E, \vin, \vout, \sigma \circ o)$. We further define an involution $*$ on $\salg{G}$ by analogy with the $*$-graph polynomials. Formally, $g^* = (V, E, \target, \source, \vout, \vin, o)$, the only difference being that we do not have edge labels to modify.
\end{defn}

In fact, one may carry forward the entirety of this section to the graph operations: for example, the $*$-algebra of graph operations, substitution in graph operations $g(g_1, \ldots, g_K)$, etc. We leave the relatively straightforward details to the interested reader.

While the two notions largely coincide, the flexibility to work interchangeably between $*$-graph polynomials and graph operations facilitates many of the definitions. For example, the $*$-graph polynomials allow us to define the traffic analogue of the $*$-distribution, whereas the graph operations allow us to formulate the axioms of a traffic space in a more natural setting. Roughly speaking, a traffic space is a $*$-probability space $(\salg{A}, \varphi)$ with the additional structure to evaluate graph operations in the random variables $a \in \salg{A}$. We often refer to the random variables in a traffic space as \emph{traffic random variables} (or simply \emph{traffics}) to emphasize this distinction. The $*$-probability space of random $n \times n$ matrices (see Example \hyperref[eg2.1.4]{2.1.4}) is the prototype of a traffic space; we construct this example in the next section.

\subsection{Graph operations on matrices}\label{sec2.3}

\begin{defn}[Graph of matrices]\label{defn2.3.1}
Let $\mfk{A}_n = (\mbf{A}_n^{(k)})_{k=1}^K$ be a $K$-tuple of random $n \times n$ matrices. For a $K$-graph operation $g = (V, E, \vin, \vout, o)$, we define the \emph{graph of matrices} $Z_g(\mfk{A}_n) = Z_g(\mbf{A}_n^{(1)}, \ldots, \mbf{A}_n^{(K)})$ as the random $n \times n$ matrix with entries 
\begin{equation}\label{eq:2.4}
Z_g(\mfk{A}_n)(i, j) = \sum_{\substack{\phi: V \to [n] \text{ s.t.}\\ \phi(\vout) = i,\ \phi(\vin) = j}} \prod_{e \in E} \mbf{A}_n^{(o(e))}(\phi(\target(e)), \phi(\source(e))).
\end{equation}
For simplicity, we often write $\phi(e) = (\phi(\target(e)), \phi(\source(e)))$. We extend the operation \eqref{eq:2.4} to a multilinear function 
\[
Z_g: \matn(\salg{M}_{\C}(\Omega, \salg{F}, \prob))^{\otimes k} \to \matn(\salg{M}_{\C}(\Omega, \salg{F}, \prob)), \qquad \mbf{A}_n^{(1)} \otimes \cdots \otimes \mbf{A}_n^{(K)} \mapsto Z_g(\mfk{A}_n).
\]

We visualize a graph of matrices $Z_g(\mfk{A}_n)$ in the natural way: as a bi-rooted test graph with edge labels in $\matn(\salg{M}_{\C}(\Omega, \salg{F}, \prob))$. In particular, this suggests that the ordering of the edges should only play a formal role in the definition of a graph of matrices. We capture this intuition with the following \emph{equivariance} property: writing $\mfk{A}_n^\sigma = (\mbf{A}_n^{(\sigma(k))})_{k=1}^K$ for a permutation $\sigma \in \mfk{S}_K$, we have the equality
\[
Z_{g_\sigma}(\mfk{A}_n^{\sigma^{-1}}) = Z_g(\mfk{A}_n), \qquad \forall \sigma \in \mfk{S}_K.
\]

If instead $\mfk{A}_n = (\mbf{A}_n^{(i)})_{i \in I}$ is a family of random $n \times n$ matrices and $t = (V, E, \vin, \vout, \gamma, \varepsilon)$ is a $*$-graph monomial in $\mbf{x} = (x_i)_{i \in I}$, then we define the \emph{graph of matrices} $t(\mfk{A}_n)$ as the random $n \times n$ matrix with entries
\begin{equation}\label{eq:2.5}
t(\mfk{A}_n)(i, j) = \sum_{\substack{\phi: V \to [n] \text{ s.t.}\\ \phi(\vout) = i,\ \phi(\vin) = j}} \prod_{e \in E} (\mbf{A}_n^{(\gamma(e))})^{\varepsilon(e)}(\phi(e)).
\end{equation}
We extend the operation \eqref{eq:2.5} to the $*$-graph polynomials by linearity to obtain a $*$-homomorphic evaluation map
\begin{equation}\label{eq:2.6}
\op{eval}_{\ssgp{x}, \mfk{A}_n}: \ssgp{x} \to \matn(\salg{M}_{\C}(\Omega, \salg{F}, \prob)), \qquad t \mapsto t(\mfk{A}_n).
\end{equation}
We leave it to the reader to formulate the equivariance property in this context.
\end{defn}

\begin{rem}\label{rem2.3.2}
We think of the maps $Z_g$ as defining an action of the graph operations $\salg{G}$ on $\matn(\salg{M}(\Omega, \salg{F}, \prob))$, the data of which we record abstractly in the $*$-graph polynomials with the maps $\op{eval}_{\ssgp{x}, \mfk{A}_n}$. For consistency, we find it convenient to work exclusively with the $*$-graph polynomials for the remainder of this section. 
\end{rem}

We note that the evaluation \eqref{eq:2.6} extends the $*$-algebra structure of $\matn(\salg{M}_{\C}(\Omega, \salg{F}, \prob))$. To see this, recall the embedding \eqref{eq:2.3},
\[
\eta_{\mbf{x}}: \C\langle\mbf{x}, \mbf{x}^*\rangle \hookrightarrow \ssgp{x}, \qquad P \mapsto t_P. 
\]
The usual $*$-polynomial evaluation
\[
\op{eval}_{\C\langle\mbf{x}, \mbf{x}^*\rangle, \mfk{A}_n}: \C\langle\mbf{x}, \mbf{x}^*\rangle \to \matn(\salg{M}_{\C}(\Omega, \salg{F}, \prob)), \qquad P \mapsto P(\mfk{A}_n)
\]
then factors through the $*$-graph polynomials via $\eta_{\mbf{x}}$, i.e., the diagram
\[
\begin{tikzcd}[row sep=2cm, column sep=.75cm]
\Scale[1]{\C\langle\mbf{x}, \mbf{x}^*\rangle} \arrow{dr}[swap]{\eta_{\mbf{x}}} \arrow{rr}{\Scale[1]{\op{eval}_{\C\langle\mbf{x}, \mbf{x}^*\rangle, \mfk{A}_n}}}& &\Scale[1]{\matn(\salg{M}_{\C}(\Omega, \salg{F}, \prob))} \\
&\Scale[1]{\ssgp{x}} \arrow{ur}[swap]{\op{eval}_{\ssgp{x}, \mfk{A}_n}}&
\end{tikzcd}
\]
commutes. Even more, the evaluation in $*$-graph polynomials also produces matrices with additional linear algebraic structure.

\begin{eg}\label{eg2.3.3}
For $*$-graph monomials $t_1 = (T_1, \vin^{(1)}, \vout^{(1)})$ and $t_2 = (T_2, \vin^{(2)}, \vout^{(2)})$ in $\mbf{x}$, we define the \emph{Hadamard-Schur product} $t_1 \circ t_2$ as the superimposition of $t_1$ and $t_2$ according to their distinguished vertices. More precisely, $t_1 \circ t_2 = (T_3, \vin^{(3)}, \vout^{(3)})$, where $T_3$ is the $*$-graph obtained from the disjoint union of $T_1$ and $T_2$ by identifying the vertices $\vin^{(1)}$ and $\vin^{(2)}$ (which we then call $\vin^{(3)}$) and the vertices $\vout^{(1)}$ and $\vout^{(2)}$ (which we then call $\vout^{(3)}$). We extend this to a bilinear operation on the $*$-graph polynomials to obtain a commutative, associative product $\circ$ on $\C\salg{G}\langle\mbf{x}, \mbf{x}^*\rangle$. The evaluation \eqref{eq:2.6} then defines a morphism (of semigroups) for this product and the usual Hadamard-Schur product of matrices:
\[
(t_1 \circ t_2)(\mfk{A}_n) = t_1(\mfk{A}_n) \circ t_2(\mfk{A}_n), \qquad \forall t_1, t_2 \in \C\salg{G}\langle\mbf{x}, \mbf{x}^*\rangle.
\]
\end{eg}

\begin{eg}\label{eg.2.3.4}
The transpose operation on the $*$-graph monomials defines a linear involution on $\C\salg{G}\langle\mbf{x}, \mbf{x}^*\rangle$ that we again call the transpose and denote by $\cdot^\intercal$. Using the same notation for the matrices, we have the equality
\[
t^\intercal(\mfk{A}_n) = t(\mfk{A}_n)^\intercal, \qquad \forall t \in \C\salg{G}\langle\mbf{x}, \mbf{x}^*\rangle.
\]
\end{eg}

\begin{eg}\label{eg2.3.5}
For an indeterminate $x$, we write $\op{row}(t_x)$ for the $*$-graph monomial with two vertices ($v_1$ and $v_2$), a single edge (from $v_2$ to $v_1$ with label $x$), and input and output both equal (to $v_1$). Evaluating $\op{row}(t_x)$ in an $n \times n$ matrix $\mbf{A}_n$ outputs the diagonal matrix of row sums of $\mbf{A}_n$, i.e.,
\[
(\text{row}(t_x)(\mbf{A}_n))(i, j) = \indc{i = j} \sum_{k = 1}^n \mbf{A}_n(i, k).
\]
Reversing the direction of the lone edge in $\op{row}(t_x)$, we obtain a $*$-graph monomial $\op{col}(t_x)$ that evaluates to the diagonal matrix of column sums:
\[
(\text{col}(t_x)(\mbf{A}_n))(i, j) = \indc{i = j}\sum_{k = 1}^n \mbf{A}_n(k, j).
\] 
\end{eg}

\begin{center}
\begingroup%
  \makeatletter%
  \providecommand\color[2][]{%
    \errmessage{(Inkscape) Color is used for the text in Inkscape, but the package 'color.sty' is not loaded}%
    \renewcommand\color[2][]{}%
  }%
  \providecommand\transparent[1]{%
    \errmessage{(Inkscape) Transparency is used (non-zero) for the text in Inkscape, but the package 'transparent.sty' is not loaded}%
    \renewcommand\transparent[1]{}%
  }%
  \providecommand\rotatebox[2]{#2}%
  \ifx\svgwidth\undefined%
    \setlength{\unitlength}{468bp}%
    \ifx\svgscale\undefined%
      \relax%
    \else%
      \setlength{\unitlength}{\unitlength * \real{\svgscale}}%
    \fi%
  \else%
    \setlength{\unitlength}{\svgwidth}%
  \fi%
  \global\let\svgwidth\undefined%
  \global\let\svgscale\undefined%
  \makeatother%
  \begin{picture}(1,0.32307691)%
    \put(0,0){\includegraphics[width=\unitlength,page=1]{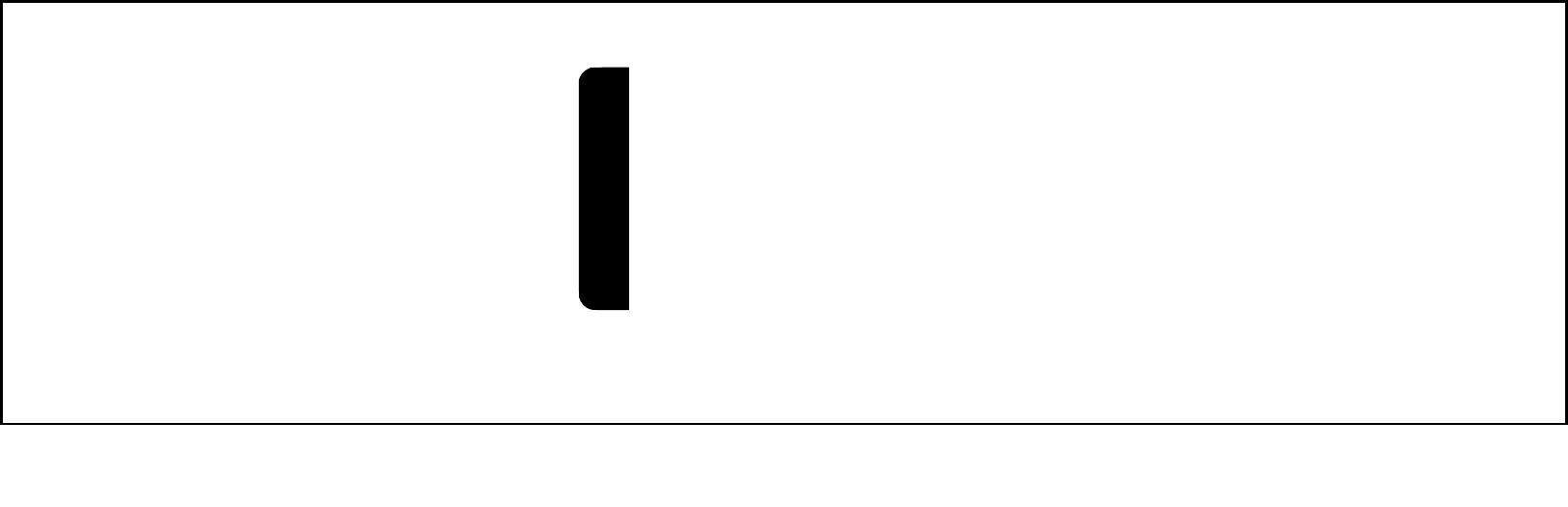}}%
    \put(-0.00141487,0.03834763){\color[rgb]{0,0,0}\makebox(0,0)[lt]{\begin{minipage}{0.99822195\unitlength}\raggedright Figure 8: Examples of $*$-graph monomials with linear algebraic structure.\end{minipage}}}%
    \put(0.12170717,1.59355766){\color[rgb]{0,0,0}\makebox(0,0)[lt]{\begin{minipage}{0.11106843\unitlength}\raggedright \end{minipage}}}%
    \put(0,0){\includegraphics[width=\unitlength,page=2]{fig8_la.pdf}}%
    \put(0.60579484,0.21391918){\color[rgb]{0,0,0}\rotatebox{-90}{\makebox(0,0)[lb]{\smash{in}}}}%
    \put(0.4899918,0.21587272){\color[rgb]{0,0,0}\makebox(0,0)[lb]{\smash{$x$}}}%
    \put(0.39216741,0.18225164){\color[rgb]{0,0,0}\rotatebox{90}{\makebox(0,0)[lb]{\smash{\textcolor{white}{out}}}}}%
    \put(0.1462418,0.081791){\color[rgb]{0,0,0}\makebox(0,0)[lb]{\smash{$t_x \circ t_{y^*}$}}}%
    \put(0.4899918,0.081791){\color[rgb]{0,0,0}\makebox(0,0)[lb]{\smash{$t_x^\intercal$}}}%
    \put(0,0){\includegraphics[width=\unitlength,page=3]{fig8_la.pdf}}%
    \put(0.28202207,0.21391908){\color[rgb]{0,0,0}\rotatebox{-90}{\makebox(0,0)[lb]{\smash{in}}}}%
    \put(0.06839459,0.18225164){\color[rgb]{0,0,0}\rotatebox{90}{\makebox(0,0)[lb]{\smash{\textcolor{white}{out}}}}}%
    \put(0.78726744,0.081791){\color[rgb]{0,0,0}\makebox(0,0)[lb]{\smash{$\op{row}(t_x)$}}}%
    \put(0,0){\includegraphics[width=\unitlength,page=4]{fig8_la.pdf}}%
    \put(0.92956522,0.21391918){\color[rgb]{0,0,0}\rotatebox{-90}{\makebox(0,0)[lb]{\smash{in}}}}%
    \put(0.71593784,0.18225174){\color[rgb]{0,0,0}\rotatebox{90}{\makebox(0,0)[lb]{\smash{\textcolor{white}{out}}}}}%
    \put(0,0){\includegraphics[width=\unitlength,page=5]{fig8_la.pdf}}%
    \put(0.83854954,0.2058567){\color[rgb]{0,0,0}\makebox(0,0)[lb]{\smash{$x$}}}%
    \put(0,0){\includegraphics[width=\unitlength,page=6]{fig8_la.pdf}}%
    \put(0.17228347,0.25593682){\color[rgb]{0,0,0}\makebox(0,0)[lb]{\smash{$x$}}}%
    \put(0.17228347,0.13721176){\color[rgb]{0,0,0}\makebox(0,0)[lb]{\smash{$y^*$}}}%
  \end{picture}%
\endgroup%

\end{center}

For a permutation $\sigma \in \mfk{S}_n$, we write
\[
\mbf{P}_\sigma(i, j) = \indc{\sigma(i) = j}
\]
for the corresponding $n \times n$ permutation matrix. The following result states that the graph operations commute with conjugation by the permutation matrices; the proof follows directly from the definitions.

\begin{prop}\label{prop2.3.6}
Let $\mfk{A}_n = (\mbf{A}_n^{(i)})_{i \in I}$ be a family of random $n \times n$ matrices. For any $*$-graph polynomial $t \in \C\salg{G}\langle\mbf{x}, \mbf{x}^*\rangle$ and permutation $\sigma \in \mfk{S}_n$, we have the equality
\[
t(\mbf{P}_\sigma \mfk{A}_n \mbf{P}_\sigma^*) = \mbf{P}_\sigma t(\mfk{A}_n) \mbf{P}_\sigma^*,
\]
where $\mbf{P}_\sigma \mfk{A}_n \mbf{P}_\sigma^*= (\mbf{P}_\sigma \mbf{A}_n^{(i)} \mbf{P}_\sigma^*)_{i \in I}$.
\end{prop}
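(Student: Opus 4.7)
The plan is to reduce the claim to $*$-graph monomials by linearity of the evaluation map, and then verify the identity directly by comparing matrix entries via a change of summation variable. Since both sides of the proposed equality are linear in $t$ (the evaluation map $\op{eval}_{\ssgp{x},\mfk{A}_n}$ is linear, and left/right multiplication by fixed matrices preserves linearity), it suffices to treat a single $*$-graph monomial $t = (V,E,\vin,\vout,\gamma,\varepsilon)$.

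First, I would record the elementary identity
\[
(\mbf{P}_\sigma \mbf{B} \mbf{P}_\sigma^*)(a,b) = \mbf{B}(\sigma(a),\sigma(b))
\]
for any $n\times n$ matrix $\mbf{B}$, which follows immediately from $(\mbf{P}_\sigma\mbf{B})(a,b) = \mbf{B}(\sigma(a),b)$ and $(\mbf{B}\mbf{P}_\sigma^*)(a,b) = \mbf{B}(a,\sigma(b))$. Because $\mbf{P}_\sigma$ is unitary, conjugation commutes with the $*$-operation, so for each edge $e$ we have $(\mbf{P}_\sigma\mbf{A}_n^{(\gamma(e))}\mbf{P}_\sigma^*)^{\varepsilon(e)} = \mbf{P}_\sigma(\mbf{A}_n^{(\gamma(e))})^{\varepsilon(e)}\mbf{P}_\sigma^*$, whose $(a,b)$ entry is $(\mbf{A}_n^{(\gamma(e))})^{\varepsilon(e)}(\sigma(a),\sigma(b))$.

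Next, I would apply formula \eqref{eq:2.5} to both sides. The left-hand side becomes
\[
t(\mbf{P}_\sigma\mfk{A}_n\mbf{P}_\sigma^*)(i,j) = \sum_{\substack{\phi: V\to[n] \\ \phi(\vout)=i,\ \phi(\vin)=j}} \prod_{e\in E}(\mbf{A}_n^{(\gamma(e))})^{\varepsilon(e)}\bigl(\sigma(\phi(\target(e))),\,\sigma(\phi(\source(e)))\bigr),
\]
while the right-hand side, using the elementary identity applied to $\mbf{B}=t(\mfk{A}_n)$ and then \eqref{eq:2.5}, becomes
\[
(\mbf{P}_\sigma t(\mfk{A}_n)\mbf{P}_\sigma^*)(i,j) = t(\mfk{A}_n)(\sigma(i),\sigma(j)) = \sum_{\substack{\psi:V\to[n] \\ \psi(\vout)=\sigma(i),\ \psi(\vin)=\sigma(j)}} \prod_{e\in E}(\mbf{A}_n^{(\gamma(e))})^{\varepsilon(e)}(\psi(e)).
\]

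Finally, I would perform the bijective change of variables $\psi := \sigma\circ\phi$ on the set of labelings $V\to[n]$. This identifies the boundary constraints ($\psi(\vout)=\sigma(i)$ iff $\phi(\vout)=i$, and similarly for $\vin$) and transforms each edge factor of the right-hand side into the corresponding edge factor of the left-hand side, yielding a termwise match. This proof is essentially pure bookkeeping, so no step is genuinely difficult; the only delicate point is correctly tracking the direction of the permutation action on indices under the $*$-operation, which is handled by the unitarity of $\mbf{P}_\sigma$.
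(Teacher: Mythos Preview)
Your proof is correct and is precisely the direct verification from the definitions that the paper alludes to (the paper does not write out any details beyond ``the proof follows directly from the definitions''). The reduction to $*$-graph monomials by linearity, the entrywise identity for conjugation by $\mbf{P}_\sigma$, and the bijective relabeling $\psi=\sigma\circ\phi$ are exactly the steps implicit in that remark.
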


Note that the trace of a graph of matrices $t(\mfk{A}_n)$ depends on $t=(T, \vin, \vout)$ only up to the $*$-graph $\Delta(t)=(\wtilde{V}, E, \gamma, \varepsilon)$ obtained from $T=(V, E, \gamma, \varepsilon)$ by identifying the input $\vin$ and the output $\vout$ and forgetting their distinguished roles. Indeed,
\begin{equation}\label{eq:2.7}
\begin{aligned}
\trace(t(\mfk{A}_n)) = \sum_{i = 1}^n t(\mfk{A}_n)(i, i) &= \sum_{i=1}^n \sum_{\substack{\phi: V \to [n] \text{ s.t.}\\ \phi(\vout)=\phi(\vin)=i}} \prod_{e\in E} (\mbf{A}_n^{(\gamma(e))})^{\varepsilon(e)}(\phi(e)) \\
&= \sum_{\phi: \wtilde{V} \to [n]} \prod_{e\in E} (\mbf{A}_n^{(\gamma(e))})^{\varepsilon(e)}(\phi(e)).
\end{aligned}
\end{equation}
We define the traffic distribution of the matrices accordingly.

Recall that a $*$-test graph in $\mbf{x}$ is a finite, connected $*$-graph in $\mbf{x}$. We write $\salg{T}\langle\mbf{x}, \mbf{x}^*\rangle$ for the set of $*$-test graphs in $\mbf{x}$. We further write $\C\salg{T}\langle\mbf{x}, \mbf{x}^*\rangle$ for the complex vector space of finite linear combinations in $\salg{T}\langle\mbf{x}, \mbf{x}^*\rangle$. The gluing operation $\Delta$ consisting of identifying the input and the output then extends to a linear map $\Delta: \C\salg{G}\langle\mbf{x}, \mbf{x}^*\rangle \to \C\salg{T}\langle\mbf{x}, \mbf{x}^*\rangle$.

For a $*$-test graph $T=(V, E, \gamma, \varepsilon) \in \salg{T}\langle\mbf{x}, \mbf{x}^*\rangle$, we define the random variable
\begin{equation}\label{eq:2.8}
\trace\big[T(\mfk{A}_n)\big] = \sum_{\phi: V \to [n]} \prod_{e \in E} (\mbf{A}_n^{(\gamma(e))})^{\varepsilon(e)}(\phi(e)).
\end{equation}
We emphasize that we do not define $T(\mfk{A}_n)$ itself: the identity \eqref{eq:2.7} explains the notation.

\begin{center}
\begingroup%
  \makeatletter%
  \providecommand\color[2][]{%
    \errmessage{(Inkscape) Color is used for the text in Inkscape, but the package 'color.sty' is not loaded}%
    \renewcommand\color[2][]{}%
  }%
  \providecommand\transparent[1]{%
    \errmessage{(Inkscape) Transparency is used (non-zero) for the text in Inkscape, but the package 'transparent.sty' is not loaded}%
    \renewcommand\transparent[1]{}%
  }%
  \providecommand\rotatebox[2]{#2}%
  \ifx\svgwidth\undefined%
    \setlength{\unitlength}{468bp}%
    \ifx\svgscale\undefined%
      \relax%
    \else%
      \setlength{\unitlength}{\unitlength * \real{\svgscale}}%
    \fi%
  \else%
    \setlength{\unitlength}{\svgwidth}%
  \fi%
  \global\let\svgwidth\undefined%
  \global\let\svgscale\undefined%
  \makeatother%
  \begin{picture}(1,0.35384615)%
    \put(0,0){\includegraphics[width=\unitlength,page=1]{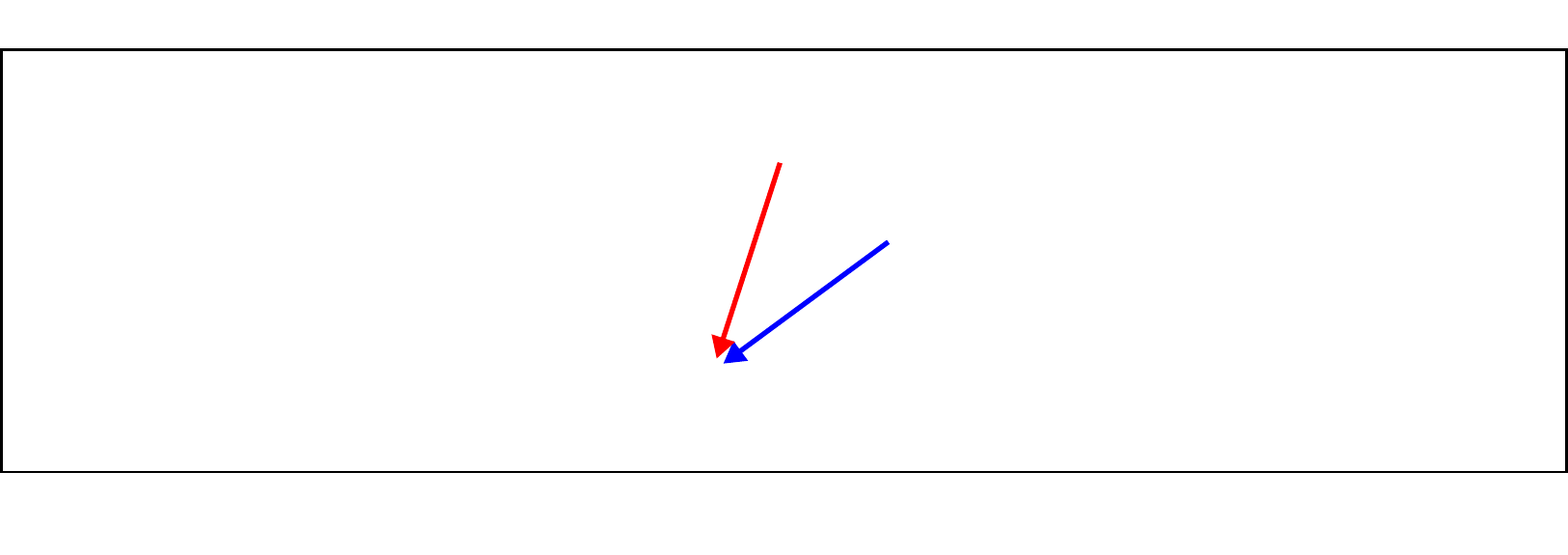}}%
    \put(-0.00141487,0.03834761){\color[rgb]{0,0,0}\makebox(0,0)[lt]{\begin{minipage}{0.99822194\unitlength}\raggedright Figure 9: Examples of $*$-test graphs.\end{minipage}}}%
    \put(0.12170716,1.59355762){\color[rgb]{0,0,0}\makebox(0,0)[lt]{\begin{minipage}{0.11106842\unitlength}\raggedright \end{minipage}}}%
    \put(0,0){\includegraphics[width=\unitlength,page=2]{fig9_stg.pdf}}%
    \put(0.71579922,0.26819654){\color[rgb]{0,0,0}\rotatebox{-5.5830001}{\makebox(0,0)[lt]{\begin{minipage}{0.06799102\unitlength}\raggedright  \end{minipage}}}}%
    \put(0,0){\includegraphics[width=\unitlength,page=3]{fig9_stg.pdf}}%
    \put(0.78598361,0.24217968){\color[rgb]{0,0,0}\makebox(0,0)[lb]{\smash{$y^*$}}}%
    \put(0.73149643,0.19730789){\color[rgb]{0,0,0}\makebox(0,0)[lb]{\smash{$y^*$}}}%
    \put(0.76755412,0.1231893){\color[rgb]{0,0,0}\makebox(0,0)[lb]{\smash{$y^*$}}}%
    \put(0.83886822,0.13480789){\color[rgb]{0,0,0}\makebox(0,0)[lb]{\smash{$y^*$}}}%
    \put(0.85569514,0.20532071){\color[rgb]{0,0,0}\makebox(0,0)[lb]{\smash{$y^*$}}}%
    \put(0,0){\includegraphics[width=\unitlength,page=4]{fig9_stg.pdf}}%
    \put(0.44992769,0.24391757){\color[rgb]{0,0,0}\makebox(0,0)[lb]{\smash{$x^*$}}}%
    \put(0.54928666,0.23590475){\color[rgb]{0,0,0}\makebox(0,0)[lb]{\smash{$x$}}}%
    \put(0.48678667,0.09447847){\color[rgb]{0,0,0}\makebox(0,0)[lb]{\smash{$x$}}}%
    \put(0.41467128,0.15257142){\color[rgb]{0,0,0}\makebox(0,0)[lb]{\smash{$x$}}}%
    \put(0.56780108,0.14602584){\color[rgb]{0,0,0}\makebox(0,0)[lb]{\smash{$y^*$}}}%
    \put(0.51883795,0.14295603){\color[rgb]{0,0,0}\makebox(0,0)[lb]{\smash{$z$}}}%
    \put(0.44992769,0.18929507){\color[rgb]{0,0,0}\makebox(0,0)[lb]{\smash{$y^*$}}}%
    \put(0,0){\includegraphics[width=\unitlength,page=5]{fig9_stg.pdf}}%
    \put(0.18888121,0.22705684){\color[rgb]{0,0,0}\makebox(0,0)[lb]{\smash{$y$}}}%
    \put(0.21034436,0.11741642){\color[rgb]{0,0,0}\makebox(0,0)[lb]{\smash{$x^*$}}}%
  \end{picture}%
\endgroup%

\end{center}

\begin{defn}[Traffic distribution of matrices]\label{defn2.3.7}
Let $\mfk{A}_n$ be a family of random $n \times n$ matrices in $(\matn(L^{\infty-}(\Omega, \salg{F}, \prob)), \E\frac{1}{n}\trace)$. We define the \emph{traffic distribution} of $\mfk{A}_n$ as the linear functional $\tau_{\mfk{A}_n} : \C\salg{T}\langle\mbf{x}, \mbf{x}^*\rangle \to \C$ determined by
\[
T \mapsto \E\bigg[\frac{1}{n}\trace\big[T(\mfk{A}_n)\big]\bigg], \qquad \forall T \in \salg{T}\langle\mbf{x}, \mbf{x}^*\rangle.
\]
We say that a sequence of families $(\mfk{A}_n)$ \emph{converges in traffic distribution} if the corresponding sequence of traffic distributions $(\tau_{\mfk{A}_n})$ converges pointwise, i.e.,
\[
\lim_{n \to \infty} \tau_{\mfk{A}_n}(T) \in \C, \qquad \forall T \in \C\salg{T}\langle\mbf{x}, \mbf{x}^*\rangle.
\]
\end{defn}

Note that the usual $*$-distribution of $\mfk{A}_n$ factors through the traffic distribution via the embedding $\eta_{\mbf{x}}$ and the gluing operation $\Delta$. We can formalize this with the commutative diagram
\begin{equation}\label{eq:2.9}
\begin{tikzcd}[row sep=2.5cm, column sep=1.5cm]
\Scale[1]{\displaystyle\C\langle\mbf{x},\mbf{x}^*\rangle} \arrow{r}{\Scale[1]{\nu_{\mfk{A}_n}}} \arrow{d}[swap]{\Scale[1]{\eta_{\mbf{x}}}}& \Scale[1]{\C} \\
\Scale[1]{\displaystyle\C\salg{G}\langle\mbf{x},\mbf{x}^*\rangle} \arrow{r}[swap]{\Scale[1]{\Delta}}& \Scale[1]{\C\salg{T}\langle\mbf{x},\mbf{x}^*\rangle} \arrow{u}[swap]{\Scale[1]{\tau_{\mfk{A}_n}}}
\end{tikzcd}
\end{equation}

\begin{center}
\begingroup%
  \makeatletter%
  \providecommand\color[2][]{%
    \errmessage{(Inkscape) Color is used for the text in Inkscape, but the package 'color.sty' is not loaded}%
    \renewcommand\color[2][]{}%
  }%
  \providecommand\transparent[1]{%
    \errmessage{(Inkscape) Transparency is used (non-zero) for the text in Inkscape, but the package 'transparent.sty' is not loaded}%
    \renewcommand\transparent[1]{}%
  }%
  \providecommand\rotatebox[2]{#2}%
  \ifx\svgwidth\undefined%
    \setlength{\unitlength}{468bp}%
    \ifx\svgscale\undefined%
      \relax%
    \else%
      \setlength{\unitlength}{\unitlength * \real{\svgscale}}%
    \fi%
  \else%
    \setlength{\unitlength}{\svgwidth}%
  \fi%
  \global\let\svgwidth\undefined%
  \global\let\svgscale\undefined%
  \makeatother%
  \begin{picture}(1,0.41080002)%
    \put(0,0){\includegraphics[width=\unitlength,page=1]{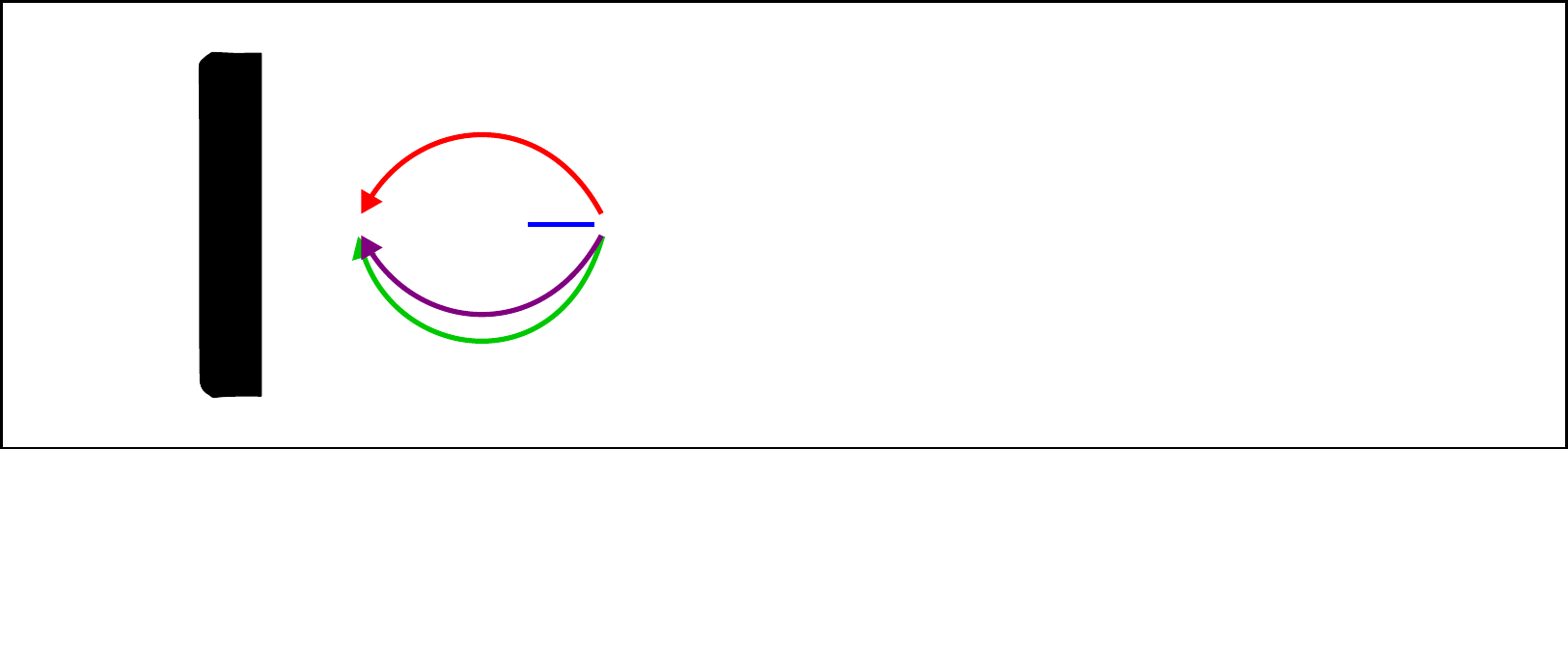}}%
    \put(-0.00141487,0.1105186){\color[rgb]{0,0,0}\makebox(0,0)[lt]{\begin{minipage}{0.99822195\unitlength}\raggedright Figure 10: An example of the trace of a graph of matrices. Here, we consider the trace of a Hadamard product $\circ_{j = 1}^k\, \mbf{A}_n^{(i(j))} = (\circ_{j = 1}^k\, t_{x_{i(j)}})(\mfk{A}_n)$ in the form \eqref{eq:2.8}, as recorded by the traffic distribution via \eqref{eq:2.9}. \end{minipage}}}%
    \put(0.12170717,1.66592649){\color[rgb]{0,0,0}\makebox(0,0)[lt]{\begin{minipage}{0.11106843\unitlength}\raggedright \end{minipage}}}%
    \put(0,0){\includegraphics[width=\unitlength,page=2]{fig10_trace.pdf}}%
    \put(0.45843015,0.27868509){\color[rgb]{0,0,0}\rotatebox{-90}{\makebox(0,0)[lb]{\smash{in}}}}%
    \put(0,0){\includegraphics[width=\unitlength,page=3]{fig10_trace.pdf}}%
    \put(0.08774821,0.26065635){\color[rgb]{0,0,0}\makebox(0,0)[lb]{\smash{$\trace($}}}%
    \put(0.64150279,0.23684118){\color[rgb]{0,0,0}\makebox(0,0)[lt]{\begin{minipage}{0.18632479\unitlength}\raggedright  \end{minipage}}}%
    \put(0.49479949,0.26132471){\color[rgb]{0,0,0}\makebox(0,0)[lb]{\smash{$(\mfk{A}_n)) = \trace\big[$}}}%
    \put(0.8581809,0.26132534){\color[rgb]{0,0,0}\makebox(0,0)[lb]{\smash{$(\mfk{A}_n)\big]$}}}%
    \put(0.154131,0.24755612){\color[rgb]{0,0,0}\rotatebox{90}{\makebox(0,0)[lb]{\smash{\textcolor{white}{out}}}}}%
    \put(0,0){\includegraphics[width=\unitlength,page=4]{fig10_trace.pdf}}%
    \put(0.72076103,0.17180221){\color[rgb]{0,0,0}\makebox(0,0)[lb]{\smash{$\cdots$}}}%
    \put(0.71395013,0.32217562){\color[rgb]{0,0,0}\makebox(0,0)[lb]{\smash{$x_{i(1)}$}}}%
    \put(0.65906231,0.26488395){\color[rgb]{0,0,0}\makebox(0,0)[lb]{\smash{$x_{i(2)}$}}}%
    \put(0.76883795,0.26488395){\color[rgb]{0,0,0}\makebox(0,0)[lb]{\smash{$x_{i(k)}$}}}%
    \put(0,0){\includegraphics[width=\unitlength,page=5]{fig10_trace.pdf}}%
    \put(0.28646616,0.35582946){\color[rgb]{0,0,0}\makebox(0,0)[lb]{\smash{$x_{i(1)}$}}}%
    \put(0.28646616,0.300541){\color[rgb]{0,0,0}\makebox(0,0)[lb]{\smash{$x_{i(2)}$}}}%
    \put(0.3024918,0.2578359){\color[rgb]{0,0,0}\makebox(0,0)[lb]{\smash{$\vdots$}}}%
    \put(0.28646616,0.17233587){\color[rgb]{0,0,0}\makebox(0,0)[lb]{\smash{$x_{i(k)}$}}}%
    \put(0.27604949,0.23082946){\color[rgb]{0,0,0}\makebox(0,0)[lb]{\smash{$x_{i(k-1)}$}}}%
  \end{picture}%
\endgroup%

\end{center}

Finally, Proposition \hyperref[prop2.3.6]{2.3.6} further implies that the traffic distribution is invariant under conjugation by the permutation matrices, i.e.,
\[
\mfk{A}_n \teq \mbf{P}_\sigma \mfk{A}_n \mbf{P}_\sigma^*, \qquad \forall\sigma\in\mfk{S}_n.
\]

\begin{rem}\label{rem2.3.8}
The construction in this section originates in the work \cite{MS12} of Mingo and Speicher, who were interested in bounding partition restricted sums of products of matrix entries. An earlier notion also appears as an example in the work \cite[Example 2.6]{Jon99} of Jones on planar algebras. We abstract the features in the matricial setting to give the formal definition of a traffic space in the next section.
\end{rem}

\subsection{Traffic spaces}\label{sec2.4}

We use the language of commutative diagrams to define a traffic space: the content of a diagram is its commutativity. We encourage the reader to follow through the axioms of a traffic space with the matrices in mind.

\begin{defn}[Traffic space]\label{defn2.4.1}
A \emph{traffic space} is a tracial $*$-probability space $(\salg{A}, \varphi)$ together with a compatible action of the operad of graph operations $\salg{G} = \bigcup_{K \geq 0} \salg{G}_K$ (see, e.g., \cite{May97}). By this, we mean that for any $g \in \salg{G}_K \subset \salg{G}$, there exists a multilinear map
\[
Z_g: \salg{A}^{\otimes K}  \to \salg{A}, \qquad a_1 \otimes \cdots \otimes a_K \mapsto Z_g(a_1 \otimes \cdots \otimes a_K),
\]
satisfying certain consistency properties. We note that the action of $\salg{G}$ defines a linear evaluation map $\op{eval}_{\ssgp{x}, \salg{A}}: \ssgp{x} \to \salg{A}$ for any set of indeterminates $\mbf{x}$. We make use of both the maps $Z_g$ and $\op{eval}_{\ssgp{x}, \salg{A}}$ in formalizing the following properties:
\begin{enumerate}[label=(\roman*)]
\item (Associativity) For any graph operations $g_1, \ldots, g_K$ ($g_i \in \salg{G}_{L_i}$), the action of the substituted graph operation $g(g_1, \ldots, g_K)$ factors through the action of the $g_i$, i.e.,
\[
\begin{tikzcd}[row sep=2cm, column sep=1cm]
\Scale[1]{\salg{A}^{\otimes \sum_{i=1}^K L_i}} \arrow{dr}[swap]{\Scale[1]{\otimes_{i=1}^K Z_{g_i}}} \arrow{rr}{\Scale[1]{Z_{g(g_1, \ldots, g_K)}}}& &\Scale[1]{\salg{A}} \\
&\Scale[1]{\salg{A}^{\otimes K}} \arrow{ur}[swap]{\Scale[1]{Z_g}}&
\end{tikzcd}
\]

\item (Compatibility) The usual evaluation map $\op{eval}_{\C\langle\mbf{x}, \mbf{x}^*\rangle, \salg{A}}: \C\langle\mbf{x}, \mbf{x}^*\rangle \times \salg{A}^I \to \salg{A}$ factors through the $*$-graph polynomials via the embedding $\eta_{\mbf{x}}: \C\langle\mbf{x}, \mbf{x}^*\rangle \to \ssgp{x}$, i.e.,
\[
\begin{tikzcd}[row sep=2cm, column sep=0cm]
\Scale[1]{\C\langle\mbf{x}, \mbf{x}^*\rangle \times \salg{A}^I} \arrow{dr}[swap]{\Scale[1]{\eta_{\mbf{x}} \times \op{id}}} \arrow{rr}{\Scale[1]{\op{eval}_{\C\langle\mbf{x}, \mbf{x}^*\rangle, \salg{A}}}}& &\Scale[1]{\salg{A}} \\
&\Scale[1]{\ssgp{x} \times \salg{A}^I} \arrow{ur}[swap]{\Scale[1]{\op{eval}_{\ssgp{x},\salg{A}}}}&
\end{tikzcd}
\]

\item (Equivariance) For any $\sigma \in \mfk{S}_K$,
\[
\begin{tikzcd}[row sep=1.5cm, column sep=.75cm]
\Scale[1]{\salg{A}^{\otimes K}} \arrow{dr}[swap]{\sigma^{-1}} \arrow{rr}{\Scale[1]{Z_g}}& &\Scale[1]{\salg{A}} \\
&\Scale[1]{\salg{A}^{\otimes K}} \arrow{ur}[swap]{Z_{g_\sigma}}&
\end{tikzcd}
\]

\item (Involutivity) The actions of the graph operations $g$ and $g^*$ are adjoint to each other with respect to the $*$-operation on $\salg{A}$, i.e.,
\[
\begin{tikzcd}[row sep=2cm, column sep=1.75cm]
\Scale[1]{\salg{A}^{\otimes K}} \arrow{r}{\Scale[1]{*^{\otimes K}}} \arrow{d}[swap]{\Scale[1]{Z_g}}& \Scale[1]{\salg{A}^{\otimes K}} \arrow{d}{Z_{g^*}} \\
\Scale[1]{\salg{A}} \arrow{r}[swap]{\Scale[1]{*}}& \Scale[1]{\salg{A}}
\end{tikzcd}
\]
In particular, in view of properties (i) and (ii), $\op{eval}_{\ssgp{x}, \salg{A}}: \ssgp{x} \to \salg{A}$ defines a morphism of $*$-algebras.

\item (Unity) Evaluating an edge $e$ in a graph operation $g$ on the identity $1_{\salg{A}}$ corresponds to the graph operation $\wtilde{g}$ obtained from $g$ by identifying the vertices $\source(e)$ and $\target(e)$ and deleting the edge $e$ from $g$, i.e.,
\[
Z_g(\cdot_1 \otimes \cdots \otimes \cdot_{K-1} \otimes 1_{\salg{A}}) = Z_{\wtilde{g}}(\cdot_1 \otimes \cdots \otimes \cdot_{K-1}).
\]
Note that this property follows from (i) and (ii). We emphasize it here for the convenience of the reader.
\end{enumerate}

We further require that the trace $\varphi$ factor through the $*$-test graphs via the gluing map $\Delta: \ssgp{x} \to \C\salg{T}\langle\mbf{x}, \mbf{x}^*\rangle$ as in the matricial setting: for any set of indeterminates $\mbf{x}$, there exists a map
\[
\tau_{\mbf{x}, \salg{A}}: \C\salg{T}\langle\mbf{x}, \mbf{x}^*\rangle \times \salg{A}^I \to \C, \qquad (T,\mbf{a}) \mapsto \tau\big[T(\mbf{a})\big]
\]
such that
\[
\begin{tikzcd}[row sep=3.5cm, column sep=2.625cm]
\Scale[1]{\ssgp{x} \times \salg{A}^I} \arrow{r}{\Scale[1]{\op{eval}_{\ssgp{x}, \salg{A}}}} \arrow{d}[swap]{\Scale[1]{\Delta \times \op{id}}}& \Scale[1]{\salg{A}} \arrow{d}{\varphi} \\
\Scale[1]{\C\salg{T}\langle\mbf{x}, \mbf{x}^*\rangle \times \salg{A}^I} \arrow{r}[swap]{\Scale[1]{\tau_{\mbf{x}, \salg{A}}}}& \Scale[1]{\C}
\end{tikzcd}
\]
The maps $\tau_{\mbf{x}, \salg{A}}$ implicitly define a function $\tau: (T, \mbf{a}) \mapsto \tau\big[T(\mbf{a})\big]$ we call the \emph{traffic state}. One visualizes a pair $(T, \mbf{a})$ as a test graph with edge labels in $\salg{A}$. Writing $\C\salg{T}\langle\salg{A}\rangle$ for the vector space of finite linear combinations of test graphs in $\salg{A}$, we can formally define the traffic state as a linear functional $\tau: \C\salg{T}\langle\salg{A}\rangle \to \C$.

We specify a traffic space by a triple $(\salg{A}, \varphi, \tau)$, though we often omit the trace $\varphi$. We require that the traffic state satisfy a technical positivity condition analogous to the positivity condition in a $*$-probability space. We state this condition separately in Definition \hyperref[defn2.4.3]{2.4.3}.
\end{defn}

To define the positivity of the traffic state, we need the notion of a test graph with an arbitrary number of distinguished vertices. As suggested by the above, we extend the notion of a test graph to a general labeling set $S$ with an involution $*: S \to S$. 

\begin{defn}[$n$-graph polynomial]\label{defn2.4.2}
An \emph{$n$-graph monomial} $t = (T, \mbf{v})$ in $S$ consists of a test graph $T = (V, E, \gamma)$ in $S$ (i.e., $\gamma: E \to S$) and an $n$-tuple $\mbf{v} = (v_1, \ldots, v_n) \in V^n$ of distinguished (not necessarily distinct) vertices. We write $\salg{G}^{(n)}\langle S\rangle$ for the set of $n$-graph monomials in $S$. We further write $\C\salg{G}^{(n)}\langle S\rangle$ for the complex vector space of finite linear combinations in $\salg{G}^{(n)}\langle S\rangle$, the elements of which we call the \emph{$n$-graph polynomials}. We define the \emph{adjoint} $t^* = (\overline{T}, \mbf{v})$ of $t$ as the $n$-graph monomial obtained from $t$ by conjugating the underlying test graph $T$ (as in Definition \hyperref[defn2.2.1]{2.2.1}). In particular, in contrast to the $*$-graph monomials, we do not permute the distinguished vertices when taking the adjoint of an $n$-graph monomial. We extend the adjoint operation to a conjugate linear involution on $\C\salg{G}^{(n)}\langle S\rangle$.
\end{defn}

\phantomsection\label{fig11_ngp}
\begin{center}
\begingroup%
  \makeatletter%
  \providecommand\color[2][]{%
    \errmessage{(Inkscape) Color is used for the text in Inkscape, but the package 'color.sty' is not loaded}%
    \renewcommand\color[2][]{}%
  }%
  \providecommand\transparent[1]{%
    \errmessage{(Inkscape) Transparency is used (non-zero) for the text in Inkscape, but the package 'transparent.sty' is not loaded}%
    \renewcommand\transparent[1]{}%
  }%
  \providecommand\rotatebox[2]{#2}%
  \ifx\svgwidth\undefined%
    \setlength{\unitlength}{468bp}%
    \ifx\svgscale\undefined%
      \relax%
    \else%
      \setlength{\unitlength}{\unitlength * \real{\svgscale}}%
    \fi%
  \else%
    \setlength{\unitlength}{\svgwidth}%
  \fi%
  \global\let\svgwidth\undefined%
  \global\let\svgscale\undefined%
  \makeatother%
  \begin{picture}(1,0.32307691)%
    \put(0,0){\includegraphics[width=\unitlength,page=1]{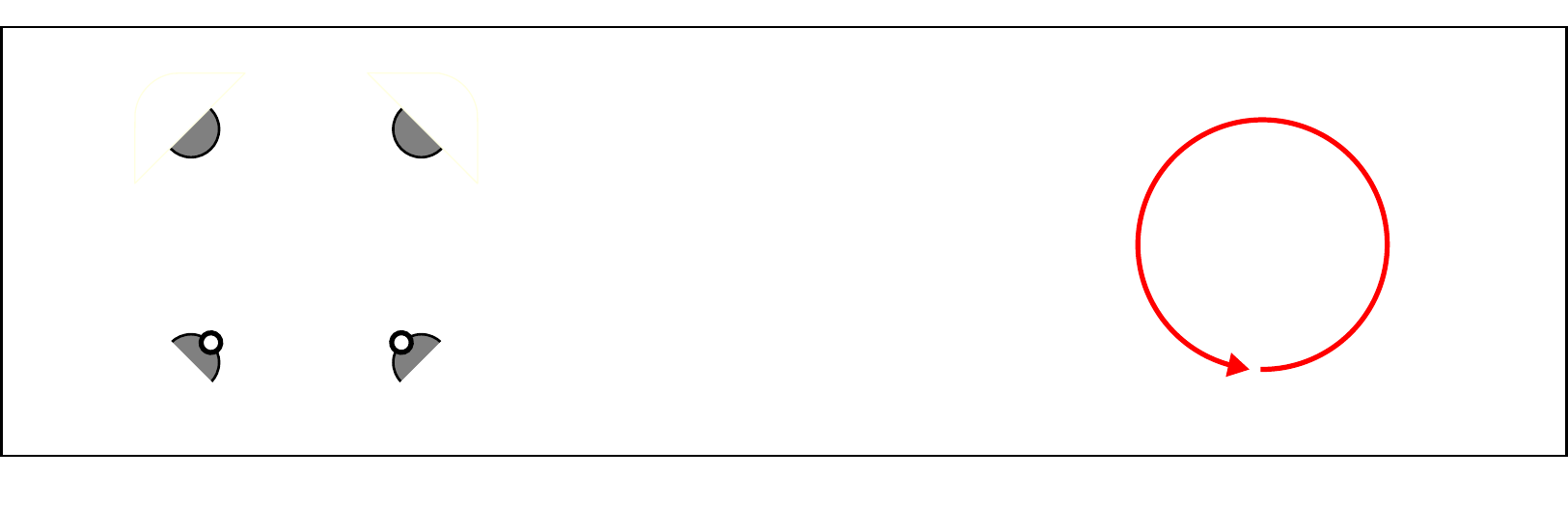}}%
    \put(-0.00141487,0.0176395){\color[rgb]{0,0,0}\makebox(0,0)[lt]{\begin{minipage}{0.99822195\unitlength}\raggedright Figure 11: Examples of $n$-graph monomials. In the last example, we have $n = 0$.\end{minipage}}}%
    \put(0.12170717,1.57325017){\color[rgb]{0,0,0}\makebox(0,0)[lt]{\begin{minipage}{0.11106843\unitlength}\raggedright \end{minipage}}}%
    \put(0,0){\includegraphics[width=\unitlength,page=2]{fig11_ngp.pdf}}%
    \put(0.10306872,0.24796153){\color[rgb]{0,0,0}\makebox(0,0)[lb]{\smash{$1$}}}%
    \put(0,0){\includegraphics[width=\unitlength,page=3]{fig11_ngp.pdf}}%
    \put(0.27444693,0.24796153){\color[rgb]{0,0,0}\makebox(0,0)[lb]{\smash{$2$}}}%
    \put(0,0){\includegraphics[width=\unitlength,page=4]{fig11_ngp.pdf}}%
    \put(0.10306872,0.06967607){\color[rgb]{0,0,0}\makebox(0,0)[lb]{\smash{$3$}}}%
    \put(0.27444693,0.06967607){\color[rgb]{0,0,0}\makebox(0,0)[lb]{\smash{$4$}}}%
    \put(0,0){\includegraphics[width=\unitlength,page=5]{fig11_ngp.pdf}}%
    \put(0.18710718,0.24195191){\color[rgb]{0,0,0}\makebox(0,0)[lb]{\smash{$x^*$}}}%
    \put(0.27164244,0.16182371){\color[rgb]{0,0,0}\makebox(0,0)[lb]{\smash{$z$}}}%
    \put(0.18710718,0.07368279){\color[rgb]{0,0,0}\makebox(0,0)[lb]{\smash{$x^*$}}}%
    \put(0.1061777,0.16182371){\color[rgb]{0,0,0}\makebox(0,0)[lb]{\smash{$z$}}}%
    \put(0.19832513,0.17531018){\color[rgb]{0,0,0}\makebox(0,0)[lb]{\smash{$y^*$}}}%
    \put(0,0){\includegraphics[width=\unitlength,page=6]{fig11_ngp.pdf}}%
    \put(0.7976841,0.2193807){\color[rgb]{0,0,0}\makebox(0,0)[lb]{\smash{$y$}}}%
    \put(0,0){\includegraphics[width=\unitlength,page=7]{fig11_ngp.pdf}}%
    \put(0.40705911,0.21628209){\color[rgb]{0,0,0}\makebox(0,0)[lb]{\smash{$1$}}}%
    \put(0.40705911,0.16897119){\color[rgb]{0,0,0}\makebox(0,0)[lb]{\smash{$2$}}}%
    \put(0.40705911,0.10326606){\color[rgb]{0,0,0}\makebox(0,0)[lb]{\smash{$3$}}}%
    \put(0.4851841,0.08570171){\color[rgb]{0,0,0}\makebox(0,0)[lb]{\smash{$x$}}}%
    \put(0.45914244,0.14920759){\color[rgb]{0,0,0}\makebox(0,0)[lb]{\smash{$y^*$}}}%
    \put(0.49640205,0.21330608){\color[rgb]{0,0,0}\makebox(0,0)[lb]{\smash{$x$}}}%
    \put(0.55309276,0.14774045){\color[rgb]{0,0,0}\makebox(0,0)[lb]{\smash{$x$}}}%
  \end{picture}%
\endgroup%

\end{center}

For $n \geq 1$ and $n$-graph monomials $t_1 = (T_1, \mbf{v})$ and $t_2 = (T_2, \mbf{v}_2)$ in $S$, we define
\[
\Delta_n(t_1, t_2) \in \salg{T}\langle S\rangle = \salg{G}^{(0)}\langle S\rangle
\]
as the test graph obtained from disjoint copies of $T_1$ and $T_2$ by identifying the distinguished vertices $\mbf{v}_1 = (v_1^{(1)}, \ldots, v_n^{(1)})$ and $\mbf{v}_2 = (v_1^{(2)}, \ldots, v_n^{(2)})$ coordinatewise. We extend this operation to a bilinear map
\[
\Delta_n: \C\salg{G}^{(n)}\langle S\rangle^{\otimes 2} \to \C\salg{T}\langle S\rangle,
\]
which allows us to formalize

\begin{defn}[Positivity]\label{defn2.4.3}
We say that a function $\tau: \C\salg{T}\langle S\rangle \to \C$ is \emph{positive} if
\[
\tau\big[\Delta_n(t^*,t)\big] \geq 0, \qquad \forall t \in \C\salg{G}^{(n)}\langle S\rangle.
\]
\end{defn}

\begin{center}
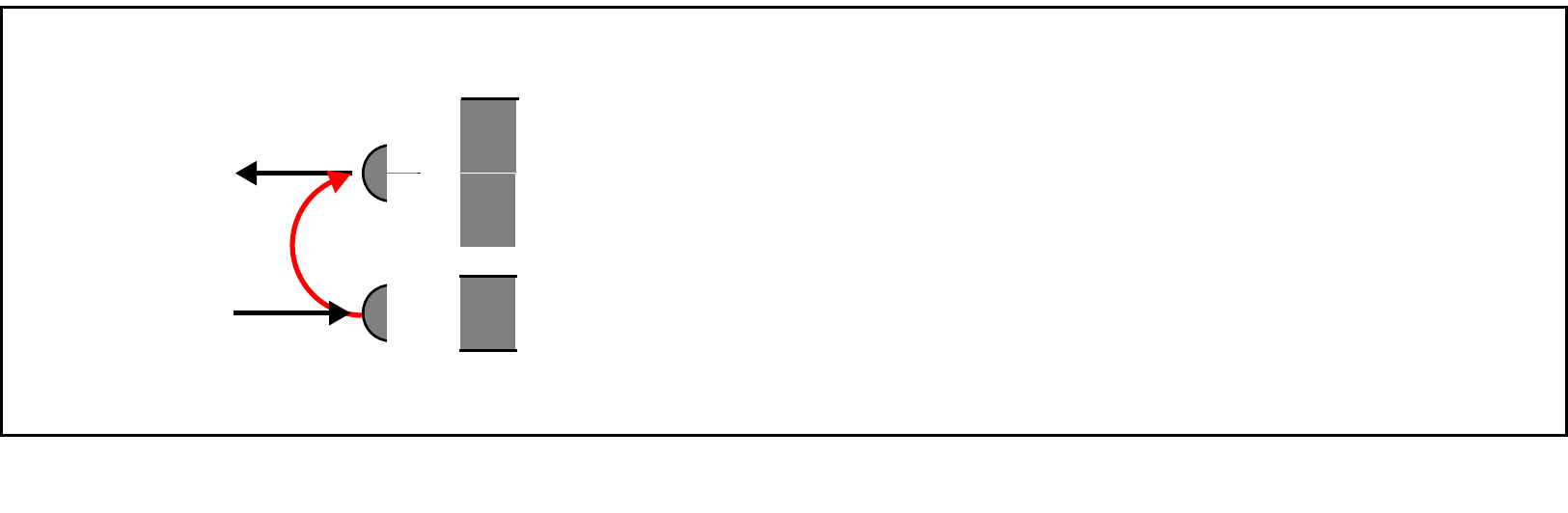
\end{center}

For $S = (\salg{A}, \varphi, \tau)$ a traffic space and $n=2$, the positivity condition is equivalent to the positivity of the trace $\varphi$. We call an element $a \in \salg{A}$ a \emph{traffic random variable} (or simply a \emph{traffic}). We define the traffic distribution of a family of traffics as the pushforward of the traffic state by said family (cf. Definitions \hyperref[defn2.1.5]{2.1.5} and \hyperref[defn2.1.6]{2.1.6}).

\begin{defn}[Traffic distribution]\label{defn2.4.4}
The \emph{traffic distribution} of a family $\mbf{a} = (a_i)_{i \in I}$ in $(\salg{A}, \tau)$ is the linear functional
\[
\tau_{\mbf{a}} : \C\salg{T}\langle\mbf{x}, \mbf{x}^*\rangle \to \C, \qquad T \mapsto \tau\big[T(\mbf{a})\big].
\]
Suppose that for each $n \in \N$ we have a family of traffics $\mbf{a}_n = (a_n^{(i)})_{i \in I}$ in a traffic space $(\salg{A}_n, \tau_n)$. We say that the $\mbf{a}_n$ \emph{converge in traffic distribution} to $\mbf{a}$ if the corresponding traffic distributions $\tau_{\mbf{a}_n}$ converge pointwise to $\tau_{\mbf{a}}$, i.e.,
\[
\lim_{n \to \infty} \tau_{\mbf{a}_n}(T) = \tau_{\mbf{a}}(T), \qquad \forall T \in \C\salg{T}\langle\mbf{x}, \mbf{x}^*\rangle.
\]
\end{defn}

Lemma 2.9 in \cite[v5]{Mal11} establishes the positivity of the traffic state for the random matrices $(\matn(L^{\infty-}(\Omega, \salg{F}, \prob)), \E\frac{1}{n}\trace)$ of Section \hyperref[sec2.3]{2.3}. The positivity condition is of course closed under convergence in traffic distribution; thus, for any traffic convergent sequence of random matrices $\mfk{A}_n \subset \matn(L^{\infty-}(\Omega, \salg{F}, \prob))$, the tracial $*$-probability space $(\ssgp{x}, \lim_{n\to \infty} \E\frac{1}{n}\trace)$ equipped with the traffic state $\tau = \lim_{n\to \infty} \tau_{\mfk{A}_n}$ is again a traffic space. We study the asymptotics of large random matrices within this framework.

\subsection{Traffic independence}\label{sec2.5}

We formulate traffic independence in terms of a combinatorial transform of the traffic state. The construction resembles that of the cumulants (see, e.g., Lecture 11 in \cite{NS06}).

\begin{defn}[Injective version of the traffic state]\label{defn2.5.1}
Let $T = (V, E, \gamma) \in \salg{T}\langle\salg{A}\rangle$ be a test graph in a traffic space $(\salg{A}, \tau)$. We write $\salg{P}(V)$ for the usual poset of partitions of $V$ with its M\"{o}bius function $\mu$. For a partition $\pi \in \salg{P}(V)$, we construct a new test graph $T^\pi$ from $T$ by identifying the vertices $V$ according to the block structure of $\pi$ so that $T^\pi = (V/\mathord{\sim_\pi}, E, \gamma)$.

We define the \emph{injective version of the traffic state} (or \emph{injective traffic state} for short) as the (dual) M\"{o}bius transform of $\tau$, i.e.,
\begin{equation}\label{eq:2.10}
\tau^0: \C\salg{T}\langle\salg{A}\rangle \to \C, \qquad T \mapsto \sum_{\pi \in \salg{P}(V)} \tau\big[T^\pi\big]\mu(0_V,\pi).
\end{equation}
We recover the traffic state $\tau$ via the M\"{o}bius inversion formula (see, e.g., Proposition 3.7.2 in \cite{Sta12}):
\begin{equation}\label{eq:2.11}
\tau\big[T\big] = \sum_{\pi \in \salg{P}(V)} \tau^0\big[T^\pi\big].
\end{equation}
The relations \eqref{eq:2.10} and \eqref{eq:2.11} allow us to work interchangeably between the traffic state and the injective traffic state as convenient.
\end{defn}

\begin{center}
\begingroup%
  \makeatletter%
  \providecommand\color[2][]{%
    \errmessage{(Inkscape) Color is used for the text in Inkscape, but the package 'color.sty' is not loaded}%
    \renewcommand\color[2][]{}%
  }%
  \providecommand\transparent[1]{%
    \errmessage{(Inkscape) Transparency is used (non-zero) for the text in Inkscape, but the package 'transparent.sty' is not loaded}%
    \renewcommand\transparent[1]{}%
  }%
  \providecommand\rotatebox[2]{#2}%
  \ifx\svgwidth\undefined%
    \setlength{\unitlength}{468bp}%
    \ifx\svgscale\undefined%
      \relax%
    \else%
      \setlength{\unitlength}{\unitlength * \real{\svgscale}}%
    \fi%
  \else%
    \setlength{\unitlength}{\svgwidth}%
  \fi%
  \global\let\svgwidth\undefined%
  \global\let\svgscale\undefined%
  \makeatother%
  \begin{picture}(1,0.34615385)%
    \put(-0.00141487,0.09283481){\color[rgb]{0,0,0}\makebox(0,0)[lt]{\begin{minipage}{0.99822194\unitlength}\raggedright Figure 13: An example of the relationship between the trace $\varphi$, the traffic state $\tau$, and the injective traffic state $\tau^0$. Here, the mixed moment $\varphi(a_1 a_2)$ splits into a sum of injective moments.\end{minipage}}}%
    \put(0.12170717,1.64804481){\color[rgb]{0,0,0}\makebox(0,0)[lt]{\begin{minipage}{0.11106842\unitlength}\raggedright \end{minipage}}}%
    \put(0,0){\includegraphics[width=\unitlength,page=1]{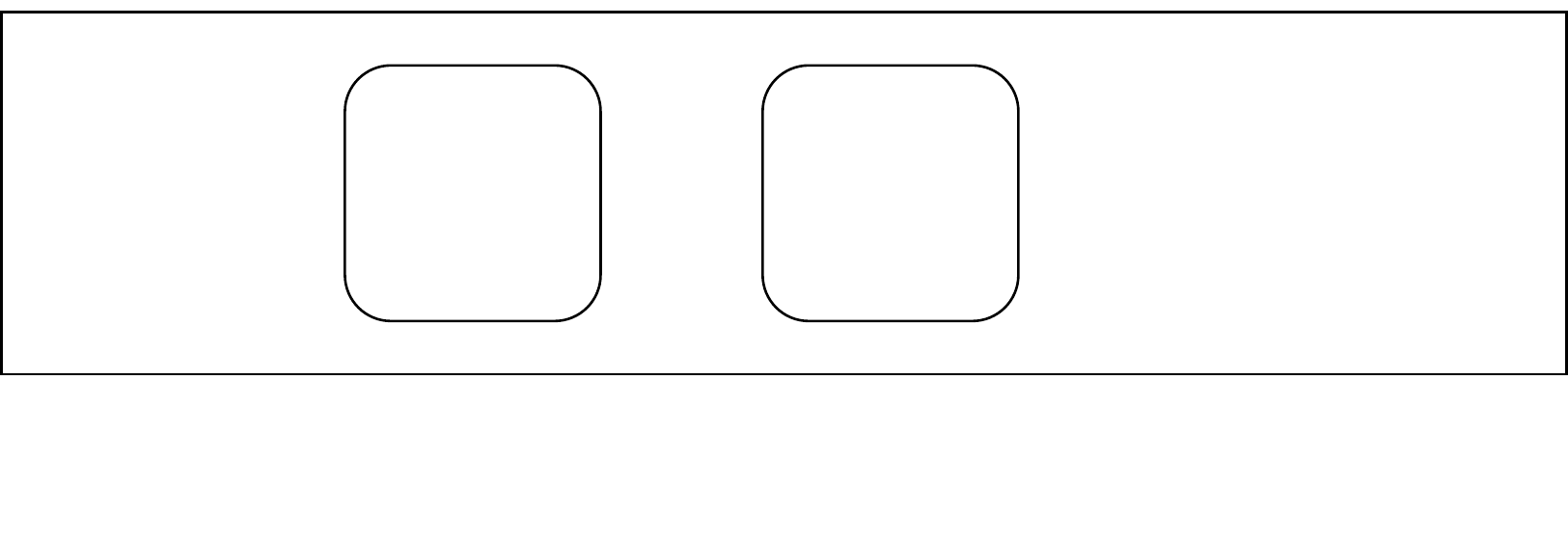}}%
    \put(0.0681168,0.21656828){\color[rgb]{0,0,0}\makebox(0,0)[lb]{\smash{$\varphi(a_1a_2) = \tau\big[$}}}%
    \put(0.39544051,0.21589919){\color[rgb]{0,0,0}\makebox(0,0)[lb]{\smash{$\big] = \tau^0\big[$}}}%
    \put(0.66226743,0.21589919){\color[rgb]{0,0,0}\makebox(0,0)[lb]{\smash{$\big] + \tau^0\big[$}}}%
    \put(0.92548859,0.21589919){\color[rgb]{0,0,0}\makebox(0,0)[lb]{\smash{$\big]$}}}%
    \put(0,0){\includegraphics[width=\unitlength,page=2]{fig13_partition.pdf}}%
    \put(0.82070531,0.18571662){\color[rgb]{0,0,0}\makebox(0,0)[lb]{\smash{$a_1$}}}%
    \put(0.82070494,0.24741972){\color[rgb]{0,0,0}\makebox(0,0)[lb]{\smash{$a_2$}}}%
    \put(0.55689885,0.25606992){\color[rgb]{0,0,0}\makebox(0,0)[lb]{\smash{$a_2$}}}%
    \put(0.55689885,0.17794492){\color[rgb]{0,0,0}\makebox(0,0)[lb]{\smash{$a_1$}}}%
    \put(0,0){\includegraphics[width=\unitlength,page=3]{fig13_partition.pdf}}%
    \put(0.29047257,0.25606992){\color[rgb]{0,0,0}\makebox(0,0)[lb]{\smash{$a_2$}}}%
    \put(0.29047257,0.17794492){\color[rgb]{0,0,0}\makebox(0,0)[lb]{\smash{$a_1$}}}%
  \end{picture}%
\endgroup%

\end{center}

The injective traffic state admits an explicit form without reference to the M\"{o}bius function in the matricial setting. Indeed, the (regular) traffic state $\tau_n$ of $(\matn(L^{\infty-}(\Omega, \salg{F}, \prob)), \E\frac{1}{n}\trace)$ follows from equations \eqref{eq:2.7} and \eqref{eq:2.8}:
\[
\tau_n: \C\salg{T}\langle\matn(L^{\infty-}(\Omega, \salg{F}, \prob))\rangle \to \C, \qquad \tau_n\big[T\big] = \E\bigg[\frac{1}{n}\trace\big[T\big]\bigg] = \E\bigg[\frac{1}{n}\sum_{\phi: V \to [n]} \prod_{e \in E} \gamma(e)(\phi(e))\bigg].
\]
If we define the random variable
\[
\trace^0\big[T\big] = \sum_{\substack{\phi: V \to [n] \text{ s.t.} \\ \phi \text{ is injective}}} \prod_{e \in E} \gamma(e)(\phi(e)) = \sum_{\phi: V \hookrightarrow [n]} \prod_{e \in E} \gamma(e)(\phi(e)),
\]
then we have the equality
\[
\trace\big[T\big] = \sum_{\pi \in \salg{P}(V)} \trace^0\big[T^\pi\big].
\]
The injective version of $\tau_n$ then follows from the relation \eqref{eq:2.11}:
\begin{equation}\label{eq:2.12}
\tau_n^0\big[T\big] = \E\bigg[\frac{1}{n}\trace^0\big[T\big]\bigg] = \E\bigg[\frac{1}{n}\sum_{\phi: V \hookrightarrow [n]} \prod_{e \in E} \gamma(e)(\phi(e))\bigg],
\end{equation}
hence the term \emph{injective} traffic state.

In particular, traffic independence specifies the behavior of the injective traffic state on test graphs of a particular form (cf. Definitions \hyperref[defn2.1.7]{2.1.7} and \hyperref[defn2.1.8]{2.1.8}).

\begin{defn}[Free product of test graphs]\label{defn2.5.2}
Let $\salg{S} = \bigcup_{i \in I} S_i$ be a union of pairwise disjoint labeling sets $S_i$. For a test graph $T \in \salg{T}\langle\salg{S}\rangle$, we define $\chi(T)$ as the simple bipartite graph obtained from $T$ as follows. For each $i \in I$, let $(T_{i, \ell})_{\ell = 1}^{k(i)}$ denote the connected components of the subgraph of $T$ spanned by the labels $S_i$ so that
\[
T_{i, \ell} \in \salg{T}\langle S_i\rangle, \qquad \forall \ell \in [k(i)].
\]
Note that $\sum_{i \in I} k(i) < \infty$ since $T$ is a finite graph. We write $(v_m)_{m=1}^n$ for the vertices of $T$ that belong to more than one of the components $(T_{i, \ell})_{i \in I, \ell \in [k(i)]}$. Together, the components $(T_{i, \ell})_{i \in I, \ell \in [k(i)]}$ and the vertices $(v_m)_{m=1}^n$ form the vertices of $\chi(T)$ with edges determined by inclusion, i.e.,
\[
v_m \sim_{\chi(T)} T_{i, \ell} \quad \Longleftrightarrow \quad v_m \in T_{i, \ell}.
\]
We say that $T$ is a \emph{free product} in $(S_i)_{i \in I}$ if $\chi(T)$ is a tree.
\end{defn}

\begin{center}
\begingroup%
  \makeatletter%
  \providecommand\color[2][]{%
    \errmessage{(Inkscape) Color is used for the text in Inkscape, but the package 'color.sty' is not loaded}%
    \renewcommand\color[2][]{}%
  }%
  \providecommand\transparent[1]{%
    \errmessage{(Inkscape) Transparency is used (non-zero) for the text in Inkscape, but the package 'transparent.sty' is not loaded}%
    \renewcommand\transparent[1]{}%
  }%
  \providecommand\rotatebox[2]{#2}%
  \ifx\svgwidth\undefined%
    \setlength{\unitlength}{468bp}%
    \ifx\svgscale\undefined%
      \relax%
    \else%
      \setlength{\unitlength}{\unitlength * \real{\svgscale}}%
    \fi%
  \else%
    \setlength{\unitlength}{\svgwidth}%
  \fi%
  \global\let\svgwidth\undefined%
  \global\let\svgscale\undefined%
  \makeatother%
  \begin{picture}(1,0.38461538)%
    \put(0,0){\includegraphics[width=\unitlength,page=1]{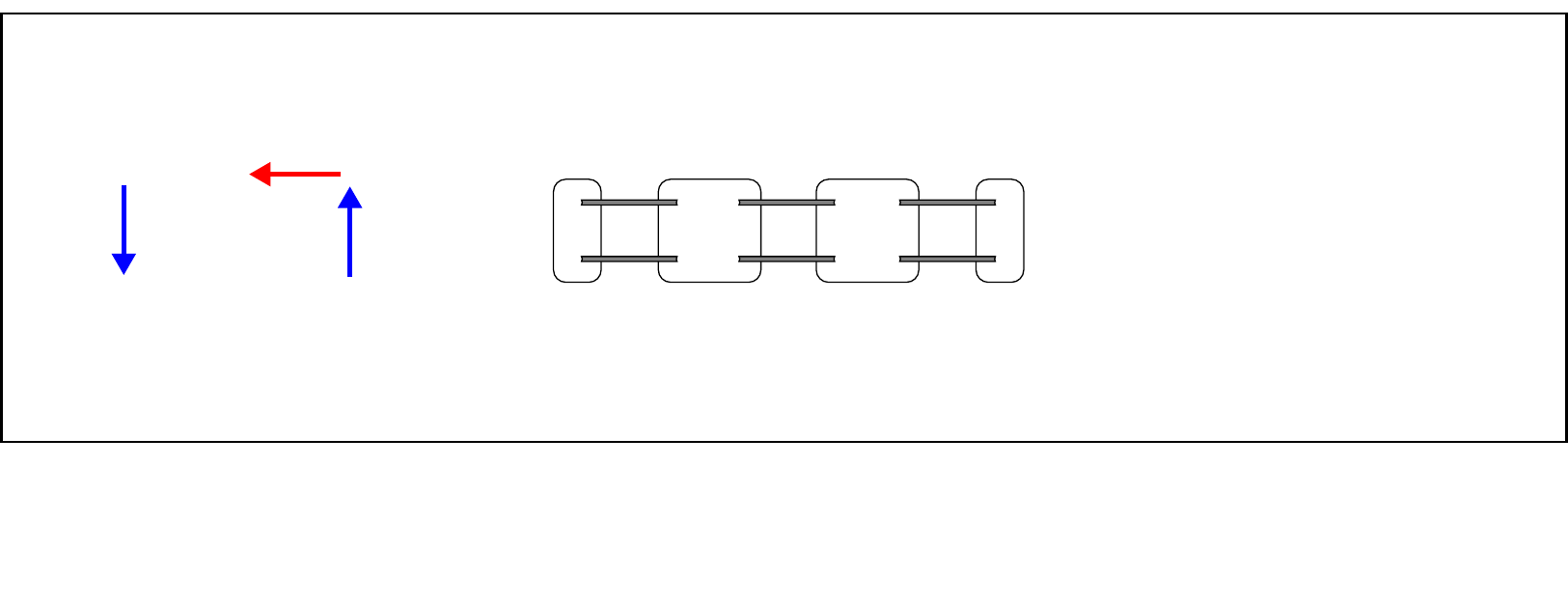}}%
    \put(-0.00141487,0.08819488){\color[rgb]{0,0,0}\makebox(0,0)[lt]{\begin{minipage}{0.99822194\unitlength}\raggedright Figure 14: An example of the construction of the simple graph $\chi(T)$ from a test graph $T$. Here, we color the vertices of $\chi(T)$ to clarify the construction. Note that $T$ is not a free product in this example.\end{minipage}}}%
    \put(0.12170717,1.64340494){\color[rgb]{0,0,0}\makebox(0,0)[lt]{\begin{minipage}{0.11106842\unitlength}\raggedright \end{minipage}}}%
    \put(0.28646616,0.23271809){\color[rgb]{0,0,0}\makebox(0,0)[lb]{\smash{$\stackrel{\chi}{\mapsto}$}}}%
    \put(0.69391808,0.23125095){\color[rgb]{0,0,0}\makebox(0,0)[lb]{\smash{$=$}}}%
    \put(0,0){\includegraphics[width=\unitlength,page=2]{fig14_free.pdf}}%
    \put(0.05609757,0.23590475){\color[rgb]{0,0,0}\makebox(0,0)[lb]{\smash{$z$}}}%
    \put(0,0){\includegraphics[width=\unitlength,page=3]{fig14_free.pdf}}%
    \put(0.23438282,0.23590475){\color[rgb]{0,0,0}\makebox(0,0)[lb]{\smash{$z^*$}}}%
    \put(0.11382513,0.28798809){\color[rgb]{0,0,0}\makebox(0,0)[lb]{\smash{$x$}}}%
    \put(0.18594051,0.28945533){\color[rgb]{0,0,0}\makebox(0,0)[lb]{\smash{$y$}}}%
    \put(0.11783154,0.23590475){\color[rgb]{0,0,0}\makebox(0,0)[lb]{\smash{$x$}}}%
    \put(0.17191806,0.237372){\color[rgb]{0,0,0}\makebox(0,0)[lb]{\smash{$y$}}}%
    \put(0.1018059,0.17180219){\color[rgb]{0,0,0}\makebox(0,0)[lb]{\smash{$x^*$}}}%
    \put(0.17392128,0.17326943){\color[rgb]{0,0,0}\makebox(0,0)[lb]{\smash{$y^*$}}}%
    \put(0,0){\includegraphics[width=\unitlength,page=4]{fig14_free.pdf}}%
  \end{picture}%
\endgroup%

\end{center}

\begin{defn}[Traffic independence]\label{defn2.5.3}
Let $(\salg{A}, \tau)$ be a traffic space. We say that subsets $(\mbf{a}_i : i \in I)$ of $\salg{A}$ (with union $\mbf{a} = \bigcup_{i \in I} \mbf{a}_i$) are \emph{traffic independent} if for any $T \in \salg{T}\langle\mbf{x}, \mbf{x}^*\rangle$,
\begin{equation}\label{eq:2.13}
\tau^0\big[T(\mbf{a})\big] =
\begin{cases}
\prod_{i \in I} \prod_{\ell=1}^{k(i)} \tau^0\big[T_{i, \ell}(\mbf{a}_i)\big] & \quad \text{if $T$ is a free product in $((\mbf{x}_i, \mbf{x}_i^*) : i \in I)$,}  \\
\hfil 0                                                    & \quad \text{otherwise.}
\end{cases}
\end{equation}

Suppose instead that for each $n \in \N$ we have subsets $(\mbf{a}_n^{(i)} : i \in I)$ of a traffic space $(\salg{A}_n, \tau_n)$ with union $\mbf{a}_n = \bigcup_{i \in I} \mbf{a}_n^{(i)} = (a_n^{(i,j)})_{i \in I, j \in J_i}$. We say that the $(\mbf{a}_n^{(i)} : i \in I)$ are \emph{asymptotically traffic independent} if the joint traffic distributions $\tau_{\mbf{a}_n}: \C\salg{T}\langle\mbf{x}, \mbf{x}^*\rangle \to \C$ converge pointwise to a limit $\tau$ such that for any $T \in \salg{T}\langle\mbf{x}, \mbf{x}^*\rangle$,
\begin{equation}\label{eq:2.14}
\tau^0\big[T\big] =
\begin{cases}
\prod_{i \in I} \prod_{\ell=1}^{k(i)} \tau^0\big[T_{i, \ell}\big] & \quad \text{if $T$ is a free product in $((\mbf{x}_i, \mbf{x}_i^*) : i \in I)$,}  \\
\hfil 0                                                    & \quad \text{otherwise.}
\end{cases}
\end{equation}
\end{defn}

\begin{rem}\label{rem2.5.4}
The relations \eqref{eq:2.10} and \eqref{eq:2.11} characterize the joint traffic distribution of traffic independent random variables in terms of their corresponding marginal traffic distributions. In the asymptotic case, we can realize the limit $\tau$ defined by \eqref{eq:2.14} as the traffic state of the traffic space $(\ssgp{x}, \lim_{n \to \infty} \tau_n)$, in which case we have that the $\mbf{a}_n$ converge in traffic distribution to
\[
t_{\mbf{x}} = \bigcup_{i \in I} t_{\mbf{x}_i} = (t_{x_{i ,j}})_{i \in I, j \in J_i} \subset (\ssgp{x}, \lim_{n \to \infty} \tau_n)
\]
with $(t_{\mbf{x}_i} : i \in I)$ traffic independent (recall the notation $t_{x_{i,j}}$ from \eqref{eq:2.3}).

We note that the traffic distribution of a subset $\mbf{a}_i$ specifies the traffic distribution of the generated traffic space $\mathcal{A}_i$ (i.e., $\mathcal{A}_i$ is the smallest unital $*$-subalgebra containing $\mbf{a}_i$ that is closed under the action of the graph operations). Naturally, the traffic independence of any family of subsets $(\mbf{a}_i : i \in I)$ then extends to the generated traffic spaces $(\salg{A}_i : i \in I)$ \cite[Proposition 3.4]{Mal11}. In the context of the previous paragraph, this implies that we actually have the traffic independence of the traffic spaces $(\C\salg{G}\langle\mbf{x}_i, \mbf{x}_i^*\rangle : i \in I)$ in $(\ssgp{x}, \lim_{n \to \infty} \tau_n)$.  
\end{rem}

For a family of traffic spaces $(\salg{A}_j, \tau_j)_{j \in J}$, we can find a traffic independent realization of the $(\salg{A}_j, \tau_j)_{j \in J}$ inside of a larger traffic space $(\salg{A}, \tau)$. Intuitively, we imagine $\salg{A}$ as a suitable set of graphs in $\bigcup_{j \in J} \salg{A}_j$, while equation \eqref{eq:2.13} completely determines our choice of $\tau = *_{j \in J}\, \tau_j$. The formal construction involves a number of technical details: most notably, in establishing the positivity of $\tau$. We refer the reader to \cite{CDM16} for the existence of such a (traffic) free product; however, one need not appeal to the free product construction in order to find instances of traffic independence. More concretely, Theorem 2.8 in \cite{Mal11}, recorded below, shows that traffic independence describes the asymptotic behavior of permutation invariant random matrices.

\begin{thm}[Criteria for asymptotic traffic independence]\label{thm2.5.5}
Let $I$ be an index set, and suppose that for each $n \in \N$ and $i \in I$ we have a family $\mfk{A}_n^{(i)} = (\mbf{A}_n^{(i,j)})_{j \in J_i}$ of random $n \times n$ matrices satisfying the following properties: 
\begin{enumerate}[label=(\roman*)]
\item (Independence) The families $(\mfk{A}_n^{(i)} : i \in I)$ are independent.
\item (Permutation invariance) The distribution of all but at most one of the families $\mfk{A}_n^{(i)}$ is invariant under conjugation by the permutation matrices, i.e.,
\[
\mbf{P}_\sigma \mfk{A}_n^{(i)} \mbf{P}_\sigma^* \deq \mfk{A}_n^{(i)}, \qquad \forall \sigma \in \mfk{S}_n.
\]
\item (Convergence in traffic distribution) For each $i \in I$, the sequence $(\mfk{A}_n^{(i)})$ converges in traffic distribution.
\item (Factorization) For each $i \in I$ and any finite collection of $*$-test graphs $T_1, \ldots, T_\ell \in \salg{T}\langle\mbf{x}_i, \mbf{x}_i^*\rangle$,
\begingroup
\setlength\abovedisplayskip{8.5pt}
\setlength\belowdisplayskip{8.5pt}
\begin{equation}\label{eq:2.15}
\lim_{n \to \infty} \E\bigg[\prod_{m=1}^\ell \frac{1}{n}\emph{tr}\big[T_m(\mfk{A}_n^{(i)})\big]\bigg] = \prod_{m=1}^\ell \bigg(\lim_{n \to \infty} \E\bigg[\frac{1}{n}\emph{tr}\big[T_m(\mfk{A}_n^{(i)})\big]\bigg]\bigg),
\end{equation}
\endgroup
where the limits on the right exist by (iii).
\end{enumerate}
Then the families $(\mfk{A}_n^{(i)} : i \in I)$ are asymptotically traffic independent and satisfy the joint factorization property
\[
\lim_{n \to \infty} \E\bigg[\prod_{m=1}^\ell \frac{1}{n}\emph{tr}\big[T_m(\mfk{A}_n)\big]\bigg] = \prod_{m=1}^\ell \bigg(\lim_{n \to \infty} \E\bigg[\frac{1}{n}\emph{tr}\big[T_m(\mfk{A}_n)\big]\bigg]\bigg)
\]
for any finite collection of $*$-test graphs $T_1, \ldots, T_\ell \in \salg{T}\langle\mbf{x}, \mbf{x}^*\rangle$, where $\mfk{A}_n = \bigcup_{i \in I} \mfk{A}_n^{(i)}$.
\end{thm}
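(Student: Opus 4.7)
My plan is to compute the injective traffic state $\tau_n^0\big[T(\mfk{A}_n)\big]$ in the explicit form \eqref{eq:2.12}, decompose by color, and use independence plus permutation invariance to reduce the sum over injections to a product of per-family injective traces; the limit is then determined by a power-counting argument organized around the bipartite graph $\chi(T)$ of Definition \hyperref[defn2.5.2]{2.5.2}. Fix $T = (V, E, \gamma, \varepsilon)$ with $\mbf{x} = \bigcup_i \mbf{x}_i$; let $E_i$ be the color-$i$ edges, $V_i \subseteq V$ the vertices incident to some $e \in E_i$, $v_i = |V_i|$, and $\wtilde{T}_i := \bigsqcup_\ell T_{i,\ell}$. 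Applying (i) factors the integrand in \eqref{eq:2.12} across $i$. Let $i_0$ denote the (possibly absent) non-invariant index; for every $i \ne i_0$, hypothesis (ii) makes the inner expectation constant on injections of $V_i$ (its only dependence is on the coincidence pattern of $\phi|_{V_i}$, which is trivial), with common value $\E[\op{tr}^0 \wtilde{T}_i]/(n)_{v_i}$. Extracting the $i_0$-piece and summing over extensions $\phi|_{V_{i_0}} \rightsquigarrow \phi$ yields
$$\tau_n^0\big[T(\mfk{A}_n)\big] = \frac{(n-v_{i_0})_{|V|-v_{i_0}}}{n}\,\E\big[\op{tr}^0 \wtilde{T}_{i_0}\big]\,\prod_{i\ne i_0}\frac{\E[\op{tr}^0 \wtilde{T}_i]}{(n)_{v_i}}.$$

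The key lemma is $\frac{1}{n^{k(i)}}\E[\op{tr}^0 \wtilde{T}_i] \to \prod_\ell \tau^0[T_{i,\ell}]$ for every $i$. Since $\op{tr}[\wtilde{T}_i] = \prod_\ell \op{tr}[T_{i,\ell}]$ for a disjoint union, (iv) already gives the $\op{tr}$-version of this; applying the M\"{o}bius identities \eqref{eq:2.10}--\eqref{eq:2.11} factor-by-factor transfers it to $\frac{1}{n^{k(i)}}\E\prod_\ell \op{tr}^0[T_{i,\ell}] \to \prod_\ell \tau^0[T_{i,\ell}]$. One then relates the two quantities via
$$\prod_\ell \op{tr}^0[T_{i,\ell}] = \op{tr}^0[\wtilde{T}_i] + \sum_{\pi > 0,\ \pi|_{V(T_{i,\ell})} = 0} \op{tr}^0[\wtilde{T}_i^\pi],$$
obtained by stratifying a tuple of $k(i)$ injections by the coincidence partition $\pi$ it induces on $V_i$. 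Any such valid $\pi > 0$ must merge at least two components of $\wtilde{T}_i$, so $c(\wtilde{T}_i^\pi) < k(i)$; the elementary bound $|\E\op{tr}^0[S]| = O(n^{c(S)})$ (a consequence of (iii)--(iv) applied component by component to $S$) makes the overlap contributions $o(n^{k(i)})$, giving the lemma.

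Combining with the previous display, $\tau_n^0\big[T(\mfk{A}_n)\big] \sim n^{\alpha} \prod_{i,\ell} \tau^0[T_{i,\ell}]$ with $\alpha = |V|-1+\sum_i k(i)-\sum_i v_i$. Setting $V^* = \{v \in V : d(v) \ge 2\}$ where $d(v)$ is the number of distinct colors incident to $v$, a direct count gives $V(\chi(T)) = \sum_i k(i)+|V^*|$ and $E(\chi(T)) = \sum_{v \in V^*} d(v)$, while $\sum_i v_i = |V\setminus V^*|+\sum_{v\in V^*}d(v)$ and $|V| = |V^*|+|V\setminus V^*|$. Substituting simplifies $\alpha$ to $V(\chi(T))-E(\chi(T))-1 = -r(\chi(T))$, where $T$ connected forces $\chi(T)$ connected (a path in $T$ lifts to a walk in $\chi(T)$) and $r$ denotes the first Betti number. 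Hence $\alpha \le 0$ with equality iff $\chi(T)$ is a tree, which establishes \eqref{eq:2.14}: the limit is $\prod_{i,\ell}\tau^0[T_{i,\ell}]$ exactly when $T$ is a free product, and $0$ otherwise. The joint factorization property follows at once by running the same analysis with $T_1 \sqcup \cdots \sqcup T_\ell$ in place of $T$, using $\op{tr}[T_1 \sqcup \cdots \sqcup T_\ell] = \prod_m \op{tr}[T_m]$.

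The main technical obstacle is the bookkeeping in the key lemma: one must simultaneously transfer factorization across the M\"{o}bius transform \emph{and} control the overlap terms $\op{tr}^0[\wtilde{T}_i^\pi]$ at strictly lower order than $n^{k(i)}$. Intuitively each nontrivial cross-component identification costs a factor of $n$, but making this rigorous requires the uniform polynomial bound $|\E\op{tr}^0[S]| = O(n^{c(S)})$ together with a careful accounting of how the valid partition structure forces strict reductions in the component count.
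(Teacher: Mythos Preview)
The paper does not prove this theorem: it is quoted verbatim as Theorem~2.8 of \cite{Mal11} (``recorded below'') and used as a black box throughout. So there is no in-paper proof to compare against.

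That said, your argument is essentially the standard proof from \cite{Mal11}: factor the integrand in the injective expansion \eqref{eq:2.12} by color using (i), replace each invariant-color piece by its average value $\E[\op{tr}^0\wtilde{T}_i]/(n)_{v_i}$ using (ii), and then perform the Euler-characteristic count on $\chi(T)$. Your identification $\alpha = |V(\chi(T))| - |E(\chi(T))| - 1 = -r(\chi(T))$ is exactly the mechanism that singles out free products. The key lemma is also handled correctly: the transfer from (iv) to the injective statement via \eqref{eq:2.10}--\eqref{eq:2.11} is a finite triangular inversion, and your bound $|\E\op{tr}^0[S]| = O(n^{c(S)})$ does follow from (iii)--(iv) after writing $\op{tr}^0[S]$ as a M\"obius sum of products of connected traces.

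Two small points worth tightening. First, your asymptotic $\tau_n^0[T] \sim n^{\alpha}\prod_{i,\ell}\tau^0[T_{i,\ell}]$ should be phrased as $\tau_n^0[T] = n^{\alpha}\bigl(\prod_{i,\ell}\tau^0[T_{i,\ell}] + o(1)\bigr)$ so that the conclusion remains valid when the product vanishes; your derivation already gives this stronger form. Second, the joint factorization is not literally ``the same analysis with $T_1\sqcup\cdots\sqcup T_\ell$,'' since your displayed identity is written for connected $T$; you need to rerun the argument with the normalization $n^{-\ell}$ and the disconnected analogue of $\chi$, showing that any cross-$T_m$ merger strictly increases the Betti number. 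This is routine but deserves a sentence.
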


The assumptions of Theorem \hyperref[thm2.5.5]{2.5.5} turn out to be surprisingly mild in practice and hold for many classical random matrix ensembles: for example, the Wigner matrices, Haar distributed unitary matrices, and uniformly distributed permutation matrices \cite{Mal11}. Of course, these ensembles are already well-studied within the context of free probability, but Theorem \hyperref[thm2.5.5]{2.5.5} also applies to random matrix ensembles traditionally outside of the domain of free probability: for example, the heavy Wigner matrices \cite{Mal17}. In Section \hyperref[sec4]{4}, we further show how the random band matrices fit neatly into the traffic probability framework.

We conclude with a central limit theorem for traffic independence. The version stated below is contained in the more general Theorem 8.18 of \cite{Mal11} and interpolates between the classical CLT and the free CLT (cf. Theorem \hyperref[thm2.1.9]{2.1.9}).

\begin{thm}[Traffic CLT]\label{thm2.5.6}
Let $(a_n)$ be a sequence of identically distributed self-adjoint traffics in a traffic space $(\salg{A}, \varphi, \tau)$. Assume that the $a_n$ are centered with unit variance, i.e. $\varphi(a_n) = 0$ and $\varphi(a_n^2) = 1$, and write $s_n = \frac{1}{\sqrt{n}} \sum_{j=1}^n a_j$ for the normalized sum. We split the variance of $a_n$ as
\[
1 = \varphi(a_n^2) = \tau\big[T_1\big] = \tau^0\big[T_1\big] + \tau^0\big[T_2\big] = \alpha + (1-\alpha),
\]
where
\begin{center}
\begingroup%
  \makeatletter%
  \providecommand\color[2][]{%
    \errmessage{(Inkscape) Color is used for the text in Inkscape, but the package 'color.sty' is not loaded}%
    \renewcommand\color[2][]{}%
  }%
  \providecommand\transparent[1]{%
    \errmessage{(Inkscape) Transparency is used (non-zero) for the text in Inkscape, but the package 'transparent.sty' is not loaded}%
    \renewcommand\transparent[1]{}%
  }%
  \providecommand\rotatebox[2]{#2}%
  \ifx\svgwidth\undefined%
    \setlength{\unitlength}{468bp}%
    \ifx\svgscale\undefined%
      \relax%
    \else%
      \setlength{\unitlength}{\unitlength * \real{\svgscale}}%
    \fi%
  \else%
    \setlength{\unitlength}{\svgwidth}%
  \fi%
  \global\let\svgwidth\undefined%
  \global\let\svgscale\undefined%
  \makeatother%
  \begin{picture}(1,0.16470781)%
    \put(0,0){\includegraphics[width=\unitlength,page=1]{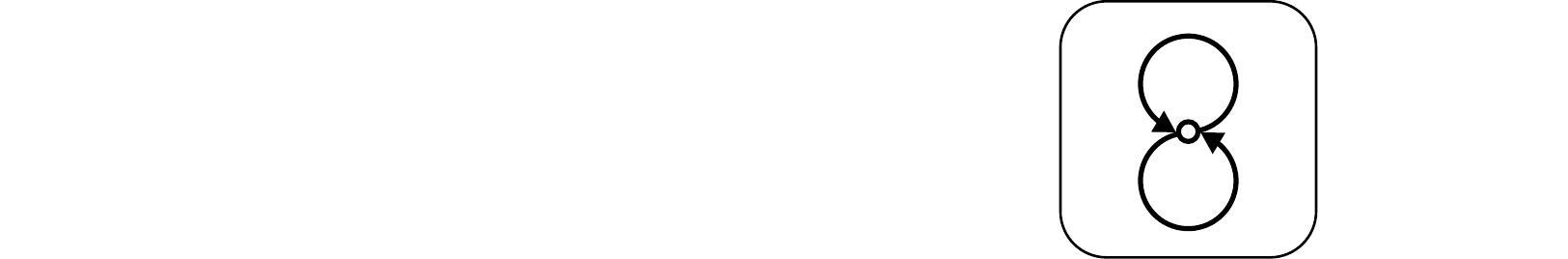}}%
    \put(-0.36386975,1.50609133){\color[rgb]{0,0,0}\makebox(0,0)[lt]{\begin{minipage}{0.11106842\unitlength}\raggedright \end{minipage}}}%
    \put(0,0){\includegraphics[width=\unitlength,page=2]{fig15_clt.pdf}}%
    \put(0.74698736,0.04328332){\color[rgb]{0,0,0}\makebox(0,0)[lb]{\smash{$a_n$}}}%
    \put(0.74698705,0.10498642){\color[rgb]{0,0,0}\makebox(0,0)[lb]{\smash{$a_n$}}}%
    \put(0.29247577,0.11555844){\color[rgb]{0,0,0}\makebox(0,0)[lb]{\smash{$a_n$}}}%
    \put(0.29247577,0.03743344){\color[rgb]{0,0,0}\makebox(0,0)[lb]{\smash{$a_n$}}}%
    \put(0.15826103,0.07605669){\color[rgb]{0,0,0}\makebox(0,0)[lb]{\smash{$T_1 = $}}}%
    \put(0.61258795,0.07605701){\color[rgb]{0,0,0}\makebox(0,0)[lb]{\smash{$T_2 = $}}}%
    \put(0.47773979,0.07538802){\color[rgb]{0,0,0}\makebox(0,0)[lb]{\smash{and}}}%
    \put(0.85239747,0.07038783){\color[rgb]{0,0,0}\makebox(0,0)[lb]{\smash{.}}}%
  \end{picture}%
\endgroup%

\end{center}
If the $a_n$ are traffic independent, then $(s_n)$ converges in distribution to the free convolution $\mu_\alpha = \wsc(0, \alpha) \boxplus \gn(0, 1-\alpha)$, i.e.,
\[
\lim_{n \to \infty} \varphi(s_n^m) = \int_{\R} t^m\, \mu_{\alpha}(dt), \qquad \forall m \in \N.
\]
\end{thm}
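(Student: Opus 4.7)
The plan is to prove convergence in moments using the combinatorial machinery of the injective traffic state $\tau^0$ developed in Section \hyperref[sec2.5]{2.5}. Let $C_m = \Delta(\eta_{\mathbf{x}}(x^m))$ denote the $m$-cycle test graph (with $m$ vertices and $m$ edges, labeled cyclically by the argument). Using the diagram \eqref{eq:2.9} and multilinearity in the edge labels,
\[
\varphi(s_n^m) = \frac{1}{n^{m/2}} \sum_{j_1,\ldots,j_m \in [n]} \tau\bigl[C_m(a_{j_1},\ldots,a_{j_m})\bigr] = \frac{1}{n^{m/2}} \sum_{j_1,\ldots,j_m} \sum_{\pi \in \salg{P}(V(C_m))} \tau^0\bigl[C_m^\pi(a_{j_1},\ldots,a_{j_m})\bigr],
\]
where the second equality is \eqref{eq:2.11}. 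I then group the tuples by their equality-pattern partition $\sigma \in \salg{P}([m])$; by the identical distribution of the $a_j$ and traffic independence across distinct indices, the inner sum depends only on $\sigma$ (and $\pi$), and the number of realizing tuples is $n(n-1)\cdots(n-|\sigma|+1) = n^{|\sigma|} + O(n^{|\sigma|-1})$.

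Next comes a two-stage reduction. First, the centering assumption $\varphi(a_n) = 0$ eliminates any $\sigma$ with a singleton block: such a block corresponds to an edge of $C_m$ whose label $a_{j_k}$ appears nowhere else, and the traffic-independence factorization of \eqref{eq:2.13} isolates it as a color-connected component whose contribution, after summing over the compatible $\pi$, telescopes via \eqref{eq:2.11} to a factor proportional to $\varphi(a_{j_k}) = 0$. Second, against the $n^{-m/2}$ normalization, the count $n^{|\sigma|}$ demands $|\sigma| \geq m/2$ for survival; combined with the first reduction, only pair partitions $\sigma$ contribute to the limit (so in particular $m$ must be even and $|\sigma| = m/2$).

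For each pair partition $\sigma$, the rule \eqref{eq:2.13} forces $\tau^0[C_m^\pi]$ to vanish unless $C_m^\pi$ is a free product in the $\sigma$-coloring, in which case $\tau^0$ factorizes as a product over the $m/2$ pairs. Each pair contributes either $\tau^0[T_1] = \alpha$ or $\tau^0[T_2] = 1-\alpha$, depending on how $\pi$ merges the endpoints of the two matching edges. Carrying out the enumeration of admissible $(\sigma,\pi)$ yields a weighted sum over non-crossing pair partitions of $[m]$ decorated by $\{T_1,T_2\}$-labels; the hard step is verifying that this weighted sum is exactly the $m$-th moment of $\wsc(0,\alpha) \boxplus \gn(0,1-\alpha)$. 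One can establish this either by a bijective argument between free-product vertex partitions of $C_m^\pi$ and non-crossing pair partitions with the prescribed weights (in the spirit of the Bo\.zejko--Speicher machinery alluded to in the introduction), or indirectly by matching the limiting $R$-transform; in either approach, the coupling between $\wsc(0,\alpha)$ and $\gn(0,1-\alpha)$ arises from the free-product structure of the injective traffic state under traffic independence.
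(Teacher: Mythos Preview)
The paper does not prove Theorem \hyperref[thm2.5.6]{2.5.6}; it is quoted from \cite{Mal11} (see the sentence immediately preceding the statement: ``The version stated below is contained in the more general Theorem 8.18 of \cite{Mal11}''). So there is no in-paper proof to compare against, and your sketch should be assessed on its own merits.

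Your outline follows the natural moment-method route and the first reductions are sound. Two points deserve more care. First, the singleton elimination: you assert that the singleton factor ``telescopes via \eqref{eq:2.11} to a factor proportional to $\varphi(a_{j_k})=0$'', but $\varphi(a)=0$ only gives $\tau^0$ of the one-vertex loop equal to zero; it says nothing about $\tau^0$ of the single non-loop edge (which equals $\tau[\text{edge}]-\tau[\text{loop}]=\tau[\text{edge}]$, a quantity not controlled by the hypotheses). The argument actually works, but for a structural reason you do not mention: for a quotient $C_m^\pi$ to satisfy the free-product tree condition \eqref{eq:2.13} with the singleton edge appearing as a \emph{non-loop} component, both of its endpoints must be cut vertices, and the cyclic geometry of $C_m$ then forces a cycle in $\chi(C_m^\pi)$. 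Hence only the loop shape survives for the singleton component, and that factor vanishes by centering.

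Second, and more seriously, your claim that ``each pair contributes either $\tau^0[T_1]=\alpha$ or $\tau^0[T_2]=1-\alpha$'' is an oversimplification. For a pair $\{p,q\}$ in $\sigma$, the two same-colored edges in $C_m^\pi$ can form several shapes besides $T_1$ and $T_2$: a two-edge path on three vertices, a loop plus a pendant edge on two vertices, or two disjoint single edges (each its own connected component). The hypotheses do not determine $\tau^0$ on these shapes, so one cannot simply read off a product of $\alpha$'s and $(1-\alpha)$'s term by term. The correct mechanism is that, after summing over \emph{all} $\pi$, one recovers $\tau[C_m(\sigma)]=\varphi(a_{j_1}\cdots a_{j_m})$; traffic independence then determines this mixed moment, and the dependence on the unknown $\tau^0$-values of the extra shapes cancels in the full sum. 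Making this cancellation explicit (or bypassing it via the combinatorics of \cite{Mal11}) is precisely the ``hard step'' you flag, and without it the argument is incomplete rather than merely sketchy.
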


We note that \eqref{eq:2.10} and the positivity of the traffic state imply that both
\begin{align*}
\alpha = \tau^0\big[T_1\big] &= \tau\big[T_1\big] - \tau\big[T_2\big] \\
&= \varphi\big((a_n - \Delta(a_n))^2\big) \\
&= \varphi\big((a_n - \Delta(a_n))^*(a_n - \Delta(a_n))\big) \geq 0
\end{align*}
and
\[
1 - \alpha = \tau^0\big[T_2\big] = \tau\big[T_2\big] \geq 0.
\]

\begin{rem}\label{rem2.5.7}
Section 7 in \cite{Mal11} shows how one can realize the traffic CLT for the values $\alpha \in \{0, 1\}$, recovering the usual CLTs. We show how one can realize the traffic CLT for the remaining values $\alpha \in (0, 1)$ in the next section.
\end{rem}

\section{Wigner matrices}\label{sec3}

We now proceed to the details of our motivating discussion on the Wigner matrices. We restrict ourselves to finite-moment Wigner matrices $\salg{X}_n = (\mbf{X}_n^{(i)})_{i \in I}$ with a strong uniform control on the moments in a slight generalization of Definition \hyperref[defn1.1]{1.1}, namely,
\begin{equation}\label{eq:3.1}
\sup_{n \in \N} \sup_{i \in I_0} \sup_{1 \leq j \leq k \leq n} \E|\mbf{X}_n^{(i)}(j, k)|^\ell \leq m_\ell^{(I_0)} < \infty, \qquad \forall I_0 \subset I: \#(I_0) < \infty,
\end{equation}
where the entries $(\mbf{X}_n^{(i)}(j, k) : 1 \leq j \leq k \leq n,\, i \in I)$ are independent with parameter
\begin{equation*}
\E \mbf{X}_n^{(i)}(j, k)^2 = \beta_i, \qquad \forall j < k.
\end{equation*}
In particular, compared to our original definition, we now allow the random variables within our matrices to vary with the dimension $n$; moreover, we no longer assume that they are identically distributed. For technical reasons, we assume that the real and imaginary parts of an off-diagonal entry $\mbf{X}_n^{(i)}(j, k)$ are uncorrelated so that
\begin{equation}\label{eq:3.2}
\E \mbf{X}_n^{(i)}(j, k)^2 = \beta_i = \overline{\beta_i} = \E \mbf{X}_n^{(i)}(k, j)^2.
\end{equation}
For example, this includes the class of all real Wigner matrices ($\beta_i = 1$), but also circularly-symmetric ensembles such as the GUE ($\beta_i = 0$). We comment on the general case of $\beta_i \in \C$ when possible, though the situation becomes much different and often intractable (especially for RBMs). Thus, unless stated otherwise, we assume that $\beta_i = \overline{\beta_i} \in \R$.

\subsection{Traffic distribution}\label{sec3.1}

For such a family $\salg{X}_n$, Male proved the traffic convergence of the corresponding family of normalized Wigner matrices
\[
\salg{W}_n = (\mbf{W}_n^{(i)})_{i \in I} = (\mbf{N}_n \circ \mbf{X}_n^{(i)})_{i \in I}
\]
to the so-called colored double trees \cite[Proposition 4.4]{Mal11}. We review the proof shortly in Proposition \hyperref[prop3.1.2]{3.1.2} for the convenience of the reader, presenting a slightly modified argument that carries forward to the rest of the article. For simplicity, we restrict our attention to test graphs. The general case of a $*$-test graph follows from the self-adjointness of our ensembles.

\begin{defn}[Colored double tree]\label{defn3.1.1}
Let $T = (V, E, \gamma)$ be a test graph in $\mbf{x} = (x_i)_{i \in I}$. We say that $T$ is a \emph{fat tree} if when disregarding the orientation and multiplicity of the edges, $T$ becomes a tree. We further specify that $T$ is a \emph{double tree} if there are exactly two edges between adjacent vertices. We call the pair of edges connecting adjacent vertices in a double tree \emph{twin edges}: \emph{congruent} if they have the same orientation, \emph{opposing} otherwise. Finally, we say that $T$ is a \emph{colored double tree} if $T$ is a double tree such that each pair of twin edges $\{e,e'\}$ shares a common label $\gamma(e) = \gamma(e') \in I$. We record the number $c_i(T)$ of pairs of congruent twin edges with the common label $i$ in a colored double tree $T$.
\end{defn}

\begin{center}
\begingroup%
  \makeatletter%
  \providecommand\color[2][]{%
    \errmessage{(Inkscape) Color is used for the text in Inkscape, but the package 'color.sty' is not loaded}%
    \renewcommand\color[2][]{}%
  }%
  \providecommand\transparent[1]{%
    \errmessage{(Inkscape) Transparency is used (non-zero) for the text in Inkscape, but the package 'transparent.sty' is not loaded}%
    \renewcommand\transparent[1]{}%
  }%
  \providecommand\rotatebox[2]{#2}%
  \ifx\svgwidth\undefined%
    \setlength{\unitlength}{468bp}%
    \ifx\svgscale\undefined%
      \relax%
    \else%
      \setlength{\unitlength}{\unitlength * \real{\svgscale}}%
    \fi%
  \else%
    \setlength{\unitlength}{\svgwidth}%
  \fi%
  \global\let\svgwidth\undefined%
  \global\let\svgscale\undefined%
  \makeatother%
  \begin{picture}(1,0.32692308)%
    \put(0,0){\includegraphics[width=\unitlength,page=1]{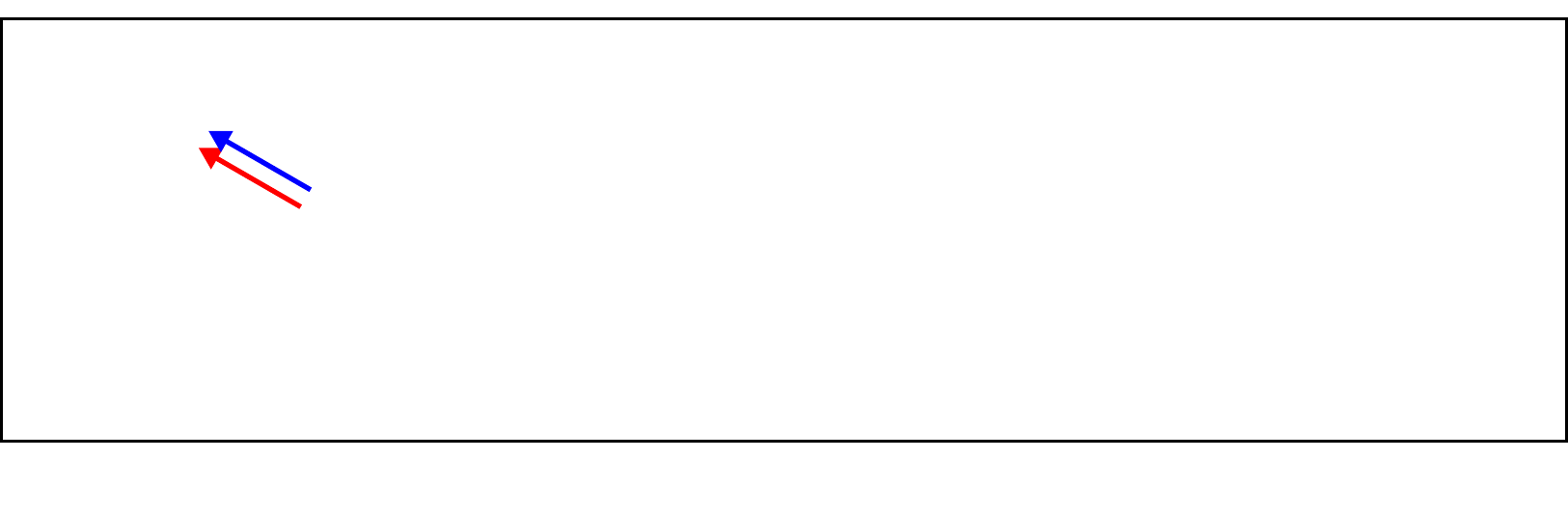}}%
    \put(-0.00141487,0.03099137){\color[rgb]{0,0,0}\makebox(0,0)[lt]{\begin{minipage}{0.99822197\unitlength}\raggedright Figure 15: Examples of a fat tree, a double tree, and a colored double tree respectively.\end{minipage}}}%
    \put(0.12170717,1.58620142){\color[rgb]{0,0,0}\makebox(0,0)[lt]{\begin{minipage}{0.11106843\unitlength}\raggedright \end{minipage}}}%
    \put(0,0){\includegraphics[width=\unitlength,page=2]{fig15_trees.pdf}}%
    \put(0.18830911,0.09168201){\color[rgb]{0,0,0}\makebox(0,0)[lb]{\smash{$x$}}}%
    \put(0.46675463,0.23991116){\color[rgb]{0,0,0}\makebox(0,0)[lb]{\smash{$z$}}}%
    \put(0.52264405,0.23991116){\color[rgb]{0,0,0}\makebox(0,0)[lb]{\smash{$r$}}}%
    \put(0.46393893,0.14776372){\color[rgb]{0,0,0}\makebox(0,0)[lb]{\smash{$x$}}}%
    \put(0,0){\includegraphics[width=\unitlength,page=3]{fig15_trees.pdf}}%
    \put(0.52243252,0.14776372){\color[rgb]{0,0,0}\makebox(0,0)[lb]{\smash{$x$}}}%
    \put(0.44071296,0.19330148){\color[rgb]{0,0,0}\makebox(0,0)[lb]{\smash{$y$}}}%
    \put(0.54828508,0.19183424){\color[rgb]{0,0,0}\makebox(0,0)[lb]{\smash{$r$}}}%
    \put(0.16187103,0.23990949){\color[rgb]{0,0,0}\makebox(0,0)[lb]{\smash{$z$}}}%
    \put(0,0){\includegraphics[width=\unitlength,page=4]{fig15_trees.pdf}}%
    \put(0.13582936,0.19329981){\color[rgb]{0,0,0}\makebox(0,0)[lb]{\smash{$y$}}}%
    \put(0.2434087,0.19183267){\color[rgb]{0,0,0}\makebox(0,0)[lb]{\smash{$r$}}}%
    \put(0.15906231,0.14776372){\color[rgb]{0,0,0}\makebox(0,0)[lb]{\smash{$x$}}}%
    \put(0.21755592,0.14776372){\color[rgb]{0,0,0}\makebox(0,0)[lb]{\smash{$x$}}}%
    \put(0,0){\includegraphics[width=\unitlength,page=5]{fig15_trees.pdf}}%
    \put(0.77145439,0.23991064){\color[rgb]{0,0,0}\makebox(0,0)[lb]{\smash{$y$}}}%
    \put(0.82734382,0.23991064){\color[rgb]{0,0,0}\makebox(0,0)[lb]{\smash{$r$}}}%
    \put(0.76863869,0.1477632){\color[rgb]{0,0,0}\makebox(0,0)[lb]{\smash{$x$}}}%
    \put(0,0){\includegraphics[width=\unitlength,page=6]{fig15_trees.pdf}}%
    \put(0.82713228,0.1477632){\color[rgb]{0,0,0}\makebox(0,0)[lb]{\smash{$x$}}}%
    \put(0.74541273,0.19330096){\color[rgb]{0,0,0}\makebox(0,0)[lb]{\smash{$y$}}}%
    \put(0.85298484,0.19183372){\color[rgb]{0,0,0}\makebox(0,0)[lb]{\smash{$r$}}}%
  \end{picture}%
\endgroup%

\end{center}

We introduce some notation to emphasize the relevant features of our test graphs. This notation will greatly simplify our analysis and features prominently in the remainder of the article. We start with a finite (not necessarily connected) multidigraph $G = (V, E)$. We partition the set of edges $E = L \cup N$ to distinguish between the loops $L$ and the non-loop edges $N = L^c$. As suggested by Definition \hyperref[defn3.1.1]{3.1.1}, we define $\wtilde{G} = (V, \wtilde{E})$ as the undirected graph obtained from $G$ by disregarding the orientation and multiplicity of the edges. Formally, $\wtilde{E} = E/\ssim$ consists of equivalence classes in $E$, where
\[
e \sim e' \quad \Longleftrightarrow \quad \{\source(e), \target(e)\} = \{\source(e'), \target(e')\}.
\]
In this case, our partition $E = L \cup N$ projects down to a partition $\wtilde{E} = \wtilde{L} \cup \wtilde{N}$ between equivalence classes of loops and equivalence classes of non-loops respectively. We may then write the underlying simple graph $\underline{G}$ of $G = (V, E)$ as $\underline{G} = (V, \wtilde{N})$.

\begin{center}
\begingroup%
  \makeatletter%
  \providecommand\color[2][]{%
    \errmessage{(Inkscape) Color is used for the text in Inkscape, but the package 'color.sty' is not loaded}%
    \renewcommand\color[2][]{}%
  }%
  \providecommand\transparent[1]{%
    \errmessage{(Inkscape) Transparency is used (non-zero) for the text in Inkscape, but the package 'transparent.sty' is not loaded}%
    \renewcommand\transparent[1]{}%
  }%
  \providecommand\rotatebox[2]{#2}%
  \ifx\svgwidth\undefined%
    \setlength{\unitlength}{468bp}%
    \ifx\svgscale\undefined%
      \relax%
    \else%
      \setlength{\unitlength}{\unitlength * \real{\svgscale}}%
    \fi%
  \else%
    \setlength{\unitlength}{\svgwidth}%
  \fi%
  \global\let\svgwidth\undefined%
  \global\let\svgscale\undefined%
  \makeatother%
  \begin{picture}(1,0.35384615)%
    \put(0,0){\includegraphics[width=\unitlength,page=1]{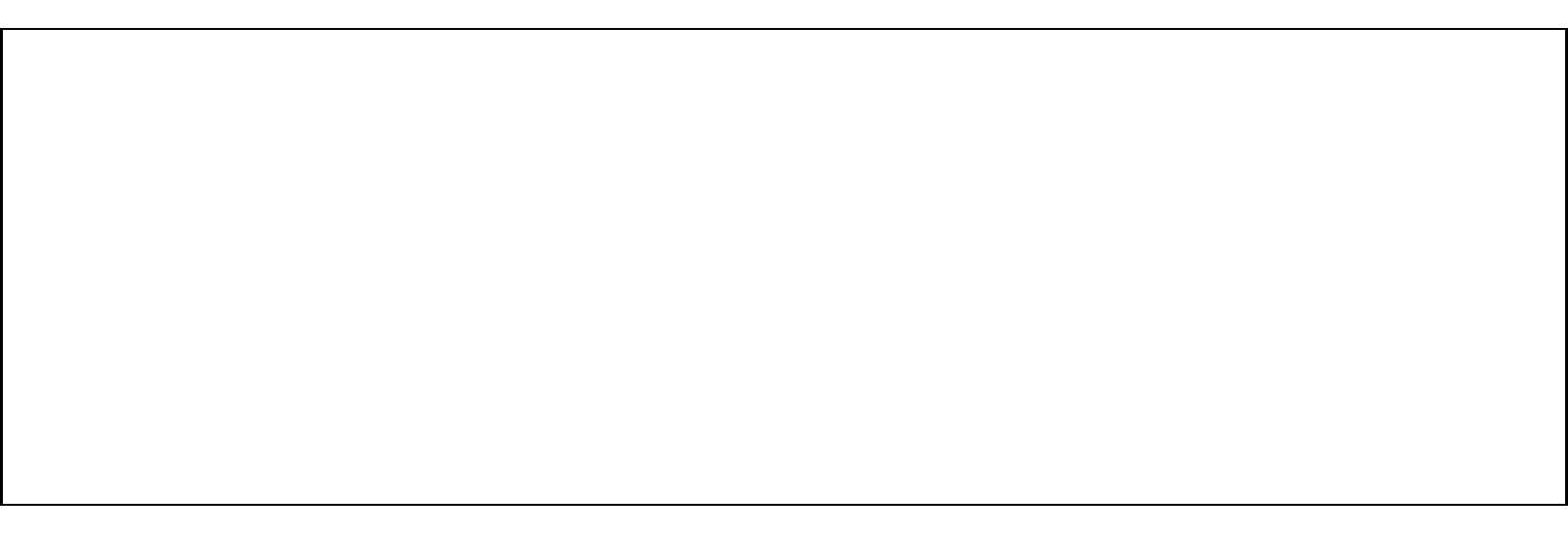}}%
    \put(-0.00141487,0.02501381){\color[rgb]{0,0,0}\makebox(0,0)[lt]{\begin{minipage}{0.99822197\unitlength}\raggedright Figure 16: Examples of the projections $\wtilde{G}$ and $\underline{G}$ starting from a multidigraph $G$.\end{minipage}}}%
    \put(0.12170717,1.59361325){\color[rgb]{0,0,0}\makebox(0,0)[lt]{\begin{minipage}{0.11106843\unitlength}\raggedright \end{minipage}}}%
    \put(0,0){\includegraphics[width=\unitlength,page=2]{fig16_simple.pdf}}%
    \put(0.18541806,0.05161008){\color[rgb]{0,0,0}\makebox(0,0)[lb]{\smash{$G$}}}%
    \put(0.49011998,0.05199726){\color[rgb]{0,0,0}\makebox(0,0)[lb]{\smash{$\wtilde{G}$}}}%
    \put(0.7948187,0.05199726){\color[rgb]{0,0,0}\makebox(0,0)[lb]{\smash{$\underline{G}$}}}%
    \put(0,0){\includegraphics[width=\unitlength,page=3]{fig16_simple.pdf}}%
  \end{picture}%
\endgroup%

\end{center}

Now suppose that our graph $G$ comes with edge labels $\gamma: E \to I$. We count the (undirected) multiplicity of a label $i$ in a class of edges $[e] = \{e' \in E : e \sim e'\} \in \wtilde{E}$ with
\[
m_{i, [e]} = \#(\gamma^{-1}(\{i\}) \cap [e]) \geq 0.
\]
Summing this over the labels in $I$, we of course obtain the multiplicity of the class $[e]$, 
\[
m_{[e]} = \sum_{i \in I} m_{i, [e]} = \#([e]).
\]

If $T = (G, \gamma)$ is a colored double tree, then
\begin{equation}\label{eq:3.3}
m_{i, [e]} \in \{0, 2\} \text{ and } m_{[e]} = 2, \qquad \forall (i, [e]) \in I \times \wtilde{E}. 
\end{equation}
In this case, we write
\begin{equation}\label{eq:3.4}
\gamma([e]) = \gamma(e)
\end{equation}
for the common label $\gamma(e) = \gamma(e')$ of twin edges $[e] = \{e, e'\}$. Conversely, if \eqref{eq:3.3} and \eqref{eq:3.4} hold for a test graph $T$ whose projection $\wtilde{T}$ is a tree, then $T$ is a colored double tree.

\begin{prop}[Semicircular traffics]\label{prop3.1.2}
For any test graph $T$ in $\mbf{x} = (x_i)_{i \in I}$,
\begin{equation}\label{eq:3.5}
\lim_{n \to \infty} \tau^0\big[T(\salg{W}_n)\big] = 
\begin{cases}
\prod_{i \in I} \beta_i^{c_i(T)} & \text{if $T$ is a colored double tree,} \\
\hfil 0 & \text{otherwise}.
\end{cases}
\end{equation}
\end{prop}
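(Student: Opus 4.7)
The plan is to compute $\tau_n^0[T(\salg{W}_n)]$ directly from the explicit formula \eqref{eq:2.12}, extract the leading-order contribution, and identify which connected test graphs survive the scaling limit.

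Step 1 (Setup). Write $T = (V, E, \gamma)$ and use \eqref{eq:2.12} together with $\mbf{W}_n^{(i)} = n^{-1/2}\mbf{X}_n^{(i)}$ to obtain
\[
\tau_n^0\big[T(\salg{W}_n)\big] = \frac{1}{n^{1+|E|/2}} \sum_{\phi: V \hookrightarrow [n]} \E\bigg[\prod_{e \in E} \mbf{X}_n^{(\gamma(e))}(\phi(e))\bigg].
\]
Since $\phi$ is injective, two edges $e, e' \in E$ hit the same matrix entry (up to conjugation) iff $\gamma(e) = \gamma(e')$ and $[e] = [e']$ in $\wtilde{E}$. By independence across distinct indices $i \in I$ and across distinct classes in $\wtilde{E}$, the inner expectation factors over the pairs $(i, [e]) \in I \times \wtilde{E}$ with $m_{i,[e]} \geq 1$.

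Step 2 (Vanishing constraints). Since the off-diagonal entries are centered, any non-loop pair $(i, [e]) \in I \times \wtilde{N}$ with $m_{i,[e]} = 1$ kills the expectation. Thus for a nonvanishing contribution we need $m_{i,[e]} \geq 2$ whenever $[e] \in \wtilde{N}$, and so $|E| \geq 2|\wtilde{N}|$. The uniform moment bound \eqref{eq:3.1} then controls each factor by a constant $C_T$ depending only on $T$. Since loop edges carry only an $n^{-1/2}$ factor each, a single-multiplicity loop class only reduces the scaling. Counting the $\sim n^{|V|}$ injections gives
\[
\big|\tau_n^0[T(\salg{W}_n)]\big| \leq C_T\, n^{|V|-1-|E|/2}.
\]
Using the connectedness of $T$ (so $|\wtilde{N}| \geq |V|-1$) together with $|E| \geq 2|\wtilde{N}| + |\wtilde{L}|$ shows that the exponent is $\leq 0$, and that equality forces $T$ to have (i) no loops, (ii) $\wtilde{T}$ a tree, and (iii) every class in $\wtilde{E}$ of multiplicity exactly $2$ with both edges sharing a common label—precisely the condition that $T$ is a colored double tree.

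Step 3 (Evaluation on colored double trees). When $T$ is a colored double tree, the number of injections satisfies $\#\{\phi\colon V \hookrightarrow [n]\} = n^{|V|}(1 + O(1/n))$, so the prefactor $n^{-1-|E|/2}$ cancels exactly in the limit. The inner expectation decouples into one factor per twin pair $[e] = \{e, e'\}$ with common label $i = \gamma([e])$: opposing pairs yield $\E|\mbf{X}_n^{(i)}(\phi(e))|^2 = 1$ by \eqref{eq:1.1}, while congruent pairs yield $\E\,\mbf{X}_n^{(i)}(\phi(e))^2 = \beta_i$ by \eqref{eq:3.2} (and the reality assumption on $\beta_i$ makes this the same on either side of the diagonal). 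Multiplying over all pairs gives exactly $\prod_{i \in I} \beta_i^{c_i(T)}$, independently of $\phi$, and combining with Step 2 yields \eqref{eq:3.5}.

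The main technical nuisance will be the careful bookkeeping in Step 2: ensuring that the factorization of the expectation respects both the label partition and the endpoint partition under the injection $\phi$, and that the loop vs.\ non-loop dichotomy is handled so that neither non-centered diagonal entries nor odd-multiplicity loop classes create spurious leading-order contributions. Once this is organized via the $m_{i,[e]}$ notation introduced before the statement, everything else is a standard moment-method count à la Wigner.
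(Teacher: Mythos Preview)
Your proposal is correct and follows essentially the same approach as the paper's proof: expand $\tau_n^0$ via the injective-trace formula, factor the expectation over $(i,[e])$ pairs using independence and injectivity, use centering to force $m_{i,[e]}\geq 2$ on non-loop classes, and combine $|N|\geq 2|\wtilde{N}|$ with $|\wtilde{N}|\geq |V|-1$ to show survival only on colored double trees, where the remaining product of variances/pseudo-variances yields $\prod_i \beta_i^{c_i(T)}$. The only cosmetic slip is the phrasing ``$m_{i,[e]}\geq 2$ whenever $[e]\in\wtilde{N}$''---you of course mean whenever $m_{i,[e]}\geq 1$---but the subsequent counting is unaffected.
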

\begin{proof}
Suppose that $T = (V, E, \gamma)$. By definition, we have that
\begin{align}
\notag \tau^0\big[T(\salg{W}_n)\big] &= \E \bigg[\frac{1}{n} \sum_{\phi: V \hookrightarrow [n]} \prod_{e \in E} \mbf{W}_n^{(\gamma(e))}(\phi(e))\bigg] \\
&= \frac{1}{n^{1+\frac{\#(E)}{2}}} \sum_{\phi: V \hookrightarrow [n]} \E\bigg[\prod_{e \in E} \mbf{X}_n^{(\gamma(e))}(\phi(e))\bigg]. \label{eq:3.6}
\end{align}
We analyze the asymptotics of \eqref{eq:3.6} by working piecemeal in order to count the number of contributing maps $\phi$ (i.e., maps such that the summand is nonzero). First, we note that the independence of the random variables $\mbf{X}_n^{(i)}(j,k)$ and the injectivity of the maps $\phi$ allow us to factor the product over the expectation, provided that we take into account multi-edges. The relevant information is precisely contained in the projected graph $\wtilde{T} = (V, \wtilde{E})$, which allows us to recast \eqref{eq:3.6} as
\begin{equation}\label{eq:3.7}
\frac{1}{n^{1+\frac{\#(E)}{2}}} \sum_{\phi: V \hookrightarrow [n]} \bigg(\prod_{[\ell] \in \wtilde{L}} \E \bigg[\prod_{\ell' \in [\ell]} \mbf{X}_n^{(\gamma(\ell'))}(\phi(\ell'))\bigg]\bigg) \bigg(\prod_{[e] \in \wtilde{N}} \E \bigg[\prod_{e' \in [e]} \mbf{X}_n^{(\gamma(e'))}(\phi(e'))\bigg]\bigg).
\end{equation}

For non-loop edges $e' \in N$, the independence of the centered random variables $\mbf{X}_n^{(i)}(\phi(e'))$ implies that the second expectation in \eqref{eq:3.7} vanishes if there exists a lone edge $e_0 \in [e]$ with the label $\gamma(e_0) = i_0$. Thus, in order for a summand to be non-zero, each label $i$ present in a class $[e]$ must occur with multiplicity
\begin{equation}\label{eq:3.8}
m_{i,[e]} \geq 2.
\end{equation}
This in turn implies that
\begin{equation}\label{eq:3.9}
\#(N) \geq 2\#(\wtilde{N}).
\end{equation}
The underlying simple graph $\underline{T} = (V, \wtilde{N})$ is of course still connected, whence the inequality
\begin{equation}\label{eq:3.10}
\#(\wtilde{N}) \geq \#(V) - 1.
\end{equation}
Finally, we make use of our strong moment assumption \eqref{eq:3.1} to bound the summands in \eqref{eq:3.7} uniformly in $\phi$ and $n$. In particular, our bound only depends on $T$, i.e.,
\begin{equation}\label{eq:3.11}
\bigg(\prod_{[\ell] \in \wtilde{L}} \E \bigg[\prod_{\ell' \in [\ell]} \mbf{X}_n^{(\gamma(\ell'))}(\phi(\ell'))\bigg]\bigg) \bigg(\prod_{[e] \in \wtilde{N}} \E \bigg[\prod_{e' \in [e]} \mbf{X}_n^{(\gamma(e'))}(\phi(e'))\bigg]\bigg) \leq C_T < \infty.
\end{equation}

Putting everything together, we arrive at the asymptotic
\begin{equation}\label{eq:3.12}
\tau^0\big[T(\salg{W}_n)\big] = O_T(n^{-1-\frac{\#(E)}{2}}n^{\#(V)}) = O_T(n^{-(\frac{\#(N)}{2} - (\#(V)-1))}n^{-\frac{\#(L)}{2}}).
\end{equation}
The inequalities \eqref{eq:3.8}-\eqref{eq:3.10} then imply that $\tau^0\big[T(\salg{W}_n)\big]$ vanishes in the limit unless $T$ is a colored double tree. For such a test graph $T$, \eqref{eq:3.7} becomes
\begin{equation}\label{eq:3.13}
\frac{n^{\underline{\#(V)}}}{n^{\#(V)}} \prod_{[e] \in \wtilde{E}} \bigg(\indc{[e] \text{ are opposing}} + \beta_{\gamma([e])}\indc{[e] \text{ are congruent}}\bigg),
\end{equation}
where $n^{\underline{\#(V)}}$ denotes the falling factorial $n(n-1)\cdots (n-(\#(V)-1))$. The limit \eqref{eq:3.5} now follows.
\end{proof}

Equation \eqref{eq:3.13} explains the apparent asymmetry in the LTD of the Wigner matrices. In particular, if we record the number $o_i(T)$ of pairs of opposing twin edges with the common label $i$ in a colored double tree $T$, then we can rewrite the nontrivial part of \eqref{eq:3.5} as
\[
\prod_{i \in I} \beta_i^{c_i(T)} = \prod_{i \in I} 1^{o_i(T)}\beta_i^{c_i(T)}.
\]
Working directly with this LTD, one can prove the asymptotic traffic independence of the Wigner matrices $\mathcal{W}_n$. To the same end, we can instead appeal to Theorem \hyperref[thm2.5.5]{2.5.5} by choosing a permutation invariant realization of our ensemble and concluding the general result by universality. We employ this technique of instantiation in the next section to realize the traffic CLT.

The careful reader will notice that we have made use of \eqref{eq:3.2} in formulating \eqref{eq:3.13}: by assuming that $\beta_i = \overline{\beta_i}$, we were able to disregard the ordering on the vertices induced by the maps $\phi$ and conclude that congruent twin edges $[e]$ always give a contribution of $\beta_{\gamma([e])}$. In general, for a colored double tree $T$, a summand $S_\phi(T)$ of \eqref{eq:3.7} will depend on $\phi$, namely,
\[
\begin{aligned}
S_\phi(T) = \prod_{[e] \in \wtilde{E}} \bigg(\indc{[e] \text{ are opposing}} &+ \beta_{\gamma([e])}\indc{[e] \text{ are congruent and }  \phi(\target([e])) < \phi(\source([e]))} \\
&+ \overline{\beta_{\gamma([e])}}\indc{[e] \text{ are congruent and } \phi(\target([e])) > \phi(\source([e]))} \bigg).
\end{aligned}
\]
To compute the limit, we must then keep track of the ordering $\psi_\phi$ on the vertices, where 
\[
\psi_\phi: [\#(V)] \stackrel{\sim}{\to} V, \qquad \phi(\psi_\phi(1)) > \cdots > \phi(\psi_\phi(\#(V))).
\]
Note that if $\phi_1: V \hookrightarrow [n_1]$ and $\phi_2: V \hookrightarrow [n_2]$ induce the same ordering $\psi_{\phi_1} = \psi_{\phi_2}$, then the corresponding summands are equal, i.e.,
\[
S_{\phi_1}(T) = \E\bigg[\prod_{e \in E} \mbf{X}_{n_1}^{(\gamma(e))}(\phi_1(e))\bigg] = \E\bigg[\prod_{e \in E} \mbf{X}_{n_2}^{(\gamma(e))}(\phi_2(e))\bigg] = S_{\phi_2}(T).
\]
Thus, for an ordering $\psi: [\#(V)] \stackrel{\sim}{\to} V$, we write $S_\psi(T)$ for the common value of
\[
\{S_\phi(T) : \psi_\phi = \psi\}.
\]
In this case, \eqref{eq:3.7} becomes
\begin{equation}\label{eq:3.14}
\sum_{\psi: [\#(V)] \stackrel{\sim}{\to} V} \frac{\sum_{\phi: V \hookrightarrow [n]} \indc{\psi_\phi = \psi}}{n^{\#(V)}}S_\psi(T).
\end{equation}
One can intuitively verify that
\[
\lim_{n \to \infty} \frac{\sum_{\phi: V \hookrightarrow [n]} \indc{\psi_\phi = \psi}}{n^{\#(V)}} = \frac{1}{\#(V)!}, \qquad \forall \psi: [\#(V)] \stackrel{\sim}{\to} V;
\]
however, in anticipation of Section \hyperref[sec4]{4}, we give a natural integral representation of this limit instead. To this end, we introduce a set of indeterminates $\mbf{x}_V = (x_v)_{v \in V}$ indexed by the vertices of our graph. A straightforward weak convergence argument then shows that
\begin{equation}\label{eq:3.15}
\lim_{n \to \infty} \frac{\sum_{\phi: V \hookrightarrow [n]} \indc{\psi_\phi = \psi}}{n^{\#(V)}} = \int_{[0, 1]^V} \indc{x_{\psi(1)} \geq \cdots \geq x_{\psi(\#(V))}} \, d\mbf{x}_V = \frac{1}{\#(V)!}.
\end{equation}
Indeed, for each $n \in \N$, we can scale a labeling $\phi: V \hookrightarrow [n]$ by $n$ to associate the image $\phi(V) = (\phi(v))_{v \in V}$ with a point $p_{\phi}$ of the latticed hypercube $[0, 1]^V$, namely, 
\[
p_\phi = \bigg(\frac{\phi(v)}{n}\bigg)_{v \in V}.
\]
We imagine integrating the indicator $\indc{x_{\psi(1)} \geq \cdots \geq x_{\psi(\#(V))}}$ against the atomic measure 
\[
\mu_n = \frac{1}{n^{\underline{\#(V)}}} \sum_{\phi: V \hookrightarrow [n]} \delta_{p_\phi}
\]
to obtain the left-hand side of \eqref{eq:3.15} (up to an asymptotically negligible corrective factor). The limit $n \to \infty$ then converts this discretization into the uniform measure on $[0, 1]^V$.

Finally, we arrive at the analogue of \eqref{eq:3.5} for general $\beta_i \in \C$,
\begin{equation}\label{eq:3.16}
\lim_{n \to \infty} \tau^0\big[T(\salg{W}_n)\big] = 
\begin{dcases}
\sum_{\psi: [\#(V)] \stackrel{\sim}{\to} V} \frac{1}{\#(V)!} S_\psi(T) & \text{if $T$ is a colored double tree,} \\
\hfil 0 & \text{otherwise}.
\end{dcases}
\end{equation}
In contrast to Proposition \hyperref[prop3.1.2]{3.1.2}, the LTD \eqref{eq:3.16} does not necessarily describe asymptotically traffic independent matrices $\salg{W}_n$. In fact, if we divide our index set $I$ into two camps $I = I_\R \cup I_\C = \{i \in I: \beta_i \in \R\} \cup \{i \in I: \beta_i \in \C\setminus\R\}$, then the two families $\salg{W}_n^{\R} = (\mbf{W}_n^{(i)})_{i \in I_\R}$ and $\salg{W}_n^{\C} = (\mbf{W}_n^{(i)})_{i \in I_\C}$ are asymptotically traffic independent, but the matrices $\salg{W}_n^{\C}$ are not.

For the first statement, we need only to note that the representative value $S_\psi(T)$ does not depend on the ordering of the vertices that are only adjacent to edges with labels $i \in I_\R$, for which $\beta_i = \overline{\beta_i}$. We can formalize this by considering the subgraphs $T_\R = (V_\R, E_\R)$ and $T_\C = (V_\C, E_\C)$ of $T$ with edge labels in $I_\R$ and $I_\C$ respectively. We write $T_\C = C_1^\C \cup \cdots \cup C_{k_1}^\C$ for the connected components of $T_\C$, each of which is a colored double tree $C_\ell = (V_\ell^\C, E_\ell^\C)$, and similarly for $T_\R = C_1^\R \cup \cdots \cup C_{k_2}^\R$. We call such a graph a \emph{forest of colored double trees}. It follows that a summand $S_\phi(T)$ only depends on the orderings 
\[
\psi_\phi^{(\ell)}: [\#(V_\ell^\C)] \stackrel{\sim}{\to} V_\ell^\C, \qquad \ell \in [k_1]
\]
on each component $C_\ell^\C$. In particular,
\[
S_\phi(T) = \bigg(\prod_{\ell = 1}^{k_1} S_{\psi_\phi^{(\ell)}}(C_\ell^\C)\bigg) \bigg(\prod_{\ell = 1}^{k_2} \prod_{i \in I_\R} \beta_i^{c_i(C_\ell^\R)}\bigg).
\]
In this case, for a concatenation of orderings
\[
\psi = \times_{\ell = 1}^{k_1} \psi_\ell: \bigtimes_{\ell = 1}^{k_1} [\#(V_\ell^\C)] \stackrel{\sim}{\to} \bigtimes_{\ell = 1}^{k_1} V_\ell^\C
\]
with the restrictions
\[
\psi_\ell: [\#(V_\ell^\C)] \stackrel{\sim}{\to} V_\ell^\C,
\]
we write $S_\psi$ for the common value of
\[
\{S_\phi(T): \psi_\phi^{(\ell)} = \psi_\ell \text{ for all } \ell \in [k_1]\}.
\]
We may then write
\[
\tau^0\big[T(\salg{W}_n)\big] = \sum_{\psi: \bigtimes_{\ell = 1}^{k_1} [\#(V_\ell^\C)] \stackrel{\sim}{\to} \bigtimes_{\ell = 1}^{k_1} V_\ell^\C} \frac{\sum_{\phi: V \hookrightarrow [n]} \prod_{\ell = 1}^{k_1} \indc{\psi_\phi^{(\ell)} = \psi_\ell}}{n^{\#(V)}}S_\psi(T),
\]
where
\begin{align}
\notag \lim_{n \to \infty} \frac{\sum_{\phi: V \hookrightarrow [n]} \prod_{\ell = 1}^{k_1} \indc{\psi_\phi^{(\ell)} = \psi_\ell}}{n^{\#(V)}} &= \int_{[0, 1]^V} \prod_{\ell = 1}^{k_1} \indc{x_{\psi_\ell(1)} \geq \cdots \geq x_{\psi_\ell(\#(V_\ell^{\C}))}} \, d\mbf{x}_V \\
&= \bigg(\int_{[0, 1]^{V \setminus V_\C}} \, d\mbf{x}_{V \setminus V_\C}\bigg) \bigg(\prod_{\ell = 1}^{k_1} \int_{[0, 1]^{V_\ell^\C}} \indc{x_{\psi_\ell(1)} \geq \cdots \geq x_{\psi_\ell(\#(V_\ell^{\C}))}} \, d\mbf{x}_{V_\ell^\C}\bigg) \label{eq:3.17} \\
\notag &= \frac{1}{\prod_{\ell = 1}^{k_1} \#(V_\ell^\C)!}.
\end{align}
We conclude that
\begin{align*}
\lim_{n \to \infty} \tau^0\big[T(\salg{W}_n)\big] &= \sum_{\psi: \bigtimes_{\ell = 1}^{k_1} [\#(V_\ell^\C)] \stackrel{\sim}{\to} \bigtimes_{\ell = 1}^{k_1} V_\ell^\C} \frac{1}{\prod_{\ell = 1}^{k_1} \#(V_\ell^\C)!} \bigg(\prod_{\ell = 1}^{k_1} S_{\psi_\ell}(C_\ell^\C)\bigg) \bigg(\prod_{\ell = 1}^{k_2} \prod_{i \in I_\R} \beta_i^{c_i(C_\ell^\R)}\bigg) \\
&= \bigg(\prod_{\ell = 1}^{k_1} \sum_{\psi_\ell: [\#(V_\ell^\C)] \to V_\ell^\C} \frac{1}{\#(V_\ell^\C)!} S_{\psi_\ell}(C_\ell^\C)\bigg) \bigg(\prod_{\ell = 1}^{k_2} \prod_{i \in I_\R} \beta_i^{c_i(C_\ell^\R)} \bigg) \\
&= \bigg(\prod_{\ell = 1}^{k_1} \lim_{n \to \infty} \tau^0\big[C_\ell^\C(\salg{W}_n^{\C})\big]\bigg) \bigg(\prod_{\ell = 1}^{k_2} \lim_{n \to \infty} \tau^0\big[C_\ell^\R(\salg{W}_n^\R)\big]\bigg),
\end{align*}
as was to be shown.

Intuitively, we imagine each pair of twin edges $[e]$ imposing a constraint coming from the ordering of its adjacent vertices $\{\source([e]), \target([e])\}$. We gather these constraints in the ordering $\psi_\phi$ to carry out the calculation of $S_{\phi} = S_{\psi(\phi)}$; however, if $\gamma([e]) \in I_\R$, the constraint becomes vacuous and we can disregard it, which corresponds to discarding the edge $[e]$ (but keeping the adjacent vertices). In this way, we arrive at the integral \eqref{eq:3.17} (and, after discarding the isolated vertices, the forest of colored double trees $T_\C$). We return to this notion of a ``free'' edge $[e]$ in a slightly different context in Section \hyperref[sec4]{4}.

\begin{center}
\begingroup%
  \makeatletter%
  \providecommand\color[2][]{%
    \errmessage{(Inkscape) Color is used for the text in Inkscape, but the package 'color.sty' is not loaded}%
    \renewcommand\color[2][]{}%
  }%
  \providecommand\transparent[1]{%
    \errmessage{(Inkscape) Transparency is used (non-zero) for the text in Inkscape, but the package 'transparent.sty' is not loaded}%
    \renewcommand\transparent[1]{}%
  }%
  \providecommand\rotatebox[2]{#2}%
  \ifx\svgwidth\undefined%
    \setlength{\unitlength}{468bp}%
    \ifx\svgscale\undefined%
      \relax%
    \else%
      \setlength{\unitlength}{\unitlength * \real{\svgscale}}%
    \fi%
  \else%
    \setlength{\unitlength}{\svgwidth}%
  \fi%
  \global\let\svgwidth\undefined%
  \global\let\svgscale\undefined%
  \makeatother%
  \begin{picture}(1,0.42500001)%
    \put(0,0){\includegraphics[width=\unitlength,page=1]{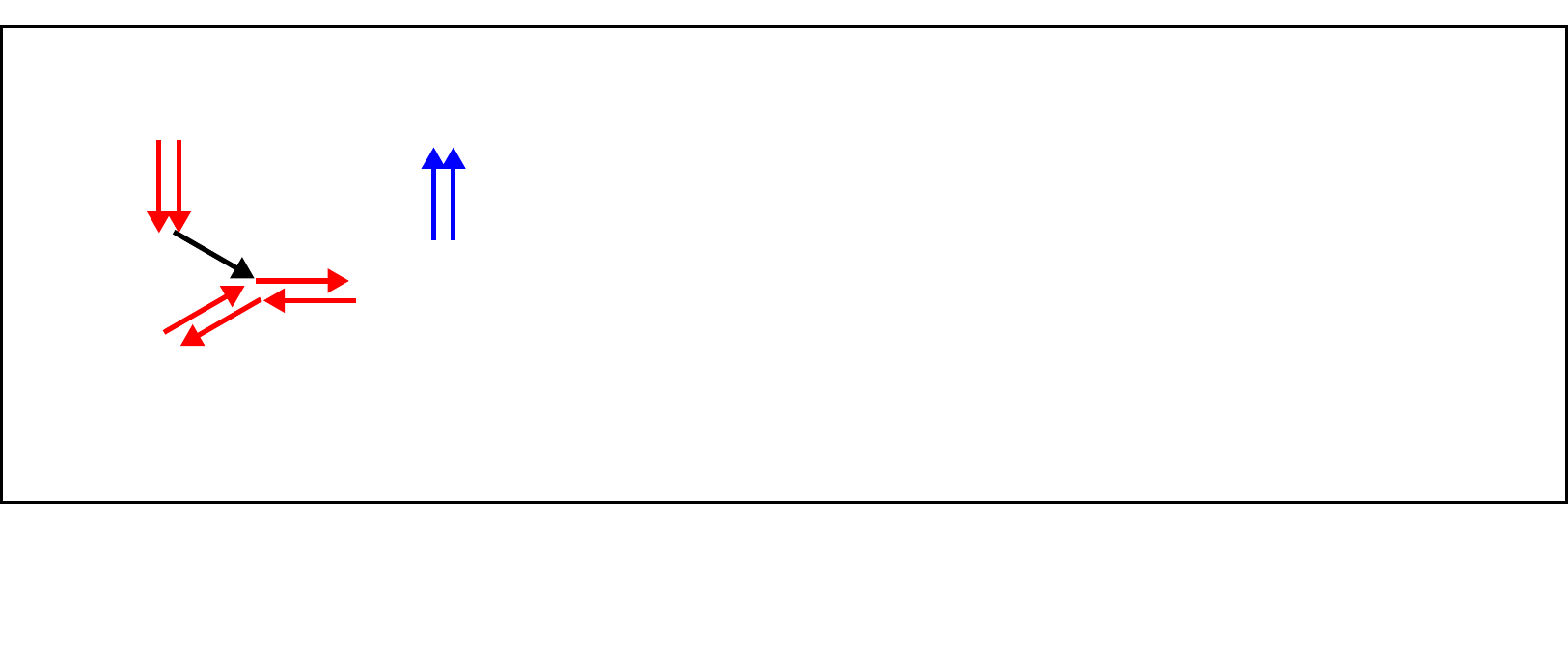}}%
    \put(-0.00141487,0.09011798){\color[rgb]{0,0,0}\makebox(0,0)[lt]{\begin{minipage}{0.99822201\unitlength}\raggedright Figure 17: An example of the forest subgraph construction starting from a colored double tree $T$. We label twin edges $[e]$ with a single common indeterminate $\gamma([e])$ to simplify the presentation. \end{minipage}}}%
    \put(0.12170717,1.66621588){\color[rgb]{0,0,0}\makebox(0,0)[lt]{\begin{minipage}{0.11106843\unitlength}\raggedright \end{minipage}}}%
    \put(0,0){\includegraphics[width=\unitlength,page=2]{fig17_forest.pdf}}%
    \put(0.18541807,0.12246409){\color[rgb]{0,0,0}\makebox(0,0)[lb]{\smash{$T$}}}%
    \put(0.49012,0.1245999){\color[rgb]{0,0,0}\makebox(0,0)[lb]{\smash{$T_\R$}}}%
    \put(0.79481873,0.1245999){\color[rgb]{0,0,0}\makebox(0,0)[lb]{\smash{$T_\C$}}}%
    \put(0,0){\includegraphics[width=\unitlength,page=3]{fig17_forest.pdf}}%
    \put(0.13422258,0.27305713){\color[rgb]{0,0,0}\makebox(0,0)[lb]{\smash{$x_1^\R$}}}%
    \put(0.23318091,0.18731995){\color[rgb]{0,0,0}\makebox(0,0)[lb]{\smash{$x_1^\R$}}}%
    \put(0.22637002,0.27305713){\color[rgb]{0,0,0}\makebox(0,0)[lb]{\smash{$x_1^\R$}}}%
    \put(0.12300463,0.18731995){\color[rgb]{0,0,0}\makebox(0,0)[lb]{\smash{$x_2^\C$}}}%
    \put(0.1822995,0.2025585){\color[rgb]{0,0,0}\makebox(0,0)[lb]{\smash{$x_2^\C$}}}%
    \put(0.23438284,0.31552498){\color[rgb]{0,0,0}\makebox(0,0)[lb]{\smash{$x_3^\C$}}}%
    \put(0.12620975,0.31552498){\color[rgb]{0,0,0}\makebox(0,0)[lb]{\smash{$x_2^\C$}}}%
    \put(0,0){\includegraphics[width=\unitlength,page=4]{fig17_forest.pdf}}%
    \put(0.43892256,0.27305713){\color[rgb]{0,0,0}\makebox(0,0)[lb]{\smash{$x_1^\R$}}}%
    \put(0.5378809,0.18731995){\color[rgb]{0,0,0}\makebox(0,0)[lb]{\smash{$x_1^\R$}}}%
    \put(0.53107,0.27305713){\color[rgb]{0,0,0}\makebox(0,0)[lb]{\smash{$x_1^\R$}}}%
    \put(0,0){\includegraphics[width=\unitlength,page=5]{fig17_forest.pdf}}%
    \put(0.73240496,0.18731995){\color[rgb]{0,0,0}\makebox(0,0)[lb]{\smash{$x_2^\C$}}}%
    \put(0.79169984,0.20255839){\color[rgb]{0,0,0}\makebox(0,0)[lb]{\smash{$x_2^\C$}}}%
    \put(0.84378317,0.31552498){\color[rgb]{0,0,0}\makebox(0,0)[lb]{\smash{$x_3^\C$}}}%
    \put(0.73561009,0.31552498){\color[rgb]{0,0,0}\makebox(0,0)[lb]{\smash{$x_2^\C$}}}%
  \end{picture}%
\endgroup%

\end{center}

For the second statement (about the lack of asymptotic traffic independence for $\salg{W}_n^\C$), we give a simple counterexample, namely, for $\beta_2^\C, \beta_3^\C \in \C \setminus \R$,

\begin{center}
\begingroup%
  \makeatletter%
  \providecommand\color[2][]{%
    \errmessage{(Inkscape) Color is used for the text in Inkscape, but the package 'color.sty' is not loaded}%
    \renewcommand\color[2][]{}%
  }%
  \providecommand\transparent[1]{%
    \errmessage{(Inkscape) Transparency is used (non-zero) for the text in Inkscape, but the package 'transparent.sty' is not loaded}%
    \renewcommand\transparent[1]{}%
  }%
  \providecommand\rotatebox[2]{#2}%
  \ifx\svgwidth\undefined%
    \setlength{\unitlength}{468bp}%
    \ifx\svgscale\undefined%
      \relax%
    \else%
      \setlength{\unitlength}{\unitlength * \real{\svgscale}}%
    \fi%
  \else%
    \setlength{\unitlength}{\svgwidth}%
  \fi%
  \global\let\svgwidth\undefined%
  \global\let\svgscale\undefined%
  \makeatother%
  \begin{picture}(1,0.28076924)%
    \put(0.0956655,1.55415027){\color[rgb]{0,0,0}\makebox(0,0)[lt]{\begin{minipage}{0.11106843\unitlength}\raggedright \end{minipage}}}%
    \put(0.65305273,0.04412947){\color[rgb]{0,0,0}\makebox(0,0)[lb]{\smash{$\displaystyle\big]\bigg) \bigg(\lim_{n \to \infty} \tau^0\big[$}}}%
    \put(0.34055272,0.21400263){\color[rgb]{0,0,0}\makebox(0,0)[lb]{\smash{$\big] = \frac{1}{3}(\beta_2^\C \beta_3^\C + \overline{\beta_2^\C}\overline{\beta_3^\C}) + \frac{1}{6}(\beta_2^\C \overline{\beta_3^\C} + \overline{\beta_2^\C} \beta_3^\C)$}}}%
    \put(0.01202706,0.21400263){\color[rgb]{0,0,0}\makebox(0,0)[lb]{\smash{$\displaystyle \lim_{n \to \infty} \tau^0\big[$}}}%
    \put(0,0){\includegraphics[width=\unitlength,page=1]{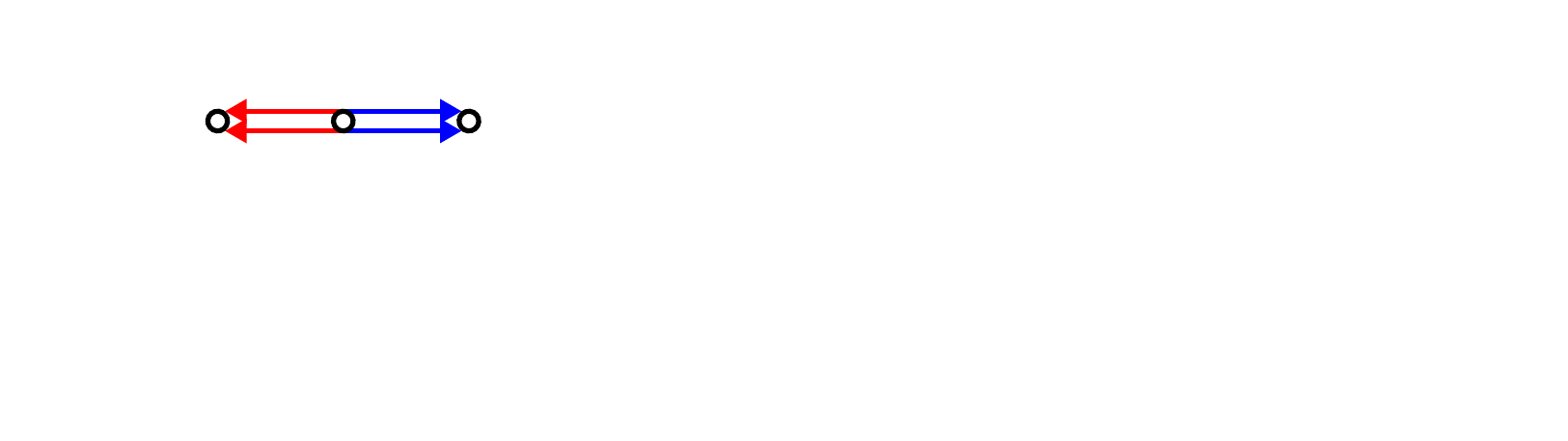}}%
    \put(0.16388047,0.2242244){\color[rgb]{0,0,0}\makebox(0,0)[lb]{\smash{$\mbf{W}_n^{i_2^\C}$}}}%
    \put(0.22958561,0.2242244){\color[rgb]{0,0,0}\makebox(0,0)[lb]{\smash{$\mbf{W}_n^{i_3^\C}$}}}%
    \put(0,0){\includegraphics[width=\unitlength,page=2]{fig18_non.pdf}}%
    \put(0.34055272,0.13387442){\color[rgb]{0,0,0}\makebox(0,0)[lb]{\smash{$\phantom{\big]} \neq  \bigg(\frac{1}{2}(\beta_2^\C + \overline{\beta_2^\C})\bigg)\bigg(\frac{1}{2}(\beta_3^\C + \overline{\beta_3^\C})\bigg)$}}}%
    \put(0.9543348,0.04346007){\color[rgb]{0,0,0}\makebox(0,0)[lb]{\smash{$\big]\bigg)$.}}}%
    \put(0,0){\includegraphics[width=\unitlength,page=3]{fig18_non.pdf}}%
    \put(0.8433638,0.05434957){\color[rgb]{0,0,0}\makebox(0,0)[lb]{\smash{$\mbf{W}_n^{i_3^\C}$}}}%
    \put(0,0){\includegraphics[width=\unitlength,page=4]{fig18_non.pdf}}%
    \put(0.55650241,0.05434957){\color[rgb]{0,0,0}\makebox(0,0)[lb]{\smash{$\mbf{W}_n^{i_2^\C}$}}}%
    \put(0.34055272,0.04413072){\color[rgb]{0,0,0}\makebox(0,0)[lb]{\smash{$\phantom{\big[} = \displaystyle\bigg(\lim_{n \to \infty} \tau^0\big[$}}}%
  \end{picture}%
\endgroup%

\end{center}

Yet, we know that free independence describes the asymptotic behavior of the Wigner matrices $\salg{W}_n$ regardless of the parameters $(\beta_i)_{i \in I}$. Naturally, we would like to know how to extract this information from the LTD (in particular, how this is consistent with the distinct LTDs \eqref{eq:3.5} and \eqref{eq:3.16}). Again, we restrict our attention to the joint distribution, the general case of the joint $*$-distribution following from the self-adjointness of our ensembles. 

From the diagram \eqref{eq:2.9}, we know that the joint distribution $\mu_{\salg{W}_n}$ of $\salg{W}_n$ factors through the traffic distribution $\tau_{\salg{W}_n}$ of $\salg{W}_n$ via 
\[
\mu_{\salg{W}_n} = \tau_{\salg{W}_n} \circ \Delta \circ \eta.
\]
This amounts to computing $\tau\big[C(\salg{W}_n)\big]$ for directed cycles $C = (V, E)$. We use the injective traffic state to rewrite this as
\[
\tau\big[C(\salg{W}_n)\big] = \sum_{\pi \in \salg{P}(V)} \tau^0\big[C^\pi(\salg{W}_n)\big].
\]
In the limit, the only contributions come from (colored) double trees $C^\pi$. We claim that if $C^\pi$ is a double tree, then it can only have opposing twin edges (an \emph{opposing double tree}). Indeed, assume that $\pi \in \salg{P}(V)$ identifies the sources $\source(e_1) \stackrel{\pi}{\sim} \source(e_2)$ and targets $\target(e_1) \stackrel{\pi}{\sim} \target(e_2)$ of two distinct edges $e_1, e_2 \in E$. We write $C^\rho$ for the graph intermediate to $C$ and $C^\pi$ obtained from $C$ by only making these two identifications. If $e_1$ and $e_2$ are consecutive edges in the cycle $C$, then $C^\rho$ consists of a directed cycle with two loops coming out of a particular vertex (``rabbit ears''). Otherwise, $C^\rho$ consists of two almost disjoint directed cycles overlapping in the twin edge $[e] = \{e_1, e_2\}$ (a ``butterfly''). In both cases, we see that no further identifications can possibly result in a double tree $C^\pi$.

\phantomsection\label{fig18_cycle}
\begin{center}
\begingroup%
  \makeatletter%
  \providecommand\color[2][]{%
    \errmessage{(Inkscape) Color is used for the text in Inkscape, but the package 'color.sty' is not loaded}%
    \renewcommand\color[2][]{}%
  }%
  \providecommand\transparent[1]{%
    \errmessage{(Inkscape) Transparency is used (non-zero) for the text in Inkscape, but the package 'transparent.sty' is not loaded}%
    \renewcommand\transparent[1]{}%
  }%
  \providecommand\rotatebox[2]{#2}%
  \ifx\svgwidth\undefined%
    \setlength{\unitlength}{468bp}%
    \ifx\svgscale\undefined%
      \relax%
    \else%
      \setlength{\unitlength}{\unitlength * \real{\svgscale}}%
    \fi%
  \else%
    \setlength{\unitlength}{\svgwidth}%
  \fi%
  \global\let\svgwidth\undefined%
  \global\let\svgscale\undefined%
  \makeatother%
  \begin{picture}(1,0.34230768)%
    \put(0,0){\includegraphics[width=\unitlength,page=1]{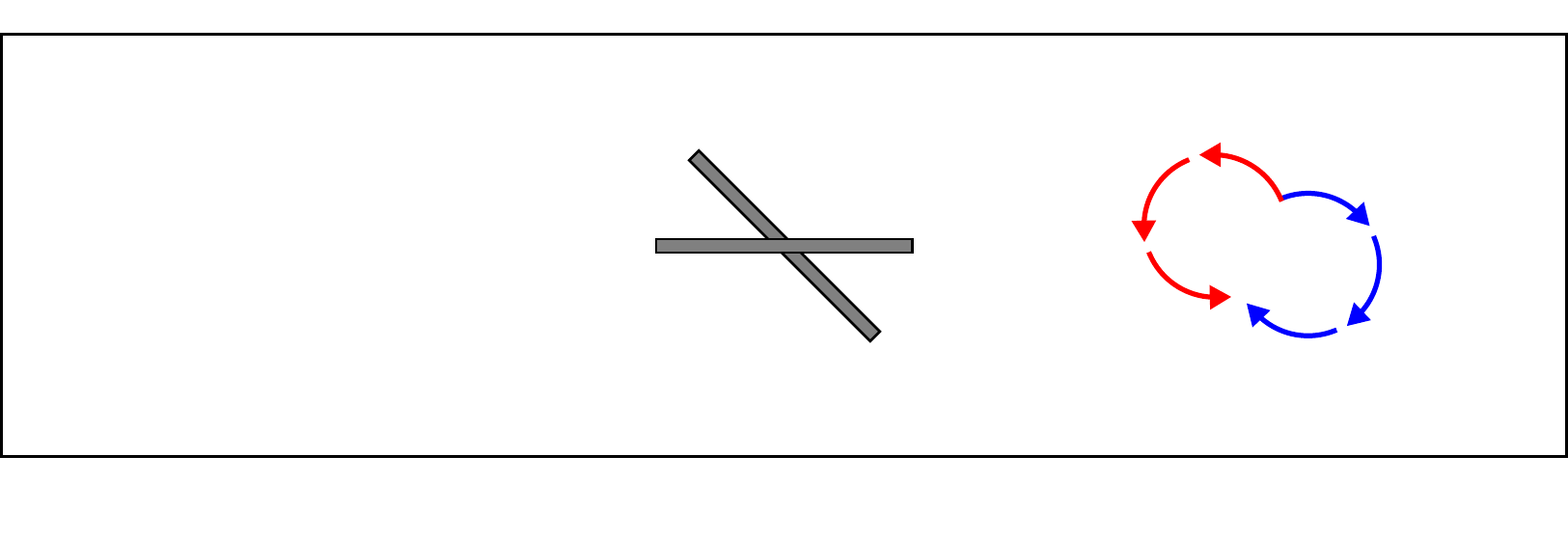}}%
    \put(-0.00141487,0.03646963){\color[rgb]{0,0,0}\makebox(0,0)[lt]{\begin{minipage}{0.99822197\unitlength}\raggedright Figure 18: An example of a butterfly $C^\rho$ starting from a directed cycle $C$.\end{minipage}}}%
    \put(0.12170717,1.59167967){\color[rgb]{0,0,0}\makebox(0,0)[lt]{\begin{minipage}{0.11106843\unitlength}\raggedright \end{minipage}}}%
    \put(0,0){\includegraphics[width=\unitlength,page=2]{fig18_cycle.pdf}}%
    \put(0.3348636,0.17906974){\color[rgb]{0,0,0}\makebox(0,0)[lb]{\smash{$\stackrel{\rho}{\mapsto}$}}}%
    \put(0.64267611,0.17833617){\color[rgb]{0,0,0}\makebox(0,0)[lb]{\smash{$=$}}}%
    \put(0,0){\includegraphics[width=\unitlength,page=3]{fig18_cycle.pdf}}%
  \end{picture}%
\endgroup%

\end{center}

Thus, from the perspective of the joint distribution, we need only to consider the behavior of the LTD on opposing colored double trees $T$. In this case, we see that the LTDs \eqref{eq:3.5} and \eqref{eq:3.16} agree on the value of  
\[
\lim_{n \to \infty} \tau^0\big[T(\salg{W}_n)\big] = 1.
\]

\begin{rem}\label{rem3.1.3}
An important application of traffic probability lies in the relationship between traffic independence and free independence. In certain situations, one can actually deduce free independence from traffic independence \cite{Mal11,CDM16}, the advantage being that the traffic setting might be more tractable. Of course, the two notions do not perfectly align, as seen even in the case of the Wigner matrices (Lemma 4.7 in \cite{Mal11} gives yet another example). In this case, we see that the traffic distribution specifies the behavior of our matrices in situations that might not be relevant to their joint distribution: in a certain sense, traffic independence asks for too much. Nevertheless, we can still use the traffic framework to make free probabilistic statements, even when a LTD might not exist! In particular, from our work above, we see that if a family of self-adjoint traffics $\mbf{a}_n = (a_n^{(i)})_{i \in I}$ in a traffic space $(\salg{A}_n, \tau_n)$ satisfies
\begin{equation}\label{eq:3.18}
\lim_{n \to \infty} \tau_n^0\big[T(\mbf{a}_n)\big] =
\begin{cases}
1 & \text{if $T$ is an opposing colored double tree,}\\
0 & \text{if $T$ is not a colored double tree,}
\end{cases}
\end{equation}
then $\mbf{a}_n$ converges in \emph{joint distribution} to a semicircular system $\mbf{a} = (a_i)_{i \in I}$. Note that we do not specify the behavior of $\tau_n^0\big[T(\mbf{a}_n)\big]$ on general colored double trees $T$ (in particular, we do not assume that the limit $\lim_{n \to \infty} \tau_n^0\big[T(\mbf{a}_n)\big]$ even exists). We will use this criteria in Section \hyperref[sec4]{4} to treat the case of RBMs of a general parameter $\beta_i \in \C$.

Of course, in the other direction, it is possible to have traffic independence but not free independence. We can see this in the context of the traffic CLT (Theorem \hyperref[thm2.5.6]{2.5.6}) by realizing the intermediate values $\alpha \in (0, 1)$.
\end{rem}

\subsection{The traffic CLT}\label{sec3.2}

For simplicity, we restrict our attention to real Wigner matrices $\mathcal{W}_n = (\mbf{W}_n^{(\ell)}: \ell \in \N)$ in this section. A classical result of Dykema shows that the matrices $\salg{W}_n$ are asymptotically free \cite{Dyk93}, thus realizing both the free CLT and the traffic CLT (the latter, for $\alpha = 1$). Yet, Remark \hyperref[rem2.5.4]{2.5.4} extends the asymptotic traffic independence of $\mathcal{W}_n$ to a much larger class of matrices. In particular, we know that the corresponding family of degree matrices $\salg{D}_n = (\mbf{D}_n^{(\ell)} : \ell \in \N)$ are also asymptotically traffic independent, where 
\begin{equation}\label{eq:3.19}
\mbf{D}_n^{(\ell)} = \text{row}(t_{x_\ell})(\mbf{W}_n^{(\ell)}) = \frac{1}{2}\text{row}(t_{x_\ell})(\mbf{W}_n^{(\ell)}) + \frac{1}{2}\text{col}(t_{x_\ell})(\mbf{W}_n^{(\ell)}) \in \C\salg{G}\langle\mbf{W}_n^{(\ell)}\rangle.
\end{equation}
A simple computation shows that the diagonal matrices $\salg{D}_n$ realize the traffic CLT for $\alpha = 0$, in some sense recovering the classical CLT.

Taking linear combinations of the above, we obtain the $(p, q)$-Markov matrices from before:
\[
\mbf{M}_{n, p, q}^{(\ell)} = p\mbf{W}_n^{(\ell)} + q\mbf{D}_n^{(\ell)} \in \C\salg{G}\langle\mbf{W}_n^{(\ell)}\rangle, \qquad \forall p, q \in \R.
\]
Recall that the LSD of the Markov matrices is given by the free convolution $\wsc(0,1) \boxplus \gn(0, 1)$. Naively, one may then hope that the interpolation between $\mbf{W}_n^{(\ell)}$ and $\mbf{D}_n^{(\ell)}$ given by $\mbf{M}_{n, p, q}^{(\ell)}$ passes to the traffic CLT, realizing the intermediate values $\alpha \in (0, 1)$. We show that this is indeed the case. 

\begin{defn}[Stable traffic distribution]\label{defn3.2.1}
Let $\nu: \C\salg{T}\langle\mbf{x}, \mbf{x}^*\rangle \to \C$ denote the traffic distribution of some family of centered random variables. We say that $\nu$ is \emph{stable} if there exists a realization of $\nu$ by traffic independent families $\mbf{a}_1 = (a_1^{(i)})_{i \in I}$ and $\mbf{a}_2 = (a_2^{(i)})_{i \in I}$ in a traffic space $(\salg{A}, \tau)$ such that the sum $\mbf{a} = \mbf{a}_1 + \mbf{a}_2 = (a_1^{(i)} + a_2^{(i)})_{i \in I}$ has the same traffic distribution, up to scale. By this, we mean that
\[
\nu = \tau_{\mbf{a}_1} = \tau_{\mbf{a}_2}
\]
with a scaling parameter $c \in \R_+$ such that
\[
\tau_{\mbf{a}}(T) = c^{\#(E)/2}\nu(T), \qquad \forall T = (V,E,\gamma,\varepsilon) \in \salg{T}\langle\mbf{x}, \mbf{x}^*\rangle.
\]
\end{defn}

\begin{lemma}\label{lemma3.2.2}
The families $(\salg{M}_n^{(\ell)}: \ell \in \N) = ((\mbf{M}_{n, p, q}^{(\ell)})_{p, q \in \R}: \ell \in \N)$ are asymptotically traffic independent with a stable universal limiting traffic distribution.
\end{lemma}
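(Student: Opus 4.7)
The plan is to reduce everything to the Wigner matrices by exploiting the fact that the $(p,q)$-Markov operation $\mbf{A}\mapsto p\mbf{A}+q\,\op{deg}(\mbf{A})$ is a \emph{linear} graph polynomial. Writing $\mbf{M}_{n,p,q}^{(\ell)} = p\,t_{x_\ell}(\mbf{W}_n^{(\ell)}) + q\,\op{row}(t_{x_\ell})(\mbf{W}_n^{(\ell)})$, every member of $\salg{M}_n^{(\ell)}$ lies in the unital traffic subalgebra $\C\salg{G}\langle\mbf{W}_n^{(\ell)}\rangle$ generated by $\mbf{W}_n^{(\ell)}$. The Wigner matrices $(\mbf{W}_n^{(\ell)})_{\ell\in\N}$ are asymptotically traffic independent by Theorem \hyperref[thm2.5.5]{2.5.5}: they are permutation invariant in distribution, and the required convergence and factorization properties follow from Proposition \hyperref[prop3.1.2]{3.1.2}. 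Remark \hyperref[rem2.5.4]{2.5.4} then transfers asymptotic traffic independence to the generated traffic algebras, hence to the families $(\salg{M}_n^{(\ell)})_{\ell\in\N}$. The existence of a universal (i.e., $\ell$-independent) LTD $\nu$ for each $\salg{M}_n^{(\ell)}$ follows from the same application of Proposition \hyperref[prop3.1.2]{3.1.2}, since the Wigner LTD does not depend on $\ell$.

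For stability, I realize $\nu$ using two fresh independent copies of the underlying Wigner ensemble. Let $(\mbf{W}_n^{(j,\ell)})_{j\in\{1,2\},\,\ell\in\N}$ be mutually independent real normalized Wigner matrices, and build the Markov families $\salg{M}_n^{(j)} := (\mbf{M}_{n,p,q}^{(j,\ell)})_{\ell\in\N,\,p,q\in\R}$. Applying the first step to the doubled index set shows $\salg{M}_n^{(1)}$ and $\salg{M}_n^{(2)}$ are asymptotically traffic independent with common LTD $\nu$. By linearity of the $(p,q)$-Markov operation,
\[
\mbf{M}_{n,p,q}^{(1,\ell)} + \mbf{M}_{n,p,q}^{(2,\ell)} = p\,\mbf{W}_n^{+,\ell} + q\,\op{deg}(\mbf{W}_n^{+,\ell}), \qquad \mbf{W}_n^{+,\ell} := \mbf{W}_n^{(1,\ell)} + \mbf{W}_n^{(2,\ell)},
\]
so the summed family is the $(p,q)$-Markov family constructed from $\mbf{W}_n^{+,\ell}$.

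It remains to verify that $\mbf{W}_n^{+,\ell}$ carries the LTD of $\sqrt{2}\,\widetilde{\mbf{W}}_n^{(\ell)}$. On a test graph $T=(V,E,\gamma)$ in a single indeterminate, expand $(\mbf{W}^{(1)}+\mbf{W}^{(2)})^{\otimes E}$ edgewise; asymptotic traffic independence of the two Wigner copies forces any contributing labeling $\sigma:E\to\{1,2\}$ to be constant on each twin pair of $T$, yielding $2^{\#(E)/2}$ identical surviving contributions. This matches exactly the factor produced by substituting $\mbf{W}\mapsto\sqrt{2}\mbf{W}$ in equation \eqref{eq:3.13}, since each twin pair contributes a squared entry that acquires a factor of $2$. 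Multilinearity of $\tau^0$ in the edge labels then propagates this $\sqrt{2}$-scaling edgewise to any test graph $T\in\salg{T}\langle\mbf{x},\mbf{x}^*\rangle$ via the linear $(p,q)$-Markov substitution, giving $\tau^0_{\mbf{a}_1+\mbf{a}_2}(T) = 2^{\#(E)/2}\nu(T)$ and establishing stability with $c=2$.

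The conceptual content is concentrated in the first step — using Remark \hyperref[rem2.5.4]{2.5.4} to transfer traffic independence to the generated traffic algebras — and in identifying that $(p,q)$-Markov is a \emph{linear} graph polynomial, so that sums of independent Markov matrices are again $(p,q)$-Markov matrices built from a sum of independent Wigners. The rest is bookkeeping: the scaling exponent $\#(E)/2$ is always integral on the support of the LTD because only colored double trees contribute, and the edgewise factorization of $\sqrt{2}$ through the substitution is automatic from linearity. The main subtlety to watch is precisely this propagation of scaling through substitution, which could be obscured if one tried to work directly with the Wigner test graphs obtained after expanding each $\mbf{M}$-edge; staying at the level of $\mbf{M}$-test graphs avoids this issue entirely.
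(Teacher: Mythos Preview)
Your argument is correct. The reduction to asymptotic traffic independence of the Wigner matrices and the transfer to the generated traffic algebras via Remark~\ref{rem2.5.4} is exactly the paper's route, and it dispenses with that part of the lemma in the same sentence.

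Your stability argument, however, differs from the paper's. The paper invokes the \emph{instantiation} technique: by universality of the Wigner LTD (Proposition~\ref{prop3.1.2}), one may compute $\nu$ using any convenient realization, and choosing the Gaussian one $\mbf{X}_n^{(\ell)}(i,j)\sim\gn(0,\indc{i\neq j})$ gives the exact finite-$n$ distributional identity $\frac{1}{\sqrt{k}}\sum_{\ell=1}^k\salg{M}_n^{(\ell)}\deq\salg{M}_n^{(1)}$. Passing to the limit and taking $k=2$ yields stability immediately, with no combinatorics. You instead argue directly: observing that $\mbf{W}_n^{(1)}+\mbf{W}_n^{(2)}$ has the LTD of $\sqrt{2}\,\mbf{W}_n$ (either by your edge-coloring count or, more simply, because the sum is literally $\sqrt{2}$ times a normalized Wigner matrix), and then pushing the scalar through the linear substitution $\mbf{W}\mapsto p\mbf{W}+q\deg(\mbf{W})$ to obtain the edgewise factor $2^{\#(E)/2}$. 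The paper's Gaussian trick is shorter and more conceptual; your approach is more hands-on but has the virtue of working entirely at the level of the limiting traffic space without singling out a particular entry law, and it makes transparent exactly where the linearity of the $(p,q)$-Markov operation enters. Both land on $c=2$.
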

\begin{proof}
We need only to prove the stability of the limiting traffic distribution $\nu = \lim_{n \to \infty} \tau_{\salg{M}_n^{(1)}}$ as the rest follows from Proposition \hyperref[prop3.1.2]{3.1.2} and Remark \hyperref[rem2.5.4]{2.5.4}. To this end, we model the limit of our matrices $(\salg{M}_n^{(\ell)} : \ell \in \N)$ within the traffic space $(\ssgp{x}, \tau)$, where
\[
\tau = *_{\ell \in \N} \bigg(\lim_{n \to \infty} \tau_{\mbf{W}_n^{(\ell)}}\bigg) \quad\text{and}\quad x_\ell = x_\ell^*.
\]
By the universality of \eqref{eq:3.5}, the traffic state $\tau$ does not depend on the particular choice of Wigner matrices $\mbf{W}_n^{(\ell)}$. We single out the Gaussian realization $\mbf{X}_n^{(\ell)}(i,j) \deq \gn(0, \indc{i \neq j})$ for the distributional symmetry 
\[
\salg{S}_n^{(k)} = \frac{1}{\sqrt{k}} \sum_{\ell=1}^k \salg{M}_n^{(\ell)} = \bigg(\frac{1}{\sqrt{k}}\sum_{\ell=1}^k \mbf{M}_{n, p, q}^{(\ell)}\bigg)_{p, q \in \R} \deq (\mbf{M}_{n, p, q}^{(1)})_{p, q \in \R} = \salg{M}_n^{(1)}.
\]
This in turn implies the traffic distributional equality
\begin{equation}\label{eq:3.20}
\salg{S}_n^{(k)} \stackrel{\tau_n}{=} \salg{M}_n^{(1)}.
\end{equation}

By construction, the family $\salg{S}_n^{(k)}$ converges in traffic distribution to
\[
\mbf{s}_k = \frac{1}{\sqrt{k}}\sum_{\ell=1}^k \mbf{m}_\ell = \bigg(\frac{1}{\sqrt{k}}\sum_{\ell=1}^k pt_{x_\ell} + \frac{q}{2}\text{row}(t_{x_\ell}) + \frac{q}{2}\text{col}(t_{x_\ell})\bigg)_{p, q \in \R} \subset (\ssgp{x}, \tau).
\]
Passing to the limit, \eqref{eq:3.20} becomes
\begin{equation}\label{eq:3.21}
\mbf{s}_k \teq \mbf{m}_1 \subset \C\salg{G}\langle x_1, x_1^*\rangle.
\end{equation}
Taking $k=2$ in the above, we have that
\[
\mbf{s}_2 = \frac{1}{\sqrt{2}}(\mbf{m}_1 + \mbf{m}_2) \teq \mbf{m}_1,
\]
where $\mbf{m}_1$ and $\mbf{m}_2$ are traffic independent. We conclude that the limiting traffic distribution $\nu = \lim_{n \to \infty} \tau_{\salg{M}_n^{(1)}}$ is stable with scaling parameter $c = 2$. 
\end{proof}

\begin{cor}\label{cor3.2.3}
The ESDs $\mu(\mbf{M}_{n, p, q}^{(1)})$ converge weakly in expectation to the free convolution $\mu_{p, q} = \wsc(0, p^2) \boxplus \gn(0, q^2)$.
\end{cor}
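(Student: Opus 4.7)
The plan is to combine Lemma 3.2.2 (stability) with the traffic CLT (Theorem 2.5.6) to identify the $*$-distribution of the limiting traffic corresponding to $\mbf{M}_{n,p,q}^{(1)}$, and then transfer this back to the ESDs via the factorization (2.9). First I realize the matrices $(\mbf{M}_{n,p,q}^{(\ell)})_{\ell \in \N}$ in the limit traffic space as a sequence $(\mbf{m}_\ell)$ of identically distributed, traffic independent, self-adjoint traffics with common traffic distribution $\nu$. A direct computation gives $\varphi(\mbf{m}_1) = 0$ and $\varphi(\mbf{m}_1^2) = p^2 + q^2$, so I set $a_\ell := \mbf{m}_\ell / \sqrt{p^2+q^2}$ to obtain centered, unit-variance traffics to which Theorem 2.5.6 applies.

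The key step is the variance splitting $\alpha = \tau^0[T_1(a_1)]$ in the notation of Theorem 2.5.6, which I compute at the matrix level and pass to the limit. For the two-vertex/two-edge graph $T_1$ evaluated on $\mbf{M} = p\mbf{W}+q\mbf{D}$, the injective constraint $\phi(1)\neq\phi(2)$ kills every $\mbf{D}$ (diagonal) contribution, leaving only $p^2\,\mbf{W}(i,j)\mbf{W}(j,i)$ terms, so that $\tau^0_n[T_1(\mbf{M})] \to p^2$. For the one-vertex/two-loop graph $T_2$, only diagonal entries contribute: the $(p+q)\mbf{W}(i,i)$ part is $O(n^{-1/2})$ and washes out, while the $q\sum_{j\neq i}\mbf{W}(i,j)$ part contributes $q^2$ by independence of the $\mbf{W}(i,j)$. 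After normalizing by $p^2+q^2$, I obtain $\alpha = p^2/(p^2+q^2)$ and $1-\alpha = q^2/(p^2+q^2)$, exactly as needed.

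Theorem 2.5.6 then says that $s_k := k^{-1/2}\sum_{\ell=1}^k a_\ell$ converges in distribution to $\wsc(0,p^2/(p^2+q^2)) \boxplus \gn(0,q^2/(p^2+q^2))$. Stability (equation (3.21) in the proof of Lemma 3.2.2) gives $s_k \teq a_1$ for every $k$, and because the $*$-distribution factors through the traffic distribution by diagram (2.9), $a_1$ has the same $*$-distribution as the CLT limit. Rescaling by $\sqrt{p^2+q^2}$ yields that $\mbf{m}_{p,q}$ has $*$-distribution $\wsc(0,p^2)\boxplus\gn(0,q^2) = \mu_{p,q}$.

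To conclude, traffic convergence of $\mbf{M}_{n,p,q}^{(1)}$ to $\mbf{m}_{p,q}$ implies $\E[n^{-1}\trace((\mbf{M}_{n,p,q}^{(1)})^k)] \to \int t^k\,d\mu_{p,q}(t)$ for every $k\in\N$. Since $\mu_{p,q}$ is moment-determinate (compactly supported when $q=0$; sub-Gaussian tails otherwise, via the free convolution with $\gn(0,q^2)$), moment convergence upgrades to weak convergence of the expected ESDs $\E[\mu(\mbf{M}_{n,p,q}^{(1)})]$ to $\mu_{p,q}$. I expect the main delicate point to be the variance-splitting computation: one must carefully see that loops (encoding diagonal $\mbf{D}$-contributions) collect the $q^2$ mass while non-loop twin edges (encoding off-diagonal $\mbf{W}$-contributions) collect the $p^2$ mass, this being precisely what produces the prescribed interpolation parameter $\alpha$.
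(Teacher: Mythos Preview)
Your proposal is correct and follows essentially the same route as the paper: realize the $(p,q)$-Markov matrices in the limit traffic space, use the stability identity $s_k \teq a_1$ from Lemma~3.2.2, compute the variance splitting $\tau^0[T_1]=p^2$ and $\tau^0[T_2]=q^2$ at the matrix level, apply the traffic CLT, and conclude via moment determinacy of $\mu_{p,q}$. The only cosmetic difference is that the paper normalizes to $p^2+q^2=1$ at the outset rather than dividing by $\sqrt{p^2+q^2}$ midstream, and it cites Proposition~A.3 of \cite{BDJ06} for moment determinacy rather than invoking sub-Gaussian tails.
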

\begin{proof}
It suffices to prove the result for $p, q \in \R$ of the form $p^2 + q^2 = 1$. Proposition A.3 in \cite{BDJ06} shows that the free convolution $\mu_{1, 1}$ is determined by its moments: the same argument applies wholesale to the family of free convolutions $(\mu_{p, q})_{p, q \in \R}$. We may thus proceed by the method of moments.

Using the same notation as before, we know that $\mbf{M}_{n, p, q}^{(\ell)}$ converges in traffic distribution to the self-adjoint traffic
\[
a_{p,q}^{(\ell)} = pt_{x_\ell} + \frac{q}{2}\text{row}(t_{x_\ell}) + \frac{q}{2}\text{col}(t_{x_\ell}) \in \mbf{m}_\ell \subset \C\salg{G}\langle x_\ell, x_\ell^*\rangle \subset (\ssgp{x}, \tau).
\]
This reduces the problem to showing that the moments of $a_{p, q}^{(1)}$ match those of $\mu_{p, q}$.

Now, note that a special case of \eqref{eq:3.21} implies that
\begin{equation}\label{eq:3.22}
s_{p, q}^{(k)} = \frac{1}{\sqrt{k}}\sum_{\ell=1}^k a_{p, q}^{(\ell)} \teq a_{p, q}^{(1)}.
\end{equation}
We calculate the mean and variance of $a_{p, q}^{(\ell)}$ using the same Gaussian realization as before: 
\[
\varphi(a_{p, q}^{(\ell)}) = \lim_{n \to \infty} \E\bigg[\frac{1}{n}\trace(\mbf{M}_{n, p, q}^{(\ell)})\bigg] = \lim_{n \to \infty} \E\bigg[\frac{1}{n}\sum_{j=1}^n \bigg(q\sum_{k \neq j}^n \frac{\mbf{X}_n^{(\ell)}(j, k)}{\sqrt{n}}\bigg)\bigg] = 0
\]
and
\[
\varphi((a_{p, q}^{(\ell)})^2) = \tau\big[T_1(a_{p, q}^{(\ell)})\big] = \tau^0\big[T_1(a_{p, q}^{(\ell)})\big] + \tau^0\big[T_2(a_{p, q}^{(\ell)})\big],
\]
where $T_1$ and $T_2$ are as in the statement of the traffic CLT (Theorem \hyperref[thm2.5.6]{2.5.6}). A straightforward calculation then shows that
\[
\tau^0\big[T_1(a_{p, q}^{(\ell)})\big] = \lim_{n \to \infty} \tau_n^0\big[T_1(\mbf{M}_{n, p, q}^{(\ell)})\big] = \lim_{n \to \infty} \frac{1}{n}\sum_{j \neq k}^n \E\big[\mbf{M}_{n, p, q}(j, k)^2\big] = \lim_{n \to \infty} \frac{1}{n}n(n-1)\frac{p^2}{n} = p^2
\]
and
\begin{alignat*}{2}
\tau^0\big[T_2(a_{p, q}^{(\ell)})\big] &= \lim_{n \to \infty} \tau_n^0\big[T_2(\mbf{M}_{n, p, q}^{(\ell)})\big] &&= \lim_{n \to \infty} \frac{1}{n}\sum_{j=1}^n \E\big[\mbf{M}^{(\ell)}_{n, p, q}(j,j)^2\big] \\
&= \lim_{n \to \infty} \E\big[\mbf{M}^{(\ell)}_{n, p, q}(1,1)^2\big] &&= \lim_{n \to \infty} \E\bigg[\bigg(q\sum_{j=2}^n\frac{\mbf{X}_n^{(\ell)}(1, j)}{\sqrt{n}}\bigg)^2\bigg] = \lim_{n\to\infty} (n-1)\frac{q^2}{n} = q^2.
\end{alignat*}
Combining \eqref{eq:3.22} with the traffic CLT, we obtain the distributional identity
\[
a_{p, q}^{(1)} \teq s_{p, q}^{(k)} \dto \mu_{p, q} = \wsc(0, p^2) \boxplus \gn(0, q^2) \quad \text{as} \quad k \to \infty,
\]
as was to be shown.
\end{proof}

Taking $(p, q) = (1, -1)$ in the above, we recover the special case of the Markov matrices in \cite{BDJ06}. Corollary \hyperref[cor3.2.3]{3.2.3} explains this convergence in the context of traffic probability, but it also suggests a far more natural free probabilistic interpretation, namely, the asymptotic freeness of $\mbf{W}_n^{(1)}$ and $\mbf{D}_n^{(1)}$. Note that if $\mbf{W}_n^{(1)}$ and $\mbf{W}_n^{(2)}$ are normalized GOE matrices, then the standard techniques apply to show that the \emph{independent} matrices $\mbf{W}_n^{(1)}$ and $\mbf{D}_n^{(2)}$ are asymptotically free \cite{Voi91}; however, in our case, the matrix $\mbf{D}_n^{(1)}$ is completely determined by $\mbf{W}_n^{(1)}$. Nevertheless, one can work directly with the LTD \eqref{eq:3.5} of the Wigner matrices to show that the pairs $(\mbf{W}_n^{(1)}, \mbf{D}_n^{(1)})$ and $(\mbf{W}_n^{(1)}, \mbf{D}_n^{(2)})$ have the same limiting joint distribution, which proves the expected result. Instead, we defer to \cite{AM17}, wherein this convergence follows from the given free product decomposition of the universal enveloping traffic space.

For convenience, we restricted our attention to real Wigner matrices. One can easily adapt the argument to complex Wigner matrices of a real parameter $\beta_\ell \in \R$ by finding an appropriate complex Gaussian realization. In this case, we must take care to choose an analogue of the degree matrix $\mbf{D}_n^{(\ell)}$ to ensure that we have a self-adjoint traffic (in particular, we can use the second equality in \eqref{eq:3.19} so that $\mbf{D}_n^{(\ell)}$ now averages the row sums with the column sums). We leave the relatively straightforward details to the interested reader.

\subsection{Concentration inequalities}\label{sec3.3}

For a test graph $T = (V, E, \gamma) \in \salg{T}\langle\mbf{x}\rangle$, we recall the random variable \eqref{eq:2.8}:
\[
\trace\big[T(\salg{W}_n)\big] = \sum_{\phi: V \to [n]} \prod_{e \in E} (\mbf{W}_n^{(\gamma(e))})(\phi(e)).
\]

For natural reasons, we are interested in bounding the deviation of $\trace\big[T(\salg{W}_n)\big]$ from its mean. In particular, we would like to emulate the usual approach for the Wigner matrices to show that the variance $\var(\frac{1}{n}\trace\big[T(\salg{W}_n)\big]) = O_T(n^{-2})$, which would allow us to upgrade the convergence in Proposition \hyperref[prop3.1.2]{3.1.2} to the almost sure sense. It turns out that this approach will not work in general, but it will be instructive to see just how it falls short.

For notational convenience, we consider instead the deviation of $\trace\big[T(\salg{X}_n)\big]$ (recall that $\salg{X}_n = \sqrt{n}\salg{W}_n$ are the unnormalized Wigner matrices). To begin,
\begin{align}
\notag \var(\trace\big[T(\salg{X}_n)\big]) &= \E\bigg[\bigg|\trace\big[T(\salg{X}_n)]- \E \, \trace\big[T(\salg{X}_n)\big]\bigg|^2\bigg] \\
\notag &= \E\bigg[\bigg(\trace\big[T(\salg{X}_n)]- \E \, \trace\big[T(\salg{X}_n)\big]\bigg)\overline{\bigg(\trace\big[T(\salg{X}_n)]- \E \, \trace\big[T(\salg{X}_n)\big]\bigg)}\bigg] \\
&= \sum_{\phi_1, \phi_2: V \to [n]} \E\bigg[\prod_{\ell=1}^2 \bigg(\prod_{e \in E} \mbf{X}_{n, \ell}^{(\gamma(e))}(\phi_\ell(e)) - \E \prod_{e \in E} \mbf{X}_{n, \ell}^{(\gamma(e))}(\phi_\ell(e)) \bigg)\bigg], \label{eq:3.23}
\end{align}
where
\begin{equation}\label{eq:3.24}
\mbf{X}_{n, \ell}^{(i)}(j, k) =
\begin{cases}
\mbf{X}_n^{(i)}(j, k) & \text{ if } \ell = 1, \\
\mbf{X}_n^{(i)}(k, j) & \text{ if } \ell = 2.
\end{cases}
\end{equation}
We again make use of our strong moment assumption \eqref{eq:3.1}, this time to bound our summands uniformly in $\phi_1$, $\phi_2$, and $n$. In particular, our bound only depends on $T$, i.e.,
\begin{equation}\label{eq:3.25}
\E\bigg[\prod_{\ell=1}^2 \bigg(\prod_{e \in E} \mbf{X}_{n, \ell}^{(\gamma(e))}(\phi_\ell(e)) - \E \prod_{e \in E} \mbf{X}_{n, \ell}^{(\gamma(e))}(\phi_\ell(e)) \bigg)\bigg] \leq C_T < \infty.
\end{equation}

We are then interested in the number of pairs $(\phi_1, \phi_2)$ that actually contribute in \eqref{eq:3.23} (i.e., such that the summand \eqref{eq:3.25} is nonzero). To this end, note that the maps $\phi_\ell$ induce maps $\wtilde{\phi}_\ell : E \to \{\{a, b\} : a, b \in [n]\}$, where
\[
e \mapsto \{\phi_\ell(\source(e)), \phi_\ell(\target(e))\}.
\]
In particular, if $\wtilde{\phi}_1(E) \cap \wtilde{\phi}_2(E) = \emptyset$, then the independence of the $\mbf{X}_n^{(i)}(j, k)$ implies that the outermost product of \eqref{eq:3.25} factors over the expectation, resulting in a zero summand. Thus, we need only to consider so-called \emph{edge-matched} pairs $(\phi_1, \phi_2)$. For our purposes, it will be convenient to incorporate the data of such a pair into the graph $T$ itself.

For a pair $(\phi_1, \phi_2)$, we construct a new graph $T_{\phi_1 \sqcup \phi_2}$ by considering two disjoint copies $T_1$ and $T_2$ of $T$ (associated to $\phi_1$ and $\phi_2$ respectively), reversing the direction of the edges of $T_2$, and then identifying the vertices according to their images under the maps $\phi_1$ and $\phi_2$; formally, the vertices of $T_{\phi_1 \sqcup \phi_2}$ are then given by
\[
V_{\phi_1 \sqcup \phi_2} = (\phi_1^{-1}(m) \cup \phi_2^{-1}(m) : m \in [n]).
\]
An edge match between $\phi_1$ and $\phi_2$ then corresponds to an overlay of edges, though not necessarily in the same direction. Note that
\[
(\phi_1, \phi_2) \text{ is edge-matched} \quad \Longrightarrow \quad T_{\phi_1 \sqcup \phi_2} \text{ is connected}.
\]

\begin{center}
\begingroup%
  \makeatletter%
  \providecommand\color[2][]{%
    \errmessage{(Inkscape) Color is used for the text in Inkscape, but the package 'color.sty' is not loaded}%
    \renewcommand\color[2][]{}%
  }%
  \providecommand\transparent[1]{%
    \errmessage{(Inkscape) Transparency is used (non-zero) for the text in Inkscape, but the package 'transparent.sty' is not loaded}%
    \renewcommand\transparent[1]{}%
  }%
  \providecommand\rotatebox[2]{#2}%
  \ifx\svgwidth\undefined%
    \setlength{\unitlength}{468bp}%
    \ifx\svgscale\undefined%
      \relax%
    \else%
      \setlength{\unitlength}{\unitlength * \real{\svgscale}}%
    \fi%
  \else%
    \setlength{\unitlength}{\svgwidth}%
  \fi%
  \global\let\svgwidth\undefined%
  \global\let\svgscale\undefined%
  \makeatother%
  \begin{picture}(1,0.48076923)%
    \put(0,0){\includegraphics[width=\unitlength,page=1]{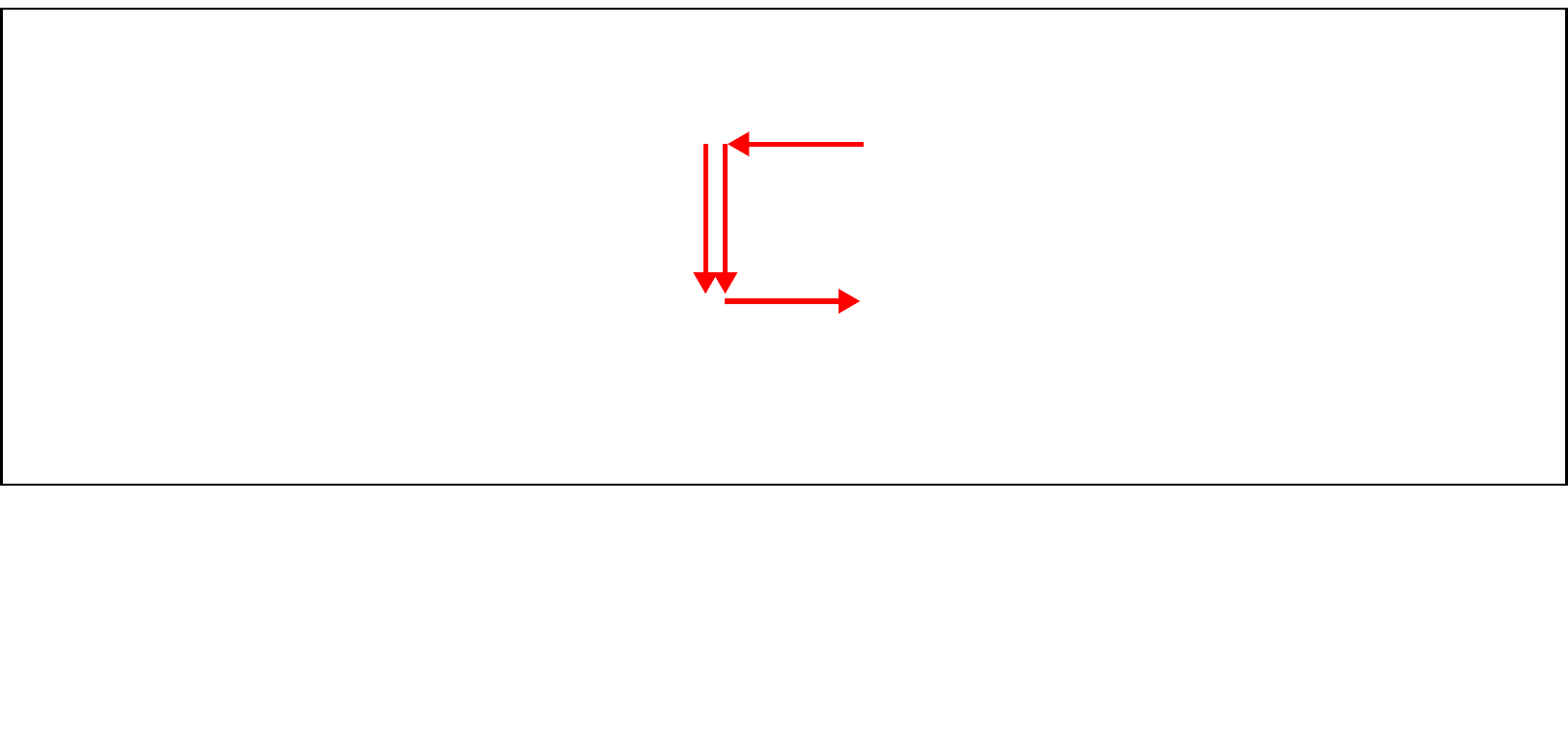}}%
    \put(-0.00141487,0.15726613){\color[rgb]{0,0,0}\makebox(0,0)[lt]{\begin{minipage}{0.99822205\unitlength}\raggedright Figure 19: An example of the construction of the graph $T_{\phi_1 \sqcup \phi_2}$ for an edge-matched pair $(\phi_1, \phi_2)$. Here, we omit the edge labels to emphasize the vertex labels $\phi_\ell(v)$. In this case, we use different colors for the edges of the two copies $T_1$ and $T_2$ of $T$ to keep track of their origins in the new graph $T_{\phi_1 \sqcup \phi_2}$. Recall that we reverse the direction of the edges of the second copy $T_2$ before identifying the vertices. \end{minipage}}}%
    \put(0.12170718,1.73336406){\color[rgb]{0,0,0}\makebox(0,0)[lt]{\begin{minipage}{0.11106844\unitlength}\raggedright \end{minipage}}}%
    \put(0,0){\includegraphics[width=\unitlength,page=2]{fig19_sqcup.pdf}}%
    \put(0.15489565,0.19174848){\color[rgb]{0,0,0}\makebox(0,0)[lb]{\smash{$(T_1, \phi_1)$}}}%
    \put(0.45959597,0.19174807){\color[rgb]{0,0,0}\makebox(0,0)[lb]{\smash{$(T_2, \phi_2)$}}}%
    \put(0.71915848,0.19174807){\color[rgb]{0,0,0}\makebox(0,0)[lb]{\smash{$(T_{\phi_1 \sqcup \phi_2}, \phi_1 \sqcup \phi_2)$}}}%
    \put(0,0){\includegraphics[width=\unitlength,page=3]{fig19_sqcup.pdf}}%
    \put(0.13882994,0.41122602){\color[rgb]{0,0,0}\makebox(0,0)[lb]{\smash{$1$}}}%
    \put(0.23899019,0.41122602){\color[rgb]{0,0,0}\makebox(0,0)[lb]{\smash{$2$}}}%
    \put(0.23899019,0.24932697){\color[rgb]{0,0,0}\makebox(0,0)[lb]{\smash{$4$}}}%
    \put(0.13882994,0.24932697){\color[rgb]{0,0,0}\makebox(0,0)[lb]{\smash{$3$}}}%
    \put(0.5499806,0.41122602){\color[rgb]{0,0,0}\makebox(0,0)[lb]{\smash{$1$}}}%
    \put(0.44982033,0.41122602){\color[rgb]{0,0,0}\makebox(0,0)[lb]{\smash{$2$}}}%
    \put(0.5499806,0.24932697){\color[rgb]{0,0,0}\makebox(0,0)[lb]{\smash{$5$}}}%
    \put(0.44982033,0.24932697){\color[rgb]{0,0,0}\makebox(0,0)[lb]{\smash{$3$}}}%
    \put(0,0){\includegraphics[width=\unitlength,page=4]{fig19_sqcup.pdf}}%
    \put(0.71915848,0.41828138){\color[rgb]{0,0,0}\makebox(0,0)[lb]{\smash{$1$}}}%
    \put(0.82774661,0.33154019){\color[rgb]{0,0,0}\makebox(0,0)[lb]{\smash{$3$}}}%
    \put(0.71915848,0.24477646){\color[rgb]{0,0,0}\makebox(0,0)[lb]{\smash{$4$}}}%
    \put(0.87781229,0.2447944){\color[rgb]{0,0,0}\makebox(0,0)[lb]{\smash{$5$}}}%
    \put(0.87781229,0.41827397){\color[rgb]{0,0,0}\makebox(0,0)[lb]{\smash{$2$}}}%
    \put(0,0){\includegraphics[width=\unitlength,page=5]{fig19_sqcup.pdf}}%
  \end{picture}%
\endgroup%

\end{center}

The sum over the set of edge-matched pairs $(\phi_1,\phi_2)$ can then be decomposed into a double sum: the first, over the set $\salg{S}_T$ of connected graphs $T_\sqcup = (V_\sqcup, E_\sqcup, \gamma_\sqcup)$ obtained by gluing the vertices of two disjoint copies of $T$ with at least one edge overlay (we reverse the direction of the edges of the second copy beforehand, and we keep track of the origin of the edges $E_\sqcup = E_\sqcup^{(1)} \sqcup E_\sqcup^{(2)}$); the second, over the set of injective labelings $\phi_\sqcup: V_\sqcup \hookrightarrow [n]$ of the vertices of $T_\sqcup$. We may then recast \eqref{eq:3.23} as
\begin{equation}\label{eq:3.26}
\sum_{T_\sqcup \in \salg{S}_T} \sum_{\phi_\sqcup: V_\sqcup \hookrightarrow [n]} \E\bigg[\prod_{\ell=1}^2 \bigg(\prod_{e \in E_\sqcup^{(\ell)}} \mbf{X}_n^{(\gamma_\sqcup(e))}(\phi_\sqcup(e)) - \E \prod_{e \in E_\sqcup^{(\ell)}} \mbf{X}_n^{(\gamma_\sqcup(e))}(\phi_\sqcup(e)) \bigg)\bigg]. 
\end{equation}
We defined $\salg{S}_T$ by reversing the direction of the edges of the second copy of $T$ before gluing in order to write \eqref{eq:3.26} without reference to the transposes \eqref{eq:3.24}. Moreover, by keeping track of the origin of the edges, we ensure that $\salg{S}_T$ does not conflate otherwise isomorphic graphs, and so guaranteeing a faithful reconstruction of \eqref{eq:3.23} from \eqref{eq:3.26}. The set $\salg{S}_T$ is of course a finite set whose size only depends on $T$. 

We consider a generic $T_\sqcup \in \salg{S}_T$, iterating the proof of Proposition \hyperref[prop3.1.2]{3.1.2}. We decompose the set of edges $E_\sqcup = L_\sqcup \cup N_\sqcup$ as before, and the same for $\wtilde{E}_\sqcup = \wtilde{L}_\sqcup \cup \wtilde{N}_{\sqcup}$ (recall that $\wtilde{E}_\sqcup$ denotes the set of equivalence classes in $E_\sqcup$). Suppose that there exists a lone edge $e_0 \in [e] \in \wtilde{N}_\sqcup$ with the label $\gamma(e_0) = i_0 \in I$ so that
\[
\gamma(e') \neq \gamma(e_0), \qquad \forall e' \in [e]\setminus\{e_0\}.
\]
Without loss of generality, we may assume that $e_0 \in E_\sqcup^{(1)}$. We write
\[
P_\ell = \prod_{e \in E_\sqcup^{(\ell)}} \mbf{X}_n^{(\gamma_\sqcup(e))}(\phi_\sqcup(e)) \quad \text{and} \quad P_1^{(0)} = \prod_{e \in E_\sqcup^{(1)}\setminus\{e_0\}} \mbf{X}_n^{(\gamma_\sqcup(e))}(\phi_\sqcup(e)).
\]
The independence of the centered random variables $\mbf{X}_n^{(i)}(j, k)$ and the injectivity of the maps $\phi_\sqcup$ imply that
\begin{align*}
\E\big[(P_1 -\E P_1)(P_2 - \E P_2)\big] &= \E\big[(\mbf{X}_n^{(\gamma_\sqcup(e_0))}(\phi_\sqcup(e_0))P_1^{(0)} - \E\mbf{X}_n^{(\gamma_\sqcup(e_0))}(\phi_\sqcup(e_0))\E P_1^{(0)})(P_2 - \E P_2)\big] \\
&= \E\big[\mbf{X}_n^{(\gamma_\sqcup(e_0))}(\phi_\sqcup(e_0))]\E\big[(P_1^{(0)} - \E P_1^{(0)})(P_2 - \E P_2)\big] \\
&= 0.
\end{align*}
Thus, for $T_\sqcup \in \salg{S}_T$ to contribute, each label $i \in I$ present in a class $[e] \in \wtilde{N}_\sqcup$ must occur with multiplicity
\begin{equation}\label{eq:3.27}
m_{i, [e]} \geq 2.
\end{equation}
This in turn implies that
\begin{equation}\label{eq:3.28}
\#(N_\sqcup) \geq 2\#(\wtilde{N}_\sqcup).
\end{equation}

As before, the underlying simple graph $\underline{T_\sqcup}  = (V_\sqcup, \wtilde{N}_\sqcup)$ is still connected, whence
\begin{equation}\label{eq:3.29}
\#(\wtilde{N}_\sqcup) + 1 \geq \#(V_\sqcup).
\end{equation}
Of course, we also have the inherent bound
\begin{equation}\label{eq:3.30}
\#(N_\sqcup) \leq \#(E_\sqcup) = 2\#(E).
\end{equation}
Recalling the uniform bound \eqref{eq:3.25}, we arrive at the asymptotic
\begin{equation}\label{eq:3.31}
\text{Var}(\trace\big[T(\salg{X}_n)\big]) = O_T(n^{\max\{\#(V_\sqcup) : T_\sqcup \in \salg{S}_T\}}) \leq O_T(n^{\#(E) + 1}),
\end{equation}
or, equivalently,
\begin{equation}\label{eq:3.32}
\text{Var}\bigg(\frac{1}{n}\trace\big[T(\salg{W}_n)\big]\bigg) = O_T(n^{-1}),
\end{equation}
falling short of our goal. Of course, one might hope that we were overly generous in our bounds and that equality in
\begin{equation}\label{eq:3.33}
\max\{{\#(V_\sqcup) : T_\sqcup \in \salg{S}_T\}} \leq \#(E) + 1
\end{equation}
is not attainable in practice. In fact, in the usual situation of traces of powers
\begin{equation}\label{eq:3.34}
\trace\big[T(\salg{W}_n)\big] = \trace((\mbf{W}_n^{(i(1))})^{\ell_1} \cdots (\mbf{W}_n^{(i(m))})^{\ell_m} ),
\end{equation}
this is indeed the case; however, in general, \eqref{eq:3.31} is tight. In particular, note that if we start with a tree $T$, we can overlay two disjoint copies $T_1$ and $T_2$ of $T$, the second with reversed edges, to obtain an opposing colored double tree $T_\sqcup$. In this case, we have equality in \eqref{eq:3.27}-\eqref{eq:3.30}. Proposition \hyperref[prop3.1.2]{3.1.2} then shows that the contribution of $T_\sqcup$ in \eqref{eq:3.26} is $\Theta(n^{\#(E) + 1})$.

\begin{center}
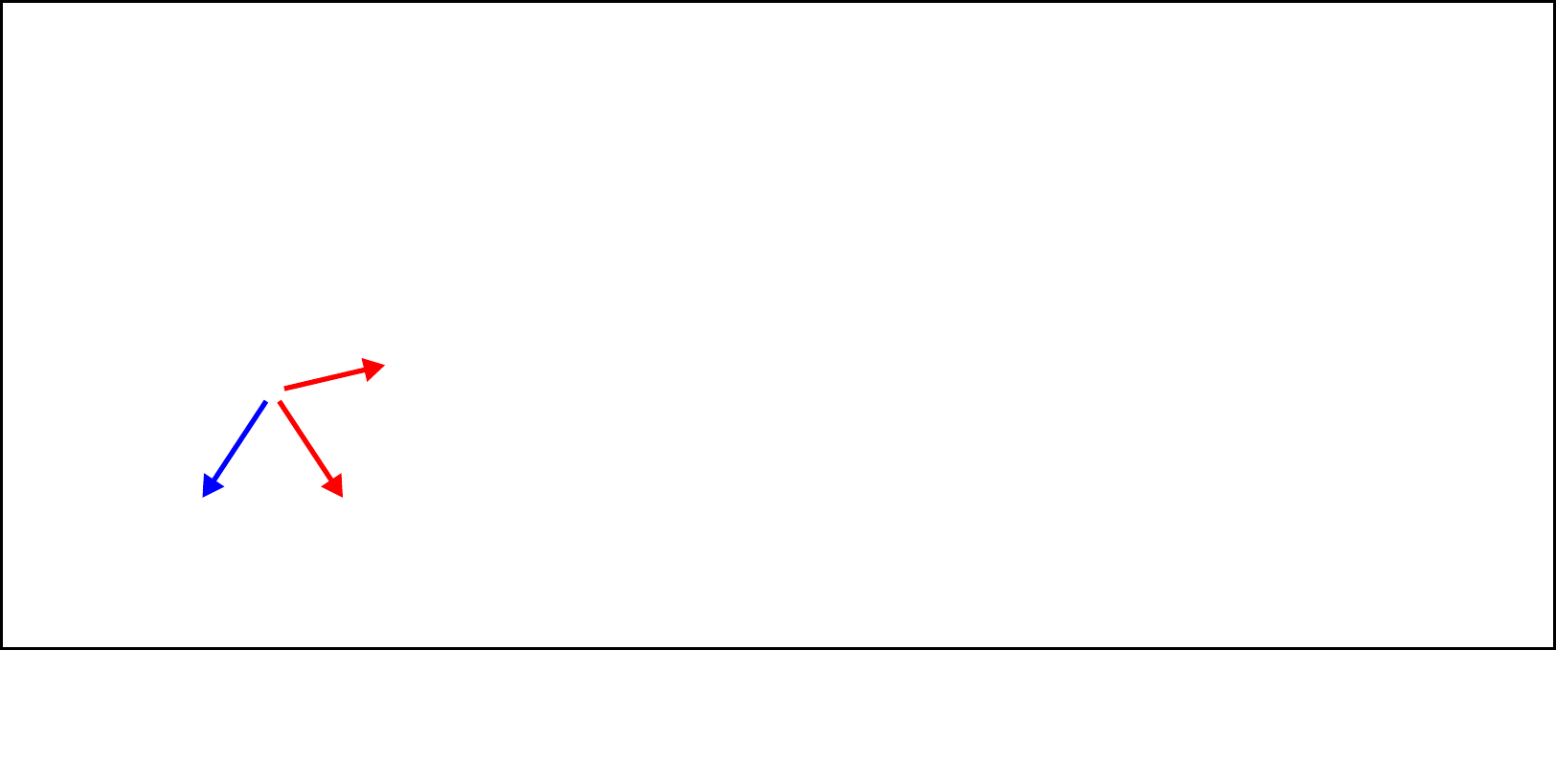
\end{center}

Working backwards, we identify the worst case scenario: for \eqref{eq:3.27}-\eqref{eq:3.30} to hold with equality, we need to glue (not necessarily overlay) disjoint copies $T_1$ and $T_2$ of $T$ with at least one edge overlay to obtain a colored double tree $T_\sqcup$ (though $T$ itself need not be a tree in general). In the classical case \eqref{eq:3.34}, $T$ corresponds to a cycle of length $\ell_1 + \cdots + \ell_m$ and such a gluing does not exist: starting with an edge overlay between two copies of the cycle, we obtain a butterfly as in Figure \hyperref[fig18_cycle]{18}, leading to a strict inequality in \eqref{eq:3.33} (and hence the usual asymptotic $O(n^{-2})$ in place of \eqref{eq:3.32}).

The careful reader will notice that we have actually proven a stronger result in the presence of loops $L \neq \emptyset$: in place of \eqref{eq:3.30}, we can instead use the tighter bound
\[
\#(N_\sqcup) \leq 2\#(N).
\]
We summarize our findings thus far.

\begin{lemma}\label{lemma3.3.1}
For a family of Wigner matrices $\salg{X}_n = (\mbf{X}_n^{(i)})_{i \in I}$, we have the asymptotic
\[
\var(\emph{tr}\big[T(\salg{X}_n)\big]) = O_T(n^{\#(N) + 1}), \qquad \forall T \in \salg{T}\langle\mbf{x}\rangle.
\]
The bound is tight in the sense that there exist test graphs $T$ in $\mbf{x}$ with
\[
\var(\emph{tr}\big[T(\salg{X}_n)\big]) = \Theta_T(n^{\#(N) + 1}).
\]
\end{lemma}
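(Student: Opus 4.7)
The plan is to formalize the edge-matching argument laid out in the preceding discussion into a proof that maximizes the exponent of $n$ contributed by any gluing of two copies of $T$. Writing $T = (V, E, \gamma)$, I would first expand the variance as a double sum over pairs of labelings $\phi_1, \phi_2: V \to [n]$, use our strong moment assumption \eqref{eq:3.1} to extract a uniform constant $C_T$ on the summand, and then invoke independence of the centered entries to eliminate all but the \emph{edge-matched} pairs. The standard encoding sends each such pair to a connected quotient graph $T_\sqcup = (V_\sqcup, E_\sqcup, \gamma_\sqcup) \in \salg{S}_T$, formed from two disjoint copies of $T$ (with the second copy's edges reversed and an edge-origin decoration retained so as not to conflate isomorphism classes). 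Since $\salg{S}_T$ depends only on $T$, the variance reduces to
\[
\var(\trace\big[T(\salg{X}_n)\big]) = \sum_{T_\sqcup \in \salg{S}_T} O_T(n^{\#(V_\sqcup)}),
\]
so it suffices to bound $\max \{\#(V_\sqcup) : T_\sqcup \in \salg{S}_T\}$.

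To do so, I would apply the same centering-vs-independence argument at the level of $T_\sqcup$: each non-loop equivalence class $[e] \in \wtilde{N}_\sqcup$ must see each label it uses with multiplicity at least two, whence $\#(N_\sqcup) \geq 2\#(\wtilde{N}_\sqcup)$. Connectedness of the underlying simple graph $\underline{T_\sqcup}$ gives $\#(\wtilde{N}_\sqcup) \geq \#(V_\sqcup) - 1$. Finally, since $T_\sqcup$ is assembled from two copies of $T$, every non-loop edge of $T_\sqcup$ descends from a non-loop edge of $T$, yielding $\#(N_\sqcup) \leq 2\#(N)$. Chaining these three inequalities produces $\#(V_\sqcup) \leq \#(N) + 1$, which is the desired upper bound $O_T(n^{\#(N)+1})$.

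For tightness, I would exhibit a single test graph where all three inequalities simultaneously saturate. The natural candidate is any tree $T$ with no loops: overlay two copies of $T$ after reversing the direction of the edges of the second copy, producing an opposing colored double tree $T_\sqcup$ (see Figure 20 in the excerpt). Each of the three inequalities above becomes an equality for this $T_\sqcup$, so $\#(V_\sqcup) = \#(N) + 1$. Moreover, since $T_\sqcup$ is a colored double tree, Proposition \hyperref[prop3.1.2]{3.1.2} guarantees that its contribution to \eqref{eq:3.26} is nondegenerate, namely $\Theta_T(n^{\#(N)+1})$. Since no cancellation can occur between different $T_\sqcup \in \salg{S}_T$ of differing orders, this contribution is not wiped out by the remaining terms, certifying the matching lower bound $\Theta_T(n^{\#(N)+1})$.

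The main technical obstacle is simply keeping the bookkeeping honest: the origin decoration on edges of $T_\sqcup$ must be preserved when enumerating $\salg{S}_T$ so that \eqref{eq:3.26} is a faithful rewrite of \eqref{eq:3.23}, and the nonvanishing of the contribution from the opposing overlay relies on the colored double tree having only opposing twin edges (so no $\beta_i$ factors risk producing accidental cancellation). Both are addressed directly by the construction, so the proof is essentially just the careful execution of the sketch above.
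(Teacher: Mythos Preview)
Your proposal is correct and follows essentially the same approach as the paper: the paper's proof of Lemma 3.3.1 is precisely the edge-matching argument in the discussion preceding the lemma, with the same chain of inequalities $\#(N_\sqcup) \geq 2\#(\wtilde{N}_\sqcup) \geq 2(\#(V_\sqcup)-1)$ together with $\#(N_\sqcup) \leq 2\#(N)$, and the same tightness witness (overlaying a tree with its edge-reversed copy to produce an opposing colored double tree whose contribution is $\Theta(n^{\#(N)+1})$ by Proposition 3.1.2). Your remark that cancellation cannot occur across differing orders is a slight refinement of what the paper states explicitly, but otherwise the arguments coincide.
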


The colored double tree obstruction in Lemma \hyperref[lemma3.3.1]{3.3.1} ramifies into a forest of colored double trees for higher powers, but this construction remains the lone outlier (in particular, things do not get any worse). Drawing inspiration from Proposition 4.15 of \cite{BDJ06}, we prove

\begin{thm}\label{thm3.3.2}
For a family of Wigner matrices $\salg{X}_n = (\mbf{X}_n^{(i)})_{i \in I}$, we have the asymptotic
\[
\E\bigg[\bigg|\emph{tr}\big[T(\salg{X}_n)\big] - \E\,\emph{tr}\big[T(\salg{X}_n)\big]\bigg|^{2m}\bigg] = O_T(n^{m(\#(N)+1)}), \qquad \forall T \in \salg{T}\langle\mbf{x}\rangle.
\]
The bound is tight in the sense that there exist test graphs $T$ in $\mbf{x}$ with
\[
\E\bigg[\bigg|\emph{tr}\big[T(\salg{X}_n)\big] - \E\,\emph{tr}\big[T(\salg{X}_n)\big]\bigg|^{2m}\bigg] = \Theta_T(n^{m(\#(N)+1)}).
\]
\end{thm}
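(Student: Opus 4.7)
The plan is to generalize the variance analysis verbatim, replacing two copies of $T$ with $2m$ copies and then extracting the dominant contribution via a pairing argument. Expanding the $2m$-th central moment yields
\[
\sum_{\phi_1,\ldots,\phi_{2m}: V \to [n]} \E\bigg[\prod_{\ell=1}^{2m}\bigg(\prod_{e \in E}\mbf{X}_{n,\ell}^{(\gamma(e))}(\phi_\ell(e)) - \E\prod_{e \in E}\mbf{X}_{n,\ell}^{(\gamma(e))}(\phi_\ell(e))\bigg)\bigg],
\]
where the second index $\ell$ tracks which of the $2m$ copies an edge belongs to (and, as in \eqref{eq:3.24}, encodes transposition on alternating copies so that one may work with a single labeled glued graph). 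Each summand is bounded uniformly in $(\phi_\ell)$ and $n$ by our strong moment assumption \eqref{eq:3.1}. As in the variance case, I would encode the tuple $(\phi_1,\ldots,\phi_{2m})$ by the glued graph $T_\sqcup = T_{\phi_1 \sqcup \cdots \sqcup \phi_{2m}}$ built from disjoint copies of $T$ with vertices identified according to common values of the $\phi_\ell$, and decompose the sum as a double sum over a finite family $\salg{S}_T^{(2m)}$ of such abstract gluings together with injective labelings $\phi_\sqcup: V_\sqcup \hookrightarrow [n]$.

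The first key step is to rule out nonzero contributions from gluings in which some copy is centered and disconnected from the others. Expanding the centering on each factor and iterating the argument from the variance case, if the partition of the edges $E_\sqcup$ into $2m$ blocks (one per copy) admits a block whose edges are label-multiplicity-one in $T_\sqcup$, that factor of the expectation decouples and vanishes. Combinatorially, this forces the copies to partition into groups, each of size $\geq 2$, such that within each group every edge-equivalence-class $[e] \in \wtilde{N}_\sqcup$ that uses edges from that group has $m_{i,[e]} \geq 2$ for every label $i$ appearing. The second key step is to upgrade the three chain of bounds \eqref{eq:3.27}--\eqref{eq:3.30} to the $2m$-copy setting: within each group of size $g_j$, the inequalities $\#(N_\sqcup^{(j)}) \geq 2\#(\wtilde{N}_\sqcup^{(j)})$, $\#(\wtilde{N}_\sqcup^{(j)})+1 \geq \#(V_\sqcup^{(j)})$, and $\#(N_\sqcup^{(j)}) \leq g_j \#(N)$ together give $\#(V_\sqcup^{(j)}) \leq \tfrac{g_j}{2}\#(N)+1$. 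Summing over groups with $\sum_j g_j = 2m$ and optimizing over partitions, the maximum of $\sum_j \#(V_\sqcup^{(j)})$ is attained by the pairing $g_1 = \cdots = g_m = 2$, giving the bound $m(\#(N)+1)$ and hence the claimed $O_T(n^{m(\#(N)+1)})$.

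The main obstacle is precisely this pairing optimization, which requires showing rigorously that splitting the $2m$ copies into pairs dominates any coarser grouping. Concavity-type arguments alone are not quite enough, because a group of size $g_j \geq 3$ gains one ``$+1$'' but loses the additive advantage of multiple ``$+1$''s available to finer partitions; this is where the floor $+1$ term in $\#(V_\sqcup^{(j)}) \leq \tfrac{g_j}{2}\#(N)+1$ is decisive. I would verify this by a direct comparison: replacing a group of size $g_j \geq 3$ by groups of sizes $2$ and $g_j-2$ weakly increases $\sum_j(\tfrac{g_j}{2}\#(N)+1)$, and iterating reduces any partition to the all-pairs partition (provided $g_j \geq 2$ in each group, which is forced by centering). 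For tightness, it suffices to pair the $2m$ copies into $m$ pairs and repeat the overlay of two disjoint copies of a tree $T$ construction that achieved equality in Lemma \hyperref[lemma3.3.1]{3.3.1}; each such pair contributes $\Theta_T(n^{\#(N)+1})$, and the $m$ contributions multiply because the glued pairs are vertex-disjoint, yielding the matching lower bound $\Theta_T(n^{m(\#(N)+1)})$.
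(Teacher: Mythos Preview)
Your proposal is correct and follows essentially the same approach as the paper. The one organizational difference is that the paper works directly with the connected components $C_1,\ldots,C_{d_{T_\sqcup}}$ of the glued graph $T_\sqcup$: since every copy must be edge-matched with some other copy, $d_{T_\sqcup}\leq m$, and applying the variance-case inequalities componentwise gives $\#(V_\sqcup)\leq m\#(N)+d_{T_\sqcup}\leq m(\#(N)+1)$ immediately, so no explicit ``pairing optimization'' is needed. Your groups (matching equivalence classes) and the paper's components coincide up to possible vertex-only identifications across groups, which only helps the upper bound; the tightness construction via $m$ disjoint tree-overlay pairs is identical.

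One small point of phrasing: your first key step conflates two separate vanishing mechanisms. The decoupling argument (an unmatched copy gives a centered independent factor) is what forces groups of size $\geq 2$; the lone-labeled-edge argument (a class $[e]\in\wtilde{N}_\sqcup$ with some $m_{i,[e]}=1$ forces a zero factor via independence of that single entry) is what gives $m_{i,[e]}\geq 2$ and hence $\#(N_\sqcup^{(j)})\geq 2\#(\wtilde{N}_\sqcup^{(j)})$. Both are needed, and both are present in your sketch under ``iterating the argument from the variance case,'' but they should be stated as two distinct steps.
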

\begin{proof}
The concrete case of $m = 2$ contains all of the essential ideas; we encourage the reader to follow through the proof with this simpler case in mind.

To begin, we expand the absolute value as in \eqref{eq:3.23} to obtain
\begin{equation}\label{eq:3.35}
\sum_{\phi_1, \ldots, \phi_{2m}: V \to [n]} \E\bigg[\prod_{\ell=1}^{2m} \bigg(\prod_{e \in E} \mbf{X}_{n, \ell}^{(\gamma(e))}(\phi_\ell(e)) - \E \prod_{e \in E} \mbf{X}_{n, \ell}^{(\gamma(e))}(\phi_\ell(e)) \bigg)\bigg],
\end{equation}
where
\[
\mbf{X}_{n, \ell}^{(i)}(j, k) =
\begin{cases}
\mbf{X}_n^{(i)}(j, k) & \text{ if $\ell$ is odd}, \\
\mbf{X}_n^{(i)}(k, j) & \text{ if $\ell$ is even.}
\end{cases}
\]
Our strong moment assumption \eqref{eq:3.1} again ensures that we can bound the summands in \eqref{eq:3.35} uniformly in $(\phi_1, \ldots, \phi_{2m})$ and $n$ with a dependence only on $T$, i.e.,
\begin{equation}\label{eq:3.36}
\E\bigg[\prod_{\ell=1}^{2m} \bigg(\prod_{e \in E} \mbf{X}_{n, \ell}^{(\gamma(e))}(\phi_\ell(e)) - \E \prod_{e \in E} \mbf{X}_{n, \ell}^{(\gamma(e))}(\phi_\ell(e)) \bigg)\bigg] \leq C_T < \infty.
\end{equation}

We proceed to an analysis of contributing $2m$-tuples $\Phi = (\phi_1, \ldots, \phi_{2m})$. Using the same notation as before, we say that a coordinate $\phi_\ell$ in a $2m$-tuple $\Phi$ is \emph{unmatched} if
\[
\wtilde{\phi}_\ell(E) \cap \wtilde{\phi}_{\ell'}(E) = \emptyset, \qquad \forall \ell' \neq \ell.
\]
Similarly, we say that the distinct coordinates $\phi_\ell$ and $\phi_{\ell'}$ (i.e., $\ell \neq \ell'$) are \emph{matched} if
\[
\wtilde{\phi}_\ell(E) \cap \wtilde{\phi}_{\ell'}(E) \neq \emptyset.
\]
We further say that a $2m$-tuple $\Phi$ is \emph{unmatched} if it has an unmatched coordinate $\phi_\ell$; otherwise, we say that $\Phi$ is \emph{matched}.

We define an equivalence relation $\sim$ on the coordinates of $\Phi$ by matchings; thus,
\begin{equation*}
\phi_\ell \sim \phi_{\ell'} \quad \Longleftrightarrow \quad \exists \ell_1, \ldots \ell_k \in [2m]: \phi_{\ell_j} \text{ and } \phi_{\ell_{j+1}} \text{ are matched for } j = 0, \ldots, k,
\end{equation*}
where $\ell(0) = \ell$ and $\ell(k+1) = \ell'$. We write $\wtilde{\Phi}$ for the set of equivalence classes in $\Phi$, in which case \eqref{eq:3.36} becomes
\[
\prod_{[\wtilde{\phi}] \in \wtilde{\Phi}} \E\bigg[\prod_{\phi \in [\wtilde{\phi}]} \bigg(\prod_{e \in E} \mbf{X}_{n, \ell(\phi)}^{(\gamma(e))}(\phi(e)) - \E \prod_{e \in E} \mbf{X}_{n, \ell(\phi)}^{(\gamma(e))}(\phi(e)) \bigg)\bigg].
\]
For an unmatched $\Phi$, this product includes a zero term; henceforth, we only consider matched $2m$-tuples. We incorporate the data of such a tuple into the graph $T$ as before.

For a $2m$-tuple $\Phi$, we construct a new graph $T_{\sqcup \Phi}$ by considering $2m$ disjoint copies $(T_1, \ldots, T_{2m})$ of $T$ (associated to $\Phi = (\phi_1, \ldots, \phi_{2m})$ respectively), reversing the direction of the edges of $(T_2, T_4, \ldots, T_{2m})$, and then identifying the vertices according their images under the maps $\Phi$; formally, the vertices of $T_{\sqcup \Phi}$ are then given by
\[
V_{\sqcup \Phi} = (\cup_{\ell=1}^{2m} \phi_\ell^{-1}(m) : m \in [n]).
\]
Note that
\[
\Phi \text{ is matched} \quad \Longrightarrow \quad T_{\sqcup \Phi} \text{ has $\leq m$ connected components.} 
\]
The sum over the set of matched $2m$-tuples $\Phi$ can then be decomposed into a double sum: the first, over the set $\salg{S}_T$ of (not necessarily connected) graphs $T_\sqcup = (V_\sqcup, E_\sqcup, \gamma_\sqcup)$ obtained by gluing the vertices of $2m$ disjoint copies of $T$ such that each copy has at least one edge overlay with at least one other copy (we reverse the direction of the edges of the even copies beforehand, and we again keep track of the origin of the edges $E_\sqcup = E_\sqcup^{(1)} \sqcup \cdots \sqcup E_\sqcup^{(2m)}$); the second, over the set of injective labelings $\phi_\sqcup : V_\sqcup \hookrightarrow [n]$ of the vertices of $T_\sqcup$. We write $C(T_\sqcup) = \{C_1, \ldots, C_{d_{T_\sqcup}}\}$ for the set of connected components of $T_\sqcup$. We emphasize that
\begin{equation}\label{eq:3.37}
d_{T_\sqcup} \leq m.
\end{equation}

Note that the edges $E_p$ of each connected component $C_p$ consists of a union
\[
E_p = E_\sqcup^{(j_p(1))} \sqcup \cdots \sqcup E_\sqcup^{(j_p(k_p))}.
\]
We may then recast \eqref{eq:3.35} as
\begin{equation}\label{eq:3.38}
\sum_{T_\sqcup \in \salg{S}_T} \sum_{\phi_\sqcup: V_\sqcup \hookrightarrow [n]} \prod_{p=1}^{d_{T_\sqcup}} \E\bigg[\prod_{\ell=1}^{k_p} \bigg(\prod_{e \in E_\sqcup^{(j_p(\ell))}} \mbf{X}_n^{(\gamma_\sqcup(e))}(\phi_\sqcup(e)) - \E \prod_{e \in E_\sqcup^{(j_p(\ell))}} \mbf{X}_n^{(\gamma_\sqcup(e))}(\phi_\sqcup(e)) \bigg)\bigg]. 
\end{equation}

We consider a generic $T_\sqcup \in \salg{S}_T$. Note that our analysis from before applies to each of the connected components $C_p = (V_p, E_p, \gamma_p)$. In particular, using the same notation as before, we know that the components of a contributing $T_\sqcup$ must satisfy
\begin{align}
m_{i,[e]} = 0 \text{ or } m_{i,[e]} &\geq 2, \qquad \forall (i, [e]) \in I \times \wtilde{N}_p, \label{eq:3.39} \\
\#(N_p) &\geq 2\#(\wtilde{N}_p), \label{eq:3.40} \\
\#(\wtilde{N}_p) + 1 &\geq \#(V_p). \label{eq:3.41}
\end{align}
Of course, we also have the inherent (in)equalities
\begin{equation}\label{eq:3.42}
\sum_{p=1}^{d_{T_\sqcup}} \#(V_p) = \#(V_\sqcup), \qquad \sum_{p=1}^{d_{T_\sqcup}} \#(N_p) = \#(N_\sqcup) \leq 2m\#(N).
\end{equation}
Putting everything together, we arrive at the asymptotic
\[
\E\bigg[\bigg|\trace\big[T(\salg{X}_n)\big] - \E \, \trace\big[T(\salg{X}_n)\big]\bigg|^{2m}\bigg] = O_T(n^{\max\{\#(V_\sqcup) : T_\sqcup \in \salg{S}_T\}}) \leq O_T(n^{m\#(N)+d_{T_\sqcup}}) \leq O_T(n^{m(\#(N)+1)}).
\]

The tightness of our bound follows much as before. If we start with a tree $T$, we can overlay pairs of the $2m$-disjoint copies $(T_1, \ldots, T_{2m})$ of $T$ to obtain a forest of $d_{T_\sqcup} = m$ opposing colored double trees. In this case, we have equality in \eqref{eq:3.37} and \eqref{eq:3.39}-\eqref{eq:3.42}. Once again, Proposition \hyperref[prop3.1.2]{3.1.2} shows that the contribution of $T_\sqcup$ in \eqref{eq:3.38} is $\Theta(n^{m(\#(N)+1)})$. As was the case for $m=1$, a forest of $m$ colored double trees $T_\sqcup$ corresponds to the worst case scenario.
\end{proof}

Reintroducing the standard normalization $\salg{W}_n = n^{-1/2}\salg{X}_n$, we obtain the asymptotic
\begin{equation}\label{eq:3.43}
\E\bigg[\bigg|\frac{1}{n}\trace\big[T(\salg{W}_n)\big] - \E\frac{1}{n}\trace\big[T(\salg{W}_n)\big]\bigg|^{2m}\bigg] = O_T(n^{-m(\#(L)+1)}), \qquad \forall T \in \salg{T}\langle\mbf{x}\rangle,
\end{equation}
which bounds the deviation
\begin{equation}\label{eq:3.44}
\prob\bigg(\bigg|\frac{1}{n}\trace\big[T(\salg{W}_n)\big] - \E\frac{1}{n}\trace\big[T(\salg{W}_n)\big]\bigg| > \varepsilon\bigg) = O_{T, m}(n^{-m(\#(L)+1)}), \qquad \forall T \in \salg{T}\langle\mbf{x}\rangle.
\end{equation}

We chose to work with the random variable $\trace\big[T(\salg{X}_n)\big]$, but virtually the same proof applies to the injective version
\[
\trace^0\big[T(\salg{X}_n)\big] = \sum_{\phi: V \hookrightarrow [n]} \prod_{e \in E} (\mbf{X}_n^{(\gamma(e))})(\phi(e)).
\]
In particular, Theorem \hyperref[thm3.2.2]{3.2.2} holds with $\trace^0\big[T(\salg{X}_n)\big]$ in place of $\trace\big[T(\salg{X}_n)\big]$, and so too do its implications \eqref{eq:3.43} and \eqref{eq:3.44}. Of course, one could also deduce this from the relations \eqref{eq:2.10} and \eqref{eq:2.11} between $\trace\big[T(\salg{X}_n)\big]$ and $\trace^0\big[T(\salg{X}_n)\big]$, which still hold at the level of random variables (i.e., before taking the expectation). This shows that the two results are in fact equivalent. We may then apply the usual Borel-Cantelli machinery to prove the almost sure version of Proposition \hyperref[prop3.1.2]{3.1.2} (and, as a special case, the a.s.\@ version of Corollary \hyperref[cor3.2.3]{3.2.3}).

The results in this section apply just as well to Wigner matrices of a general parameter $\beta_i \in \C$. In this case, we do not need a separate statement for the general situation.

\section{Random band matrices}\label{sec4}

Our analysis of the Wigner matrices $\salg{W}_n$ in the previous sections crucially relies on two important features of our ensemble, namely, the homogeneity of the vertices in our graphs $T$ and the divergence of our normalization $\sqrt{n}$. By the first property, we mean that the label $\phi(v) \in [n]$ of a vertex $v \in V$ does not constrain our choice of a contributing label $\phi(w)$ for an adjacent vertex $w \sim_e v$ (or, in the case of an injective labeling $\phi$, does so uniformly in the choice of $\phi(v)$). At the level of the matrices $\salg{X}_n$, this corresponds to the fact that any given row (resp., column) of a Wigner matrix looks much the same as any other row (resp., column). For example, if we consider a real Wigner matrix as in Definition \hyperref[defn1.1]{1.1}, then the rows (resp, columns) each have the same distribution up to a cyclic permutation of the entries. More generally, there exists a permutation invariant realization of our ensemble $\salg{X}_n$ iff $\beta_i \in \R$. This property of course does not hold for the random band matrices $\mbf{\Xi}_n = \mbf{B}_n \circ \mbf{X}_n$ (recall Definition \hyperref[defn1.4]{1.4}): rows (resp, columns) near the top or the bottom (resp., the far left or the far right) of our matrix will in general have fewer nonzero entries. This in turn owes to the asymmetry of the band condition $\mbf{B}_n$. We can recover the homogeneity of our ensemble by reflecting the band width across the perimeter of the matrix to obtain the so-called periodic random band matrices, providing an intermediate model between the Wigner matrices and the random band matrices. We start with this technically simpler model and work our way up to the RBMs. We summarize the main results at the end of Section \hyperref[sec4.3]{4.3}.

\begin{Rem}\label{rem4.1}
The so-called homogeneity property mentioned above and the corresponding periodization technique first appeared in the work \cite{BMP91} of Bogachev, Molchanov, and Pastur. The authors used this intermediate model to transfer Wigner's semicircle law to random band matrices of slow growth. We employ the same periodization technique to identify the joint limiting traffic distribution of independent random band matrices.
\end{Rem} 

\subsection{Periodic random band matrices}\label{sec4.1}

To begin, we formalize

\begin{defn}[Periodic RBM]\label{defn4.1.1}
Let $(b_n)$ be a sequence of nonnegative integers. We write $\mbf{P}_n$ for the corresponding $n \times n$ periodic band matrix of ones with band width $b_n$, i.e.,
\[
\mbf{P}_n(i, j) = \indc{|i - j|_n \leq b_n},
\]
where
\[
|i - j|_n = \min\{|i-j|, n-|i-j|\}.
\]
Let $\mbf{X}_n$ be an unnormalized Wigner matrix. We call the random matrix $\mbf{\Gamma}_n$ defined by
\[
\mbf{\Gamma}_n = \mbf{P}_n \circ \mbf{X}_n
\]
an \emph{unnormalized periodic RBM}. Using the normalization $\mbf{\Upsilon}_n = (2b_n)^{-1/2}\mbf{J}_n$, we call the random matrix $\mbf{\Lambda}_n$ defined by
\[
\mbf{\Lambda}_n = \mbf{\Upsilon}_n \circ \mbf{\Gamma}_n
\]
a \emph{normalized periodic RBM}. We simply refer to periodic RBMs when the context is clear, or when considering the definition altogether.
\end{defn}

\begin{center}
\begingroup%
  \makeatletter%
  \providecommand\color[2][]{%
    \errmessage{(Inkscape) Color is used for the text in Inkscape, but the package 'color.sty' is not loaded}%
    \renewcommand\color[2][]{}%
  }%
  \providecommand\transparent[1]{%
    \errmessage{(Inkscape) Transparency is used (non-zero) for the text in Inkscape, but the package 'transparent.sty' is not loaded}%
    \renewcommand\transparent[1]{}%
  }%
  \providecommand\rotatebox[2]{#2}%
  \ifx\svgwidth\undefined%
    \setlength{\unitlength}{468bp}%
    \ifx\svgscale\undefined%
      \relax%
    \else%
      \setlength{\unitlength}{\unitlength * \real{\svgscale}}%
    \fi%
  \else%
    \setlength{\unitlength}{\svgwidth}%
  \fi%
  \global\let\svgwidth\undefined%
  \global\let\svgscale\undefined%
  \makeatother%
  \begin{picture}(1,0.5)%
    \put(0,0){\includegraphics[width=\unitlength,page=1]{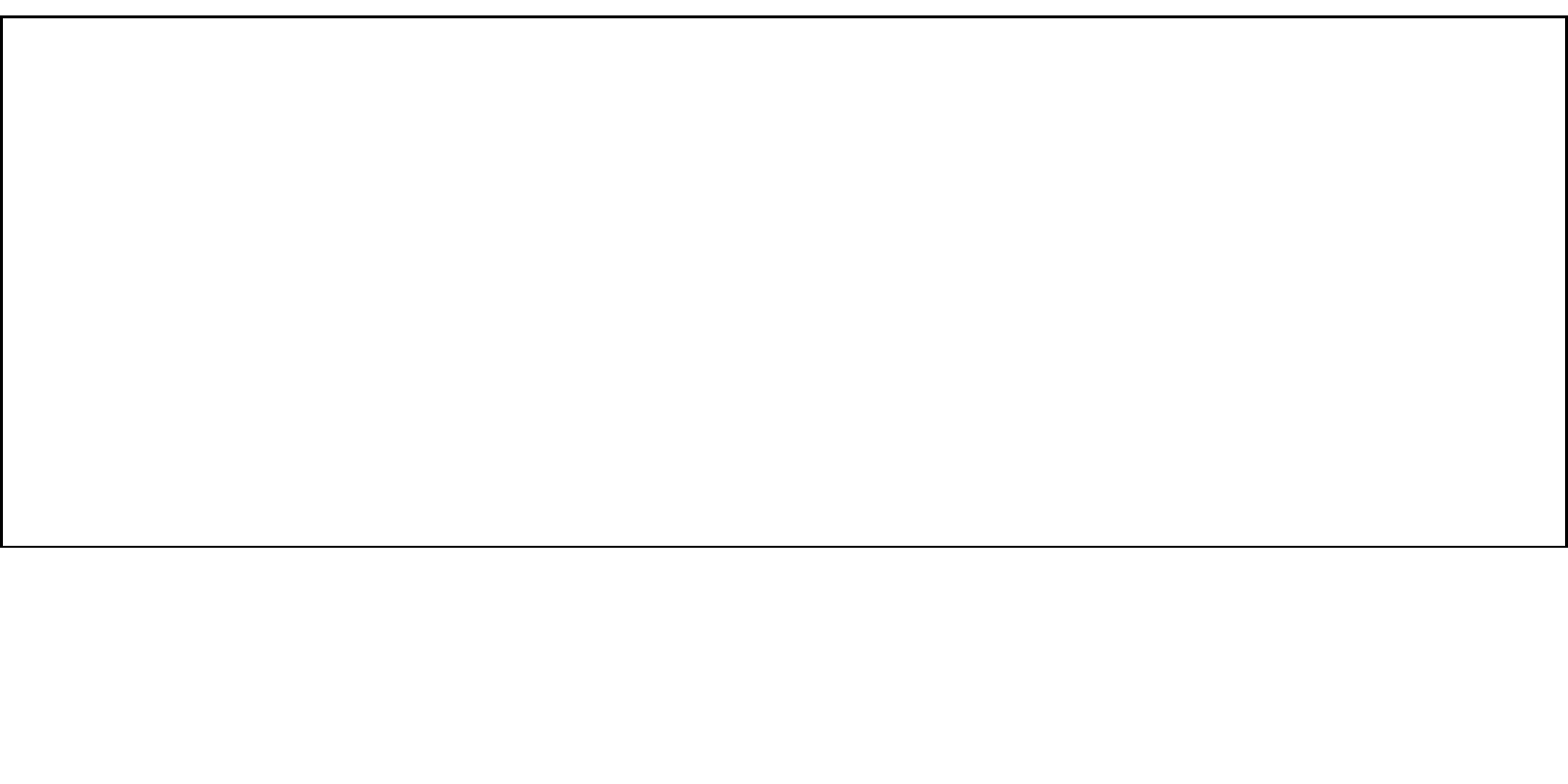}}%
    \put(-0.00141487,0.13570342){\color[rgb]{0,0,0}\makebox(0,0)[lt]{\begin{minipage}{0.99822201\unitlength}\raggedright Figure 21: An example of the periodization of a random band matrix. Here, we scale the matrix to the unit square $[0, 1]^2$. The $(i, j)$-th entry then corresponds to the subsquare $[\frac{j-1}{n}, \frac{j}{n}] \times [\frac{n-i}{n}, \frac{n-i+1}{n}]$, which we then fill in provided the band width condition $|i -j| \leq b_n$ (resp., $|i - j|_n \leq b_n$) is satisfied. \end{minipage}}}%
    \put(0.12170717,1.60355588){\color[rgb]{0,0,0}\makebox(0,0)[lt]{\begin{minipage}{0.11106843\unitlength}\raggedright \end{minipage}}}%
    \put(0,0){\includegraphics[width=\unitlength,page=2]{fig21_periodic.pdf}}%
    \put(0.66064081,0.17393845){\color[rgb]{0,0,0}\makebox(0,0)[lb]{\smash{$(\mbf{\Gamma}_n, \mbf{\Lambda}_n)$}}}%
    \put(0.24540044,0.17393845){\color[rgb]{0,0,0}\makebox(0,0)[lb]{\smash{$(\mbf{\Xi}_n, \mbf{\Theta}_n)$}}}%
  \end{picture}%
\endgroup%

\end{center}

Let $\salg{X}_n = (\mbf{X}_n^{(i)})_{i \in I}$ be a family of unnormalized Wigner matrices as in Section \hyperref[sec3]{3}. We consider a family of divergent band widths $(b_n^{(i)})_{i \in I}$ such that
\begin{equation}\label{eq:4.1}
\lim_{n \to \infty} b_n^{(i)} = \infty,\qquad \forall i \in I,
\end{equation}
for which we form the corresponding family of periodic RBMs, unnormalized $\salg{R}_n = (\mbf{\Gamma}_n^{(i)})_{i \in I}$ and otherwise $\salg{P}_n = (\mbf{\Lambda}_n^{(i)})_{i \in I}$. We identify the LTD of the family $\salg{P}_n$ with that of the familiar Wigner matrices $\salg{W}_n$ from Proposition \hyperref[prop3.1.2]{3.1.2}.

\begin{lemma}\label{lemma4.1.2}
For any test graph $T$ in $\mbf{x} = (x_i)_{i \in I}$,
\begin{equation}\label{eq:4.2}
\lim_{n \to \infty} \tau^0\big[T(\salg{P}_n)\big] = 
\begin{cases}
\prod_{i \in I} \beta_i^{c_i(T)} & \text{if $T$ is a colored double tree,} \\
\hfil 0 & \text{otherwise}.
\end{cases}
\end{equation}
\end{lemma}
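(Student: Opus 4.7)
My plan is to adapt the proof of Proposition 3.1.2 to account for the two new features of the periodic RBMs: the normalization $(2b_n^{(i)})^{-1/2}$ in place of $n^{-1/2}$, and the band constraint $\indc{|\phi(\source(e))-\phi(\target(e))|_n\leq b_n^{(\gamma(e))}}$ now carried by each edge. First I would expand the injective traffic state as
\begin{equation*}
\tau^0\big[T(\salg{P}_n)\big] = \frac{1}{n}\sum_{\phi:V\hookrightarrow[n]}\bigg(\prod_{e\in E}\frac{\indc{|\phi(\source(e))-\phi(\target(e))|_n\leq b_n^{(\gamma(e))}}}{\sqrt{2b_n^{(\gamma(e))}}}\bigg)\E\bigg[\prod_{e\in E}\mbf{X}_n^{(\gamma(e))}(\phi(e))\bigg],
\end{equation*}
and then recycle the argument from Proposition \hyperref[prop3.1.2]{3.1.2} verbatim to identify contributing summands: the independence, the mean-zero property of the off-diagonal entries, and the injectivity of $\phi$ will again force each class multiplicity to satisfy $m_{i,[e]}\in\{0\}\cup[2,\infty)$ for every $(i,[e])\in I\times\wtilde{N}$, and the strong moment bound \eqref{eq:3.1} will uniformly bound the expectation factor by a constant $C_T$ depending only on $T$.

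The key new step is counting the band-satisfying injective labelings $\phi$, and this is where the homogeneity of the periodic model enters crucially: by periodicity, for any $j\in[n]$ and $b\in\N$, the set $\{k\in[n]:|j-k|_n\leq b\}$ has cardinality exactly $2b+1$, independent of $j$. I would exploit this by rooting a spanning tree of the underlying simple graph $\underline{T}=(V,\wtilde{N})$ at some $v_0$ and traversing its $\#(V)-1$ edges to obtain
\begin{equation*}
\#\{\phi:V\hookrightarrow[n]\text{ band-satisfying}\}\leq n\prod_{[e]\text{ in spanning tree}}(2b_n^*([e])+1),
\end{equation*}
where $b_n^*([e])=\min_{e'\in[e]}b_n^{(\gamma(e'))}$. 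Combining this with the uniform expectation bound and the normalization factors will yield an estimate of the form
\begin{equation*}
|\tau^0\big[T(\salg{P}_n)\big]|=O_T\bigg(\frac{\prod_{[e]\text{ in spanning tree}}b_n^*([e])}{\prod_{e\in E}\sqrt{b_n^{(\gamma(e))}}}\bigg),
\end{equation*}
and the combinatorial constraints $\#(L)\geq 0$, $m_{[e]}\geq 2$, and $\#(\wtilde{N})\geq\#(V)-1$ will force this ratio to vanish unless all three are simultaneously saturated — i.e., $\#(L)=0$, $\underline{T}$ is a tree, and every class $[e]\in\wtilde{E}$ has $m_{[e]}=2$ with both edges sharing a common label. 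This is precisely the colored double tree condition.

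For such a $T$, I would upgrade the upper bound to an asymptotic equality to leading order: the number of injective band-satisfying $\phi$ is $n\prod_{[e]\in\wtilde{E}}2b_n^{(\gamma([e]))}(1+o(1))$, where the $o(1)$ absorbs both the $+1$ correction and the $O_T(1)$ labels that would collide with previously-chosen vertices (both negligible since every $b_n^{(i)}\to\infty$). The normalization factor $\prod_{[e]\in\wtilde{E}}2b_n^{(\gamma([e]))}$ then cancels exactly, and the expectation factor breaks up over pairs of twin edges exactly as in the proof of Proposition \hyperref[prop3.1.2]{3.1.2} — opposing twins contributing $\E|\mbf{X}|^2=1$ and congruent twins contributing $\E\mbf{X}^2=\beta_{\gamma([e])}$ — yielding the product $\prod_{i\in I}\beta_i^{c_i(T)}$. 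The main obstacle would be the counting, but it is essentially automatic given the periodicity; in the non-periodic setting of Definition \hyperref[defn1.4]{1.4}, vertices whose labels land near the endpoints of $[n]$ have strictly fewer valid neighbors than $2b_n^{(i)}+1$, so the homogeneity argument breaks down and a more delicate integral-geometric analysis (in the spirit of \eqref{eq:3.15}–\eqref{eq:3.17}) will be needed — which is precisely why the authors treat the periodic model first.
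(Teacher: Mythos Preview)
Your proposal is correct and follows essentially the same approach as the paper's own proof: expand the injective traffic state, recycle the mean-zero/independence argument from Proposition~\hyperref[prop3.1.2]{3.1.2} to force $m_{i,[e]}\in\{0\}\cup[2,\infty)$, count the band-satisfying labelings via a spanning tree with the minimum band width $\min_{e'\in[e]}b_n^{(\gamma(e'))}$ on each edge class, and conclude that only colored double trees survive with the stated limit. The paper's argument is organized identically (see equations \eqref{eq:4.3}--\eqref{eq:4.9}), including the spanning-tree refinement and the observation that divergence of each $b_n^{(i)}$ is what forces the vanishing in the non-double-tree case.
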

\begin{proof}
The proof follows much along the same lines as Proposition \hyperref[prop3.1.2]{3.1.2} except that we must take care to account for the differing rates of growth in the band widths $b_n^{(i)}$. To begin, suppose that $T = (V, E, \gamma)$. By definition, we have that
\begin{align}
\notag \tau^0\big[T(\salg{P}_n)\big] &= \E\bigg[\frac{1}{n}\sum_{\phi: V \hookrightarrow [n]} \prod_{e \in E} \mbf{\Lambda}_n^{(\gamma(e))}(\phi(e))\bigg] \\
&= \frac{1}{n\prod_{e \in E} \sqrt{2b_n^{(\gamma(e))}}}\sum_{\phi: V \hookrightarrow [n]} \E\bigg[\prod_{e \in E} \mbf{\Gamma}_n^{(\gamma(e))}(\phi(e))\bigg]. \label{eq:4.3}
\end{align}
Using the same notation as before, we can recast the sum in \eqref{eq:4.3} as
\begin{equation}\label{eq:4.4}
\sum_{\phi: V \hookrightarrow [n]} \bigg(\prod_{[\ell] \in \wtilde{L}} \E \bigg[\prod_{\ell' \in [\ell]} \mbf{\Gamma}_n^{(\gamma(\ell'))}(\phi(\ell'))\bigg]\bigg) \bigg(\prod_{[e] \in \wtilde{N}} \E \bigg[\prod_{e' \in [e]} \mbf{\Gamma}_n^{(\gamma(e'))}(\phi(e'))\bigg]\bigg).
\end{equation}

Whereas before the label $\phi(v)$ of a vertex $v$ does not constrain our choice of label $\phi(w)$ for an adjacent vertex $w \sim_e v$ (beyond the injectivity requirement), we note that in this case a summand of \eqref{eq:4.4} equals zero if
\[
\exists e_0 \in [e]: |\phi(\source(e_0)) - \phi(\target(e_0))|_n > b_n^{(\gamma(e_0))}.
\]
In fact, we see that such a summand equals zero as soon as
\begin{equation*}
\exists e_0 \in [e]: |\phi(\source(e_0)) - \phi(\target(e_0))|_n > \min_{e' \in [e]} b_n^{(\gamma(e'))}.
\end{equation*}
To keep track of these constraints, we define
\[
|\phi(e)|_n = |\phi(\source(e)) - \phi(\target(e))|_n.
\]
Note that $|\phi(\cdot)|_n$ is constant on equivalence classes $[e] \in \wtilde{N}$, and so we further write $|\phi([e])|_n$ for the common value of
\[
\{|\phi(e')|_n : e' \in [e]\}.
\]
We use the function $|\phi(\cdot)|_n$ to define the band width condition
\[
C_{[e]} = \indc{|\phi([e])|_n \leq \min_{e' \in [e]} b_n^{(\gamma(e'))}},
\]
which allows us to rewrite \eqref{eq:4.4} as
\begin{equation}\label{eq:4.5}
\sum_{\phi: V \hookrightarrow [n]} \bigg(\prod_{[\ell] \in \wtilde{L}} \E \bigg[\prod_{\ell' \in [\ell]} \mbf{X}_n^{(\gamma(\ell'))}(\phi(\ell'))\bigg]\bigg) \bigg(\prod_{[e] \in \wtilde{N}} C_{[e]} \E \bigg[\prod_{e' \in [e]} \mbf{X}_n^{(\gamma(e'))}(\phi(e'))\bigg]\bigg)
\end{equation}
in terms of the usual Wigner matrices $\salg{X}_n = (\mbf{X}_n^{(i)})_{i \in I}$ (cf. \eqref{eq:3.7}). We may then apply our analysis from Proposition \hyperref[prop3.1.2]{3.1.2} to conclude that a contributing graph $T$ satisfies
\begin{equation}\label{eq:4.6}
m_{i, [e]} = 0 \text{ or } m_{i, [e]} \geq 2  \qquad \forall (i,[e]) \in I \times \wtilde{N}.
\end{equation}

The band width condition
\begin{equation}\label{eq:4.7}
|\phi([e])|_n \leq \min_{e' \in [e]} b_n^{(\gamma(e'))}, \qquad \forall [e] \in \wtilde{N}
\end{equation}
bounds the number $A_n(T)$ of contributing maps $\phi: V \hookrightarrow [n]$ by
\begin{equation*}
A_n(T) \leq n\prod_{[e] \in \wtilde{N}} \min_{e' \in [e]} 2b_n^{(\gamma(e'))}.
\end{equation*}
Indeed, fixing an arbitrary vertex $v_0 \in V$, we have $n$ choices for $\phi(v_0) \in [n]$; but, having made this choice, we must take into account the band widths in traversing the remaining edges of the simple graph $\underline{T} = (V, \wtilde{N})$. In fact, we can apply the same reasoning to any spanning tree $\underline{T_0} = (V, \wtilde{N}_0)$ of $\underline{T}$ since any edge $[e_k] \in \wtilde{N}$ in a cycle $([e_1], \ldots, [e_k])$ will have already had the admissible range of labels for its incident vertices determined by the band width conditions coming from the other edges $([e_1], \ldots, [e_{k-1}])$. This leads to the refinement
\begin{equation}\label{eq:4.8}
A_n(T) \leq n\prod_{[e] \in \wtilde{N}_0} \min_{e' \in [e]} 2b_n^{(\gamma(e'))},
\end{equation}
where
\begin{equation}\label{eq:4.9}
\#(\wtilde{N}_0) \leq \#(\wtilde{N}) \leq \#(\wtilde{E}).
\end{equation}

Recycling the bound \eqref{eq:3.11} for the summands of \eqref{eq:4.5}, we arrive at the asymptotic
\begin{equation*}
\begin{aligned}
\tau^0\big[T(\salg{P}_n)\big] &= O_T\bigg(\frac{n\prod_{[e] \in \wtilde{N}_0} \min_{e' \in [e]} 2b_n^{(\gamma(e'))}}{n\prod_{e \in E} \sqrt{2b_n^{(\gamma(e))}}}\bigg) \\
&= O_T\bigg(\frac{\prod_{[e] \in \wtilde{N}_0} \min_{e' \in [e]} 2b_n^{(\gamma(e'))}}{\prod_{e \in N} \sqrt{2b_n^{(\gamma(e))}}\prod_{\ell \in L} \sqrt{2b_n^{(\gamma(\ell))}}}\bigg).
\end{aligned}
\end{equation*}
For the sake of comparison, we draw the reader's attention to \eqref{eq:3.12} for the analogous asymptotic in the case of the Wigner matrices $\salg{W}_n$ (note that $\#(\wtilde{N}_0) = \#(V) - 1$). The divergence \eqref{eq:4.1} of the band widths $b_n^{(i)}$ and the inequalities \eqref{eq:4.6} and \eqref{eq:4.9} then imply that $\tau^0\big[T(\salg{P}_n)\big]$ vanishes in the limit unless $T$ is a colored double tree, in which case one clearly obtains the prescribed limit \eqref{eq:4.2}.
\end{proof}

Here, the situation for general $\beta_i \in \C$ becomes much different. For a single periodic RBM $\mbf{\Lambda}_n$ of divergent band width $b_n \to \infty$, the LTD again follows \eqref{eq:3.16} as in the Wigner case; however, the joint LTD of $\salg{P}_n$ might not exist depending on the fluctuations of the band widths $b_n^{(i)}$. In this case, we need to make additional assumptions on the band widths (e.g., proportional growth) to ensure the existence of an asymptotic proportion for an ordering $\psi$ of the vertices (i.e., the analogue of \eqref{eq:3.15}). We comment more on this situation later.

On the other hand, the orderings $\psi$ play no role in the calculation of $\tau^0\big[T(\salg{P}_n)\big]$ for an \emph{opposing} colored double tree $T$. Consequently, we can apply the criteria \eqref{eq:3.18} in Remark \hyperref[rem3.1.3]{3.1.3} to conclude that $\salg{P}_n = (\mbf{\Lambda}_n^{(i)})_{i \in I}$ converges in joint distribution to a semicircular system $\mbf{a} = (a_i)_{i \in I}$ regardless of $(\beta_i)_{i \in I}$.

Note that a periodic RBM $\mbf{\Lambda}_n$ with band width $b_n = n/2$ corresponds to a standard Wigner matrix $\mbf{W}_n$. As such, we can view Lemma \hyperref[lemma4.1.2]{4.1.2} as a generalization of Proposition \hyperref[prop3.1.2]{3.1.2}. We extend the result to include RBMs of slow growth in the next section. 

\subsection{Slow growth}\label{sec4.2}

To begin, we partition the index set $I$ of our matrices $\salg{X}_n = (\mbf{X}_n^{(i)})_{i \in I}$ into two camps $I = I_1 \cup I_2$. We consider a class of divergent band widths $(b_n^{(i)})_{i \in I}$ as in \eqref{eq:4.1} with the added condition of slow growth for $(b_n^{(i)})_{i \in I_2}$, i.e.,
\begin{equation}\label{eq:4.10}
\lim_{n \to \infty} \frac{b_n}{n} = 0, \qquad \forall i \in I_2.
\end{equation}
We form the corresponding family of periodic RBMs as before,
\[
\salg{R}_n = \salg{R}_n^{(1)} \cup \salg{R}_n^{(2)} = (\mbf{\Gamma}_n^{(i)})_{i \in I_1} \cup (\mbf{\Gamma}_n^{(i)})_{i \in I_2}, \qquad \salg{P}_n = \salg{P}_n^{(1)} \cup \salg{P}_n^{(2)} = (\mbf{\Lambda}_n^{(i)})_{i \in I_1} \cup (\mbf{\Lambda}_n^{(i)})_{i \in I_2}.
\] 
For $i \in I_2$, we also form the corresponding family of slow growth RBMs (see Definition \hyperref[defn1.4]{1.4}),
\[
\salg{S}_n^{(2)} = (\mbf{\Xi}_n^{(i)})_{i \in I_2} = (\mbf{B}_n^{(i)} \circ \mbf{X}_n^{(i)})_{i \in I_2}, \qquad \salg{O}_n^{(2)} = (\mbf{\Theta}_n^{(i)})_{i \in I_2} = (\mbf{\Upsilon}_n^{(i)} \circ \mbf{\Xi}_n^{(i)})_{i \in I_2}.
\]

\begin{lemma}\label{lemma4.2.1}
The family $\salg{M}_n = \salg{P}_n^{(1)} \cup \salg{O}_n^{(2)}$ converges in traffic distribution to the limit
\begin{equation}\label{eq:4.11}
\lim_{n \to \infty} \tau^0\big[T(\salg{M}_n)\big] = 
\begin{cases}
\prod_{i \in I} \beta_i^{c_i(T)} & \text{if $T$ is a colored double tree,} \\
\hfil 0 & \text{otherwise}.
\end{cases}
\end{equation}
\end{lemma}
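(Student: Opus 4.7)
The plan is to emulate the proof of Lemma \hyperref[lemma4.1.2]{4.1.2}, modifying only the band indicator used for labels in $I_2$: edges $e$ with $\gamma(e) \in I_2$ now carry the non-periodic indicator $\indc{|\phi(\source(e)) - \phi(\target(e))| \leq b_n^{(\gamma(e))}}$ in place of the wrapped version. The injective moment computation \eqref{eq:4.3}--\eqref{eq:4.5} then goes through verbatim, producing a mixed band condition $C_{[e]}$ per class $[e] \in \wtilde{N}$, periodic if $\gamma([e]) \in I_1$ and non-periodic if $\gamma([e]) \in I_2$. Since the non-periodic condition is strictly stronger than the periodic one, the spanning-tree bound \eqref{eq:4.8} on $A_n(T)$ still holds, and so non-colored-double-tree test graphs still contribute $o(1)$ by exactly the same argument.

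For a colored double tree $T$, I would establish \eqref{eq:4.11} by comparing $\tau^0\big[T(\salg{M}_n)\big]$ with the fully periodic $\tau^0\big[T(\salg{P}_n)\big]$ from Lemma \hyperref[lemma4.1.2]{4.1.2}. The two expressions differ only on the set of injective labelings $\phi \colon V \hookrightarrow [n]$ that satisfy every periodic band constraint but violate the non-periodic one on at least one class $[e_\ast] \in \wtilde{N}$ with $\gamma([e_\ast]) \in I_2$, i.e., $|\phi([e_\ast])|_n \leq b_n^{(\gamma([e_\ast]))} < |\phi([e_\ast])|$. Combined with the uniform moment bound \eqref{eq:3.11}, this yields
\[
\big|\tau^0\big[T(\salg{M}_n)\big] - \tau^0\big[T(\salg{P}_n)\big]\big| \leq \frac{C_T \cdot \#\{\phi \colon V \hookrightarrow [n] : \phi \text{ is bad}\}}{n \prod_{e \in E} \sqrt{2 b_n^{(\gamma(e))}}}.
\]

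The key quantitative step is the counting of bad labelings. Fix a spanning tree $\wtilde{N}_0$ of $\underline{T}$ rooted at some $v_0 \in V$, and set $b_n^\ast = \max_{i \in I_0} b_n^{(i)}$, where $I_0 \subset I$ is the finite set of labels appearing in $T$. A wrap-around on $[e_\ast]$ forces its two endpoints into $[1, b_n^\ast]$ and $[n - b_n^\ast + 1, n]$ respectively, so they straddle the boundary of $[n]$. Propagating the (weaker) periodic band constraints along $\wtilde{N}_0$, every vertex label lies within circular distance $(\#V) \cdot b_n^\ast$ of $\phi(v_0)$ in $\Z/n\Z$. This pins $\phi(v_0)$ to one of two intervals near $\{1, n\}$ of length $O_T(b_n^\ast)$. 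Standard counting along the spanning tree then yields $O_T\big(b_n^\ast \prod_{[e] \in \wtilde{N}_0} b_n^{([e])}\big)$ bad labelings, with a union bound over the finitely many $[e_\ast] \in \wtilde{N}_0 \cap \gamma^{-1}(I_2)$ absorbed into $O_T$.

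Since $\#\wtilde{N}_0 = \#V - 1 = \#E/2$ for a colored double tree and each factor $b_n^{([e])}$ cancels the two square roots from the twin edges in that class, the normalization in \eqref{eq:4.3} converts this count into $O_T(b_n^\ast/n)$. This vanishes by the slow growth hypothesis \eqref{eq:4.10} on $(b_n^{(i)})_{i \in I_2}$, and together with Lemma \hyperref[lemma4.1.2]{4.1.2} gives \eqref{eq:4.11}. The main obstacle will be making the propagation argument quantitatively precise: one must check that a single wrap-around on any $I_2$-class actually pins $\phi(v_0)$ to an $O(b_n^\ast)$-sized set uniformly in the choice of spanning tree and the location of $[e_\ast]$ within it. Once this is in hand, the remainder is routine bookkeeping against the scaling in \eqref{eq:4.3}.
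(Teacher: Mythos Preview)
Your overall strategy matches the paper's: reduce to colored double trees via the spanning-tree bound, then for such $T$ bound the number of ``bad'' labelings (those satisfying the periodic band condition but violating the non-periodic one on some $I_2$-class) and show this is $o(n\prod_{e}\sqrt{2b_n^{(\gamma(e))}})$. The paper proves exactly the estimate \eqref{eq:4.12} and the final bound \eqref{eq:4.20} has the same shape as yours.

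There is, however, a genuine gap in your counting of bad labelings in the mixed case $I_1\neq\emptyset$. You set $b_n^\ast=\max_{i\in I_0} b_n^{(i)}$ over \emph{all} labels appearing in $T$, then argue that propagation along the spanning tree from the wrapped $I_2$-edge $[e_\ast]$ pins the fixed root $v_0$ to an interval of length $O_T(b_n^\ast)$, giving the final bound $O_T(b_n^\ast/n)$. But the band widths $(b_n^{(i)})_{i\in I_1}$ carry no growth restriction beyond divergence; in particular one may have $b_n^{(i)}\sim n/2$ (the Wigner case). Then $b_n^\ast=\Theta(n)$, the propagation step is vacuous, and $b_n^\ast/n\not\to 0$. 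Your concluding appeal to the slow-growth hypothesis \eqref{eq:4.10} is illegitimate here because that hypothesis applies only to $I_2$, not to your $b_n^\ast$.

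The paper sidesteps propagation entirely. A wrap-around on $[e_\ast]$ with $\gamma([e_\ast])\in I_2$ directly forces one endpoint of $[e_\ast]$ into the boundary region $[1,\max_{e\in E_2} b_n^{(\gamma(e))}]\cup[n-\max_{e\in E_2} b_n^{(\gamma(e))}+1,n]$, so some vertex $v_0$ lies there (at most $\#V$ choices for which one). Taking \emph{that} vertex as the root, one has $2\max_{e\in E_2} b_n^{(\gamma(e))}$ choices for $\phi(v_0)$ and then the usual spanning-tree count for the rest, giving \eqref{eq:4.19}. The crucial point is that the boundary-localization factor is $\max_{e\in E_2} b_n^{(\gamma(e))}$, which is slow-growth, while the large $I_1$ band widths appear only in the spanning-tree product where they cancel against the normalization. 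Remark~\ref{rem4.2.2} flags exactly this issue. Your argument is easily repaired by dropping the propagation to a fixed $v_0$ and instead rooting at an endpoint of the offending $[e_\ast]$.
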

\begin{proof}
In view of Lemma \hyperref[lemma4.1.2]{4.1.2}, it suffices to show that
\begin{equation}\label{eq:4.12}
\lim_{n \to \infty} \bigg|\tau^0\big[T(\salg{P}_n)\big] - \tau^0\big[T(\salg{M}_n)\big]\bigg| = 0, \qquad \forall T \in \salg{T}\langle\mbf{x}\rangle.
\end{equation}
Of course, the only difference between the families $\salg{P}_n$ and $\salg{M}_n$ comes from the periodization of the slow growth RBMs $\salg{S}_n^{(2)}$. Equation \eqref{eq:4.12} then asserts that the contribution of the additional entries arising from this periodization becomes negligible in the limit.

For convenience, we write $\salg{U}_n = (\mbf{U}_n^{(i)})_{i \in I}$ for the unnormalized version of $\salg{M}_n$ so that
\[
\mbf{U}_n^{(i)} =
\begin{cases}
\mbf{\Gamma}_n^{(i)} & \text{if } i \in I_1, \\
\mbf{\Xi}_n^{(i)} & \text{if } i \in I_2.
\end{cases}
\]
Expanding $\tau^0\big[T(\salg{M}_n)\big]$, we obtain the analogue of \eqref{eq:4.3},
\begin{equation*}
\frac{1}{n\prod_{e \in E} \sqrt{2b_n^{(\gamma(e))}}} \sum_{\phi: V \hookrightarrow [n]} \E\bigg[\prod_{e \in E} \mbf{U}_n^{(\gamma(e))}(\phi(e))\bigg].
\end{equation*}
Our notation works just as well in this case to produce the analogue of \eqref{eq:4.4} for our sum,
\begin{equation*}
\sum_{\phi: V \hookrightarrow [n]} \bigg(\prod_{[\ell] \in \wtilde{L}} \E \bigg[\prod_{\ell' \in [\ell]} \mbf{U}_n^{(\gamma(\ell'))}(\phi(\ell'))\bigg]\bigg) \bigg(\prod_{[e] \in \wtilde{N}} \E \bigg[\prod_{e' \in [e]} \mbf{U}_n^{(\gamma(e'))}(\phi(e'))\bigg]\bigg).
\end{equation*}
Naturally, we then look for the analogue of \eqref{eq:4.5}. Note that the corresponding version of the band width condition \eqref{eq:4.7} must now take into account the index $\gamma(e') \in I_1 \cup I_2$ of $e' \in [e]$. We partition the equivalence classes $[e] = [e]_1 \cup [e]_2$ in $\wtilde{N}$ accordingly, where
\[
[e]_j = [e] \cap \gamma^{-1}(I_j).
\]

For an edge $e \in N$, we define
\[
|\phi(e)| = |\phi(\source(e)) - \phi(\target(e))|.
\]
As before, $|\phi(\cdot)|$ is constant on equivalence classes $[e] \in \wtilde{N}$, and we write $|\phi([e])|$ for the common value of
\[
\{|\phi(e')| : e' \in [e]\}.
\]
More specifically, we write $|\phi([e]_2)|$ for the common value of
\[
\{|\phi(e')| : e' \in [e]_2\}.
\]
Note that $[e]_2$ may be empty, in which case we define $|\phi(\emptyset)| = 0$. We use the same convention for $|\phi([e]_1)|_n$ to define the band width condition
\begin{equation*}
C_{[e]}' = \mathbbm{1}\{|\phi([e]_1)|_n \leq \min_{e' \in [e]_1} b_n^{(\gamma(e'))}\}\mathbbm{1}\{|\phi([e]_2)| \leq \min_{e' \in [e]_2} b_n^{(\gamma(e'))}\}, \qquad \forall [e] \in \wtilde{N}. 
\end{equation*}
We may then write the analogue of \eqref{eq:4.5} for our family $\salg{M}_n$ as
\begin{equation}\label{eq:4.13}
\sum_{\phi: V \hookrightarrow [n]} \bigg(\prod_{[\ell] \in \wtilde{L}} \E \bigg[\prod_{\ell' \in [\ell]} \mbf{X}_n^{(\gamma(\ell'))}(\phi(\ell'))\bigg]\bigg) \bigg(\prod_{[e] \in \wtilde{N}} C_{[e]}'\E \bigg[\prod_{e' \in [e]} \mbf{X}_n^{(\gamma(e'))}(\phi(e'))\bigg]\bigg).
\end{equation}
Of course, the inherent inequality $|\cdot|_ n = \min\{|\cdot|, n - |\cdot|\} \leq |\cdot|$ implies that
\[
C_{[e]}' \leq \indc{|\phi([e])|_n \leq \min_{e' \in [e]} b_n^{(\gamma(e'))}} = C_{[e]}, \qquad \forall [e] \in \wtilde{N},
\]
which bounds the number $B_n(T)$ of maps $\phi: V \hookrightarrow [n]$ satisfying the band width condition
\begin{equation}\label{eq:4.14}
|\phi([e]_1)|_n \leq \min_{e' \in [e]_1} b_n^{(\gamma(e'))} \quad \text{and} \quad |\phi([e]_2)| \leq \min_{e' \in [e]_2} b_n^{(\gamma(e'))}, \qquad \forall [e] \in \wtilde{N}
\end{equation}
by
\begin{equation}\label{eq:4.15}
B_n(T) \leq A_n(T).
\end{equation}
Recall that $A_n(T)$ is the number of maps $\phi: V \hookrightarrow [n]$ satisfying the weaker condition
\begin{equation}\label{eq:4.16}
|\phi([e])|_n \leq \min_{e' \in [e]} b_n^{(\gamma(e'))}, \qquad \forall [e] \in \wtilde{N}
\end{equation}
present in Lemma \hyperref[lemma4.1.2]{4.1.2}. In view of \eqref{eq:4.15}, our work in this previous case implies that
\[
\lim_{n \to \infty} \tau^0\big[T(\salg{M}_n)\big] = 0
\]
unless $T$ is a colored double tree. Thus, it remains to prove \eqref{eq:4.12} for such a test graph $T$.

Comparing the two equations \eqref{eq:4.5} and \eqref{eq:4.13}, we arrive at the asymptotic
\begin{equation}\label{eq:4.17}
\bigg|\tau^0\big[T(\salg{P}_n)\big] - \tau^0\big[T(\salg{M}_n)\big]\bigg| = O_T\bigg(\frac{D_n(T)}{n\prod_{e \in E} \sqrt{2b_n^{(\gamma(e))}}}\bigg),
\end{equation}
where $D_n(T) = A_n(T) - B_n(T)$ is the number of maps $\phi: V \hookrightarrow [n]$ that satisfy the band width condition \eqref{eq:4.16} but not the stronger condition \eqref{eq:4.14}. This formalizes the observation that we made at the beginning of the proof about the only difference between the families $\salg{P}_n$ and $\salg{M}_n$. In particular, for $i \in I_2$, note that the periodic version $\mbf{\Gamma}_n^{(i)}$ of a slow growth RBM $\mbf{\Xi}_n^{(i)}$ only differs in the entries within band width's distance of the perimeter; otherwise, the two matrices are identical. For a map $\phi: V \hookrightarrow [n]$, this means that if $\phi$ stays sufficiently far away from the endpoints of the interval $[n]$, then the two conditions \eqref{eq:4.14} and \eqref{eq:4.16} are actually equivalent. In particular, this holds if
\begin{equation*}
\phi(V) \subset [1 + \max_{e \in E_2} b_n^{(\gamma(e))}, n - \max_{e \in E_2} b_n^{(\gamma(e))}], 
\end{equation*}
where $E_2 = \gamma^{-1}(I_2)$ is of course a finite set. In this case, we have the bound
\[
D_n(T) = A_n(T) - B_n(T) \leq A_n^*(T),
\]
where $A_n^*(T)$ is the number of maps $\phi: V \hookrightarrow [n]$ satisfying \eqref{eq:4.16} with range
\begin{equation}\label{eq:4.18}
\phi(V) \not\subset [1 + \max_{e \in E_2} b_n^{(\gamma(e))}, n - \max_{e \in E_2} b_n^{(\gamma(e))}].
\end{equation}

\begin{center}
\begingroup%
  \makeatletter%
  \providecommand\color[2][]{%
    \errmessage{(Inkscape) Color is used for the text in Inkscape, but the package 'color.sty' is not loaded}%
    \renewcommand\color[2][]{}%
  }%
  \providecommand\transparent[1]{%
    \errmessage{(Inkscape) Transparency is used (non-zero) for the text in Inkscape, but the package 'transparent.sty' is not loaded}%
    \renewcommand\transparent[1]{}%
  }%
  \providecommand\rotatebox[2]{#2}%
  \ifx\svgwidth\undefined%
    \setlength{\unitlength}{468bp}%
    \ifx\svgscale\undefined%
      \relax%
    \else%
      \setlength{\unitlength}{\unitlength * \real{\svgscale}}%
    \fi%
  \else%
    \setlength{\unitlength}{\svgwidth}%
  \fi%
  \global\let\svgwidth\undefined%
  \global\let\svgscale\undefined%
  \makeatother%
  \begin{picture}(1,0.96153846)%
    \put(0,0){\includegraphics[width=\unitlength,page=1]{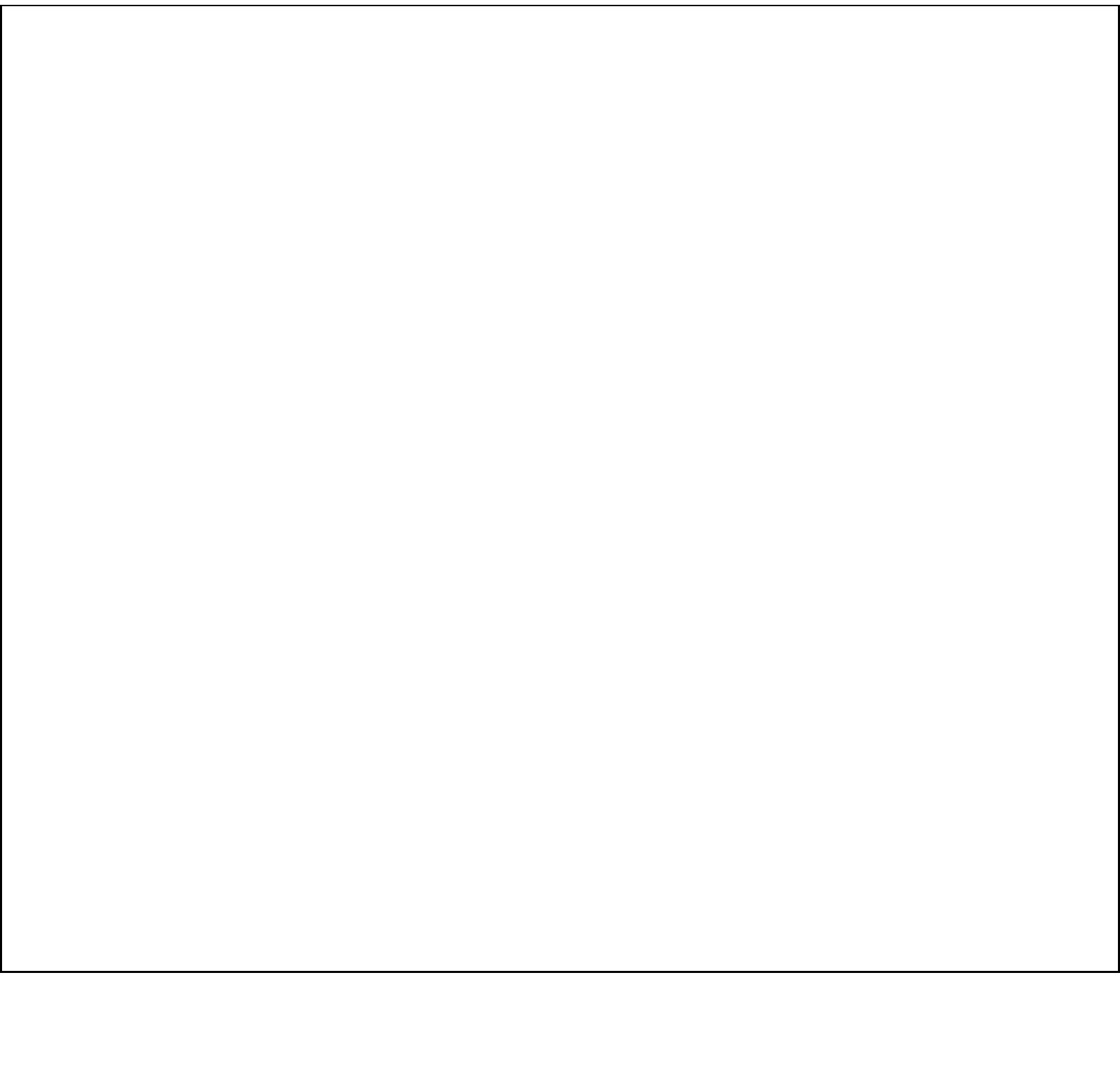}}%
    \put(-0.00141487,0.07930067){\color[rgb]{0,0,0}\makebox(0,0)[lt]{\begin{minipage}{0.99822202\unitlength}\raggedright Figure 22: An illustration of the ``interior" region of a random band matrix (resp., periodic random band matrix) at band width's distance $\frac{b_n}{n} = o(1)$ from the perimeter. Here, we cut off the boundary to see that the two interior regions are indeed identical.\end{minipage}}}%
    \put(0.12170717,1.60355587){\color[rgb]{0,0,0}\makebox(0,0)[lt]{\begin{minipage}{0.11106843\unitlength}\raggedright \end{minipage}}}%
    \put(0,0){\includegraphics[width=\unitlength,page=2]{fig22_interior.pdf}}%
    \put(0.14728346,0.15838553){\color[rgb]{0,0,0}\makebox(0,0)[lb]{\smash{$(\mbf{\Gamma}_n, \mbf{\Lambda}_n)$}}}%
    \put(0,0){\includegraphics[width=\unitlength,page=3]{fig22_interior.pdf}}%
    \put(0.14497738,0.56516443){\color[rgb]{0,0,0}\makebox(0,0)[lb]{\smash{$(\mbf{\Xi}_n, \mbf{\Theta}_n)$}}}%
    \put(0,0){\includegraphics[width=\unitlength,page=4]{fig22_interior.pdf}}%
    \put(0.66447096,0.49443521){\color[rgb]{0,0,0}\makebox(0,0)[lb]{\smash{$\frac{b_n}{n}$}}}%
    \put(0.70954307,0.44369454){\color[rgb]{0,0,0}\makebox(0,0)[lb]{\smash{$\frac{b_n}{n}$}}}%
    \put(0,0){\includegraphics[width=\unitlength,page=5]{fig22_interior.pdf}}%
    \put(0.34175462,0.31515492){\color[rgb]{0,0,0}\makebox(0,0)[lb]{\smash{$\Rightarrow$}}}%
    \put(0.34175462,0.72193433){\color[rgb]{0,0,0}\makebox(0,0)[lb]{\smash{$\Rightarrow$}}}%
    \put(0.72877384,0.72193443){\color[rgb]{0,0,0}\makebox(0,0)[lb]{\smash{$\Rightarrow$}}}%
    \put(0.72877384,0.31515565){\color[rgb]{0,0,0}\makebox(0,0)[lb]{\smash{$\Rightarrow$}}}%
  \end{picture}%
\endgroup%

\end{center}

We give a simple bound on $A_n^*(T)$ as follows: set aside a vertex $v_0 \in V$ (for which there are $\#(V)$ choices) to satisfy \eqref{eq:4.18} (for which there are $\displaystyle  2\max_{e \in E_2} b_n^{(\gamma(e))}$ choices) and pick the labels $\phi(v)$ of the remaining vertices according to \eqref{eq:4.16} (for which there are at most $\prod_{[e] \in \wtilde{E}} \min_{e' \in [e]} 2b_n^{(\gamma(e'))}$ choices) to see that
\begin{equation}\label{eq:4.19}
A_n^*(T) = O_T\bigg(\max_{e \in E_2} b_n^{(\gamma(e))}\prod_{[e] \in \wtilde{E}} \min_{e' \in [e]} 2b_n^{(\gamma(e'))}\bigg). 
\end{equation}
We may then recast \eqref{eq:4.17} as 
\begin{equation}\label{eq:4.20}
\bigg|\tau^0\big[T(\salg{P}_n)\big] - \tau^0\big[T(\salg{M}_n)\big]\bigg| = \frac{\max_{e \in E_2} b_n^{(\gamma(e))}}{n}O_T\bigg(\frac{\prod_{[e] \in \wtilde{E}} \min_{e' \in [e]} 2b_n^{(\gamma(e'))}}{\prod_{e \in E} \sqrt{2b_n^{(\gamma(e))}}}\bigg).
\end{equation}
$T$ being a colored double tree, we know that
\begin{equation*}
\frac{\prod_{[e] \in \wtilde{E}} \min_{e' \in [e]} 2b_n^{(\gamma(e'))}}{\prod_{e \in E} \sqrt{2b_n^{(\gamma(e))}}} = 1.
\end{equation*}
Moreover, since $\#(E_2) < \infty$, the slow growth \eqref{eq:4.10} still holds for the maximum over $E_2$,
\begin{equation}\label{eq:4.21}
\max_{e \in E_2} b_n^{(\gamma(e))} = o(n).
\end{equation}

Equations \eqref{eq:4.19}-\eqref{eq:4.21} formalize our intuition from before: the periodic version of a RBM only differs within band width's distance of the perimeter; for a slow growth RBM, one then needs to be very close to the perimeter to realize this difference; as such, the corresponding interior region accounts for the bulk of the calculations. The result now follows.
\end{proof}

\begin{rem}\label{rem4.2.2}
If we think of choosing a map $\phi: V \hookrightarrow [n]$ satisfying \eqref{eq:4.14} as starting at an arbitrary vertex $v_0$, making a choice $\phi(v_0) \in [n]$, and then choosing the labels of the remaining vertices in a manner compatible with the band width conditions, then each choice of $\phi(v)$ after $\phi(v_0)$ can be thought of as an incremental walk of distance at most $\min_{e' \in [e]} b_n^{(\gamma(e'))}$ for some $[e] \in \wtilde{N}$. If $I = I_2$, then starting from a ``deep'' vertex
\[
\phi(v_0) \in [1+\#(E)\max_{e \in E} b_n^{\gamma(e))}, n - \#(E)\max_{e \in E} b_n^{(\gamma(e))}],
\]
the walk never has a chance to loop across the perimeter of the matrix. This line of reasoning can be used to give a more intuitive geometric proof of Lemma \hyperref[lemma4.2.1]{4.2.1} in the simpler case of $I = I_2$. This notion of a deep vertex originates in the work \cite{BMP91}.

If $I \neq I_2$, then we need to account for the possibility of the band widths of the periodic RBMs being large enough to bring us close to the perimeter so that the walk crosses over with a step from a periodized version of a slow growth RBM. Taking inspiration from the simpler case of $I = I_2$, our analysis shows that a generic walk stays within a region in which the slow growth RBMs and their periodized versions are identical. 
\end{rem}

We encounter the same problem from before when considering general $\beta_i \in \C$: without further assumptions on the band widths $b_n^{(i)}$, their fluctuations could possibly preclude the existence of a joint LTD. In general, we must again settle for the convergence of $\salg{M}_n = (\mbf{\Lambda}_n^{(i)})_{i \in I_1} \cup (\mbf{\Theta}_n^{(i)})_{i \in I_2}$ in joint distribution to a semicircular system $\mbf{a} = (a_i)_{i \in I}$.

Recall that the Wigner matrices $\salg{W}_n$ are asymptotically traffic independent iff $\beta_i \in \R$, and that a permutation invariant realization of our ensemble $\salg{W}_n$ exists iff $\beta_i \in \R$. In view of Theorem \hyperref[thm2.5.5]{2.5.5}, one might then expect that permutation invariance is also a necessary condition for matricial asymptotic traffic independence; however, we see that this is not the case. In particular, one cannot find a permutation invariant realization of the periodic RBMs (except in the trivial case of $b_n \sim n/2$), nor of the slow growth RBMs. Instead, we relied on the aforementioned homogeneity property and the divergence of our normalization. Taken alone, neither of these two properties suffices, as we shall see in the proportional growth regime (which lacks homogeneity) and the fixed band width regime (which has a fixed normalization).

\subsection{Proportional growth}\label{sec4.3}

Not surprisingly, the periodization trick from the previous section fails for proportional growth RBMs unless $c = 1$ (recall that $c = \lim_{n \to \infty} \frac{b_n}{n}\in (0, 1]$). In the case of \emph{proper} proportion $c \in (0, 1)$, the entries in the matrix introduced by reflecting the band width across the perimeter now account for an asymptotically nontrivial region in the unit square and so no longer represent a negligible contribution to the calculations. Nevertheless, we can adapt our work from before to prove the existence of a joint LTD supported on colored double trees $T$, though in general the value of this limit will depend on the degree structure of $T$.

\begin{center}
\begingroup%
  \makeatletter%
  \providecommand\color[2][]{%
    \errmessage{(Inkscape) Color is used for the text in Inkscape, but the package 'color.sty' is not loaded}%
    \renewcommand\color[2][]{}%
  }%
  \providecommand\transparent[1]{%
    \errmessage{(Inkscape) Transparency is used (non-zero) for the text in Inkscape, but the package 'transparent.sty' is not loaded}%
    \renewcommand\transparent[1]{}%
  }%
  \providecommand\rotatebox[2]{#2}%
  \ifx\svgwidth\undefined%
    \setlength{\unitlength}{468bp}%
    \ifx\svgscale\undefined%
      \relax%
    \else%
      \setlength{\unitlength}{\unitlength * \real{\svgscale}}%
    \fi%
  \else%
    \setlength{\unitlength}{\svgwidth}%
  \fi%
  \global\let\svgwidth\undefined%
  \global\let\svgscale\undefined%
  \makeatother%
  \begin{picture}(1,0.58461538)%
    \put(0,0){\includegraphics[width=\unitlength,page=1]{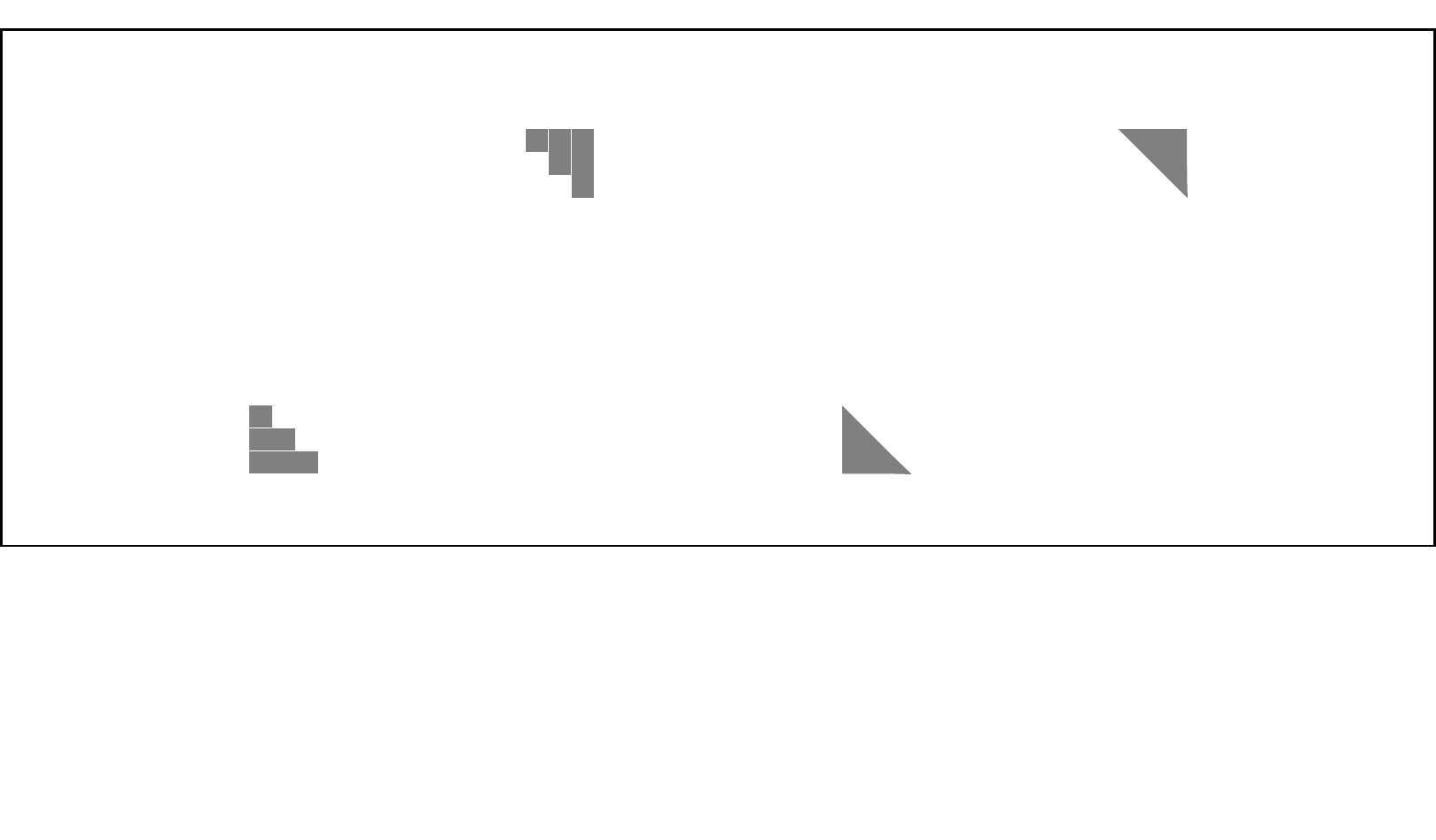}}%
    \put(-0.00141487,0.19059122){\color[rgb]{0,0,0}\makebox(0,0)[lt]{\begin{minipage}{0.99822197\unitlength}\raggedright Figure 23: An illustration of the limit shape of our scaled matrix in the unit square $[0, 1]^2$. Here, we distinguish the periodized version of our matrix with the additional grey area. In the limit, the shape corresponds to the banded region $|x-(1-y)| \leq c$ (resp., the periodic banded region $\min(|x-(1-y)|, 1-|x-(1-y)|) \leq c$). In contrast to slow growth regime, we see a nontrivial contribution from the periodization due to the nonvanishing scale of the band width $\lim_{n \to \infty} \frac{b_n}{n} = c \in (0, 1)$. \end{minipage}}}%
    \put(0.12170717,1.66837212){\color[rgb]{0,0,0}\makebox(0,0)[lt]{\begin{minipage}{0.11106843\unitlength}\raggedright \end{minipage}}}%
    \put(0,0){\includegraphics[width=\unitlength,page=2]{fig23_proportional.pdf}}%
    \put(0.3778684,0.5153973){\color[rgb]{0,0,0}\makebox(0,0)[lb]{\smash{$\frac{b_n}{n}$}}}%
    \put(0.42324499,0.46441541){\color[rgb]{0,0,0}\makebox(0,0)[lb]{\smash{$\frac{b_n}{n}$}}}%
    \put(0,0){\includegraphics[width=\unitlength,page=3]{fig23_proportional.pdf}}%
    \put(0.80008792,0.50745177){\color[rgb]{0,0,0}\makebox(0,0)[lb]{\smash{$c$}}}%
    \put(0.83774817,0.46718735){\color[rgb]{0,0,0}\makebox(0,0)[lb]{\smash{$c$}}}%
    \put(0.48718727,0.36796839){\color[rgb]{0,0,0}\makebox(0,0)[lb]{\smash{$\Rightarrow$}}}%
  \end{picture}%
\endgroup%

\end{center}

To formalize our result, we now split the index set $I = I_1 \cup I_2 \cup I_3 \cup I_4$ into four camps. We consider a class of divergent band widths $(b_n^{(i)})_{i \in I}$ as in \eqref{eq:4.1} with the added conditions of slow growth for $\mbf{b}_n^{(2)} = (b_n^{(i)})_{i \in I_2}$, full proportion for $\mbf{b}_n^{(3)} = (b_n^{(i)})_{i \in I_3}$, and proper proportion for $\mbf{b}_n^{(4)} = (b_n^{(i)})_{i \in I_4}$ so that
\begin{equation*}
\begin{aligned}
\lim_{n \to \infty} \frac{b_n^{(i)}}{n} &= 0, & \qquad \forall i &\in I_2 \\
\lim_{n \to \infty} \frac{b_n^{(i)}}{n} &= c_i = 1, & \qquad \forall i &\in I_3, \\
\lim_{n \to \infty} \frac{b_n^{(i)}}{n} &= c_i \in (0, 1), & \qquad \forall i &\in I_4.
\end{aligned}
\end{equation*}

For $i \in I_1 \cup I_2$, we form the corresponding families of periodic RBMs and slow growth RBMs as before,
\[
\begin{aligned}
\salg{R}_n &= \salg{R}_n^{(1)} \cup \salg{R}_n^{(2)} = (\mbf{\Gamma}_n^{(i)})_{i \in I_1} \cup (\mbf{\Gamma}_n^{(i)})_{i \in I_2}, & \qquad \salg{P}_n &= \salg{P}_n^{(1)} \cup \salg{P}_n^{(2)} = (\mbf{\Lambda}_n^{(i)})_{i \in I_1} \cup (\mbf{\Lambda}_n^{(i)})_{i \in I_2}; \\
\salg{S}_n^{(2)} &= (\mbf{\Xi}_n^{(i)})_{i \in I_2} = (\mbf{B}_n^{(i)} \circ \mbf{X}_n^{(i)})_{i \in I_2}, & \qquad \salg{O}_n^{(2)} &= (\mbf{\Theta}_n^{(i)})_{i \in I_2} = (\mbf{\Upsilon}_n^{(i)} \circ \mbf{\Xi}_n^{(i)})_{i \in I_2}.
\end{aligned}
\]
For $i \in I_3 \cup I_4$, we form the corresponding families of proportional growth RBMs,
\[
\begin{aligned}
\salg{F}_n^{(3)} &= (\mbf{\Xi}_n^{(i)})_{i \in I_3} = (\mbf{B}_n^{(i)} \circ \mbf{X}_n^{(i)})_{i \in I_3}, & \qquad \salg{O}_n^{(3)} &= (\mbf{\Theta}_n^{(i)})_{i \in I_3} = (\mbf{\Upsilon}_n^{(i)} \circ \mbf{\Xi}_n^{(i)})_{i \in I_3}; \\
\salg{C}_n^{(4)} &= (\mbf{\Xi}_n^{(i)})_{i \in I_4} = (\mbf{B}_n^{(i)} \circ \mbf{X}_n^{(i)})_{i \in I_4}, & \qquad \salg{O}_n^{(4)} &= (\mbf{\Theta}_n^{(i)})_{i \in I_4} = (\mbf{\Upsilon}_n^{(i)} \circ \mbf{\Xi}_n^{(i)})_{i \in I_4}.
\end{aligned}
\]

We start with the simpler case of the single family $\salg{O}_n^{(4)}$ of (proper) proportional growth RBMs. In this case, the LTD of $\salg{O}_n^{(4)}$ only depends on the band widths $\mbf{b}_n^{(4)}$ up to the limiting proportions
\[
\mbf{c}_4 = (c_i)_{i \in I_4}.
\]  
\begin{lemma}\label{lemma4.3.1}
For any test graph $T$ in $\mbf{x}_4 = (x_i)_{i \in I_4}$,
\begin{equation}\label{eq:4.22}
\lim_{n \to \infty} \tau^0\big[T(\salg{O}_n^{(4)})\big] = 
\begin{cases}
p_T(\mbf{c}_4)\prod_{i \in I} \beta_i^{c_i(T)} & \text{if $T$ is a colored double tree,} \\
\hfil 0 & \text{otherwise},
\end{cases}
\end{equation}
where $p_T(\mbf{c}_4) > 0$ only depends on the test graph $T$ and the proportions $\mbf{c}_4 = (c_i)_{i \in I_4}$. 
\end{lemma}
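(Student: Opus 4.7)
The plan is to adapt the proof of Lemma 4.1.2, replacing the periodic distance $|\cdot|_n$ by the ordinary distance $|\cdot|$ and retaining the band width constraints as a Riemann sum that now survives nontrivially in the limit. Expanding the injective traffic state gives
\[
\tau^0\big[T(\salg{O}_n^{(4)})\big] = \frac{1}{n \prod_{e \in E} \sqrt{n(2c_{\gamma(e)} - c_{\gamma(e)}^2)}} \sum_{\phi: V \hookrightarrow [n]} \prod_{[\ell] \in \wtilde{L}} \E\bigg[\prod_{\ell' \in [\ell]} \mbf{X}_n^{(\gamma(\ell'))}(\phi(\ell'))\bigg] \prod_{[e] \in \wtilde{N}} C_{[e]}\, \E\bigg[\prod_{e' \in [e]} \mbf{X}_n^{(\gamma(e'))}(\phi(e'))\bigg],
\]
with $C_{[e]} = \indc{|\phi(\source([e])) - \phi(\target([e]))| \leq \min_{e' \in [e]} b_n^{(\gamma(e'))}}$. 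Since $C_{[e]} \leq 1$ and $b_n^{(i)} = \Theta(n)$ for $i \in I_4$, the vanishing analysis of Proposition \hyperref[prop3.1.2]{3.1.2} and Lemma \hyperref[lemma4.1.2]{4.1.2} (via \eqref{eq:3.8}-\eqref{eq:3.10} and a spanning-tree count) applies verbatim to force $\tau^0\big[T(\salg{O}_n^{(4)})\big] \to 0$ unless $T$ is a colored double tree.

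For a colored double tree $T$, we have $\#(E) = 2(\#(V)-1)$, the expectations factor as $\prod_{i \in I_4} \beta_i^{c_i(T)}$ (using $\beta_i \in \R$), and the normalization prefactor reduces to $n^{-\#(V)} \prod_{[e] \in \wtilde{E}} (2c_{\gamma([e])} - c_{\gamma([e])}^2)^{-1}$. It remains to compute
\[
\lim_{n \to \infty} \frac{1}{n^{\#(V)}} \sum_{\phi: V \hookrightarrow [n]} \prod_{[e] \in \wtilde{E}} \indc{|\phi(\source([e])) - \phi(\target([e]))| \leq b_n^{(\gamma([e]))}}.
\]
Rescaling $\phi(v) \mapsto \phi(v)/n \in [0,1]$ and using $b_n^{(i)}/n \to c_i$, this Riemann sum converges (in analogy with \eqref{eq:3.15}, and with the injectivity constraint contributing a vanishing $O(n^{-1})$ fraction) to
\[
q_T(\mbf{c}_4) := \int_{[0,1]^V} \prod_{[e] \in \wtilde{E}} \indc{|x_{\source([e])} - x_{\target([e])}| \leq c_{\gamma([e])}} \, d\mbf{x}_V.
\]
Setting $p_T(\mbf{c}_4) = q_T(\mbf{c}_4) \prod_{[e] \in \wtilde{E}}(2c_{\gamma([e])} - c_{\gamma([e])}^2)^{-1}$ then yields \eqref{eq:4.22}. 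Positivity of $p_T(\mbf{c}_4)$ is immediate from the integral representation: the constant vector $x_v \equiv 1/2$ lies in the interior of every constraint $|x_u - x_v| \leq c_{\gamma([e])}$, so by continuity the integrand equals one on a nonempty open subset of $[0,1]^V$, forcing $q_T(\mbf{c}_4) > 0$.

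The main technical point is the Riemann sum convergence. Since each constraint region $\{\mbf{x} \in [0,1]^V : |x_u - x_v| \leq c_i\}$ is a polyhedron whose boundary has Lebesgue measure zero, the integrand is Riemann integrable on $[0,1]^V$; combined with the convergence $b_n^{(i)}/n \to c_i$ across the finitely many labels in $\gamma(E) \subset I_4$, this lets us pass to the limit directly. The discrepancy between the discrete indicator and its rescaled continuous counterpart lives in a boundary strip of thickness $O(n^{-1} + \max_i |b_n^{(i)}/n - c_i|)$, whose contribution to the normalized sum is $o(1)$.
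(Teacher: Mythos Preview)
Your argument is correct and follows essentially the same route as the paper: expand the injective traffic state, invoke the earlier vanishing analysis to reduce to colored double trees, and then identify the surviving count of admissible maps as a Riemann sum converging to the integral $\int_{[0,1]^V}\prod_{[e]\in\wtilde{E}}\indc{|x_{\source([e])}-x_{\target([e])}|\le c_{\gamma([e])}}\,d\mbf{x}_V$, with $p_T(\mbf{c}_4)$ defined as this integral divided by the normalization. Your explicit positivity argument via the constant vector $x_v\equiv 1/2$ and your careful handling of the Riemann convergence (boundary of measure zero, injectivity defect $O(n^{-1})$) are minor elaborations on points the paper leaves implicit.
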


\begin{proof}
As usual, we begin by expanding
\[
\tau^0\big[T(\salg{O}_n^{(4)})\big] = \frac{1}{n^{1+\frac{\#(E)}{2}}\prod_{e \in E} \sqrt{2c_{\gamma(e)}-c_{\gamma(e)}^2}} \sum_{\phi: V \hookrightarrow [n]} \E \bigg[\prod_{e \in E} \mbf{\Xi}_n^{(\gamma(e))}(\phi(e))\bigg]
\]
and rewriting the sum as
\[
\sum_{\phi: V \hookrightarrow [n]} \bigg(\prod_{[\ell] \in \wtilde{L}} \E \bigg[\prod_{\ell' \in [\ell]} \mbf{X}_n^{(\gamma(\ell'))}(\phi(\ell'))\bigg]\bigg) \bigg(\prod_{[e] \in \wtilde{N}} \mathbbm{1}\{|\phi([e])| \leq \min_{e' \in [e]} b_n^{(\gamma(e'))}\}\E \bigg[\prod_{e' \in [e]} \mbf{X}_n^{(\gamma(e'))}(\phi(e'))\bigg]\bigg).
\]
At this point, we can already conclude the second half of \eqref{eq:4.22}. Hereafter, $T$ denotes a colored double tree. In this case, we have the equality
\begin{align*}
\tau^0\big[T(\salg{O}_n^{(4)})\big] &= \frac{C_n(T)}{n^{1+\#(\wtilde{E})}\prod_{[e] \in \wtilde{E}} \, (2c_{\gamma([e])} - c_{\gamma([e])}^2)}\prod_{i \in I} \beta_i^{c_i(T)} \\
&=  \frac{C_n(T)}{n^{\#(V)}}\frac{1}{\prod_{[e] \in \wtilde{E}} \, (2c_{\gamma([e])} - c_{\gamma([e])}^2)}\prod_{i \in I} \beta_i^{c_i(T)},
\end{align*}
where $C_n(T)$ is the number of maps $\phi: V \hookrightarrow [n]$ satisfying the band width condition
\begin{equation}\label{eq:4.23}
|\phi([e])| \leq b_n^{(\gamma([e]))}, \qquad \forall [e] \in \wtilde{E}.
\end{equation}

We may think of the ratio
\[
\frac{C_n(T)}{n^{\#(V)}} \sim \frac{C_n(T)}{n^{\underline{\#(V)}}} 
\]
as the proportion of admissible maps $\phi: V \hookrightarrow [n]$. Unfortunately, the vertices of our graph $T$ lack the homogeneity property from before due to the asymmetry of the band condition \eqref{eq:4.23}. This makes the task of computing $C_n(T)$ extremely tedious (and highly dependent on $T$). Nevertheless, we can give an integral representation of the limit of this ratio much as in \cite{BMP91}. In particular, a straightforward weak convergence argument shows that
\begin{equation}\label{eq:4.24}
\lim_{n \to \infty} \frac{C_n(T)}{n^{\#(V)}} = \int_{[0, 1]^V} \prod_{[e] \in \wtilde{E}} \mathbbm{1}\{|x_{\source([e])} - x_{\target([e])}| \leq c_{\gamma([e])}\} \, d\mbf{x}_V.
\end{equation}
The remaining term in \eqref{eq:4.22} follows as
\[
p_T(\mbf{c}_4) = \frac{\int_{[0, 1]^V} \prod_{[e] \in \wtilde{E}} \mathbbm{1}\{|x_{\source([e])} - x_{\target([e])}| \leq c_{\gamma([e])}\} \, d\mbf{x}_V}{\prod_{[e] \in \wtilde{E}} \, (2c_{\gamma([e])} - c_{\gamma([e])}^2)} > 0.
\]
\end{proof}

\begin{rem}\label{rem4.3.2}
For general $\beta_i \in \C$, we must again keep track of the orderings $\psi$ of the vertices. In this case, we combine the integrands of \eqref{eq:3.15} and \eqref{eq:4.24} to define
\[
p_T(\mbf{c}_4, \psi) = \frac{\int_{[0, 1]^V} \indc{x_{\psi(1)} \geq \cdots \geq x_{\psi(\#(V))}}\prod_{[e] \in \wtilde{E}} \mathbbm{1}\{|x_{\source([e])} - x_{\target([e])}| \leq c_{\gamma([e])}\} \, d\mbf{x}_V}{\prod_{[e] \in \wtilde{E}} \, (2c_{\gamma([e])} - c_{\gamma([e])}^2)},
\]
which replaces the $\frac{1}{\#(V)!}$ term in \eqref{eq:3.16}. In particular, we can then write the LTD of $\salg{O}_n^{(4)}$ as 
\begin{equation*}
\lim_{n \to \infty} \tau^0\big[T(\salg{O}_n^{(4)})\big] =
\begin{dcases}
\sum_{\psi: [\#(V)] \stackrel{\sim}{\to} V} p_T(\mbf{c}_4, \psi) S_\psi(T) & \text{if $T$ is a colored double tree,} \\
\hfil 0 & \text{otherwise.}
\end{dcases}
\end{equation*}
\end{rem}

Naturally, we are interested in the behavior of $p_T(\mbf{c}_4)$ as the proportions $\mbf{c}_4$ approach the boundary values $\{0,1\}$. To this end, we fix some notation. Recall that $T = (V, E, \gamma, \source, \target)$ is a colored double tree. We record the labels $L(\wtilde{F})$ appearing in any subset $\wtilde{F} \subset \wtilde{E}$ of twin edges so that
\[
L(\wtilde{F}) = \{\gamma([e]) : [e] \in \wtilde{F}\} \subset I_4.
\]
We write $\{\source([e]), \target([e])\}$ for the pair of vertices adjacent to twin edges $[e] = \{e, e'\}$, which allows us to further record the vertices $V(\wtilde{F})$ appearing in $\wtilde{F}$ as
\[
V(\wtilde{F}) = \{\source([e]), \target([e]) : [e] \in \wtilde{F}\}.
\]
For any collection of real numbers $\mbf{r} = (r_j)_{j \in J}$ in $[0, 1]$ with $L(\wtilde{F}) \subset J$, we define the function
\[
\op{Cut}_{\wtilde{F}, \mbf{r}} : [0, 1]^{V(\wtilde{F})} \to [0, 1]
\]
by the product
\[
\op{Cut}_{\wtilde{F}, \mbf{r}}(\mbf{x}_{V(\wtilde{F})}) = \prod_{[e] \in \wtilde{F}} \mathbbm{1}\{|x_{\source([e])} - x_{\target([e])}| \leq r_{\gamma([e])}\}.
\]
We note that $\op{Cut}_{\wtilde{F}, \mbf{r}}$ is simply the indicator on the banded region cut out of the hypercube $[0, 1]^{V(\wtilde{F})}$ by the constraints $|x_{\source([e])} - x_{\target([e])}| \leq r_{\gamma([e])}$. For example, our notation allows us to succinctly write the integral
\[
\op{Int}_T(\mbf{c}_4) = \lim_{n \to \infty} \frac{C_n(T)}{n^{\#(V)}} = \int_{[0, 1]^{V}} \op{Cut}_{\wtilde{E}, \mbf{c}_4}(\mbf{x}_V) \, d\mbf{x}_V.
\]
Similarly, we group the normalizations coming from the twin edges $\wtilde{F} \subset \wtilde{E}$ with
\begin{equation}\label{eq:4.25}
\op{Norm}_{\wtilde{F}}(\mbf{c}_4) = \prod_{[e] \in \wtilde{F}} (2c_{\gamma([e])} - c_{\gamma([e])}^2).
\end{equation}
If $\wtilde{F} = \wtilde{E}$, we write $\op{Cut}_{T, \mbf{r}} = \op{Cut}_{\wtilde{E}, \mbf{r}}$ (resp., $\op{Norm}_T(\mbf{c}_4) = \op{Norm}_{\wtilde{E}}(\mbf{c}_4)$). In this case,
\[
p_T(\mbf{c}_4) = \frac{\op{Int}_T(\mbf{c}_4)}{\op{Norm}_T(\mbf{c}_4)}.
\]

We will need some simple bounds on the integral $\op{Int}_T(\mbf{c}_4)$. We start with an easy upper bound. Consider a leaf vertex $v_0$ of our colored double tree $T$. Let $v_1 \sim_{[e_0]} v_0$ denote the unique vertex $v_1$ adjacent to $v_0$. We compute the diameter $f(x_{v_1})$ of a cross section in the banded strip of the unit square $[0, 1]^2$ defined by $|x_{v_0} - x_{v_1}| \leq c_{\gamma([e_0])}$, 
\begin{equation}\label{eq:4.26}
\begin{aligned}
f(x_{v_1}) &= \int_0^1 \mathbbm{1}\{|x_{\source([e_0])} - x_{\target([e_0])}| \leq c_{\gamma([e_0])}\}  \, dx_{v_0} \\
&= \int_0^1 \mathbbm{1}\{|x_{v_0} - x_{v_1}| \leq c_{\gamma([e_0])}\}  \, dx_{v_0} \\
&=
\begin{cases}
x_{v_1} + c_{\gamma([e_0])}     & \text{if } x_{v_1} \in [0, c_{\gamma([e_0])} \wedge (1-c_{\gamma([e_0])})] \\
2c_{\gamma([e_0])} \wedge 1 & \text{if } x_{v_1} \in [c_{\gamma([e_0])} \wedge (1-c_{\gamma([e_0])}), c_{\gamma([e_0])} \vee (1-c_{\gamma([e_0])})] \\
1 + c_{\gamma([e_0])} - x_{v_1}   & \text{if } x_{v_1} \in [(c_{\gamma([e_0])} \vee (1-c_{\gamma([e_0])}), 1]
\end{cases}
\end{aligned}
\end{equation}
\begin{center}
\begingroup%
  \makeatletter%
  \providecommand\color[2][]{%
    \errmessage{(Inkscape) Color is used for the text in Inkscape, but the package 'color.sty' is not loaded}%
    \renewcommand\color[2][]{}%
  }%
  \providecommand\transparent[1]{%
    \errmessage{(Inkscape) Transparency is used (non-zero) for the text in Inkscape, but the package 'transparent.sty' is not loaded}%
    \renewcommand\transparent[1]{}%
  }%
  \providecommand\rotatebox[2]{#2}%
  \ifx\svgwidth\undefined%
    \setlength{\unitlength}{468bp}%
    \ifx\svgscale\undefined%
      \relax%
    \else%
      \setlength{\unitlength}{\unitlength * \real{\svgscale}}%
    \fi%
  \else%
    \setlength{\unitlength}{\svgwidth}%
  \fi%
  \global\let\svgwidth\undefined%
  \global\let\svgscale\undefined%
  \makeatother%
  \begin{picture}(1,0.28846154)%
    \put(0.12010459,1.4966977){\color[rgb]{0,0,0}\makebox(0,0)[lt]{\begin{minipage}{0.11106843\unitlength}\raggedright \end{minipage}}}%
    \put(0,0){\includegraphics[width=\unitlength,page=1]{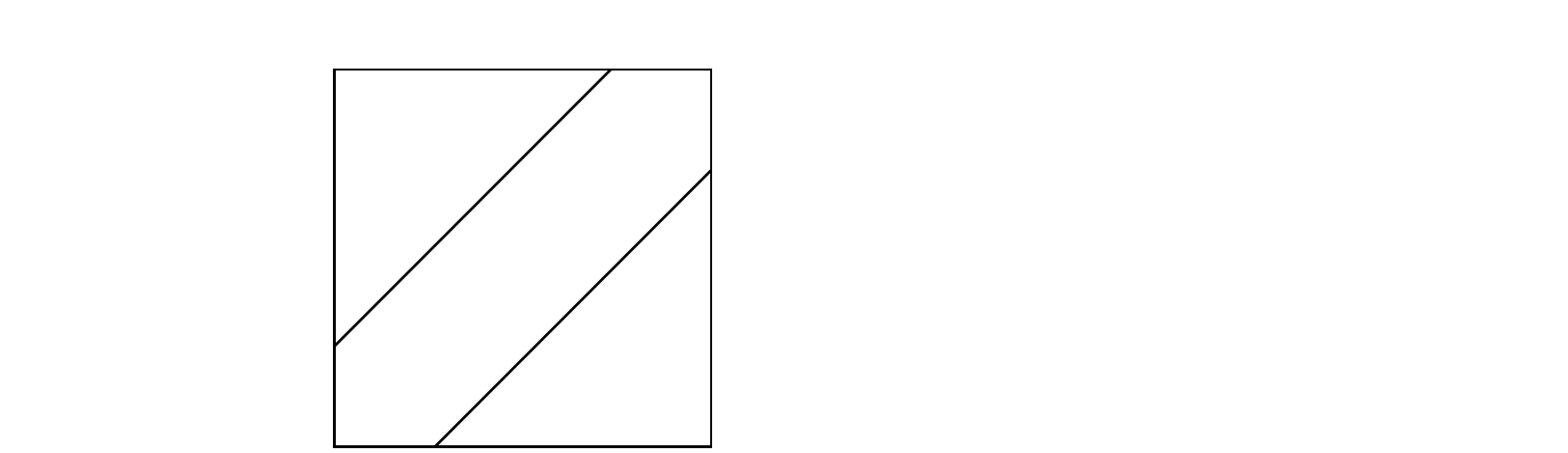}}%
    \put(0.39043251,0.26496409){\color[rgb]{0,0,0}\makebox(0,0)[lb]{\smash{$c_{\gamma([e_0])}$}}}%
    \put(0.46461516,0.20570334){\color[rgb]{0,0,0}\makebox(0,0)[lb]{\smash{$c_{\gamma([e_0])}$}}}%
    \put(0,0){\includegraphics[width=\unitlength,page=2]{fig24_diameter.pdf}}%
    \put(0.34334939,0.14809711){\color[rgb]{0,0,0}\makebox(0,0)[lb]{\smash{$f(x_{v_1})$}}}%
    \put(0,0){\includegraphics[width=\unitlength,page=3]{fig24_diameter.pdf}}%
    \put(0.32171478,-0.02638207){\color[rgb]{0,0,0}\makebox(0,0)[lb]{\smash{$x_{v_1}$}}}%
    \put(0,0){\includegraphics[width=\unitlength,page=4]{fig24_diameter.pdf}}%
    \put(0.17268411,0.11649414){\color[rgb]{0,0,0}\makebox(0,0)[lb]{\smash{$=$}}}%
    \put(0.46675463,0.11649456){\color[rgb]{0,0,0}\makebox(0,0)[lb]{\smash{$.$}}}%
  \end{picture}%
\endgroup%

\end{center}

In particular,
\[
c_{\gamma([e_0])} \leq f(x_{v_1}) \leq 2c_{\gamma([e_0])} \wedge 1.
\]
It follows that
\begin{align*}
\op{Int}_T(\mbf{c}_4) &= \int_{[0, 1]^V} \op{Cut}_{T, \mbf{c}_4}(\mbf{x}_V) d\mbf{x}_V \\
&= \int_{[0, 1]^{V\setminus\{v_0\}}} \op{Cut}_{\wtilde{E}\setminus\{[e_0]\}, \mbf{c}_4}(\mbf{x}_{V \setminus \{v_0\}}) \bigg(\int_0^1 \mathbbm{1}\{|x_{v_0} - x_{v_1}| \leq c_{\gamma([e_0])}\} \, dx_{v_0}\bigg) \, d\mbf{x}_{V\setminus\{v_0\}} \\
&\leq \int_{[0, 1]^{V\setminus\{v_0\}}} \op{Cut}_{\wtilde{E}\setminus\{[e_0]\}, \mbf{c}_4}(\mbf{x}_{V \setminus \{v_0\}})\bigg(2c_{\gamma([e_0])} \wedge 1\bigg) \, d\mbf{x}_{V\setminus\{v_0\}} \\
&= (2c_{\gamma([e_0])} \wedge 1) \op{Int}_{T\setminus[e_0]}(\mbf{c}_4),
\end{align*}
where $T\setminus[e_0]$ is the colored double tree obtained from $T$ by removing the leaf $v_0$ and its adjacent twin edges $[e_0]$. Iterating this construction, we obtain the upper bound
\[
\op{Int}_T(\mbf{c}_4) \leq \prod_{[e] \in \wtilde{E}} (2c_{\gamma([e])} \wedge 1).
\]

The same reasoning of course shows that
\[
\op{Int}_T(\mbf{c}_4) \geq c_{\gamma([e_0])}\op{Int}_{T\setminus[e_0]}(\mbf{c}_4) \geq \cdots \geq \prod_{[e] \in \wtilde{E}} c_{\gamma([e])},
\]
but we can do much better for small proportions $\mbf{c}_4$. In particular, assume that
\[
\hat{c} = \max_{[e] \in \wtilde{E}} c_{\gamma([e])} < \frac{1}{2}.
\]
Then
\begin{align*}
\op{Int}_T(\mbf{c}_4) &= \int_{[0, 1]^V} \op{Cut}_{T, \mbf{c}_4}(\mbf{x}_V) \, d\mbf{x}_V \geq \int_{[\hat{c}, 1-\hat{c}]^V} \op{Cut}_{T, \mbf{c}_4}(\mbf{x}_V) \, d\mbf{x}_V  \\
&= \int_{[\hat{c}, 1-\hat{c}]^{V\setminus\{v_0\}}} \op{Cut}_{\wtilde{E}\setminus\{[e_0]\}, \mbf{c}_4}(\mbf{x}_{V \setminus \{v_0\}})\bigg(\int_{\hat{c}}^{1-\hat{c}} \mathbbm{1}\{|x_{v_0} - x_{v_1}| \leq c_{\gamma([e_0])}\} \, dx_{v_0}\bigg) \, d\mbf{x}_{V\setminus\{v_0\}} \\
&= \int_{[\hat{c}, 1-\hat{c}]^{V\setminus\{v_0\}}} \op{Cut}_{\wtilde{E}\setminus\{[e_0]\}, \mbf{c}_4}(\mbf{x}_{V \setminus \{v_0\}})\bigg((1-2\hat{c})2c_{\gamma([e_0])}\bigg) \, d\mbf{x}_{V\setminus\{v_0\}} \\
&= \cdots = (1-2\hat{c})^{\#(\wtilde{E})}\prod_{[e] \in \wtilde{E}} 2c_{\gamma([e])}.
\end{align*}

Thus, for $\hat{c} < \frac{1}{2}$, we have the bounds
\[
\frac{(1-2\hat{c})^{\#(\wtilde{E})}\prod_{[e] \in \wtilde{E}} 2c_{\gamma([e])}}{\prod_{[e] \in \wtilde{E}} \, (2c_{\gamma([e])} - c_{\gamma([e])}^2)} \leq \frac{\op{Int}_T(\mbf{c}_4)}{\op{Norm}_T(\mbf{c}_4)} \leq \frac{\prod_{[e] \in \wtilde{E}} 2c_{\gamma([e])}}{\prod_{[e] \in \wtilde{E}} \, (2c_{\gamma([e])} - c_{\gamma([e])}^2)},
\]
which imply that
\begin{equation}\label{eq:4.27}
\lim_{\hat{c} \to 0^+} p_T(\mbf{c}_4) = \lim_{\hat{c} \to 0^+} \frac{\op{Int}_T(\mbf{c}_4)}{\op{Norm}_T(\mbf{c}_4)} = 1.
\end{equation}
We view the limit $\hat{c} \to 0^+$ as approaching the slow growth regime. In view of \eqref{eq:4.27}, we see that the LTD \eqref{eq:4.22} of the proportional growth RBMs behaves accordingly (in particular, we have convergence to the LTD \eqref{eq:4.11} of the slow growth RBMs).

In an easier direction, we can also consider the limit
\[
\underline{c} = \min_{[e] \in \wtilde{E}} c_{\gamma([e])} \to 1^-.
\]
One then clearly has
\begin{equation}\label{eq:4.28}
\lim_{\underline{c} \to 1^-} \op{Cut}_{T, \mbf{c}_4}(\mbf{x}_V) = 1, \qquad \forall \mbf{x}_V \in [0, 1]^V.
\end{equation}
We can push this limit through the integral by dominated convergence to obtain
\begin{equation}\label{eq:4.29}
\lim_{\underline{c} \to 1^-} \op{Int}_T(\mbf{c}_4) = \int_{[0, 1]^V} \lim_{\underline{c} \to 1^-} \op{Cut}_{T, \mbf{c}_4}(\mbf{x}_V) \, d\mbf{x}_V = 1.
\end{equation}
Of course, the same convergence also holds for the normalizations \eqref{eq:4.25},
\begin{equation}\label{eq:4.30}
\lim_{\underline{c} \to 1^-} \op{Norm}_{\wtilde{F}}(\mbf{c}_4) = 1, \qquad \forall \wtilde{F} \subset \wtilde{E},
\end{equation}
and so
\begin{equation}\label{eq:4.31}
\lim_{\underline{c} \to 1^-} p_T(\mbf{c}_4) = \lim_{\underline{c} \to 1^-} \frac{\op{Int}_T(\mbf{c}_4)}{\op{Norm}_T(\mbf{c}_4)} = 1.
\end{equation}

We view the limit $\underline{c} \to 1^-$ as approaching the usual Wigner matrices $\salg{W}_n$, or, more generally, the full proportion RBMs. Again, our limit \eqref{eq:4.31} shows that the LTD \eqref{eq:4.22} behaves accordingly (in particular, we have convergence to the LTD \eqref{eq:3.5} of the Wigner matrices). 

Up to now, our analysis of the integral $\op{Int}_T(\mbf{c}_4)$ essentially follows \cite{BMP91}. We take care to account for possibly different band widths by grouping them in the min $\underline{c}$ or the max $\hat{c}$, but in both cases we indiscriminately send the proportions to a single boundary value $\{0, 1\}$. From this point of view, we fail to perceive any differences in the limits
\begin{equation}\label{eq:4.32}
\lim_{\hat{c} \to 0^+} p_T(\mbf{c}_4) = 1 = \lim_{\underline{c} \to 1^-} p_T(\mbf{c}_4);
\end{equation}
yet, the two cases actually differ quite considerably. To see this, we will need to refine our analysis of $p_T(\mbf{c}_4)$ to consider sending only a subset of the proportions $\mbf{c}_4$ to possibly different boundary values. The results will greatly inform our treatment of the joint LTD of the combined families $\salg{P}_n^{(1)} \cup \salg{O}_n^{(2)} \cup \salg{O}_n^{(3)} \cup \salg{O}_n^{(4)}$.

We start with the simpler case of sending the band width $c_{i_0}$ of a single label $i_0 \in I_4$ in our colored double tree $T$ to $1^-$. We write $T_{i_0} = (V_{i_0}, E_{i_0})$ for the subgraph of $T$ with edge labels in $i_0$. In general, $T_{i_0}$ is a forest of colored double trees (in the single ``color'' $i_0$). We define $\wtilde{T}_{i_0} = (V_{i_0}, \wtilde{E}_{i_0})$ as before. We remove the twin edges $\wtilde{E}_{i_0}$ from $T$ to obtain a forest of colored double trees $T\setminus\wtilde{E}_{i_0}$ (say, with connected components $T_1, \ldots, T_k$). We emphasize that we only remove the edges $\wtilde{E}_{i_0}$; in particular, we keep any resulting isolated vertices. We then have the analogues of \eqref{eq:4.28}-\eqref{eq:4.30}:
\begin{equation}\label{eq:4.33}
\lim_{c_{i_0} \to 1^-} \op{Cut}_{T, \mbf{c}_4}(\mbf{x}_V) = \op{Cut}_{\wtilde{E}\setminus\wtilde{E}_{i_0}, \mbf{c}_4}(\mbf{x}_V) = \prod_{\ell = 1}^k \op{Cut}_{T_\ell, \mbf{c}_4}(\mbf{x}_{V_\ell}), \qquad \forall \mbf{x}_V \in [0, 1]^V,
\end{equation}
\begin{equation}\label{eq:4.34}
\begin{aligned}
\lim_{c_{i_0} \to 1^-} \op{Int}_T(\mbf{c}_4) &= \int_{[0, 1]^V} \lim_{c_{i_0} \to 1^-} \op{Cut}_{T, \mbf{c}_4}(\mbf{x}_V) \, d\mbf{x}_V \\
&= \prod_{\ell = 1}^k \int_{[0, 1]^{V_\ell}} \op{Cut}_{T_\ell, \mbf{c}_4}(\mbf{x}_{V_\ell}) \, d\mbf{x}_{V_\ell} = \prod_{\ell = 1}^k \op{Int}_{T_\ell}(\mbf{c}_4),
\end{aligned}
\end{equation}
and
\begin{equation}\label{eq:4.35}
\lim_{c_{i_0} \to 1^-} \op{Norm}_T(\mbf{c}_4) = \op{Norm}_{\wtilde{E}\setminus\wtilde{E}_{i_0}}(\mbf{c}_4) \lim_{c_{i_0} \to 1^-} \op{Norm}_{\wtilde{E}_{i_0}}(\mbf{c}_4) = \prod_{\ell = 1}^k \op{Norm}_{T_\ell}(\mbf{c}_4).
\end{equation}
It follows that
\begin{equation}\label{eq:4.36}
\lim_{c_{i_0} \to 1^-} p_T(\mbf{c}_4) = \lim_{c_{i_0} \to 1^-} \frac{\op{Int}_T(\mbf{c}_4)}{\op{Norm}_T(\mbf{c}_4)} = \frac{\prod_{\ell = 1}^k \op{Int}_{T_\ell}(\mbf{c}_4)}{\prod_{\ell = 1}^k \op{Norm}_{T_\ell}(\mbf{c}_4)} = \prod_{\ell = 1}^k p_{T_\ell}(\mbf{c}_4).
\end{equation}
Of course, if $T_\ell$ consists of an isolated vertex, then $p_{T_\ell}(\mbf{c}_4) = 1$. One can then effectively discard the isolated vertices of $T \setminus \wtilde{E}_{i_0}$ and just consider the resulting forest of nontrivial colored double trees. We choose to keep these vertices in writing a simple, consistent formula for our limit.

The reader will no doubt be easily convinced of \eqref{eq:4.36}, but we give here some intuition for the sake of comparison later. We imagine each vertex $v$ as a country in a league of allied nations $V$. Each value $x_v \in [0, 1]$ represents a proposed amount of aid to be sent by country $v$ to every other country. To avoid showing favoritism, the same amount of aid $x_v$ is sent to each ally $w \neq v$; however, to ensure goodwill, a country can opt to cap the disparity in the amount of aid they exchange with a given ally. We view these restrictions as coming from the edges $\wtilde{E}$, where an edge $v \sim_{[e]} w$ corresponds to a bound $|x_v - x_w| \leq c_{\gamma([e])}$.

We can then interpret the integral $\op{Int}_T(\mbf{c}_4)$ as the percentage of universally acceptable proposals $\mbf{x}_V \in [0, 1]^V$. Each term in our normalization
\[
\op{Norm}_T(\mbf{c}_4) = \prod_{[e] \in \wtilde{E}} (2c_{\gamma([e])} - c_{\gamma([e])}^2) = \prod_{[e] \in \wtilde{E}} \int_0^1 \int_0^1 \mathbbm{1}\{|x_{\source([e])} - x_{\target([e])}| \leq c_{\gamma([e])}\} \, dx_{\source([e])}dx_{\target([e])}
\]
corresponds to the local situation of a single pair of constrained allies $\{\source([e]), \target([e])\}$. Of course, each such pair must agree to a proposal $\mbf{x}_V$ for it to be universally acceptable, though in general this is not sufficient. We can then think of the ratio
\[
p_T(\mbf{c}_4) = \frac{\op{Int}_T(\mbf{c}_4)}{\op{Norm}_T(\mbf{c}_4)}
\]
as conditioning on the proposals that, at the very least, pass at the local level (though it is possible for $p_T(\mbf{c}^{(4)}) > 1$). In the limit $c_{i_0} \to 1^-$, the twin edges $[e] \in \wtilde{E}_{i_0}$ with label $i_0$ represent negotiations between increasingly amicable nations, insomuch that they no longer care to keep track of the disparity in the aid exchanged between them. Here, we again encounter the notion of a free edge. In this case, the proposal $\mbf{x}_V$ need only to satisfy the constraints coming from the remaining edges $\wtilde{E}\setminus\wtilde{E}_{i_0}$, which explains the limit \eqref{eq:4.36}.

Of course, there is nothing special about only sending one of the band widths $c_{i_0} \to 1^-$. In fact, the same argument clearly applies to any collection of labels $i_0, \ldots, i_j$ in a colored double tree $T$. We state the full result later once we have also considered the behavior of $p_T(\mbf{c}_4)$ for band widths $c_{i_0} \to 0^+$, but first we must introduce some more notation.

For any pair of subsets $W \subset V$ and $\wtilde{F} \subset \wtilde{E}$, we define the conditional expectation
\[
\op{Int}_{\wtilde{F}}(\mbf{c}_4 | W): [0, 1]^W \to [0, 1] 
\]
by
\[
\op{Int}_{\wtilde{F}}(\mbf{c}_4 | W)(\mbf{x}_W) = \int_{[0, 1]^{V \setminus W}} \op{Cut}_{\wtilde{F}, \mbf{c}_4}(\mbf{x}_V)\, d\mbf{x}_{V\setminus W}.
\]
For example, the reader can easily verify that
\[
\int_{[0, 1]^W} \op{Int}_T(\mbf{c}_4 | W)(\mbf{x}_W) \, d\mbf{x}_W = \op{Int}_T(\mbf{c}_4).
\]

As before, we start with a single label $i_0 \in I_4$ in $T$, for which we now consider the limit $c_{i_0} \to 0^+$. To simplify the argument, we first assume that there is a unique pair of twin edges $[e_{i_0}]$ with the label $\gamma([e_{i_0}]) = i_0$. For notational convenience, we write
\[
\{a, b\} =\{\source([e_{i_0}]), \target([e_{i_0}])\}.
\]
We condition on the vertices $\{a, b\}$ to obtain
\begin{align}
\notag p_T(\mbf{c}_4) = \frac{\op{Int}_T(\mbf{c}_4)}{\op{Norm}_T(\mbf{c}_4)} &= \int_{[0, 1]^V} \frac{\op{Cut}_{T, \mbf{c}_4}(\mbf{x}_V)}{\op{Norm}_T(\mbf{c}_4)} \, d\mbf{x}_V \\
\notag &= \int_{[0, 1]^2} \frac{\op{Int}_{\wtilde{E}\setminus\{[e_{i_0}]\}}(\mbf{c}_4 | \{x_a, x_b\})(x_a, x_b)}{\op{Norm}_{\wtilde{E}\setminus\{[e_{i_0}]\}}(\mbf{c}_4)} \bigg(\frac{\mathbbm{1}\{|x_a - x_b| \leq c_{i_0}\}}{2c_{i_0} - c_{i_0}^2} \, dx_a dx_b\bigg) \\
&= \int_{[0, 1]^2} f(x_a, x_b)\, \mu_{c_{i_0}}(dx_a, dx_b), \label{eq:4.37}
\end{align}
where
\[
f(x_a, x_b) = \frac{\op{Int}_{\wtilde{E}\setminus\{[e_{i_0}]\}}(\mbf{c}_4 | \{x_a, x_b\})(x_a, x_b)}{\op{Norm}_{\wtilde{E}\setminus\{[e_{i_0}]\}}(\mbf{c}_4)} 
\]
is a bounded continuous function that does not depend on $c_{i_0}$ and
\[
\mu_{c_{i_0}}(dx_a, dx_b)
\]
is the uniform (probability) measure on the banded strip in unit square $[0, 1]^2$ defined by $|x_a-x_b| \leq c_{i_0}$. In the limit, we have the weak convergence
\[
\mu_{c_{i_0}} \wto \mu_{\Delta} \quad \text{as} \quad  c_{i_0} \to 0^+,
\]
where $\mu_{\Delta}$ is the uniform measure on the diagonal $\{(x, x) : x \in [0 ,1]\} \subset [0, 1]^2$. In particular, this implies that
\begin{align*}
\lim_{c_{i_0} \to 0^+} p_T(\mbf{c}_4) &= \lim_{c_{i_0} \to 0^+} \int_{[0, 1]^2} f(x_a, x_b) \, \mu_{c_{i_0}}(dx_a, dx_b) \\
&= \int_{[0, 1]^2} f(x_a, x_b) \, \mu_\Delta(dx_a, dx_b) = \int_0^1 f(x, x)\, dx = p_{T/[e_{i_0}]}(\mbf{c}_4),
\end{align*}
where $T/[e_{i_0}]$ is the colored double tree obtained from $T$ by contracting the twin edges $[e_{i_0}]$ (i.e., we remove the edges $[e_{i_0}]$ and merge the vertices $\{a, b\}$). We note the contrast to the situation \eqref{eq:4.36} in the limit $c_{i_0} \to 1^-$, where we remove the edges but do not otherwise modify the vertices.

We must take care if the label $i_0$ appears in more than one set of twin edges. In any case, we can always identify the subgraph $T_{i_0}$ of $T$ with edge labels in $i_0$. In general, $T_{i_0} = (V_{i_0}, E_{i_0})$ is a forest $T_1 \sqcup \cdots \sqcup T_k$ of colored double trees $T_\ell = (V_\ell, E_\ell)$ (in the single color $i_0$). Conditioning on the vertices $V_{i_0} = V_1 \sqcup \cdots \sqcup V_k$ of $T_{i_0}$, we obtain
\begin{equation}\label{eq:4.38}
p_T(\mbf{c}_4) = \int_{\bigtimes_{\ell=1}^k [0,1]^{V_\ell}} f(\mbf{x}_{V_1}, \ldots, \mbf{x}_{V_k}) \prod_{\ell=1}^k \bigg(\frac{\op{Cut}_{T_\ell, c_{i_0}}(\mbf{x}_{V_\ell})}{\op{Norm}_{T_\ell}(c_{i_0})}\, d\mbf{x}_{V_\ell}\bigg)
\end{equation}
where
\[
f(\mbf{x}_{V_1}, \ldots, \mbf{x}_{V_k}) = \frac{\op{Int}_{\wtilde{E} \setminus \wtilde{E}_{i_0}}(\mbf{c}_4 | V_{i_0})(\mbf{x}_{V_1}, \ldots, \mbf{x}_{V_k})}{\op{Norm}_{\wtilde{E} \setminus \wtilde{E}_{i_0}}(\mbf{c}_4)}
\]
is again a bounded continuous function that does not depend on $c_{i_0}$. In this case, we cannot immediately write \eqref{eq:4.38} in terms of probability measures
\[
\mu_{c_{i_0}}^{(\ell)}(d\mbf{x}_{V_\ell}) = \frac{\op{Cut}_{T_\ell, c_{i_0}}(\mbf{x}_{V_\ell})}{\op{Norm}_{T_\ell}(c_{i_0})}\, d\mbf{x}_{V_\ell}
\]
as we did in \eqref{eq:4.37} since, in general,
\[
\op{Int}_{T_\ell}(c_{i_0}) = \int_{[0,1]^{V_\ell}} \op{Cut}_{T_\ell, c_{i_0}}(\mbf{x}_{V_\ell}) \, d\mbf{x}_{V_\ell} \neq (2c_{i_0} - c_{i_0}^2)^{\#(\wtilde{E}_\ell)} = \op{Norm}_{T_\ell}(c_{i_0});
\]
however, our work \eqref{eq:4.27} from before shows that
\[
\lim_{c_{i_0} \to 0^+} \frac{\op{Int}_{T_\ell}(c_{i_0})}{\op{Norm}_{T_\ell}(c_{i_0})} = 1.
\]
Thus, we can instead write
\begin{equation}\label{eq:4.39}
p_T(\mbf{c}_4) = \delta(c_{i_0}) \int_{\bigtimes_{\ell=1}^k [0,1]^{V_\ell}} f(\mbf{x}_{V_1}, \ldots, \mbf{x}_{V_k}) \, \bigg(\otimes_{\ell=1}^k \mu_{c_{i_0}}^{(\ell)}(d\mbf{x}_{V_\ell})\bigg),
\end{equation}
where $\delta(c_{i_0})$ is a real number depending on $c_{i_0}$ such that
\[
\lim_{c_{i_0} \to 0^+} \delta(c_{i_0}) = 1
\]
and $\mu_{c_{i_0}}^{(\ell)}$ is the uniform measure on the banded region $R_\ell \subset [0, 1]^{V_\ell}$ defined by the constraints 
\[
|x_{\source([e])} - x_{\target([e])}| \leq c_{i_0}, \qquad \forall [e] \in \wtilde{E}_\ell.
\]
As before, we note that
\[
\lim_{c_{i_0} \to 0^+} \mu_{c_{i_0}}^{(\ell)} = \mu_\Delta^{(\ell)},
\]
where $\mu_\Delta^{(\ell)}$ is the uniform measure on the diagonal $\{(x, \ldots, x) : x \in [0, 1]\} \subset [0, 1]^{V_\ell}$. It follows that
\begin{align*}
\lim_{c_{i_0} \to 0^+} p_T(\mbf{c}_4) &= \lim_{c_{i_0} \to 0^+} \int_{\bigtimes_{\ell=1}^k [0,1]^{V_\ell}} f(\mbf{x}_{V_1}, \ldots, \mbf{x}_{V_k}) \prod_{\ell=1}^k \bigg(\frac{\op{Cut}_{T_\ell, c_{i_0}}(\mbf{x}_{V_\ell})}{\op{Norm}_{T_\ell}(c_{i_0})}\, d\mbf{x}_{V_\ell}\bigg) \\
&=  \int_{\bigtimes_{\ell=1}^k [0,1]^{V_\ell}} f(\mbf{x}_{V_1}, \ldots, \mbf{x}_{V_k}) \bigg(\otimes_{\ell=1}^k \mu_\Delta^{(\ell)}(d\mbf{x}_{V_\ell})\bigg) \\
&= \int_{[0, 1]^k} f(x_1, \ldots, x_1, \ldots, x_k, \ldots, x_k)\, dx_1\cdots dx_k = p_{T/T_{i_0}}(\mbf{c}_4),
\end{align*}
where $T/T_{i_0}$ is the colored double tree obtained from $T$ by contracting the edges of $T_{i_0}$ (i.e., for each $\ell \in [k]$, we remove the edges $\wtilde{E}_\ell$ and merge the vertices $V_\ell$ into a single vertex).

We can easily adapt our argument to accommodate multiple band widths $c_{i_0}, \ldots, c_{i_j}$ in the limit $\max(c_{i_0}, \ldots, c_{i_j}) \to 0^+$. In this case, we replace $T_{i_0}$ with $T_{\mbf{i}}$, the subgraph of $T$ with edge labels in $\mbf{i} = \{i_0, \ldots, i_j\}$; otherwise, the same argument goes through just as well.

Returning to our intuition from before, we think of the limit $c_{i_0} \to 0^+$ as representing negotiations between increasing acrimonious nations, insomuch that they become completely intransigent and insist on absolute parity in the aid exchanged between them. Negotiations along such an edge $\gamma([e]) = i_0$ then stall a proposal $\mbf{x}_V$ unless $|x_{\source([e])} - x_{\target([e])}| = 0$. In this case, we can effectively consider the two countries $\source([e])$ and $\target([e])$ as a single entity sending the aid $x_{\source([e])} = x_{\target([e])}$ to the remaining allies. Our normalization then allows us to recast the problem as the proportion of acceptable proposals in this new world order. 

\phantomsection\label{fig24_forest}
\begin{center}
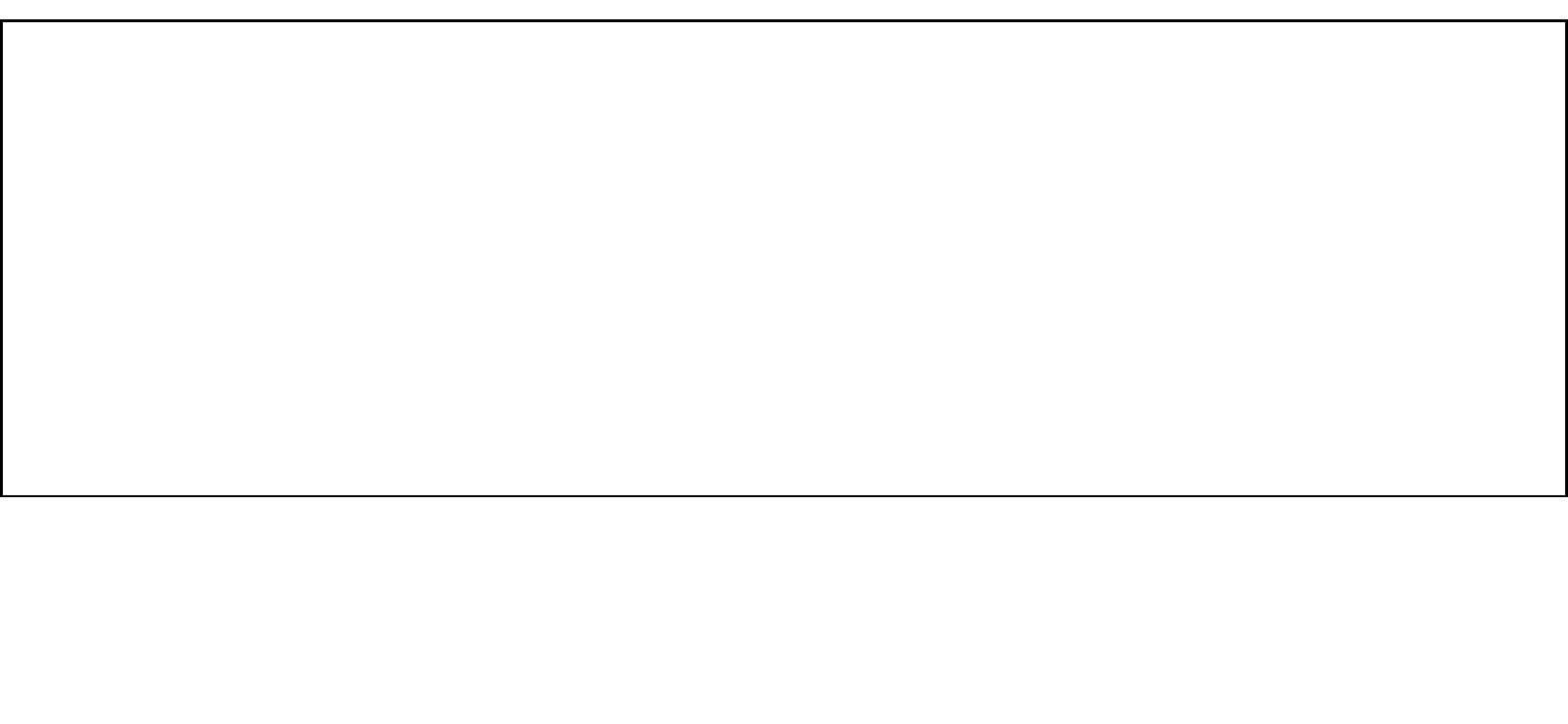
\end{center}

At this point, we see how the limits \eqref{eq:4.32} come about in different ways: in the limit $\underline{c} \to 0^+$, we contract all of the edges, leaving a single isolated vertex; in the limit $\hat{c} \to 1^-$, we remove all of the edges, leaving $\#(V)$ isolated vertices. Finally, the result for a collection of band widths sent to possibly different boundary values should come as no surprise. We combine our work in the two previous cases with care to account for parts moving simultaneously in different directions.

To begin, let $J_0$ (resp., $J_1$) denote the collection of labels in our colored double tree $T$ whose band widths are to be sent to $0^+$ (resp., $1^-$). We define
\begin{alignat*}{2}
\mbf{c}_0 &= (c_i)_{i \in J_0}, \qquad \mbf{c}_1 &&= (c_i)_{i \in J_1}; \\
c_0 &= \max_{i \in J_0} c_i, \qquad \ c_1 &&= \min_{i \in J_1} c_i,
\end{alignat*}
and write $\mbf{c}_2 = \mbf{c}_4 \setminus (\mbf{c}_0 \cup \mbf{c}_1)$ for the remaining band widths. We are then interested in the limit
\[
\lim_{(c_0, c_1) \to (0^+, 1^-)} p_T(\mbf{c}_4).
\]
We decompose our graph as before. We write $T_{0^+}$ for the subgraph of $T$ with edge labels in $J_0$. In general, $T_{0^+} = (V_{0^+}, E_{0^+})$ is a forest $T_{0^+} = T_1^+ \sqcup \cdots \sqcup T_k^+$ of colored double trees $T_\ell^+ = (V_\ell^+, E_\ell^+)$ (except now possibly with multiple colors). Similarly, we write $T_{1^-} = (V_{1^-}, E_{1^-})$ for the subgraph of $T$ with edge labels in $J_1$. Finally, we write $E_2 = E \setminus (E_0 \cup E_1)$ for the remaining edges.

Conditioning on the vertices $V_{0^+} = V_1^+ \sqcup \cdots \sqcup V_k^+$ of $T_{0^+}$, we obtain the analogue of \eqref{eq:4.39},
\begin{align*}
p_T(\mbf{c}_4) = \delta(\mbf{c}_0) \int_{\bigtimes_{\ell=1}^k [0,1]^{V_\ell^+}} f_{\mbf{c}_1}(\mbf{x}_{V_1^+}, \ldots, \mbf{x}_{V_k^+}) \bigg(\otimes_{\ell=1}^k \mu_{\mbf{c}_0}^{(\ell)}(d\mbf{x}_{V_\ell^+})\bigg),
\end{align*}
where $\delta(\mbf{c}_0)$ is a real number depending on $\mbf{c}_0$ such that
\[
\lim_{c_0 \to 0^+} \delta(\mbf{c}_0) = 1
\]
and $\mu_{\mbf{c}_0}^{(\ell)}$ is the uniform measure on the banded region $R_\ell$ in $[0, 1]^{V_\ell^+}$ defined by the constraints
\[
|x_{\source([e])} - x_{\target([e])}| \leq c_{\gamma([e])} \in \mbf{c}_0, \qquad \forall [e] \in \wtilde{E}_\ell^+.
\]
Despite considering multiple band widths $\mbf{c}_0$, we still have the weak convergence
\[
\lim_{c_0 \to 0^+} \mu_{\mbf{c}_0}^{(\ell)} = \mu_\Delta^{(\ell)}.
\]

As before,
\begin{align*}
f_{\mbf{c}_1}(\mbf{x}_{V_1^+}, \ldots, \mbf{x}_{V_\ell^+}) &=  \frac{\op{Int}_{\wtilde{E}\setminus\wtilde{E}_{0^+}}(\mbf{c}_4 | V_{0^+})(\mbf{x}_{V_1^+}, \ldots, \mbf{x}_{V_k^+})}{\op{Norm}_{\wtilde{E} \setminus \wtilde{E}_{0^+}}(\mbf{c}_4)} \\
&= \int_{[0, 1]^{V\setminus V_{0^+}}} \frac{\op{Cut}_{\wtilde{E}_{1^-}, \mbf{c}_1}(\mbf{x}_V)}{\op{Norm}_{\wtilde{E}_{1^-}}(\mbf{c}_1)} \frac{\op{Cut}_{\wtilde{E}_2, \mbf{c}_2}(\mbf{x}_V)}{\op{Norm}_{\wtilde{E}_2}(\mbf{c}_2)} \, d\mbf{x}_{V \setminus V_{0^+}}
\end{align*}
is a bounded continuous function that does not depend on $\mbf{c}_0$; however, $f_{\mbf{c}_1}$ does depend on $\mbf{c}_1$. In particular, the function
\[
\op{Cut}_{\wtilde{E}_{1^-}, \mbf{c}_1} : [0, 1]^V \to [0, 1] 
\]
is monotonic in $\mbf{c}_1$ with
\[
\lim_{c_1 \to 1^-} \op{Cut}_{\wtilde{E}_{1^-}, \mbf{c}_1}(\mbf{x}_V) = 1, \qquad \forall \mbf{x}_V \in [0, 1]^V.
\]
Since
\[
\lim_{c_1 \to 1^-} \op{Norm}_{\wtilde{E}_{1^-}}(\mbf{c}_1) = 1,
\]
it follows that
\[
f(\mbf{x}_{V_1^+}, \ldots, \mbf{x}_{V_k^+}) = \lim_{c_1 \to 1^-} f_{\mbf{c}_1}(\mbf{x}_{V_1^+}, \ldots, \mbf{x}_{V_k^+}) = \int_{[0, 1]^{V \setminus V_0^+}} \frac{\op{Cut}_{\wtilde{E}_2, \mbf{c}_2}(\mbf{x}_V)}{\op{Norm}_{\wtilde{E}_2}(\mbf{c}_2)} \, d\mbf{x}_{V \setminus V_{0^+}}.
\]
The monotonicity of $\op{Cut}_{\wtilde{E}_{1^-}, \mbf{c}_1}$ in the proportions $\mbf{c}_1$ then allows us to conclude that
\begin{align*}
\lim_{(c_0, c_1) \to (0^+, 1^-)} p_T(\mbf{c}_4) &= \lim_{(c_0, c_1) \to (0^+, 1^-)} \int_{\bigtimes_{\ell=1}^k [0,1]^{V_\ell^+}} f_{\mbf{c}_1}(\mbf{x}_{V_1^+}, \ldots, \mbf{x}_{V_k^+}) \prod_{\ell=1}^k \bigg(\frac{\op{Cut}_{\wtilde{E}_\ell^+, \mbf{c}_0}(\mbf{x}_{V_\ell^+})}{\op{Norm}_{T_\ell^+}(\mbf{c}_0)}\, d\mbf{x}_{V_\ell^+}\bigg) \\
&= \int_{\bigtimes_{\ell=1}^k [0,1]^{V_\ell^+}} f(\mbf{x}_{V_1^+}, \ldots, \mbf{x}_{V_k^+}) \bigg(\otimes_{\ell=1}^k \mu_\Delta^{(\ell)}(d\mbf{x}_{V_\ell^+})\bigg) \\
&= \int_{[0, 1]^k} f(x_1, \ldots, x_1, \ldots, x_k, \ldots, x_k) \, dx_1\cdots dx_k = p_F(\mbf{c}_2) = \prod_{r=1}^s p_{T_r}(\mbf{c}_2),
\end{align*}
where $F$ is the forest of colored double trees $F = T_1 \sqcup \cdots \sqcup T_s$ obtained from $T$ by removing the edges $E_{1^-}$ and contracting the edges $E_{0^+}$.

Our treatment of $p_T(\mbf{c}_4)$ suggests the following form for the joint LTD of the matrices $\salg{O}_n^{(2)} \cup \salg{O}_n^{(3)} \cup \salg{O}_n^{(4)}$. We leave the by-now familiar details of the proof to the diligent reader.

\begin{thm}\label{thm4.3.3}
For any test graph $T$ in $\mbf{x}_2 \cup \mbf{x}_3 \cup \mbf{x}_4 = (x_i)_{i \in I_2 \cup I_3 \cup I_4}$,
\begin{equation}\label{eq:4.40}
\lim_{n \to \infty} \tau^0\big[T(\salg{O}_n^{(2)} \cup \salg{O}_n^{(3)} \cup \salg{O}_n^{(4)})\big] = 
\begin{cases}
p_F(\mbf{c}_4)\prod_{i \in I} \beta_i^{c_i(T)} & \text{if $T$ is a colored double tree,} \\
\hfil 0 & \text{otherwise},
\end{cases}
\end{equation}
where $F = T_1 \sqcup \cdots \sqcup T_s$ is the forest of colored double trees obtained from $T$ by contracting the edges with labels in $I_2$ and removing the edges with labels in $I_3$ and
\begin{equation}\label{eq:4.41}
p_F(\mbf{c}_4) = \prod_{r=1}^s p_{T_r}(\mbf{c}_4).
\end{equation}
\end{thm}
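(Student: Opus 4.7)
The plan is to combine the techniques of Lemmas \hyperref[lemma4.1.2]{4.1.2}, \hyperref[lemma4.2.1]{4.2.1}, and \hyperref[lemma4.3.1]{4.3.1}. First, I would expand the injective trace
\[
\tau^0\big[T(\salg{O}_n^{(2)} \cup \salg{O}_n^{(3)} \cup \salg{O}_n^{(4)})\big] = \frac{1}{n\prod_{e \in E}\kappa_n^{(\gamma(e))}} \sum_{\phi: V \hookrightarrow [n]} \E\bigg[\prod_{e \in E} \mbf{\Xi}_n^{(\gamma(e))}(\phi(e))\bigg],
\]
where $\kappa_n^{(i)} = \sqrt{2b_n^{(i)}}$ for $i \in I_2$, $\sqrt{n}$ for $i \in I_3$, and $\sqrt{n(2c_i - c_i^2)}$ for $i \in I_4$. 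The familiar independence/moment argument of Proposition \hyperref[prop3.1.2]{3.1.2} and Lemma \hyperref[lemma4.1.2]{4.1.2} (each label in each twin-edge class must appear with multiplicity $\ge 2$, and $\underline{T}$ must be connected) rules out every contribution except from colored double trees. By the deep-vertex argument of Lemma \hyperref[lemma4.2.1]{4.2.1}, I can furthermore replace $\salg{O}_n^{(2)}$ with its periodic counterpart $\salg{P}_n^{(2)}$ at asymptotically negligible cost, since $\max_{i \in I_2} b_n^{(i)}/n \to 0$.

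For a colored double tree $T$ with $n_j := \#(\wtilde{E}_j)$, one has $|V| = 1 + n_2 + n_3 + n_4$ and $|V(F)| = |V| - n_2 = 1 + n_3 + n_4$. The factorization of expectations over twin edges yields
\[
\tau^0\big[T(\salg{O}_n)\big] = \frac{\prod_i \beta_i^{c_i(T)}}{\prod_{[e] \in \wtilde{E}_4}(2c_{\gamma([e])} - c_{\gamma([e])}^2)} \cdot \frac{A_n(T)}{n^{|V(F)|}\prod_{[e] \in \wtilde{E}_2}(2b_n^{([e])})},
\]
where $A_n(T)$ counts the injective maps $\phi: V \hookrightarrow [n]$ satisfying all band-width constraints. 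Rescaling $y_v := \phi(v)/n \in [0,1]$ identifies the last ratio as a Riemann approximation to
\[
\int_{[0,1]^V} \prod_{[e] \in \wtilde{E}_3 \cup \wtilde{E}_4} \mathbbm{1}\big\{|y_{\source([e])} - y_{\target([e])}| \le b_n^{([e])}/n\big\} \prod_{[e] \in \wtilde{E}_2} \frac{\mathbbm{1}\big\{|y_{\source([e])} - y_{\target([e])}| \le b_n^{([e])}/n\big\}}{2b_n^{([e])}/n}\, d\mathbf{y}_V.
\]

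The heart of the argument is the edge-type-by-edge-type analysis of this integrand. For $[e] \in \wtilde{E}_3$, $b_n^{([e])}/n \to 1$, so the indicator tends pointwise to $1$. For $[e] \in \wtilde{E}_4$, $b_n^{([e])}/n \to c_{\gamma([e])} \in (0,1)$, so the indicator tends pointwise to $\mathbbm{1}\{|y_{\source([e])} - y_{\target([e])}| \le c_{\gamma([e])}\}$, reproducing the cut-function of Lemma \hyperref[lemma4.3.1]{4.3.1}. For $[e] \in \wtilde{E}_2$, the normalized kernel $\frac{1}{2b_n^{([e])}/n}\mathbbm{1}\{|y_{\source([e])} - y_{\target([e])}| \le b_n^{([e])}/n\}$ is (up to a negligible boundary effect) a probability density in $y_{\target([e])}$ given $y_{\source([e])}$ that converges weakly to the uniform measure on the diagonal $\{y_{\source([e])} = y_{\target([e])}\}$---precisely the $\mu_c \to \mu_\Delta$ convergence invoked in the derivation of \eqref{eq:4.39}. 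Integrating against these diagonal limits identifies the endpoints of each $I_2$ twin edge, collapsing $[0,1]^V$ to $[0,1]^{V/\mathord{\sim}_{I_2}} = [0,1]^{V(F)}$; after discarding the $\wtilde{E}_3$ factors (which are identically $1$), only the $\wtilde{E}_4$ cut-functions survive, producing exactly $\op{Int}_F(\mathbf{c}_4)$. Dividing by $\op{Norm}_{\wtilde{E}_4}(\mathbf{c}_4)$ yields $p_F(\mathbf{c}_4)$, and the factorization $p_F(\mathbf{c}_4) = \prod_r p_{T_r}(\mathbf{c}_4)$ follows from the product structure of $\op{Cut}_F = \prod_r \op{Cut}_{T_r}$ over the vertex-disjoint connected components $T_r$ of $F$.

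The main obstacle is justifying the simultaneous mollifier convergence when $\wtilde{E}_2$ and $\wtilde{E}_4$ edges coexist in $T$, since the $I_2$-kernels are not uniformly bounded. The cleanest route is to condition first on the vertex labels at the quotient vertices $V(F)$, reducing the problem to a product over the $I_2$-connected clusters of finite-dimensional mollifier integrals, each taken against a bounded continuous test function supplied by the surviving $\wtilde{E}_4$ cut-function; the weak convergence $\mu_{b_n/n} \to \mu_\Delta$ on each cluster then combines with the uniform bound $\op{Cut}_F \le 1$ via dominated convergence to deliver the global limit.
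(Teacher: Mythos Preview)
Your proposal is correct and follows essentially the same route as the paper. The paper itself leaves the details of Theorem~\hyperref[thm4.3.3]{4.3.3} to the reader, having developed exactly the machinery you invoke: the reduction to colored double trees (Lemma~\hyperref[lemma4.1.2]{4.1.2}), the integral representation and weak-convergence argument (Lemma~\hyperref[lemma4.3.1]{4.3.1} and the surrounding analysis of $p_T(\mbf{c}_4)$), and the limiting behavior of $p_T$ as proportions tend to $0^+$ (contraction via $\mu_c \to \mu_\Delta$, cf.\ \eqref{eq:4.39}) or to $1^-$ (removal of free edges, cf.\ \eqref{eq:4.33}--\eqref{eq:4.36}); your conditioning-on-quotient-vertices step is precisely the argument the paper uses in the paragraphs culminating in the joint limit $\lim_{(c_0,c_1)\to(0^+,1^-)} p_T(\mbf{c}_4) = p_F(\mbf{c}_2)$.
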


\begin{cor}\label{cor4.3.4}
The full proportion RBMs $\salg{O}_n^{(3)}$ and the proper proportion RBMs $\salg{O}_n^{(4)}$ are asymptotically traffic independent, as are the full proportion RBMs $\salg{O}_n^{(3)}$ and the slow growth RBMs $\salg{O}_n^{(2)}$. The slow growth RBMs $\salg{O}_n^{(2)}$ and the proper proportion RBMs $\salg{O}_n^{(4)}$ are not asymptotically traffic independent, nor are independent proper proportion RBMs $\salg{O}_n^{(4)}  = (\mbf{\Theta}_n^{(i)})_{i \in I_4}$. 
\end{cor}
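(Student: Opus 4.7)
The plan is to reduce everything to Theorem \hyperref[thm4.3.3]{4.3.3} and check (or refute) the factorization criterion for asymptotic traffic independence in Definition \hyperref[defn2.5.3]{2.5.3}. A preliminary structural observation will organize the positive direction: whenever $T$ is a colored double tree, the underlying simple graph $\underline{T}$ is a tree, and hence the bipartite graph $\chi(T)$ of Definition \hyperref[defn2.5.2]{2.5.2} is automatically a tree (any cycle in $\chi(T)$ would lift to a non-tree cycle in $\underline{T}$). Equivalently, a colored double tree is a free product in any partition of its edge labels; conversely, if each monochromatic component is a colored double tree and $\chi(T)$ is a tree, then gluing them produces a colored double tree.

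For the first two claims I would verify the factorization directly for a colored double tree $T$ in $\mbf{x}_3 \cup \mbf{x}_4$ (resp.\@ in $\mbf{x}_2 \cup \mbf{x}_3$). The forest $F$ supplied by Theorem \hyperref[thm4.3.3]{4.3.3} is obtained by removing the $I_3$-edges (and, for the second pair, contracting the $I_2$-edges); in both cases $F$ decomposes as the disjoint union of the monochromatic $I_4$-components $T_{4,\ell}$ (resp.\@ of single vertices) together with isolated singletons coming from the vertices of $T_{3,\ell}$, so that $p_F(\mbf{c}_4) = \prod_\ell p_{T_{4,\ell}}(\mbf{c}_4)$ (or $p_F \equiv 1$). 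The factor $\prod_i \beta_i^{c_i(T)}$ also splits over color classes. Applying Theorem \hyperref[thm4.3.3]{4.3.3} to each monochromatic component (for which $F$ is either the component itself or a disjoint union of isolated vertices) recovers $\lim \tau^0[T]$ as the required product of component-wise limits. If $T$ is not a colored double tree, Theorem \hyperref[thm4.3.3]{4.3.3} gives $\lim \tau^0[T] = 0$; by the structural observation, at least one monochromatic component must also fail to be a colored double tree, so the right-hand side of the factorization also vanishes, matching the ``otherwise'' clause of Definition \hyperref[defn2.5.3]{2.5.3}.

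For the third and fourth claims I would exhibit explicit colored double trees forcing $p_F \ne 1$. For the incompatibility of $\salg{O}_n^{(2)}$ with $\salg{O}_n^{(4)}$, take $T$ to be a path on four vertices with each edge doubled, the two outer twin edges labeled by a single $j \in I_4$ and the middle twin edges labeled by some $i \in I_2$; contracting the $I_2$-edges produces a three-vertex $I_4$-path as the forest $F$. For independent proper-proportion RBMs, take $T$ to be a path on three vertices whose two pairs of twin edges carry distinct indices $i, j \in I_4$, so that $F = T$ is itself a three-vertex $I_4$-path. In both cases every monochromatic component is a single pair of twin edges and contributes a trivial $p = 1$ factor, so the factorization reduces to the requirement $p_F(\mbf{c}_4) = 1$. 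I would refute this by an explicit integration using \eqref{eq:4.26}: iterating the cross-section formula for the three-vertex path in a single proportion $c$ yields $p_F(c) = (4 - 10c/3)/(4 - 4c + c^2)$ for $c \in (0, 1/2)$, which is manifestly not identically $1$; the two-proportion version needed for the fourth claim is entirely analogous. The only nontrivial step is this explicit integration, and it is short; the main challenge is really organizational, namely identifying $F$ and the monochromatic components correctly so that the entire mismatch collapses onto a single computable integral.
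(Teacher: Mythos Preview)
Your approach matches the paper's: the positive claims follow from the multiplicativity of $p_F$ in \eqref{eq:4.41} after removing the $I_3$-edges (your structural observation about $\chi(T)$ makes explicit what the paper leaves implicit), and the negative claims are handled by the same three-vertex-path counterexample $S$ (the paper labels the two outer twin edges by distinct $I_4$-indices $i_4,j_4$ and writes out $p_S(\{c_{i_4},c_{j_4}\})$ in four pieces covering all of $(0,1)$). One caution: ``manifestly not identically $1$'' is not quite what you need --- the corollary asserts non-independence for \emph{every} choice of proper proportion, so you must check $p_F(c)\neq 1$ on all of $(0,1)$, not just $(0,1/2)$; the computation for $c\in[1/2,1)$ is equally short.
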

\begin{proof}
The statements about asymptotic traffic independence follow from the calculation of $F$ from our colored double tree $T$ (we simply remove the edges with labels in $I_3$) and the multiplicativity of \eqref{eq:4.41}. For the statements about non-asymptotic traffic independence, we give a simple counterexample, namely, for $i_2 \in I_2$ and $i_4, j_4 \in I_4$ with $0 < c_{i_4} \leq c_{j_4} < 1$,

\begin{center}
\begingroup%
  \makeatletter%
  \providecommand\color[2][]{%
    \errmessage{(Inkscape) Color is used for the text in Inkscape, but the package 'color.sty' is not loaded}%
    \renewcommand\color[2][]{}%
  }%
  \providecommand\transparent[1]{%
    \errmessage{(Inkscape) Transparency is used (non-zero) for the text in Inkscape, but the package 'transparent.sty' is not loaded}%
    \renewcommand\transparent[1]{}%
  }%
  \providecommand\rotatebox[2]{#2}%
  \ifx\svgwidth\undefined%
    \setlength{\unitlength}{468bp}%
    \ifx\svgscale\undefined%
      \relax%
    \else%
      \setlength{\unitlength}{\unitlength * \real{\svgscale}}%
    \fi%
  \else%
    \setlength{\unitlength}{\svgwidth}%
  \fi%
  \global\let\svgwidth\undefined%
  \global\let\svgscale\undefined%
  \makeatother%
  \begin{picture}(1,0.14576923)%
    \put(0.08673121,1.44600133){\color[rgb]{0,0,0}\makebox(0,0)[lt]{\begin{minipage}{0.11106843\unitlength}\raggedright \end{minipage}}}%
    \put(0,0){\includegraphics[width=\unitlength,page=1]{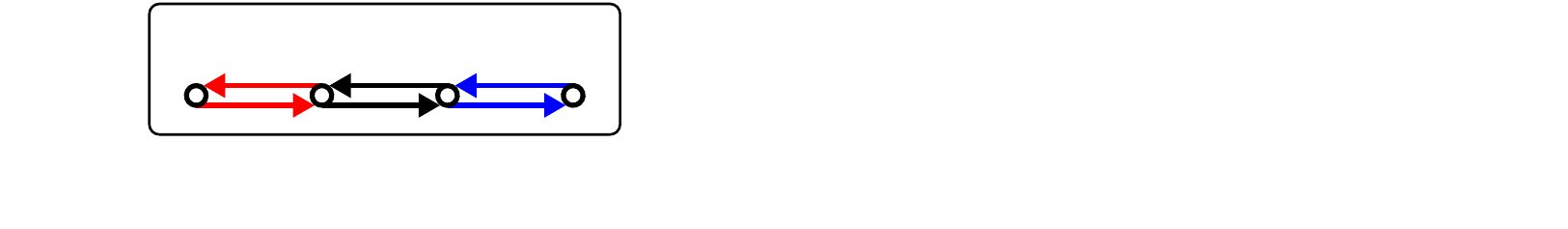}}%
    \put(0.14932674,0.10563498){\color[rgb]{0,0,0}\makebox(0,0)[lb]{\smash{$\mbf{\Theta}_n^{(i_4)}$}}}%
    \put(0.22945492,0.10563498){\color[rgb]{0,0,0}\makebox(0,0)[lb]{\smash{$\mbf{\Theta}_n^{(i_2)}$}}}%
    \put(0.30958154,0.10563498){\color[rgb]{0,0,0}\makebox(0,0)[lb]{\smash{$\mbf{\Theta}_n^{(j_4)}$}}}%
    \put(-0.00199538,0.09541175){\color[rgb]{0,0,0}\makebox(0,0)[lb]{\smash{$\displaystyle \lim_{n \to \infty} \tau^0\big[$}}}%
    \put(0,0){\includegraphics[width=\unitlength,page=2]{fig25_non.pdf}}%
    \put(0.40703505,0.09541217){\color[rgb]{0,0,0}\makebox(0,0)[lb]{\smash{$\big] = \displaystyle \lim_{n \to \infty} \tau^0\big[$}}}%
    \put(0.77993568,0.09474318){\color[rgb]{0,0,0}\makebox(0,0)[lb]{\smash{$\big]$}}}%
    \put(0.60255687,0.10563498){\color[rgb]{0,0,0}\makebox(0,0)[lb]{\smash{$\mbf{\Theta}_n^{(i_4)}$}}}%
    \put(0.68268508,0.10563498){\color[rgb]{0,0,0}\makebox(0,0)[lb]{\smash{$\mbf{\Theta}_n^{(j_4)}$}}}%
    \put(0.40703505,0.0056692){\color[rgb]{0,0,0}\makebox(0,0)[lb]{\smash{$\phantom{\big]} = \displaystyle \lim_{n \to \infty} \tau^0\big[S(\mbf{\Theta}_n^{(i_4)}, \mbf{\Theta}_n^{(i_4)}, \mbf{\Theta}_n^{(j_4)}, \mbf{\Theta}_n^{(j_4)})\big] = p_S(\{c_{i_4}, c_{j_4}\}),$}}}%
  \end{picture}%
\endgroup%

\end{center}

\noindent where \phantomsection{}
\begin{equation*}\label{eq:non}
p_S(\{c_{i_4}, c_{j_4}\}) =
\begin{dcases}
\hfil \frac{-\frac{1}{3}c_{i_4}^3 - c_{i_4}^2c_{j_4} - 2c_{i_4}c_{j_4}^2 + 4c_{i_4}c_{j_4}}{(2c_{i_4} - c_{i_4}^2)(2c_{j_4} - c_{j_4}^2)} & \text{if $c_{i_4} \leq c_{j_4} \leq \frac{1}{2}$,} \\
\frac{\frac{1}{3}c_{j_4}^3 - c_{i_4}c_{j_4}^2 - c_{i_4}^2 - c_{j_4}^2 + 2c_{i_4}c_{j_4} + c_{i_4} + c_{j_4} - \frac{1}{3}}{(2c_{i_4} - c_{i_4}^2)(2c_{j_4} - c_{j_4}^2)} & \text{if $1 - c_{j_4} \leq c_{i_4} \leq \frac{1}{2}$,} \\
\hfil \frac{-\frac{1}{3}c_{i_4}^3 - c_{i_4}^2c_{j_4} - 2c_{i_4}c_{j_4}^2 + 4c_{i_4}c_{j_4}}{(2c_{i_4} - c_{i_4}^2)(2c_{j_4} - c_{j_4}^2)} & \text{if $c_{i_4} \leq 1 - c_{j_4} \leq \frac{1}{2}$,} \\
\frac{\frac{1}{3}c_{j_4}^3 - c_{i_4}c_{j_4}^2 - c_{i_4}^2 - c_{j_4}^2 + 2c_{i_4}c_{j_4} + c_{i_4} + c_{j_4} - \frac{1}{3}}{(2c_{i_4} - c_{i_4}^2)(2c_{j_4} - c_{j_4}^2)} & \text{if $\frac{1}{2} \leq c_{i_4} \leq c_{j_4}$}.
\end{dcases}
\end{equation*}
In particular,

\begin{center}
\begingroup%
  \makeatletter%
  \providecommand\color[2][]{%
    \errmessage{(Inkscape) Color is used for the text in Inkscape, but the package 'color.sty' is not loaded}%
    \renewcommand\color[2][]{}%
  }%
  \providecommand\transparent[1]{%
    \errmessage{(Inkscape) Transparency is used (non-zero) for the text in Inkscape, but the package 'transparent.sty' is not loaded}%
    \renewcommand\transparent[1]{}%
  }%
  \providecommand\rotatebox[2]{#2}%
  \ifx\svgwidth\undefined%
    \setlength{\unitlength}{495bp}%
    \ifx\svgscale\undefined%
      \relax%
    \else%
      \setlength{\unitlength}{\unitlength * \real{\svgscale}}%
    \fi%
  \else%
    \setlength{\unitlength}{\svgwidth}%
  \fi%
  \global\let\svgwidth\undefined%
  \global\let\svgscale\undefined%
  \makeatother%
  \begin{picture}(1,0.24545455)%
    \put(0.08200042,1.28531038){\color[rgb]{0,0,0}\makebox(0,0)[lt]{\begin{minipage}{0.10501016\unitlength}\raggedright \end{minipage}}}%
    \put(-0.00188654,0.2265721){\color[rgb]{0,0,0}\makebox(0,0)[lb]{\smash{$p_S(\{c_{i_4}, c_{j_4}\}) \neq 1$}}}%
    \put(0,0){\includegraphics[width=\unitlength,page=1]{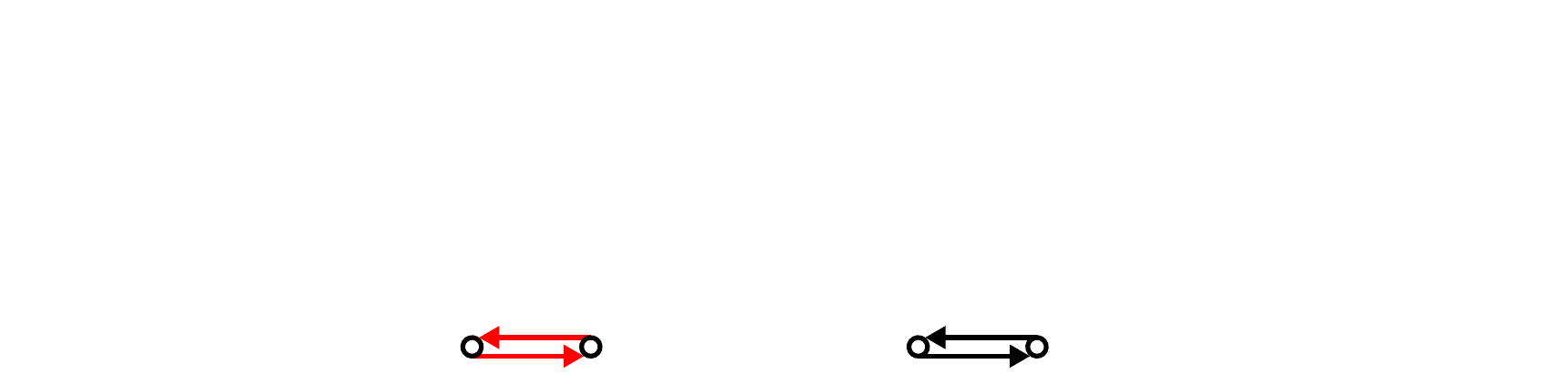}}%
    \put(0.41587865,0.03414646){\color[rgb]{0,0,0}\makebox(0,0)[lb]{\smash{$\displaystyle\big]\bigg) \bigg(\lim_{n \to \infty} \tau^0\big[$}}}%
    \put(0,0){\includegraphics[width=\unitlength,page=2]{fig26_non.pdf}}%
    \put(0.13402256,0.03414764){\color[rgb]{0,0,0}\makebox(0,0)[lb]{\smash{$= \displaystyle\bigg(\lim_{n \to \infty} \tau^0\big[$}}}%
    \put(0,0){\includegraphics[width=\unitlength,page=3]{fig26_non.pdf}}%
    \put(0.70046196,0.03414665){\color[rgb]{0,0,0}\makebox(0,0)[lb]{\smash{$\displaystyle\big]\bigg) \bigg(\lim_{n \to \infty} \tau^0\big[$}}}%
    \put(0.98504537,0.03351376){\color[rgb]{0,0,0}\makebox(0,0)[lb]{\smash{$\big]\bigg)$,}}}%
    \put(0,0){\includegraphics[width=\unitlength,page=4]{fig26_non.pdf}}%
    \put(0.4158787,0.15687394){\color[rgb]{0,0,0}\makebox(0,0)[lb]{\smash{$\displaystyle\big]\bigg) \bigg(\lim_{n \to \infty} \tau^0\big[$}}}%
    \put(0.70046196,0.15624105){\color[rgb]{0,0,0}\makebox(0,0)[lb]{\smash{$\big]\bigg)$}}}%
    \put(0,0){\includegraphics[width=\unitlength,page=5]{fig26_non.pdf}}%
    \put(0.13402256,0.15687512){\color[rgb]{0,0,0}\makebox(0,0)[lb]{\smash{$= \displaystyle\bigg(\lim_{n \to \infty} \tau^0\big[$}}}%
    \put(0.32396497,0.16653421){\color[rgb]{0,0,0}\makebox(0,0)[lb]{\smash{$\mbf{\Theta}_n^{(i_4)}$}}}%
    \put(0.60854833,0.16653421){\color[rgb]{0,0,0}\makebox(0,0)[lb]{\smash{$\mbf{\Theta}_n^{(j_4)}$}}}%
    \put(0.32396497,0.04380693){\color[rgb]{0,0,0}\makebox(0,0)[lb]{\smash{$\mbf{\Theta}_n^{(i_4)}$}}}%
    \put(0.89309381,0.04380693){\color[rgb]{0,0,0}\makebox(0,0)[lb]{\smash{$\mbf{\Theta}_n^{(j_4)}$}}}%
    \put(0.60854833,0.04380693){\color[rgb]{0,0,0}\makebox(0,0)[lb]{\smash{$\mbf{\Theta}_n^{(i_2)}$}}}%
  \end{picture}%
\endgroup%

\end{center}

\noindent which covers both statements.
\end{proof}

\begin{rem}\label{rem4.3.5}
One can also deduce the lack of asymptotic traffic independence for independent proper proportion RBMs $\salg{O}_n^{(4)} = (\mbf{\Theta}_n^{(i)})_{i \in I_4}$ of the same proportion $c_i \equiv c$ from the traffic CLT. Indeed, if the family $\salg{O}_n^{(4)}$ were asymptotically traffic independent, then we could adapt the argument from Section \hyperref[sec3.2]{3.2} to identify the LSD of a single proper proportion RBM $\mbf{\Theta}_n^{(i)}$ as a free convolution $\wsc(0,p^2) \boxplus \gn(0, q^2)$ of the form $p^2 + q^2 = 1$. On the contrary, the actual LSD is known to be non-semicircular and of bounded support \cite{BMP91}, which simultaneously implies that both $q^2 \neq 0$ and $q^2 = 0$ respectively.
\end{rem}

The careful reader will notice that the periodic RBMs $\salg{P}_n^{(1)}$ are conspicuously absent in Theorem \hyperref[thm4.3.3]{4.3.3}. Again, we have the familiar obstruction: without any further assumptions on the band widths $\mbf{b}_n^{(1)} = (b_n^{(i)})_{i \in I_1}$, their fluctuations could preclude the existence of a joint LTD. For example, if a periodic band width $b_n^{(i)}$ has a subsequence of slow growth and another subsequence of proportional growth, then the LTDs along these two subsequences will be different. If we assume that the band widths $\mbf{b}_n^{(1)} = (b_n^{(i)})_{I_1'} \cup (b_n^{(i)})_{i \in I_1''}$ fall into one of these two regimes, slow growth or proportional growth respectively, then we can prove the extension of Theorem \hyperref[thm4.3.3]{4.3.3} to $\salg{P}_n^{(1)} \cup \salg{O}_n^{(2)} \cup \salg{O}_n^{(3)} \cup \salg{O}_n^{(4)}$. In this case, the LTD essentially follows \eqref{eq:4.40} except that we must now also contract the edges with labels in $I_1'$ and remove the edges with labels in $I_1''$ (regardless of the limiting proportions $\lim_{n \to \infty} \frac{b_n^{(i)}}{n}$ for $i \in I_1''$).

The contraction of the edges with labels in $I_1'$ should come as no surprise given Section \hyperref[sec4.2]{4.2}, where we saw that periodizing a slow growth RBM does little to affect the calculations. Just as we contract the labels in $I_2$, we should then also expect to contract the labels in $I_1'$. On the other hand, as we noted before, periodizing a proportional growth RBM changes the situation entirely. Formally, we need to work with the periodic absolute value
\[
|x|_p = \min(x, 1-x), \qquad \forall x \in [0, 1]
\]
in our integral to account for the edges with labels in $I_1''$; however, the analogue of \eqref{eq:4.26} does not depend on where we measure the diameter of our cross section,
\[
g(x_{v_1}) = \int_0^1 \indc{|x_{v_0} - x_{v_1}|_p \leq c_{\gamma([e_0]}}\, dx_{v_0} = 2c_{\gamma([e_0])}, \qquad \forall x_{v_1} \in [0, 1]. 
\]
This balances out perfectly with the normalization of the periodic RBMs,
\[
\mbf{\Lambda}_n^{(\gamma([e_0]))} = \mbf{\Upsilon}_n^{(\gamma([e_0]))} \circ \mbf{\Gamma}_n^{(\gamma([e_0]))} = \frac{1}{\sqrt{2b_n^{(\gamma([e_0]))}}} \mbf{\Gamma}^{(\gamma([e_0]))},
\]
so we can integrate out the vertices that are only adjacent to edges with labels in $I_1''$ without changing the value of the integral. This of course corresponds to simply removing the edges with labels in $I_1''$ when calculating $p_F(\mbf{c}_4)$. In this case, we then know that the periodic RBMs $\salg{P}_n^{(1'')} = (\mbf{\Lambda}_n^{(i)})_{i \in I_1''}$ and the proportional growth RBMs $\salg{O}_n^{(4)}$ are asymptotically traffic independent, whereas the periodic RBMs $\salg{P}_n^{(1')} = (\mbf{\Lambda}_n^{(i)})_{i \in I_1'}$ and the proportional growth RBMs $\salg{O}_n^{(4)}$ are not.

For general $\beta_i \in \C$, we must again settle for convergence in joint distribution.
\begin{thm}\label{thm4.3.6}
Assume that the band widths $(b_n^{(i)})_{i \in I_1}$ of the periodic RBMs fall into one of two categories $I_1 = I_1' \cup I_1''$ as before. For general $\beta_i \in \C$, the families $\salg{P}_n^{(1)} \cup \salg{O}_n^{(2)} \cup \salg{O}_n^{(3)} \cup \salg{O}_n^{(4)}$ converge in joint distribution to a family
\[
\mbf{a} = (a_i)_{i \in I} = (a_i)_{i \in I_1'} \cup (a_i)_{i \in I_1''} \cup (a_i)_{i \in I_2} \cup (a_i)_{i \in I_3} \cup (a_i)_{i \in I_4} = \mbf{a}_{1'} \cup \mbf{a}_{1''} \cup \mbf{a}_2 \cup \mbf{a}_3 \cup \mbf{a}_4.
\]
The family $\mbf{a}_{1'} \cup \mbf{a}_{1''} \cup \mbf{a}_2 \cup \mbf{a}_3$ is a semicircular system; the families $\mbf{a}_{1''}$, $\mbf{a}_3$, and $\mbf{a}_4$ are free; the families $\mbf{a}_2$ and $\mbf{a}_4$ are not free, nor are the families $\mbf{a}_{1'}$ and $\mbf{a}_4$; finally, the family $\mbf{a}_4 = (a_i)_{i \in I_4}$ is not free.
\end{thm}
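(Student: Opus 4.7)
The proof hinges on the criterion from Remark \hyperref[rem3.1.3]{3.1.3}: convergence in joint distribution to a semicircular system is determined by the limiting value of the injective traffic state on opposing colored double trees (which must equal $1$) and on non-colored-double-trees (which must vanish). The key observation is that opposing twin edges give $\E|\mbf{X}_n^{(i)}(j,k)|^2 = 1$ irrespective of $\beta_i$, so the ordering-dependent factor $S_\psi(T)$ of \eqref{eq:3.16} collapses to $1$ on every opposing colored double tree. Consequently the asymptotics relevant to the joint $*$-distribution coincide with those from the $\beta_i \in \R$ analyses of Lemma \hyperref[lemma4.2.1]{4.2.1} and Theorem \hyperref[thm4.3.3]{4.3.3}.

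I would first verify joint traffic distributional convergence for the full family $\salg{P}_n^{(1)} \cup \salg{O}_n^{(2)} \cup \salg{O}_n^{(3)} \cup \salg{O}_n^{(4)}$ by adapting the earlier lemmas to general $\beta_i \in \C$, inserting the ordering indicator $\indc{x_{\psi(1)} \geq \cdots \geq x_{\psi(\#(V))}}$ into the band-width integral representation exactly as in \eqref{eq:3.16} and Remark \hyperref[rem4.3.2]{4.3.2}. Restricting next to opposing colored double trees with labels in $I_1 \cup I_2 \cup I_3$ -- where the orderings $\psi$ drop out -- the limit of $\tau_n^0\big[T\big]$ is $1$ by the same analysis that gives $\lim_{c_i \to 1^-} p_T(\mbf{c}_4) = 1$ in \eqref{eq:4.31} (handling $I_3$ and, after the periodic cancellation of the constant-in-periodic-coordinate integrand, $I_1''$), combined with Lemma \hyperref[lemma4.2.1]{4.2.1} (handling $I_1' \cup I_2$). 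Applying \eqref{eq:3.18} then yields that $\mbf{a}_{1'} \cup \mbf{a}_{1''} \cup \mbf{a}_2 \cup \mbf{a}_3$ is a semicircular system. The family $\mbf{a}_4$ fails to satisfy \eqref{eq:3.18} because the LTD on an opposing colored double tree with $I_4$-labels is $p_T(\mbf{c}_4)$, which is generally not $1$.

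The freeness of $\mbf{a}_{1''}$, $\mbf{a}_3$, $\mbf{a}_4$ I would deduce from their joint asymptotic traffic independence, which is a direct consequence of the factorization structure of Theorem \hyperref[thm4.3.3]{4.3.3}: the weight $p_F(\mbf{c}_4)$ only sees the $I_4$-sub-forest, while edges labeled in $I_3 \cup I_1''$ are simply removed. For any centered alternating product of elements of these three families, expanding $\varphi$ on the associated cycle via M\"{o}bius inversion and applying the butterfly/rabbit-ears argument preceding Remark \hyperref[rem3.1.3]{3.1.3} restricts contributions to opposing colored double trees; the free-product clause of \eqref{eq:2.13} further restricts these to trees that are free products in the three-fold family partition. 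For a cycle with consecutively distinct family labels these restrictions force the contributing gluings to coincide with non-crossing pair partitions respecting family structure, and the weighted sum assembles -- by the multiplicativity of $p_F(\mbf{c}_4)$ over components in \eqref{eq:4.41} combined with the independence factorization -- into exactly the free-independence moment formula.

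The non-freeness claims follow by explicit low-order counterexamples mirroring the non-traffic-independence examples displayed after Corollary \hyperref[cor4.3.4]{4.3.4}; one exhibits a centered alternating word whose moment, computed via Theorem \hyperref[thm4.3.3]{4.3.3}, disagrees with what free independence would force. For the family $\mbf{a}_4$ itself, Remark \hyperref[rem4.3.5]{4.3.5} provides a cleaner argument: were $\mbf{a}_4$ asymptotically free, the traffic-CLT construction of Section \hyperref[sec3.2]{3.2} applied to a common-proportion family would force the LSD of a single $\mbf{\Theta}_n^{(i)}$ to be a free convolution $\wsc(0,p^2) \boxplus \gn(0,q^2)$ with $p^2 + q^2 = 1$, contradicting the Bogachev--Molchanov--Pastur theorem which identifies the LSD as a non-semicircular distribution of bounded support. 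The hardest step is the freeness combinatorics of the third paragraph: although the three families are traffic independent, one must track that the $p_T(\mbf{c}_4)$-weighted contributions from each $\mbf{a}_4$-block in an alternating-cycle gluing reassemble precisely into the internal moments of $\mbf{a}_4$ without disturbing the vanishing of mixed alternating moments, and this requires a careful free-product decomposition in the sense of Definition \hyperref[defn2.5.2]{2.5.2}.
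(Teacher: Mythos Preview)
Your overall strategy---reducing to opposing colored double trees via Remark \hyperref[rem3.1.3]{3.1.3} and exploiting that $S_\psi(T)\equiv 1$ on such trees---matches the paper's opening move, and your treatment of the semicircular-system claim is essentially identical.

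Where you diverge substantially is in the freeness of $\mbf{a}_{1''}$, $\mbf{a}_3$, $\mbf{a}_4$. You propose a direct combinatorial verification: expand an alternating cycle, restrict to opposing colored double trees via the butterfly argument, invoke the free-product clause \eqref{eq:2.13}, and reassemble using the multiplicativity \eqref{eq:4.41}. You correctly flag this as the hardest step, and indeed it is delicate---traffic independence does not imply free independence in general, so the reassembly must be checked by hand. The paper sidesteps this entirely with a universality trick: since the joint \emph{distribution} on opposing double trees is independent of $\beta_i$, and since edges in $I_{1''}$ and $I_3$ are treated identically (both simply removed), one may replace $\salg{P}_n^{(1'')}\cup\salg{O}_n^{(3)}$ by a single family of full-proportion RBMs and then choose a \emph{unitarily invariant} realization (e.g.\ GUE). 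Voiculescu's classical asymptotic freeness result \cite{Voi91} then applies directly to the independent pair $(\salg{O}_n^{(3)},\salg{O}_n^{(4)})$. This is considerably cleaner than the combinatorics you outline.

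There is also a gap in your non-freeness argument for $\mbf{a}_4$. Remark \hyperref[rem4.3.5]{4.3.5} concerns asymptotic \emph{traffic} independence, and the CLT invoked there is the traffic CLT (Theorem \hyperref[thm2.5.6]{2.5.6}), which requires traffic independence as a hypothesis. Assuming only free independence of $\mbf{a}_4$, you cannot directly import that argument. (A variant using free stability and the free CLT could be made to work, but that is not what you wrote.) The paper instead gives an explicit counterexample: for $i_4\neq j_4$ with $c_{i_4}\leq c_{j_4}$, freeness would force $\varphi(a_{i_4}^2 a_{j_4}^2)=\varphi(a_{i_4}^2)\varphi(a_{j_4}^2)=1$, whereas the actual limit is $p_S(\{c_{i_4},c_{j_4}\})\neq 1$ for $c_{i_4},c_{j_4}\in(0,1)$, with $p_S$ the explicit rational function already computed in the proof of Corollary \hyperref[cor4.3.4]{4.3.4}. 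The non-freeness of $\mbf{a}_2$ and $\mbf{a}_4$ (and hence of $\mbf{a}_{1'}$ and $\mbf{a}_4$, since their joint distributions coincide) is handled the same way, via $\varphi(a_{i_4}^2 a_{i_2} a_{i_4}^2 a_{i_2})=p_T(c_{i_4})\neq 1$.
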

\begin{proof}
The convergence in joint distribution follows from a modified version of the criteria \eqref{eq:3.18} in Remark \hyperref[rem3.1.3]{3.1.3}. In particular, we do not actually need to know the value of
\[
\lim_{n \to \infty} \tau^0\big[T(\salg{P}_n^{(1)} \cup \salg{O}_n^{(2)} \cup \salg{O}_n^{(3)} \cup \salg{O}_n^{(4)})\big]
\]
for an opposing colored double tree $T$, just that it exists. In this case, we know that the value of this limit is equal to $p_F(\mbf{c}_4)$, which in turn is equal to 1 if there are no edges with labels in $I_4$. This proves the first statement, about $\mbf{a}_{1'} \cup \mbf{a}_{1''} \cup \mbf{a}_2 \cup \mbf{a}_3$.

For the second statement, about $\mbf{a}_{1''} \cup \mbf{a}_3 \cup \mbf{a}_4$, it suffices to prove that $\mbf{a}_3$ and $\mbf{a}_4$ are free. Indeed, this follows from the calculation of $p_F(\mbf{c}_4)$: edges with labels in either $I_{1''}$ or $I_3$ are both treated just the same and simply removed. In particular, this implies that the joint distributions $\mu_{\mbf{a}_{1''} \cup \mbf{a}_3 \cup \mbf{a}_4}$ and $\mu_{\mbf{a}_{3''} \cup \mbf{a}_3 \cup \mbf{a}_4} = \mu_{\mbf{b}_3 \cup \mbf{a}_4}$ are identical, where $\mbf{a}_{3''}$ is the limit of the full proportion RBMs $\salg{O}_n^{(3'')} = (\mbf{\Theta}_n^{(i)})_{i \in I_1''}$ and $\mbf{b}_3 = \mbf{a}_{3''} \cup \mbf{a}_3$ is simply the limit of a larger family of independent full proportion RBMs. Now, since the joint distribution $\mu_{\mbf{a}_3 \cup \mbf{a}_4}$ is universal independent of the parameters $\beta_i$, we can calculate $\mu_{\mbf{a}_3 \cup \mbf{a}_4}$ via a unitarily invariant realization of $\salg{O}_n^{(3)}$. The standard techniques then apply to show that $\mbf{a}_3$ and $\mbf{a}_4$ are free \cite{Voi91}.

Similarly, the joint distributions $\mu_{\mbf{a}_2 \cup \mbf{a}_4}$ and $\mu_{\mbf{a}_{1'} \cup \mbf{a}_4}$ are also identical, so we need only to consider the families $\mbf{a}_2$ and $\mbf{a}_4$. Let $a_{i_2} \in \mbf{a}_2$ and $a_{i_4}\in \mbf{a}_4$. If $a_{i_2}$ and $a_{i_4}$ were free, then
\[
\varphi(a_{i_4}^2a_{i_2}a_{i_4}^2a_{i_2}) = \varphi(a_{i_4}^2)^2\varphi(a_{i_2}^2) = 1;
\]
however, one can easily calculate
\begin{align*}
\lim_{n \to \infty} \E\bigg[\frac{1}{n}\trace\bigg((\mbf{\Theta}_n^{(i_4)})^2\mbf{\Theta}_n^{(i_2)}(\mbf{\Theta}_n^{(i_4)})^2\mbf{\Theta}_n^{(i_2)}\bigg)\bigg] &= p_T(c_{i_4}) \\
&=
\begin{dcases}
\frac{8c_{i_4}^2(\frac{1}{2} - c_{i_4}) + \frac{14}{3}c_{i_4}^3}{(2c_{i_4} - c_{i_4}^2)^2} & \text{if $c_{i_4} \leq \frac{1}{2}$,} \\
\frac{2c_{i_4} - 1 + \frac{2}{3}(1-c_{i_4}^3)}{(2c_{i_4} - c_{i_4}^2)^2} & \text{if $c_{i_4} \geq \frac{1}{2}$}
\end{dcases}\\
&\neq 1
\end{align*}
for $c_{i_4} \in (0, 1)$, where

\begin{center}
\begingroup%
  \makeatletter%
  \providecommand\color[2][]{%
    \errmessage{(Inkscape) Color is used for the text in Inkscape, but the package 'color.sty' is not loaded}%
    \renewcommand\color[2][]{}%
  }%
  \providecommand\transparent[1]{%
    \errmessage{(Inkscape) Transparency is used (non-zero) for the text in Inkscape, but the package 'transparent.sty' is not loaded}%
    \renewcommand\transparent[1]{}%
  }%
  \providecommand\rotatebox[2]{#2}%
  \ifx\svgwidth\undefined%
    \setlength{\unitlength}{468bp}%
    \ifx\svgscale\undefined%
      \relax%
    \else%
      \setlength{\unitlength}{\unitlength * \real{\svgscale}}%
    \fi%
  \else%
    \setlength{\unitlength}{\svgwidth}%
  \fi%
  \global\let\svgwidth\undefined%
  \global\let\svgscale\undefined%
  \makeatother%
  \begin{picture}(1,0.0848277)%
    \put(0.29106592,1.35946302){\color[rgb]{0,0,0}\makebox(0,0)[lt]{\begin{minipage}{0.11106844\unitlength}\raggedright \end{minipage}}}%
    \put(0,0){\includegraphics[width=\unitlength,page=1]{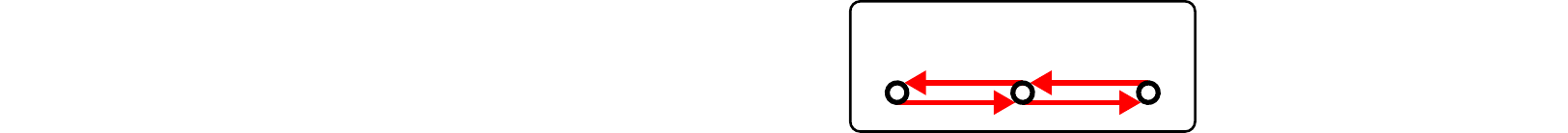}}%
    \put(0.59627457,0.04633917){\color[rgb]{0,0,0}\makebox(0,0)[lb]{\smash{$\mbf{\Theta}_n^{(i_4)}$}}}%
    \put(0.67640279,0.04633917){\color[rgb]{0,0,0}\makebox(0,0)[lb]{\smash{$\mbf{\Theta}_n^{(i_4)}$}}}%
    \put(0.22396617,0.03611678){\color[rgb]{0,0,0}\makebox(0,0)[lb]{\smash{$T(\mbf{\Theta}_n^{(i_4)}, \mbf{\Theta}_n^{(i_4)}, \mbf{\Theta}_n^{(i_4)}, \mbf{\Theta}_n^{(i_4)}) =$ }}}%
    \put(0.77124178,0.03504861){\color[rgb]{0,0,0}\makebox(0,0)[lb]{\smash{$.$}}}%
  \end{picture}%
\endgroup%

\end{center}

Finally, suppose that $a_{i_4} \neq a_{j_4} \in \mbf{a}_4$ with $0 < c_{i_4} \leq c_{j_4} < 1$. If $a_{i_4}$ and $a_{j_4}$ were free, then
\[
\varphi(a_{i_4}^2a_{j_4}^2) = \varphi(a_{i_4}^2)\varphi(a_{j_4}^2) = 1;
\]
however, one can again show that
\begin{align*}
\lim_{n \to \infty} \E\bigg[\frac{1}{n}\trace\bigg((\mbf{\Theta}_n^{(i_4)})^2(\mbf{\Theta}_n^{(j_4)})^2\bigg)\bigg] &= p_S(\{c_{i_4}, c_{j_4}\}) \neq 1,
\end{align*}
where $p_S(\{c_{i_4}, c_{j_4}\})$ is as in the proof of Corollary \hyperref[eq:non]{4.3.4}.
\end{proof}

\begin{rem}\label{rem4.3.7}
We need the assumption on the band widths $(b_n)_{i \in I_1}$ of the periodic RBMs to handle the interaction with the proper proportional growth RBMs $\salg{O}_n^{(4)}$. The families $\salg{P}_n^{(1)} \cup \salg{O}_n^{(2)} \cup \salg{O}_n^{(3)}$ converge in joint distribution to a semicircular system regardless, even without this assumption.
\end{rem}

Finally, the same considerations that allow us to translate Proposition \hyperref[prop3.1.2]{3.1.2} to Theorem \hyperref[thm4.3.3]{4.3.3} also work to prove the RBM version of the concentration inequalities in Theorem \hyperref[thm3.2.2]{3.2.2}. Here, we do not make any assumptions on the band widths $(b_n^{(i)})_{i \in I_1}$ beyond their divergence \eqref{eq:4.1}, nor on the parameters $\beta_i \in \C$.

\begin{thm}\label{thm4.3.8}
Let $\salg{Q}_n = \salg{P}_n^{(1)} \cup \salg{O}_n^{(2)} \cup \salg{O}_n^{(3)} \cup \salg{O}_n^{(4)}$. For any test graph $T$ in $\mbf{x}$,
\[
\E\bigg[\bigg|\frac{1}{n}\emph{tr}\big[T(\salg{Q}_n)\big] - \E\frac{1}{n}\emph{tr}\big[T(\salg{Q}_n)\big]\bigg|^{2m}\bigg] = O_T(n^{-m}).
\]
The bound is tight in the sense that there exist test graphs $T$ such that
\[
\E\bigg[\bigg|\frac{1}{n}\emph{tr}\big[T(\salg{Q}_n)\big] - \E\frac{1}{n}\emph{tr}\big[T(\salg{Q}_n)\big]\bigg|^{2m}\bigg] = \Theta_T(n^{-m}).
\]
\end{thm}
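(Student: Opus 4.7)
The plan is to extend the proof of Theorem \hyperref[thm3.3.2]{3.3.2} by incorporating the band width counting of Lemmas \hyperref[lemma4.1.2]{4.1.2}, \hyperref[lemma4.2.1]{4.2.1}, and \hyperref[lemma4.3.1]{4.3.1}. First I would expand the $2m$-th moment into a sum over $2m$-tuples $\Phi = (\phi_1, \ldots, \phi_{2m})$ of maps $V \to [n]$, centered copy-by-copy. As in Theorem \hyperref[thm3.3.2]{3.3.2}, only matched tuples contribute, and each such tuple is encoded by a glued multidigraph $T_\sqcup = (V_\sqcup, E_\sqcup, \gamma_\sqcup)$ with $d_{T_\sqcup} \leq m$ connected components; the inequalities $m_{i,[e]} \geq 2$ for every $(i, [e]) \in I \times \wtilde{N}_\sqcup$ together with the component bounds $\#(N_p) \geq 2\#(\wtilde{N}_p)$ and $\#(\wtilde{N}_p) + 1 \geq \#(V_p)$ persist unchanged.

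The essential new ingredient is the band width restriction on admissible labelings. Set $N_n^{(i)} = 2b_n^{(i)}$ for periodic and slow growth and $N_n^{(i)} = (2c_i - c_i^2)n$ for proportional growth, so that in every case the normalization of $\mbf{\Theta}_n^{(i)}$ (resp.~$\mbf{\Lambda}_n^{(i)}$) carries the factor $(N_n^{(i)})^{-1/2}$. Applying the spanning-forest counting of Lemma \hyperref[lemma4.3.1]{4.3.1} component-wise, and reducing slow growth components to their periodized counterparts via the interior-region argument of Lemma \hyperref[lemma4.2.1]{4.2.1}, the number of admissible $\phi_\sqcup: V_\sqcup \hookrightarrow [n]$ is bounded by $O_T\bigl(n^{d_{T_\sqcup}} \prod_{[e] \in ST(T_\sqcup)} \min_{e' \in [e]} N_n^{(\gamma_\sqcup(e'))}\bigr)$, where $ST(T_\sqcup)$ is a spanning forest of $\underline{T_\sqcup}$ with $\#(V_\sqcup) - d_{T_\sqcup}$ edges. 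Dividing by the joint normalization $n^{2m}\bigl(\prod_{e \in E} N_n^{(\gamma(e))}\bigr)^m$ and using the uniform moment bound \eqref{eq:3.1}, one obtains
\[
\E\bigg[\bigg|\tfrac{1}{n}\trace\big[T(\salg{Q}_n)\big] - \E\tfrac{1}{n}\trace\big[T(\salg{Q}_n)\big]\bigg|^{2m}\bigg] = O_T\bigg(\frac{n^{d_{T_\sqcup}-2m}\prod_{[e]\in ST(T_\sqcup)}\min_{e'\in[e]}N_n^{(\gamma_\sqcup(e'))}}{\bigl(\prod_{e\in E}N_n^{(\gamma(e))}\bigr)^m}\bigg)
\]
uniformly over contributing $T_\sqcup$. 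The crucial cancellation occurs in the extremal case in which $T_\sqcup$ is a forest of $m$ colored double trees and $T$ has no loops: then $ST(T_\sqcup) = \wtilde{N}_\sqcup$ sees each label $\gamma(e)$ exactly $m$ times with $\min_{e' \in [e]} N_n^{(\gamma(e'))} = N_n^{(\gamma(e))}$, matching the $m$-fold product in the denominator and leaving the pure $n^{d_{T_\sqcup}-2m} = n^{-m}$ factor.

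For the matching lower bound, take $T$ to be a tree with no loops and construct $T_\sqcup$ as a disjoint overlay of $m$ opposing colored double trees, one for each pair of copies; the asymptotic analysis of Lemma \hyperref[lemma4.3.1]{4.3.1} (or Lemma \hyperref[lemma4.2.1]{4.2.1}, as relevant) applied to each component then supplies a contribution of order $\Theta_T(n^{-m})$, and the positivity of $p_F(\mbf{c}_4)$ in \eqref{eq:4.41} guarantees this remains nonzero regardless of the mixture of RBM types. The main obstacle is verifying that the $N_n^{(i)}$-cancellation is robust across the heterogeneous families $I_1 \cup I_2 \cup I_3 \cup I_4$, and that loops, which are always band-admissible and therefore do not contribute to the spanning forest, cannot produce a factor worse than unity beyond $n^{-m}$; once the even-multiplicity constraint on contributing $T_\sqcup$ is imposed, the cancellation becomes an automatic consequence of the pairing structure already exploited in the proof of Proposition \hyperref[prop3.1.2]{3.1.2}.
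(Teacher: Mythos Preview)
Your proposal is correct and follows essentially the same approach the paper intends: the paper does not write out a separate proof of Theorem \hyperref[thm4.3.8]{4.3.8} but simply states that ``the same considerations that allow us to translate Proposition \hyperref[prop3.1.2]{3.1.2} to Theorem \hyperref[thm4.3.3]{4.3.3} also work to prove the RBM version of the concentration inequalities in Theorem \hyperref[thm3.3.2]{3.3.2},'' and your outline is precisely that translation---the matched-tuple/glued-graph decomposition of Theorem \hyperref[thm3.3.2]{3.3.2} combined with the spanning-forest band width counting of \eqref{eq:4.8} and the interior-region comparison of Lemma \hyperref[lemma4.2.1]{4.2.1}. One small point worth making explicit: in the non-extremal case your claimed $O_T(1)$ bound on the ratio $\prod_{[e]\in ST(T_\sqcup)}\min_{e'\in[e]}N_n^{(\gamma_\sqcup(e'))}\big/\bigl(\prod_{e\in E}N_n^{(\gamma(e))}\bigr)^m$ follows from the multiplicity condition $m_{i,[e]}\geq 2$ via $\min_{e'\in[e]}N_n^{(\gamma_\sqcup(e'))}\leq\bigl(\prod_{e'\in[e]}N_n^{(\gamma_\sqcup(e'))}\bigr)^{1/2}$, and the discrepancy between $2b_n^{(i)}$ and your $N_n^{(i)}$ in the proportional case is only a bounded constant, so nothing is lost.
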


As before, we can use Theorem \hyperref[thm4.3.8]{4.3.8} to upgrade the convergence in Theorems \hyperref[thm4.3.3]{4.3.3} and \hyperref[thm4.3.6]{4.3.6} to the almost sure sense.

\subsection{Fixed band width}\label{sec4.4}

We have much less to say in the fixed band width regime. For starters, we cannot work in the generality of the Wigner matrices of Section \hyperref[sec3]{3}. Instead, we must further assume that the off-diagonal entries (resp., the diagonal entries) of $\mbf{X}_n$ are identically distributed, independent of $n$; otherwise, in general, the LSD of even a single fixed band width RBM $\mbf{\Theta}_n = \mbf{\Upsilon}_n \circ \mbf{\Xi}_n = \mbf{\Upsilon}_n \circ (\mbf{B}_n \circ \mbf{X}_n)$ might not exist, never mind the LTD. We assume hereafter that any fixed band width RBM arises from this restricted setting.

Assuming a symmetric distribution for the entries of $\mbf{X}_n$, Section 6 in \cite{BMP91} proves the existence of a symmetric non-universal LSD $\mu_b$ for a real symmetric RBM $\mbf{\Theta}_n$ of fixed band width $b_n \equiv b$. The authors further prove that the distribution $\mu_b$ converges weakly to the standard semicircle distribution $\mu_{\wsc}$ in the limit $b \to \infty$. We consider the joint LTD of independent fixed band width RBMs (real and complex) without this symmetry assumption and prove the analogous convergence to the semicircular traffic distribution in the large band width limit.

To formalize our result, we consider a class of fixed band widths $\mbf{b} = (b_n^{(i)})_{i \in I} = (b_i)_{i \in I}$. We form the corresponding family of fixed band width RBMs
\[
\mathcal{J}_n = (\mbf{\Xi}_n^{(i)})_{i \in I} = (\mbf{B}_n^{(i)} \circ \mbf{X}_n^{(i)})_{i \in I}, \qquad \mathcal{O}_n = (\mbf{\Theta}_n^{(i)})_{i \in I} = (\mbf{\Upsilon}_n^{(i)} \circ \mbf{\Xi}_n^{(i)})_{i \in I}.
\] 
We write $\mu_i$ (resp., $\nu_i$) for the distribution of the strictly upper triangular entries $\mbf{X}_n^{(i)}(j, k)$ (resp., the diagonal entries $\mbf{X}_n^{(i)}(j, j)$) so that
\[
\mu_i = \salg{L}(\mbf{X}_n^{(i)}(j, k)) \quad \text{and} \quad \nu_i = \salg{L}(\mbf{X}_n^{(i)}(j, j)), \qquad  \forall j < k.
\]
In contrast to the previous sections, our fixed normalizations $\mbf{\Upsilon}_n^{(i)} = (2b_i + 1)^{-1/2}\mbf{J}_n$ force us to also consider non-tree-like test graphs $T$ in the limit $n \to \infty$.

\begin{thm}\label{thm4.4.1}
The family of fixed band width RBMs $\salg{O}_n$ converges in traffic distribution; moreover, for any test graph $T = (V, E, \gamma)$ in $\mbf{x}$, we have the bound
\begin{equation}\label{eq:4.42}
\lim_{n \to \infty} \tau^0\big[T(\salg{O}_n)\big] = O_{T, \bm{\mu}, \bm{\nu}}\bigg(\frac{\prod_{[e] \in \wtilde{N}_0} \min_{e' \in [e]} 2b_{\gamma(e')}}{\prod_{e \in E} \sqrt{2b_{\gamma(e)} + 1}}\bigg),
\end{equation}
where $(V, \wtilde{N}_0)$ is any spanning tree of $(V, \wtilde{N})$ and
\[
\bm{\mu} = (\mu_i)_{i \in I}, \qquad \bm{\nu} = (\nu_i)_{i \in I}.
\] 
\end{thm}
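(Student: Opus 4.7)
The plan is to mirror the analysis of Lemma 4.1.2, with two crucial modifications: the fixed band widths preclude the divergent-$b$ asymptotic that forced non-tree contributions to vanish, and the $n$-independence of the entry distributions (which forces the restriction to the i.i.d.\@ setting) means that all moment expectations become constants that can simply be absorbed into $O_{T,\bm{\mu},\bm{\nu}}(1)$. First I would expand
\[
\tau^0\big[T(\salg{O}_n)\big] = \frac{1}{n\prod_{e \in E}\sqrt{2b_{\gamma(e)}+1}}\sum_{\phi: V \hookrightarrow [n]} \bigg(\prod_{[\ell] \in \wtilde{L}}\!\E\!\!\prod_{\ell' \in [\ell]}\!\!\mbf{X}_n^{(\gamma(\ell'))}(\phi(\ell'))\bigg)\bigg(\prod_{[e] \in \wtilde{N}}\! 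C_{[e]}(\phi)\,\E\!\!\prod_{e' \in [e]}\!\!\mbf{X}_n^{(\gamma(e'))}(\phi(e'))\bigg),
\]
where $C_{[e]}(\phi) = \indc{|\phi([e])| \leq \min_{e' \in [e]} b_{\gamma(e')}}$ encodes the band condition, the factorization uses independence of the Wigner entries, and the vanishing of any factor when a label in $[e] \in \wtilde{N}$ occurs with multiplicity $1$ proceeds exactly as in Proposition~3.1.2. Because the entry laws $(\mu_i, \nu_i)$ are fixed in $n$, each moment factor is a constant depending only on $T, \bm{\mu}, \bm{\nu}$, so the entire expectation product is uniformly bounded by some $C_{T,\bm{\mu},\bm{\nu}} < \infty$.

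Second, I would apply the spanning-tree argument from Lemma~4.1.2 to count contributing maps. Since $T$ is connected (and $\#(V)\geq 2$ is the only case of interest), the underlying simple graph $(V, \wtilde{N})$ is connected; choose any spanning tree $(V,\wtilde{N}_0)$ and any root $v_0 \in V$. There are $n$ choices of $\phi(v_0) \in [n]$; traversing the spanning tree outward, each newly encountered vertex $w$ adjacent to an already-placed vertex $v$ via an equivalence class $[e] \in \wtilde{N}_0$ has at most $\min_{e' \in [e]} 2b_{\gamma(e')}$ admissible positions by the band condition, and the constraints from edges in $\wtilde{N}\setminus\wtilde{N}_0$ (and the injectivity of $\phi$) can only further restrict this count. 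Combined with the uniform moment bound above, this yields the estimate claimed in \eqref{eq:4.42}, uniformly in $n$.

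Third, to promote this bound to an actual convergence statement, I would exploit the translation invariance of the band condition on $\Z$. Setting $K_T = \#(V)\cdot\max_{i \in I} b_i$, whenever the root label $\phi(v_0)$ lies in the ``bulk'' $[K_T+1, n-K_T]$, every spanning-tree vertex is confined to a bounded ball of $[K_T+1, n-K_T]$ and the number of admissible injective extensions $\phi|_{V\setminus\{v_0\}}$ satisfying all edge constraints (both tree and cycle-closing) depends only on the differences $\phi(v)-\phi(v_0)$ and not on $\phi(v_0)$ itself. Thus the count of admissible maps equals $n \cdot c_T(\bm{b}) + O_T(1)$ for a combinatorial constant $c_T(\bm{b})$ determined by $T$ and $\bm{b}$ alone, and coupling this with the constant moment factors shows that $\tau^0\big[T(\salg{O}_n)\big]$ converges to a finite value respecting the bound in \eqref{eq:4.42}.

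The main obstacle will be a clean bookkeeping of the translational-invariance step: one must verify that the cycle-closing constraints from $\wtilde{N}\setminus\wtilde{N}_0$ and the injectivity requirement on $\phi$ are both shift-invariant once $\phi(v_0)$ is sufficiently deep in the bulk. Since any such additional constraint only involves pairs of vertices already confined to within $K_T$ of $\phi(v_0)$, shift invariance follows immediately; the analogous boundary contributions (where $\phi(v_0) \in [1, K_T] \cup [n-K_T+1, n]$) are then bounded in total by $O_T(K_T \cdot \prod_{[e] \in \wtilde{N}_0}\min_{e' \in [e]} 2b_{\gamma(e')})$, giving a uniform $O(1)$ correction and securing the convergence.
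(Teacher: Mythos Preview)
Your approach is correct in substance and agrees with the paper for the bound \eqref{eq:4.42}: the expansion, the multiplicity argument, and the spanning-tree count are identical. Where you diverge is in the convergence step. The paper groups the summands by the ordering $\psi_\phi: [\#(V)] \stackrel{\sim}{\to} V$ induced by $\phi$, observes that the summand value $S_\psi(T)$ depends only on $\psi$ thanks to the i.i.d.\ hypothesis, and then applies Fekete's lemma to the superadditive sequence
\[
a_n^{(\psi)} = \sum_{\phi: V \hookrightarrow [n]} \indc{\psi_\phi = \psi}\prod_{[e]\in\wtilde{N}}\indc{|\phi([e])| \leq \min_{e'\in[e]} b_{\gamma(e')}}
\]
to conclude that $a_n^{(\psi)}/n$ converges. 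You instead invoke translation invariance directly: for $\phi(v_0)$ in the bulk $[K_T+1, n-K_T]$, the full summand depends only on the differences $\phi(v) - \phi(v_0)$, so the sum is $(n - 2K_T)\cdot(\text{finite constant}) + O_T(1)$. This is more elementary and avoids the detour through Fekete; the paper's route via orderings is slightly more structured and feeds into its subsequent discussion of general $\beta_i \in \C$.

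One imprecision to fix: you assert that ``each moment factor is a constant depending only on $T, \bm{\mu}, \bm{\nu}$'', but in fact $\E\prod_{e'\in[e]}\mbf{X}_n^{(\gamma(e'))}(\phi(e'))$ depends on which of $\phi(\source(e'))$, $\phi(\target(e'))$ is larger (since $\mbf{X}_n(j,k) = \overline{\mbf{X}_n(k,j)}$ and higher-than-second moments of $X$ and $\overline{X}$ need not agree). This does not damage your argument, because the ordering is itself determined by the differences $\phi(v)-\phi(v_0)$ and hence is translation-invariant; but you should phrase the translation step as ``the full summand (moment factor times band indicator) depends only on the differences'' rather than separating into a count of admissible maps times a single constant.
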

\begin{proof}
We have the familiar expansion
\begin{equation}\label{eq:4.43}
\tau^0\big[T(\salg{O}_n)\big] = \frac{1}{n\prod_{e \in N} \sqrt{2b_{\gamma(e)} + 1}} \sum_{\phi: V \hookrightarrow [n]} \E\bigg[\prod_{e \in E} \mbf{\Xi}_n^{(\gamma(e))}(\phi(e))\bigg],
\end{equation}
where the sum can be written as
\[
\sum_{\phi: V \hookrightarrow [n]} \bigg(\prod_{[\ell] \in \wtilde{L}} \E \bigg[\prod_{\ell' \in [\ell]} \mbf{X}_n^{(\gamma(\ell'))}(\phi(\ell'))\bigg]\bigg) \bigg(\prod_{[e] \in \wtilde{N}} \mathbbm{1}\{|\phi([e])| \leq \min_{e' \in [e]} b_{\gamma(e')}\}\E \bigg[\prod_{e' \in [e]} \mbf{X}_n^{(\gamma(e'))}(\phi(e'))\bigg]\bigg).
\]
Note that an injective map $\phi: V \hookrightarrow [n]$ satisfying the band width condition
\[
|\phi([e])| \leq \min_{e' \in [e]} b_{\gamma(e')}, \qquad \forall [e] \in \wtilde{N}
\]
might not exist (e.g., if $\salg{O}_n$ consists of a single RBM $\mbf{\Theta}_n$ of fixed band width $b$ and $T$ is a star graph $S_k$ with $k > 2b$); however, we can certainly bound the number of such maps by
\[
n \prod_{[e] \in \wtilde{N}_0} \min_{e' \in [e]} 2b_{\gamma(e')},
\]
where $(V, \wtilde{N}_0)$ is any spanning tree of $(V, \wtilde{N})$. Here, we are simply recycling the bound \eqref{eq:4.8}. Our strong moment assumption \eqref{eq:3.1} then already proves \eqref{eq:4.42}.

As before, we see that $\tau^0\big[T(\salg{O}_n)\big]$ vanishes unless
\[
m_{i, [e]} = 0 \text{ or } m_{i, [e]} \geq 2, \qquad \forall (i, [e]) \in I \times \wtilde{N}.
\]
Unfortunately, our fixed normalizations $\sqrt{2b_i + 1}$ allow $\tau^0\big[T(\salg{O}_n)\big]$ to survive in the limit for test graphs $T$ with $m_{i, [e]} > 2$. In this case, the assumption that $\beta_i \in \R$ no longer suffices to spare us the consideration of the ordering $\psi_\phi: [\#(V)] \stackrel{\sim}{\to} V$ on the vertices. Nevertheless, our i.i.d. assumption ensures that if $\phi_1: V \hookrightarrow [n_1]$ and $\phi_2: V \hookrightarrow [n_2]$ satisfy the band width condition and induce the same ordering $\psi_{\phi_1} = \psi_{\phi_2}$, then the corresponding summands of \eqref{eq:4.43} are equal, i.e.,
\[
S_{\phi_1}(T) = \E\bigg[\prod_{e \in E} \mbf{\Xi}_{n_1}^{(\gamma(e))}(\phi_1(e))\bigg] = \E\bigg[\prod_{e \in E} \mbf{\Xi}_{n_2}^{(\gamma(e))}(\phi_2(e))\bigg] = S_{\phi_2}(T).
\]
For an ordering $\psi: [\#(V)] \stackrel{\sim}{\to} V$, we can again write $S_\psi$ for the common value of
\[
\{S_\phi: \psi_\phi = \psi \text{ and } |\phi([e])| \leq \min_{e' \in [e]} b_{\gamma(e')} \text{ for all } [e] \in \wtilde{N}\}.
\]
This allows us to rewrite \eqref{eq:4.43} as 
\[
\tau^0\big[T(\salg{O}_n)\big] = \sum_{\psi: [\#(V)] \stackrel{\sim}{\to} V} \frac{p_n^{(\psi)}}{\prod_{e \in E} \sqrt{2b_{\gamma(e)} + 1}} S_\psi(T) = \sum_{\psi: [\#(V)] \stackrel{\sim}{\to} V} q_n^{(\psi)} S_\psi(T),
\]
where
\[
p_n^{(\psi)} = \frac{\sum_{\phi: V \hookrightarrow [n]} \bigg(\mathbbm{1}\{\psi_\phi = \psi\}\prod_{[e] \in \wtilde{N}} \mathbbm{1}\{|\phi([e])| \leq \min_{e' \in [e]} b_{\gamma(e')}\}\bigg)}{n}.
\]
We note the contrast to the situation in \eqref{eq:3.14}. In particular, we cannot use the same weak convergence argument to give an integral representation of $\lim_{n \to \infty} p_n^{(\psi)}$ as in \eqref{eq:3.15} due to the vanishing scales $\lim_{n \to \infty} \frac{b_i}{n} = 0$. Instead, we must opt for a discrete approach.

Let $(a_n^{(\psi)})$ denote the sequence defined by the numerator of $p_n^{(\psi)}$ so that
\[
a_n^{(\psi)} = \sum_{\phi: V \hookrightarrow [n]} \bigg(\mathbbm{1}\{\psi_\phi = \psi\}\prod_{[e] \in \wtilde{N}} \mathbbm{1}\{|\phi([e])| \leq \min_{e' \in [e]} b_{\gamma(e')}\}\bigg).
\]
By considering a map $\phi_1: V \hookrightarrow [n]$ (resp., $\phi_2: V \hookrightarrow [m]$) as a map $\Phi_1: V \hookrightarrow [n + m]$ (resp., $\Phi_2: V \hookrightarrow [n + m]$), viz.
\[
\Phi_1(v) = \phi_1(v) \qquad (\text{resp., } \Phi_2(v) = \phi_2(v) + n),
\]
we see that the sequence $(a_n^{(\psi)})$ is superadditive:
\[
a_{n+m}^{(\psi)} \geq a_n^{(\psi)} + a_m^{(\psi)}.
\]
Fekete's lemma then implies that
\[
p_\psi = \lim_{n \to \infty} p_n^{(\psi)} = \sup_n \frac{a_n^{(\psi)}}{n} \leq \prod_{[e] \in \wtilde{N}} \min_{e' \in [e]} 2b_{\gamma(e')},
\]
which proves the convergence
\begin{equation}\label{eq:4.44}
\lim_{n \to \infty} \tau^0\big[T(\salg{O}_n)\big] = \sum_{\psi: [\#(V)] \stackrel{\sim}{\to} V} \frac{p_\psi}{\prod_{e \in E} \sqrt{2b_{\gamma(e)} + 1}} S_\psi(T) = \sum_{\psi: [\#(V)] \stackrel{\sim}{\to} V} q_\psi S_\psi(T).
\end{equation}
\end{proof}
Note that our bound \eqref{eq:4.42} implies the convergence
\begin{equation}\label{eq:4.45}
\lim_{\underline{b} \to \infty} \sum_{\psi: [\#(V)] \stackrel{\sim}{\to} V} q_\psi S_\psi(T) =
\begin{cases}
\prod_{i \in I} \beta_i^{c_i(T)}  & \text{if $T$ is a colored double tree,} \\
0  & \text{otherwise,}
\end{cases}
\end{equation}
where
\[
\underline{b} = \min_{e \in E} b_{\gamma(e)}.
\]

Theorem \hyperref[thm4.4.1]{4.4.1} still holds for general $\beta_i \in \C$: in fact, since we already keep track of the orderings $\psi$, the same proof goes through just as well (except with different values for $S_\psi(T)$). In this case, the limit \eqref{eq:4.45} might not exist depending on the relative rates of growth in the band widths $b_i$. If we assume that the band widths grow at the same rate in the limit $\underline{b} \to \infty$, then the proportions $q_n^{(\psi)}$ will tend to $\frac{1}{\#(V)}$ as in \eqref{eq:3.16}, but one can skew these proportions along different subsequences to create an obstruction. One can also periodize the fixed band width RBMs without affecting the calculations (a fixed band width is in some sense the slowest growth possible, and so we can adapt the techniques from Section \hyperref[sec4.2]{4.2}). 

At this point, we can combine everything into a result for the joint (traffic) distribution of periodic RBMs, slow growth RBMs, proportional growth RBMs, and fixed band width RBMs; however, the result is not much more interesting than what is already known from the previous section due to the form of the LTD \eqref{eq:4.44}. In particular, we do not have any interesting asymptotic independences arising between the fixed band width RBMs and those of the previously considered regimes, nor amongst the fixed band width RBMs themselves (except in the trivial case $b_i = 0$ of the diagonal fixed band width RBMs, which are permutation invariant and satisfy the conditions of Theorem \hyperref[thm2.5.5]{2.5.5}).

\appendix
\section*{Appendix}\label{appendix}
\setcounter{section}{1}
\setcounter{equation}{0}

We gather some miscellaneous results in this appendix. In the first section, we consider the analogue of the Markov matrix problem from Section \hyperref[sec3.2]{3.2} for the proportional growth RBMs. In particular, we compute the LSD of the degree matrix $\mbf{D}_n = \text{row}(t_x)(\mbf{\Theta}_n)$ of a proportional growth RBM $\mbf{\Theta}_n$ and consider the joint distribution of $(\mbf{\Theta}_n, \mbf{D}_n)$. Here, we find that the free product decomposition of \cite{AM17} cannot be extended to the proportional growth regime (in contrast to the periodic regime \hyperref[sec4.1]{\S 4.1} and the slow growth regime \hyperref[sec4.2]{\S 4.2}). In the second section, we pursue an orthogonal computation, namely, the limiting traffic distribution of a Haar distributed orthogonal matrix $\mbf{O}_n$. The proof essentially follows the unitary case \cite[Proposition 6.2]{Mal11} except that we must now take care to apply the orthogonal Weingarten calculus \cite{CS06}.

\addtocontents{toc}{\SkipTocEntry}
\subsection{An almost Gaussian degree matrix}\label{almost_gaussian}

Again, for simplicity, we restrict our attention to real Wigner matrices $\mbf{X}_n$ as in Section \hyperref[sec3.2]{3.2}. We form the corresponding proportional growth RBMs, unnormalized $\mbf{\Xi}_n$ and otherwise $\mbf{\Theta}_n$. Let $c \in (0, 1]$ denote the limiting proportion of the band width $b_n$, i.e.,
\[
\lim_{n \to \infty} \frac{b_n}{n} = c.
\]
We form the degree matrix $\mbf{D}_n = \text{row}(t_x)(\mbf{\Theta}_n)$ of $\mbf{\Theta}_n$, where
\begin{align*}
\mbf{D}_n(i, j) &= \indc{i = j} \sum_{k = 1}^n \mbf{\Theta}_n(i, k) \\
&= \indc{i = j} \sum_{k = 1}^n \frac{\mbf{\Xi}_n(i, k)}{\sqrt{n}\sqrt{2c-c^2}} = \indc{i = j} \sum_{k = 1}^n \frac{\indc{|i - k| \leq b_n}\mbf{X}_n(i, k)}{\sqrt{n}\sqrt{2c-c^2}}.
\end{align*}
One can then use the asymptotics of partial sums of falling factorials to compute the limiting moments
\[
\lim_{n \to \infty} \E\bigg[\frac{1}{n}\trace(\mbf{D}_n^m)\bigg], \qquad \forall m \in \N,
\]
for example, by choosing a convenient realization of the random variables $\mbf{X}_n(i, k)$ and then appealing to the universality of \eqref{eq:4.40}; however, one can even avoid such a tedious calculation and obtain the answer from \eqref{eq:4.40} directly. In particular, we can factor the expected moments of the spectral distribution $\mu_{\mbf{D}_n}$ through the traffic distribution of $\mbf{\Theta}_n$ via
\[
\E\bigg[\frac{1}{n}\trace(\mbf{D}_n^m)\bigg] = \tau\big[C_m(\mbf{D}_n, \ldots, \mbf{D}_n)\big] = \tau\big[S_m(\mbf{\Theta}_n, \ldots, \mbf{\Theta}_n)\big],
\]
where $C_m$ is the directed cycle with $m$ edges and $S_m = (V, E)$ is the inward facing directed $m$-star graph, i.e.,

\begin{center}
\begingroup%
  \makeatletter%
  \providecommand\color[2][]{%
    \errmessage{(Inkscape) Color is used for the text in Inkscape, but the package 'color.sty' is not loaded}%
    \renewcommand\color[2][]{}%
  }%
  \providecommand\transparent[1]{%
    \errmessage{(Inkscape) Transparency is used (non-zero) for the text in Inkscape, but the package 'transparent.sty' is not loaded}%
    \renewcommand\transparent[1]{}%
  }%
  \providecommand\rotatebox[2]{#2}%
  \ifx\svgwidth\undefined%
    \setlength{\unitlength}{468bp}%
    \ifx\svgscale\undefined%
      \relax%
    \else%
      \setlength{\unitlength}{\unitlength * \real{\svgscale}}%
    \fi%
  \else%
    \setlength{\unitlength}{\svgwidth}%
  \fi%
  \global\let\svgwidth\undefined%
  \global\let\svgscale\undefined%
  \makeatother%
  \begin{picture}(1,0.25)%
    \put(0.12170718,1.74313232){\color[rgb]{0,0,0}\makebox(0,0)[lt]{\begin{minipage}{0.11106844\unitlength}\raggedright \end{minipage}}}%
    \put(0,0){\includegraphics[width=\unitlength,page=1]{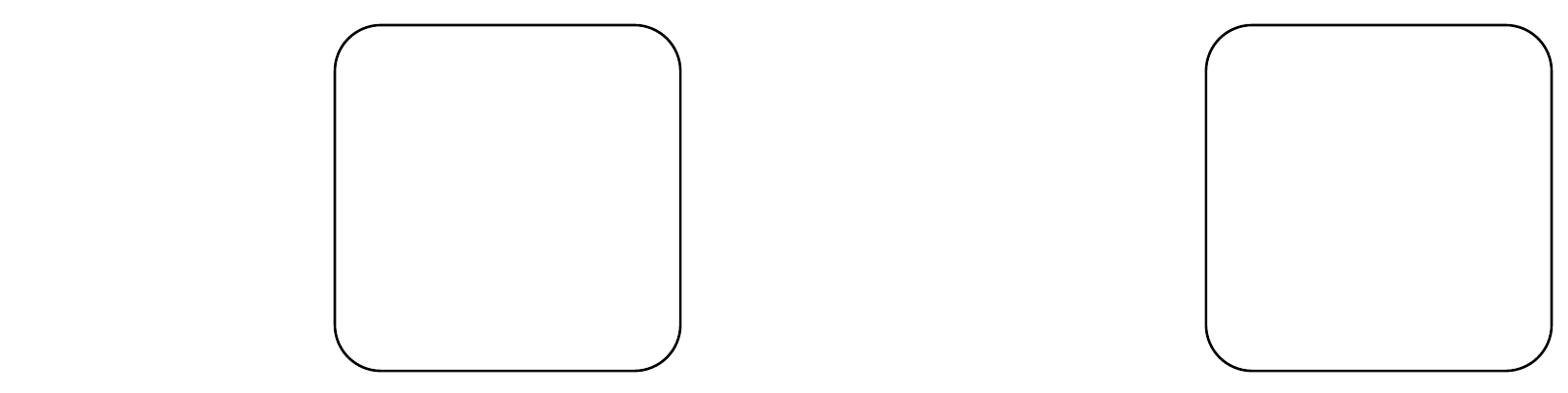}}%
    \put(-0.00199538,0.11752325){\color[rgb]{0,0,0}\makebox(0,0)[lb]{\smash{$C_m(\mbf{D}_n, \ldots, \mbf{D}_n) =$}}}%
    \put(0.55549662,0.11752304){\color[rgb]{0,0,0}\makebox(0,0)[lb]{\smash{$S_m(\mbf{\Theta}_n, \ldots, \mbf{\Theta}_n) =$}}}%
    \put(0,0){\includegraphics[width=\unitlength,page=2]{fig28_degree.pdf}}%
    \put(0.41361217,0.36502126){\color[rgb]{0,0,0}\makebox(0,0)[lb]{\smash{}}}%
    \put(0.86451103,0.06718729){\color[rgb]{0,0,0}\makebox(0,0)[lb]{\smash{$\cdots$}}}%
    \put(0.47531911,0.11719982){\color[rgb]{0,0,0}\makebox(0,0)[lb]{\smash{and}}}%
    \put(0.83374179,0.16634857){\color[rgb]{0,0,0}\makebox(0,0)[lb]{\smash{$\mbf{\Theta}_n$}}}%
    \put(0.80689884,0.11752325){\color[rgb]{0,0,0}\makebox(0,0)[lb]{\smash{$\mbf{\Theta}_n$}}}%
    \put(0.89463924,0.16634857){\color[rgb]{0,0,0}\makebox(0,0)[lb]{\smash{$\mbf{\Theta}_n$}}}%
    \put(0.91787642,0.11752325){\color[rgb]{0,0,0}\makebox(0,0)[lb]{\smash{$\mbf{\Theta}_n$}}}%
    \put(0.83374179,0.0661883){\color[rgb]{0,0,0}\makebox(0,0)[lb]{\smash{$\mbf{\Theta}_n$}}}%
    \put(0.89463924,0.0661883){\color[rgb]{0,0,0}\makebox(0,0)[lb]{\smash{$\mbf{\Theta}_n$}}}%
    \put(0.29928665,0.2028069){\color[rgb]{0,0,0}\makebox(0,0)[lb]{\smash{$\mbf{D}_n$}}}%
    \put(0.38662641,0.16714985){\color[rgb]{0,0,0}\makebox(0,0)[lb]{\smash{$\mbf{D}_n$}}}%
    \put(0.39463923,0.08221384){\color[rgb]{0,0,0}\makebox(0,0)[lb]{\smash{$\mbf{D}_n$}}}%
    \put(0.31651422,0.0281274){\color[rgb]{0,0,0}\makebox(0,0)[lb]{\smash{$\mbf{D}_n$}}}%
    \put(0,0){\includegraphics[width=\unitlength,page=3]{fig28_degree.pdf}}%
    \put(0.31435076,0.05083091){\color[rgb]{0,0,0}\makebox(0,0)[lb]{\smash{$\cdots$}}}%
    \put(0.22236359,0.15432933){\color[rgb]{0,0,0}\makebox(0,0)[lb]{\smash{$\mbf{D}_n$}}}%
    \put(0.23438283,0.06819161){\color[rgb]{0,0,0}\makebox(0,0)[lb]{\smash{$\mbf{D}_n$}}}%
    \put(0.99560079,0.11645498){\color[rgb]{0,0,0}\makebox(0,0)[lb]{\smash{$.$}}}%
  \end{picture}%
\endgroup%

\end{center}

\noindent Here, we have made the substitution $\mbf{D}_n = \op{row}(t_x)(\mbf{\Theta}_n)$. We rewrite this in terms of the injective trace to obtain
\[
\tau\big[S_m(\mbf{\Theta}_n, \ldots, \mbf{\Theta}_n)\big] = \sum_{\pi \in \salg{P}(V)} \tau^0\big[S_m^\pi(\mbf{\Theta}_n, \ldots, \mbf{\Theta}_n)\big].
\]

In the limit, \eqref{eq:4.40} tells us that the only contributions come from double trees $S_m^\pi(\mbf{\Theta}_n, \ldots, \mbf{\Theta}_n)$. For odd $m$, this is not possible since a double tree has an even number of edges, while $S_m$ has $m$ edges. This implies that
\begin{equation}\label{eq:A.1}
\lim_{n \to \infty} \E\bigg[\frac{1}{n}\trace(\mbf{D}_n^m)\bigg] = 0 \quad \text{ if $m$ is odd.}
\end{equation}
Henceforth, we assume that $m = 2\ell$. Let $v_1, \ldots, v_{2\ell}$ denote the leaf vertices of $S_{2\ell}$ with the internal node $v_0$. We see that
\[
S_{2\ell}^\pi \text{ is a double tree} \quad \Longleftrightarrow \quad \pi = \{\{v_0\}\} \cup \rho, 
\]
where $\rho$ is a pair partition of $\{v_1, \ldots, v_{2\ell}\}$. In particular, each such $\pi$ produces the same double tree $T_{\ell}(\mbf{\Theta}_n, \ldots, \mbf{\Theta}_n) = S_{2\ell}^\pi(\mbf{\Theta}_n, \ldots, \mbf{\Theta}_n)$, where $T_\ell$ is the inward facing double $\ell$-star graph. It follows that
\begin{align}
\notag \lim_{n \to \infty} \E\bigg[\frac{1}{n}\trace(\mbf{D}_n^{2\ell})\bigg] &= \lim_{n \to \infty} \sum_{\pi \in \salg{P}(V)} \tau^0\big[S_{2\ell}^\pi(\mbf{\Theta}_n, \ldots, \mbf{\Theta}_n)\big] \\
\notag &= \#(\salg{P}_2(2\ell)) p_{T_\ell}(c) = (2\ell-1)!!\frac{\op{Int_{T_\ell}(c)}}{\op{Norm}_{T_\ell}(c)} \\
\notag &= (2\ell-1)!!\frac{\int_{[0, 1]^{\ell+1}} \prod_{k = 0}^\ell \indc{|x_0 - x_k| \leq c} \, dx_\ell \cdots dx_0}{(2c-c^2)^\ell} \\
\notag &= (2\ell-1)!!\frac{\int_0^1 \bigg(\int_0^1 \indc{|x_0 - x_1| \leq c} \, dx_1\bigg)^\ell dx_0}{(2c-c^2)^\ell} \\
&= (2\ell-1)!!\frac{\frac{2}{\ell+1}((2c \wedge 1)^{\ell+1} - c^{\ell+1}) + |2c - 1|(2c \wedge 1)^\ell}{(2c-c^2)^{\ell}}, \label{eq:A.2}
\end{align}
where we have made use of \eqref{eq:4.26} in the last equality. We recognize the double factorial $(2\ell-1)!!$ as the $2\ell$-th moment of the standard normal distribution. In view of Theorem \hyperref[thm4.3.8]{4.3.8}, the limits \eqref{eq:A.1} and \eqref{eq:A.2} then show that $\mu_{\mbf{D}_n}$ converges weakly almost surely to a symmetric distribution of unit variance with \emph{almost} Gaussian moments (if $c = 1$, then these moments are precisely Gaussian). In particular, we can compute the limits 
\[
\lim_{c \to 0^+} \frac{\frac{2}{\ell+1}((2c \wedge 1)^{\ell+1} - c^{\ell+1}) + |2c - 1|(2c \wedge 1)^\ell}{(2c-c^2)^{\ell}} = 1, \qquad \forall \ell \in \N
\]
and
\[
\lim_{c \to 1^-} \frac{\frac{2}{\ell+1}((2c \wedge 1)^{\ell+1} - c^{\ell+1}) + |2c - 1|(2c \wedge 1)^\ell}{(2c-c^2)^{\ell}} = 1, \qquad \forall \ell \in \N,
\]
both of which are special cases of \eqref{eq:4.32}.

\begin{center}
\begingroup%
  \makeatletter%
  \providecommand\color[2][]{%
    \errmessage{(Inkscape) Color is used for the text in Inkscape, but the package 'color.sty' is not loaded}%
    \renewcommand\color[2][]{}%
  }%
  \providecommand\transparent[1]{%
    \errmessage{(Inkscape) Transparency is used (non-zero) for the text in Inkscape, but the package 'transparent.sty' is not loaded}%
    \renewcommand\transparent[1]{}%
  }%
  \providecommand\rotatebox[2]{#2}%
  \ifx\svgwidth\undefined%
    \setlength{\unitlength}{468bp}%
    \ifx\svgscale\undefined%
      \relax%
    \else%
      \setlength{\unitlength}{\unitlength * \real{\svgscale}}%
    \fi%
  \else%
    \setlength{\unitlength}{\svgwidth}%
  \fi%
  \global\let\svgwidth\undefined%
  \global\let\svgscale\undefined%
  \makeatother%
  \begin{picture}(1,0.51923077)%
    \put(0,0){\includegraphics[width=\unitlength,page=1]{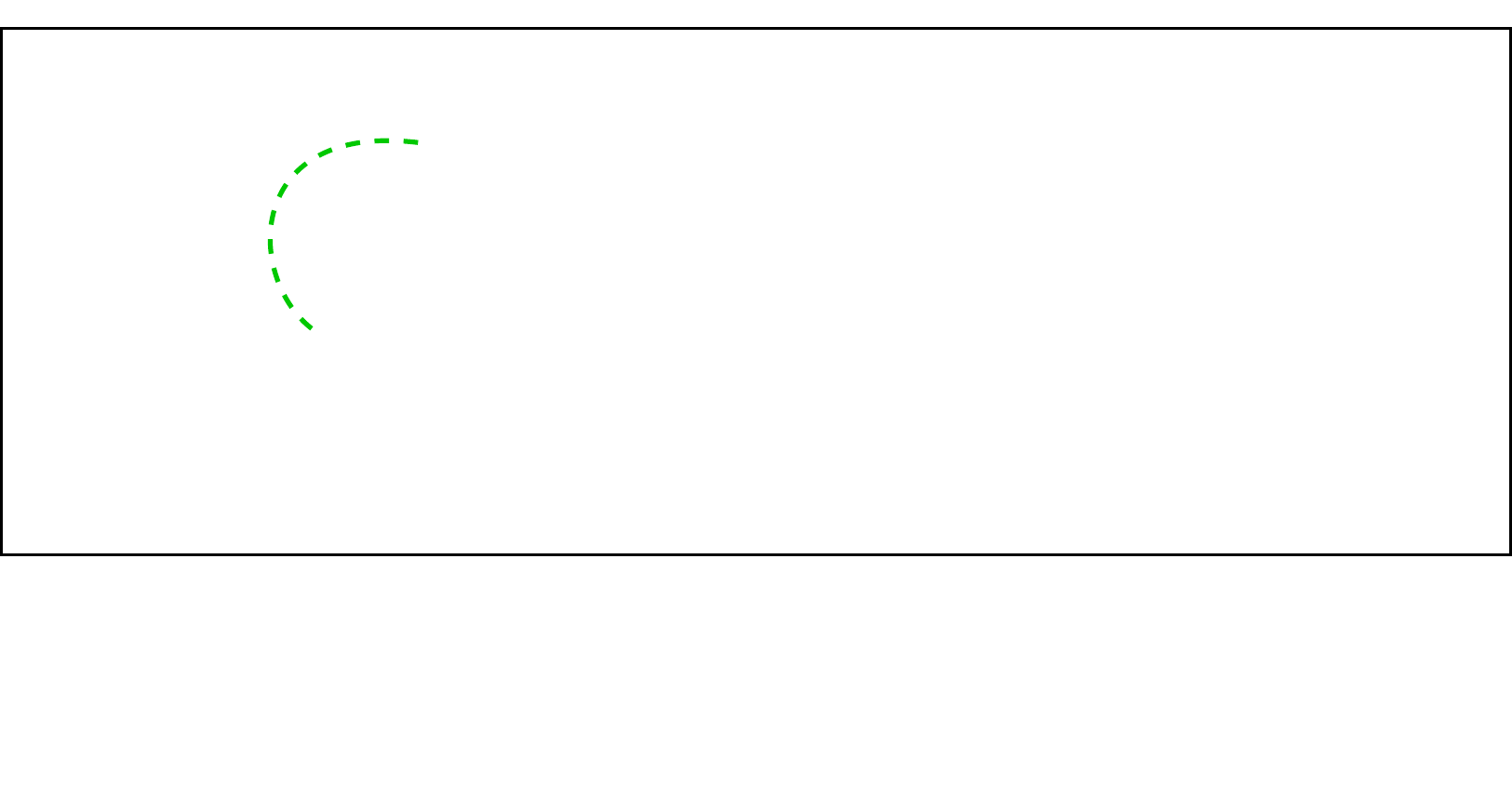}}%
    \put(-0.00141486,0.13610406){\color[rgb]{0,0,0}\makebox(0,0)[lt]{\begin{minipage}{0.99822208\unitlength}\raggedright Figure 25: An example of a pair partition $\rho$ of the leaf vertices of $S_{2\ell}$ giving rise to an inward facing double $\ell$-star graph $T_\ell$ for $\ell = 3$. Here, we use different colors for the different blocks of the pair partition. Note that any pair partition of the leaf vertices gives rise to the same double tree $T_\ell$. \end{minipage}}}%
    \put(0.12170718,1.70402897){\color[rgb]{0,0,0}\makebox(0,0)[lt]{\begin{minipage}{0.11106844\unitlength}\raggedright \end{minipage}}}%
    \put(0,0){\includegraphics[width=\unitlength,page=2]{fig29_double.pdf}}%
    \put(0.48718709,0.33418953){\color[rgb]{0,0,0}\makebox(0,0)[lb]{\smash{$\stackrel{\rho}{\mapsto}$}}}%
    \put(0.19410236,0.17373817){\color[rgb]{0,0,0}\makebox(0,0)[lb]{\smash{$S_{2\ell}(\mbf{\Theta}_n, \ldots, \mbf{\Theta}_n)$}}}%
    \put(0.63542451,0.17373817){\color[rgb]{0,0,0}\makebox(0,0)[lb]{\smash{$T_\ell(\mbf{\Theta}_n, \ldots, \mbf{\Theta}_n)$}}}%
    \put(0.2364261,0.38335035){\color[rgb]{0,0,0}\makebox(0,0)[lb]{\smash{$\mbf{\Theta}_n$}}}%
    \put(0.20958313,0.33452494){\color[rgb]{0,0,0}\makebox(0,0)[lb]{\smash{$\mbf{\Theta}_n$}}}%
    \put(0.29732354,0.38335035){\color[rgb]{0,0,0}\makebox(0,0)[lb]{\smash{$\mbf{\Theta}_n$}}}%
    \put(0.32056072,0.33452494){\color[rgb]{0,0,0}\makebox(0,0)[lb]{\smash{$\mbf{\Theta}_n$}}}%
    \put(0.2364261,0.28319009){\color[rgb]{0,0,0}\makebox(0,0)[lb]{\smash{$\mbf{\Theta}_n$}}}%
    \put(0.29732354,0.28319009){\color[rgb]{0,0,0}\makebox(0,0)[lb]{\smash{$\mbf{\Theta}_n$}}}%
    \put(0.64503985,0.31816918){\color[rgb]{0,0,0}\makebox(0,0)[lb]{\smash{$\mbf{\Theta}_n$}}}%
    \put(0.7391905,0.36003617){\color[rgb]{0,0,0}\makebox(0,0)[lb]{\smash{$\mbf{\Theta}_n$}}}%
    \put(0.72897416,0.26027665){\color[rgb]{0,0,0}\makebox(0,0)[lb]{\smash{$\mbf{\Theta}_n$}}}%
  \end{picture}%
\endgroup%

\end{center}

We note that $\mbf{\Theta}_n$ and $\mbf{D}_n$ are asymptotically free iff $c = 1$. Indeed, this follows from the calculation

\begin{center}
\begingroup%
  \makeatletter%
  \providecommand\color[2][]{%
    \errmessage{(Inkscape) Color is used for the text in Inkscape, but the package 'color.sty' is not loaded}%
    \renewcommand\color[2][]{}%
  }%
  \providecommand\transparent[1]{%
    \errmessage{(Inkscape) Transparency is used (non-zero) for the text in Inkscape, but the package 'transparent.sty' is not loaded}%
    \renewcommand\transparent[1]{}%
  }%
  \providecommand\rotatebox[2]{#2}%
  \ifx\svgwidth\undefined%
    \setlength{\unitlength}{468bp}%
    \ifx\svgscale\undefined%
      \relax%
    \else%
      \setlength{\unitlength}{\unitlength * \real{\svgscale}}%
    \fi%
  \else%
    \setlength{\unitlength}{\svgwidth}%
  \fi%
  \global\let\svgwidth\undefined%
  \global\let\svgscale\undefined%
  \makeatother%
  \begin{picture}(1,0.26923077)%
    \put(0.01012863,1.46563285){\color[rgb]{0,0,0}\makebox(0,0)[lt]{\begin{minipage}{0.11106843\unitlength}\raggedright \end{minipage}}}%
    \put(0,0){\includegraphics[width=\unitlength,page=1]{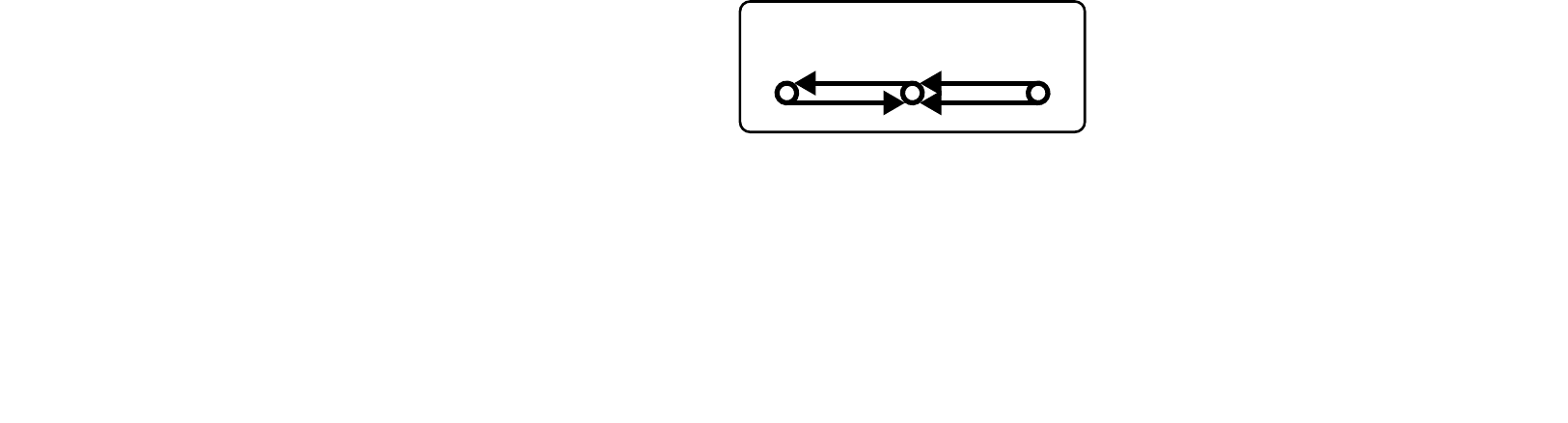}}%
    \put(0.70333311,0.21974224){\color[rgb]{0,0,0}\makebox(0,0)[lb]{\smash{$\big]$}}}%
    \put(0.5308572,0.23063405){\color[rgb]{0,0,0}\makebox(0,0)[lb]{\smash{$\mbf{\Theta}_n$}}}%
    \put(0.61499181,0.23063405){\color[rgb]{0,0,0}\makebox(0,0)[lb]{\smash{$\mbf{\Theta}_n$}}}%
    \put(0.11819693,0.22041154){\color[rgb]{0,0,0}\makebox(0,0)[lb]{\smash{$\displaystyle \lim_{n \to \infty} \E\bigg[\frac{1}{n}\trace(\mbf{\Theta}_n^2\mbf{D}_n^2)\bigg] = \lim_{n \to \infty} \tau^0\big[$}}}%
    \put(0.34808476,0.11344164){\color[rgb]{0,0,0}\makebox(0,0)[lb]{\smash{$\displaystyle = \frac{2((2c \wedge 1)^3 - c^3) + 3|2c-1|(2c \wedge 1)^2}{(2c-c^2)^2}$}}}%
    \put(0.34808476,0.02530061){\color[rgb]{0,0,0}\makebox(0,0)[lb]{\smash{$\displaystyle \neq 1 = \bigg(\lim_{n \to \infty} \E\bigg[\frac{1}{n}\trace(\mbf{\Theta}_n^2)\bigg]\bigg) \bigg(\lim_{n \to \infty} \E\bigg[\frac{1}{n}\trace(\mbf{D}_n^2)\bigg]\bigg)$}}}%
  \end{picture}%
\endgroup%

\end{center}

\noindent unless $c = 1$. In this case, we see that the free product decomposition of \cite{AM17} cannot be extended to the proper proportional growth regime.

\begin{center}
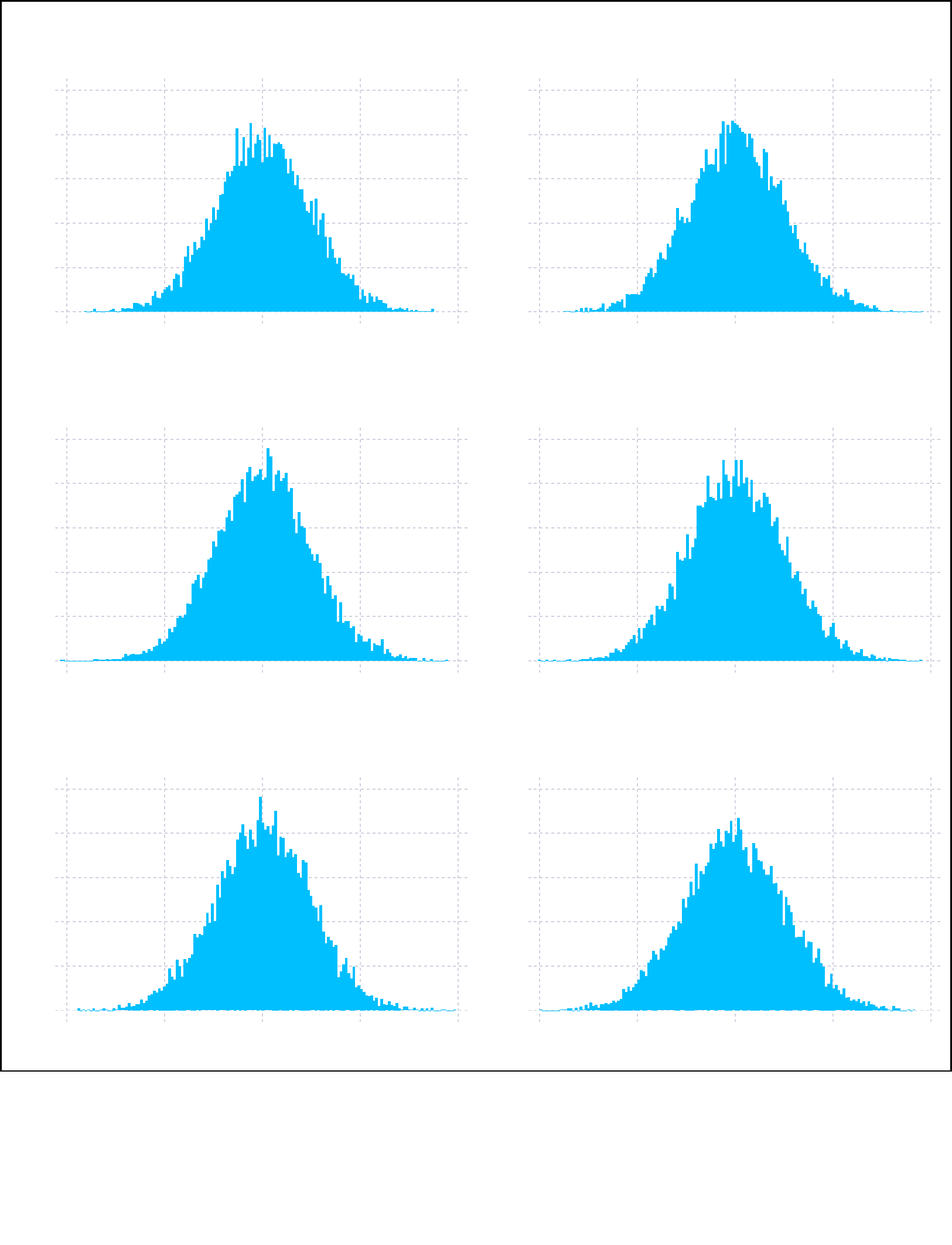
\end{center}

\addtocontents{toc}{\SkipTocEntry}
\subsection{Haar distributed orthogonal matrices}\label{haar_orthogonal}

Let $\mbf{O}_n$ denote an $n \times n$ Haar orthogonal matrix, for which we compute the limiting traffic distribution. Our proof derives from the analogous result for a Haar unitary matrix \cite[Proposition 6.2]{Mal11}. We commit the formal details here to fill out the traffic probability literature. As usual, we restrict our attention to test graphs $T \in \salg{T}\langle x \rangle$. The general case of a $*$-test graph $T = (V, E, \gamma, \varepsilon)$ follows from the relation $\mbf{O}_n^*=\mbf{O}_n^t$, which allows us to freely interchange any edge $e$ with $*$-label $\varepsilon(e) = *$ with an edge $e'$ with $*$-label $\varepsilon(e') = 1$  in the opposite direction, i.e.,
\[
(\source(e), \target(e)) = (\target(e'), \source(e')).
\]
In this case, we suppress the map $\gamma$ since there is only one indeterminate $x$ in consideration.
\begin{defn}[Orthogonal cactus]\label{defnA.2.1}
For a test graph $T = (V, E) \in \salg{T}\langle x \rangle$, we write $\interior{T} = (V, \interior{E})$ for the underlying undirected multigraph. We further write $P: E \to \interior{E}$ for the canonical projection onto the undirected edge set. We say that $T$ is a \emph{cactus} if each edge $\mathring{e}$ of $\interior{T}$ belongs to a unique simple cycle $C_{\mathring{e}}$. We further say that $T$ is an \emph{orthogonal cactus} if $T$ is a cactus such that each cycle $C_{\mathring{e}}$ corresponds to an anti-directed cycle $P^{-1}(C_{\mathring{e}})$ in $T$. By an anti-directed cycle, we mean that $P^{-1}(C_{\mathring{e}}) = (e_1, \ldots, e_k)$ alternates in direction (as opposed to a directed cycle), i.e.,
\begin{equation}\label{eq:A.3}
\exists j \in [k]: \target(e_j) = \target(e_{j+1}), \, \source(e_{j+1}) = \source(e_{j+2}), \, \target(e_{j+2}) = \target(e_{j+3}), \ldots
\end{equation}
where $e_{k+1} = e_1$, $e_{k+2} = e_2$, and so on.
\end{defn}

\phantomsection\label{fig31_cacti}
\begin{center}
\begingroup%
  \makeatletter%
  \providecommand\color[2][]{%
    \errmessage{(Inkscape) Color is used for the text in Inkscape, but the package 'color.sty' is not loaded}%
    \renewcommand\color[2][]{}%
  }%
  \providecommand\transparent[1]{%
    \errmessage{(Inkscape) Transparency is used (non-zero) for the text in Inkscape, but the package 'transparent.sty' is not loaded}%
    \renewcommand\transparent[1]{}%
  }%
  \providecommand\rotatebox[2]{#2}%
  \ifx\svgwidth\undefined%
    \setlength{\unitlength}{468bp}%
    \ifx\svgscale\undefined%
      \relax%
    \else%
      \setlength{\unitlength}{\unitlength * \real{\svgscale}}%
    \fi%
  \else%
    \setlength{\unitlength}{\svgwidth}%
  \fi%
  \global\let\svgwidth\undefined%
  \global\let\svgscale\undefined%
  \makeatother%
  \begin{picture}(1,0.44230769)%
    \put(0,0){\includegraphics[width=\unitlength,page=1]{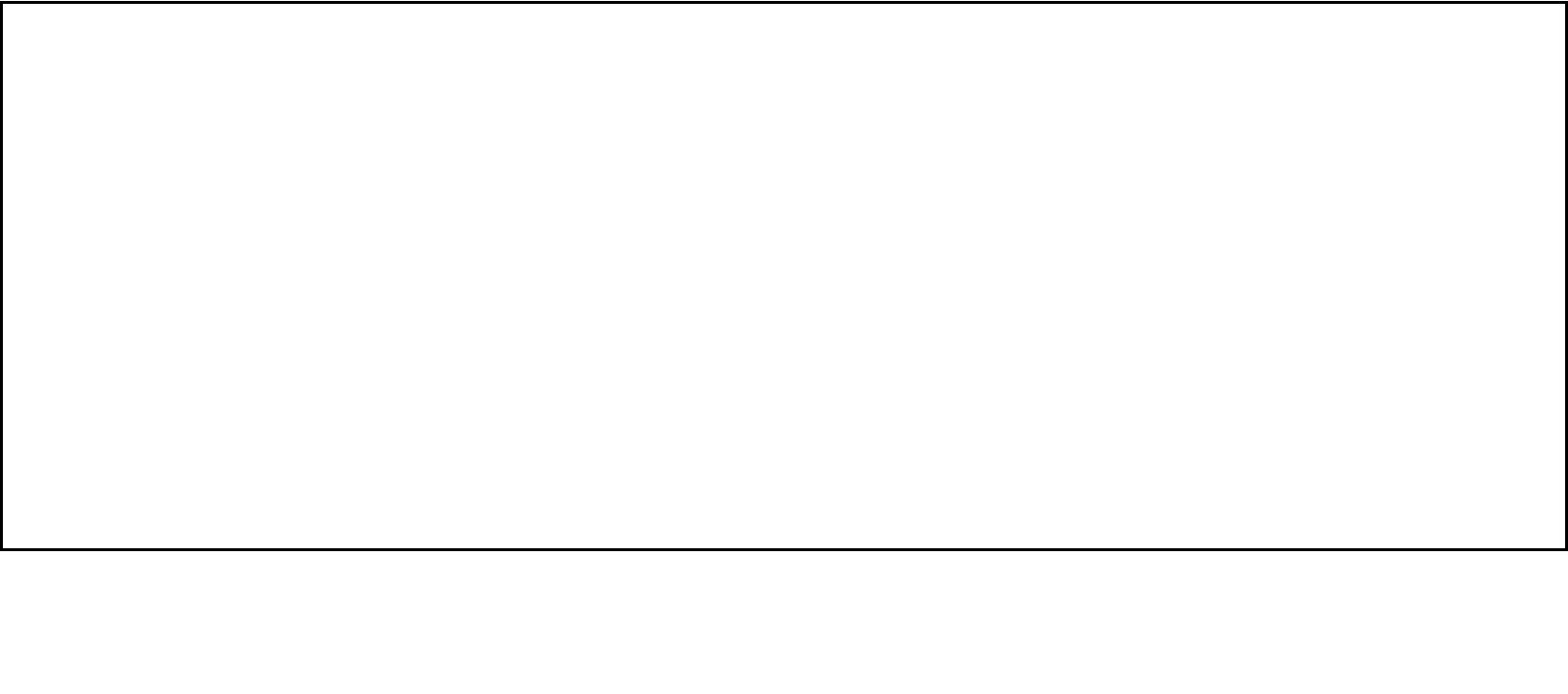}}%
    \put(-0.00141486,0.07609554){\color[rgb]{0,0,0}\makebox(0,0)[lt]{\begin{minipage}{0.99822206\unitlength}\raggedright Figure 27: Examples of a cactus and an orthogonal cactus respectively. For convenience, we omit the sole edge label $x$. \end{minipage}}}%
    \put(0.12170718,1.64402042){\color[rgb]{0,0,0}\makebox(0,0)[lt]{\begin{minipage}{0.11106844\unitlength}\raggedright \end{minipage}}}%
    \put(0,0){\includegraphics[width=\unitlength,page=2]{fig31_cacti.pdf}}%
  \end{picture}%
\endgroup%

\end{center}

For a cactus $T$, we record the length $\#(C)$ of each of its simple (undirected) cycles $C$ in $\interior{T}$. By a slight abuse of notation, we also write $C$ for the corresponding pullback $P^{-1}(C)$ in $T$. For an orthogonal cactus, we know that $\#(C) \in 2\N$ for each such cycle $C$ due to the anti-directedness \eqref{eq:A.3}

We can of course reconstruct a cactus $T$ from its simple cycles (or ``pads'') by starting with an arbitrary simple cycle $C$ of $T$ (level 0), reintroducing the simple cycles that share a common vertex with $C$ (level 1), reintroducing the simples cycles that share a common vertex with the simple cycles from level 1 (level 2), and so on. We imagine this process as ``growing'' the cactus $T$.

\begin{thm}\label{thmA.2.2}
For any test graph $T$ in $x$,
\begin{equation}\label{eq:A.4}
\lim_{n \to \infty} \tau^0\big[T(\mbf{O}_n)\big] =
\begin{dcases}
\prod_{C \in \emph{Pads}(T)} (-1)^{\frac{\#(C)}{2} - 1}c_{\frac{\#(C)}{2}} & \text{if $T$ is an orthogonal cactus,} \\
\hfil 0 & \text{otherwise},
\end{dcases}
\end{equation}
where the product is over the pads $\emph{Pads}(T)$ of $T$ and $c_k = \frac{\binom{2k}{k}}{k+1}$ is the $k$-th Catalan number.
\end{thm}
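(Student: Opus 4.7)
The plan is to adapt Proposition 6.2 of \cite{Mal11} — which handles the Haar \emph{unitary} case — to the orthogonal setting, substituting the unitary Weingarten formula with its orthogonal analogue from \cite{CS06}. First I would dispatch the odd case: if $\#(E)$ is odd, the symmetry $\mbf{O}_n \mapsto -\mbf{O}_n$ of Haar measure on $O(n)$ forces $\tau^0\big[T(\mbf{O}_n)\big] \equiv 0$, and since every orthogonal cactus has even edge count (each pad has even length by the anti-directed condition), both sides of \eqref{eq:A.4} vanish. Hereafter assume $\#(E) = 2k$ and enumerate $E = \{e_1, \ldots, e_{2k}\}$.

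Next I would expand via the orthogonal Weingarten formula. Writing $i_a = \phi(\target(e_a))$, $j_a = \phi(\source(e_a))$, and $P_2([2k])$ for the set of pair partitions of $[2k]$,
\[
\tau^0\big[T(\mbf{O}_n)\big] = \frac{1}{n} \sum_{\phi: V \hookrightarrow [n]} \sum_{\sigma, \tau \in P_2([2k])} \op{Wg}^O(\sigma, \tau, n) \prod_{\{a,b\} \in \sigma} \indc{i_a = i_b} \prod_{\{a,b\} \in \tau} \indc{j_a = j_b}.
\]
The critical observation is that injectivity of $\phi$ promotes each delta constraint to a structural constraint on $T$ itself: $(\sigma, \tau)$ contributes only when $\sigma$ refines the ``same-target'' equivalence relation on $E$ and $\tau$ refines the ``same-source'' equivalence relation. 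For each such \emph{compatible} pair the inner sum over $\phi$ equals $n^{\underline{\#(V)}} = n^{\#(V)}(1 + O(n^{-1}))$, contributing a factor of $n^{\#(V)-1}(1 + O(n^{-1}))$ after dividing by $n$.

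Then I would invoke the Weingarten asymptotics of \cite{CS06}: given $\sigma, \tau \in P_2([2k])$, the multigraph on $[2k]$ with edge set $\sigma \sqcup \tau$ has every vertex of degree two and so decomposes into $m = m(\sigma, \tau)$ alternating cycles of even lengths $2\ell_1, \ldots, 2\ell_m$ with $\sum_{i=1}^m \ell_i = k$, and
\[
\op{Wg}^O(\sigma, \tau, n) = n^{-2k+m} \prod_{i=1}^m (-1)^{\ell_i - 1} c_{\ell_i} + O(n^{-2k+m-1}).
\]
Consequently, each compatible pair contributes at order $n^{\#(V) + m - 2k - 1}$ and survives in the limit iff $\#(V) + m(\sigma, \tau) = 2k + 1$.

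The combinatorial heart of the argument, and its main obstacle, is to prove the bound $\#(V) + m(\sigma, \tau) \leq 2k + 1$ for every compatible pair with equality characterizing orthogonal cacti. The alternation of $\sigma$- and $\tau$-pairs along a cycle of $G(\sigma, \tau)$ forces successive edges to share alternately a target and a source in $T$, which is precisely the anti-directed condition \eqref{eq:A.3}: each cycle of $G(\sigma, \tau)$ prescribes an anti-directed cycle in $T$ of the same length. Since $T$ is connected, has $2k$ edges distributed among $m$ such anti-directed cycles, and each cycle of length $2\ell_i$ contributes at most $2\ell_i$ vertices, a standard Euler-characteristic / spanning-tree count yields $\#(V) \leq \sum_{i=1}^m 2\ell_i - (m - 1) = 2k - m + 1$, with equality precisely when the anti-directed cycles are vertex-disjoint except for minimal tree-like attachments — i.e.\ when $T$ is an orthogonal cactus and the cycles of $G(\sigma,\tau)$ are exactly the pads of $T$. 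In this case $(\sigma, \tau)$ is uniquely determined by the pad decomposition (any pairing across distinct pads merges their cycles in $G(\sigma, \tau)$, strictly decreasing $m$), and substituting into the leading Weingarten coefficient gives
\[
\lim_{n \to \infty} \tau^0\big[T(\mbf{O}_n)\big] = \prod_{C \in \op{Pads}(T)} (-1)^{\#(C)/2 - 1} c_{\#(C)/2},
\]
as claimed.
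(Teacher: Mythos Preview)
Your proposal is correct and follows essentially the same route as the paper: expand via the orthogonal Weingarten formula of \cite{CS06}, reduce to compatible pairs $(\sigma,\tau)$ of pair partitions, invoke the Weingarten asymptotics, and show the key inequality $\#(V)+m(\sigma,\tau)\leq 2k+1$ with equality precisely on orthogonal cacti. The only presentational difference is that the paper packages your ``Euler-characteristic / spanning-tree count'' as an explicit bipartite multigraph $\mfk{G}$ on the vertex set $V\cup\mfk{C}(\pi_1,\pi_2)$ with $2k$ edges, whose connectedness yields the inequality and whose treeness characterizes the equality case (and forces the cycles to be simple).
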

\begin{proof}
We start with the usual expansion of the injective trace
\begin{align*}
\tau_n^0\big[T(O_n)\big] &= \frac{1}{n} \sum_{\phi: V \hookrightarrow [n]} \E \bigg[\prod_{e \in E} \mbf{O}_n(\phi(e))\bigg] \\
&= \frac{1}{n} \sum_{\phi: V \hookrightarrow [n]} \E \bigg[\prod_{(v, w) \in E} \mbf{O}_n(\phi(w), \phi(v))\bigg],
\end{align*}
where we now consider $E$ as a multiset to do away with the source and target functions. In particular, $\source((v,w)) = v$ and $\target((v, w)) = w$. Note that the distributional invariance of $\mbf{O}_n$ under conjugation by the permutation matrices implies that the value of a summand
\[
S_\phi(T) = \E \bigg[\prod_{(v, w) \in E} \mbf{O}_n(\phi(w), \phi(v))\bigg] = \E \bigg[\prod_{\ell = 1}^{\#(E)} \mbf{O}_n(\phi(w_\ell), \phi(v_\ell))\bigg]
\]
does not depend on the particular choice of labeling $\phi: V \hookrightarrow [n]$ of the vertices. In this case, we can fix a labeling $\phi_0 : V \hookrightarrow [n]$ for all large $n$ (for example, by enumerating the vertices $V = (u_r)_{r=1}^s$ and defining $\phi_0(u_r) = r$) to obtain
\begin{align}
\notag \tau_n^0\big[T(O_n)\big] &= \frac{n^{\underline{\#(V)}}}{n} \E\bigg[\prod_{\ell = 1}^{\#(E)} \mbf{O}_n(\phi_0(w_\ell), \phi_0(v_\ell))\bigg] \\
&\sim n^{\#(V)-1} \E\bigg[\mbf{O}_n(i_1,j_1) \cdots \mbf{O}_n(i_m,j_m)\bigg], \label{eq:A.5}
\end{align}
where $(i_\ell,j_\ell) = (\phi_0(w_\ell), \phi_0(v_\ell))$ and $m = \#(E)$. The string $\mbf{i} = (i_1, \ldots, i_m)$ defines a partition $\ker(\mbf{i})$ of $[m]$ by
\[
\text{ker}(\mbf{i}) = \{\{\ell' : i_\ell = i_{\ell '}\} : \ell \in [m]\},
\]
and similarly for $\mbf{j} = (j_1, \ldots, j_m)$. The orthogonal Weingarten calculus (in the form of \cite[Corollary 3.4]{CS06}) tells us that the expectation in \eqref{eq:A.5} equals $0$ if $m$ is odd; otherwise, $m=2k$ and 
\begin{equation}\label{eq:A.6}
\E\bigg[\mbf{O}_n(i_1,j_1) \cdots \mbf{O}_n(i_{2k},j_{2k})\bigg] = \sum_{p_1,p_2 \in \salg{P}_2(2k)} \delta_{\mbf{i}}(p_1)\delta_{\mbf{j}}(p_2)\inn{p_1}{\op{Wg}_n(p_2)},
\end{equation}
where $\salg{P}_2(2k)$ is the set of pair partitions of $[2k]$, $\op{Wg}_n$ is the $n \times n$ orthogonal Weingarten function, and 
\[
\delta_{\bm{\iota}}(p) = \begin{dcases}
1 & \text{if } p \leq \ker(\bm{\iota}), \\
0 & \text{otherwise.}
\end{dcases}
\]
Here, we use the usual refinement order $\leq$ on the set of partitions $\salg{P}(2k)$.

Of course, the injectivity of the map $\phi_0$ implies that
\begin{alignat*}{3}
i_\ell &= i_{\ell'} \quad &&\Longleftrightarrow \quad w_\ell &&= w_{\ell'}, \\
j_\ell &= j_{\ell'} \quad &&\Longleftrightarrow \quad v_\ell &&= v_{\ell'}. 
\end{alignat*}
We use this correspondence to interpret a pair partition
\[
p_1 = \{\{a_\ell, b_\ell\} : \ell \in [k]\} \in \salg{P}_2(2k) \qquad (\text{resp., } p_2 = \{\{\alpha_\ell, \beta_\ell\} : \ell \in [k]\} \in \salg{P}_2(2k))
\]
such that $\delta_{\mbf{i}}(p_1) = 1$ (resp., $\delta_{\mbf{j}}(p_2) = 1$) as a pair partition
\[
\pi_1 = \{\{(v_{a_\ell}, w_{a_\ell}), (v_{b_\ell}, w_{b_\ell})\} : \ell \in [k]\} \qquad (\text{resp., } \pi_2 = \{\{(\nu_{\alpha_\ell}, \omega_{\alpha_\ell}), (\nu_{\beta_\ell}, \omega_{\beta_\ell})\} : \ell \in [k]\})
\]
of the edges $E$ such that the two edges
\[
(v_{a_\ell}, w_{a_\ell}) \text{ and } (v_{b_\ell}, w_{b_\ell}) \qquad (\text{resp., } (\nu_{\alpha_\ell}, \omega_{\alpha_\ell}) \text{ and } (\nu_{\beta_\ell}, \omega_{\beta_\ell}))
\]
in any block of the partition have a common target $w_{a_\ell} = w_{b_\ell}$ (resp., a common source $\nu_{\alpha_\ell} = \nu_{\beta_\ell}$). We further interpret the pair partition $\pi_1$ as a permutation of the edges $E$ by considering each block $\{(v_{a_\ell}, w_{a_\ell}), (v_{b_\ell}, w_{b_\ell})\}$ as a transposition $((v_{a_\ell}, w_{a_\ell})\text{ }(v_{b_\ell}, w_{b_\ell}))$. In this case, $\pi_1$ corresponds to a product of disjoint transpositions
\[
\pi_1 = \prod_{\ell = 1}^{k} ((v_{a_\ell}, w_{a_\ell})\text{ }(v_{b_\ell}, w_{b_\ell})),
\]
and similarly for
\[
\pi_2 = \prod_{\ell = 1}^{k} ((\nu_{\alpha_\ell}, \omega_{\alpha_\ell})\text{ }(\nu_{\beta_\ell}, \omega_{\beta_\ell})).
\]
A pair $(p_1, p_2)$ such that $\delta_{\mbf{i}}(p_1) = \delta_{\mbf{j}}(p_2) = 1$ then partitions the edges of $T$ into anti-directed cycles
\begin{equation}\label{eq:A.7}
\mfk{C}(\pi_1, \pi_2) = \{(e, \pi_2(e), \pi_1\pi_2(e), \pi_2\pi_1\pi_2(e), \ldots) : e \in E\},
\end{equation}
where we of course assume that cycles are only defined up to a cyclic ordering of the edges. We note that a cycle $C \in \mfk{C}(\pi_1, \pi_2)$ need not be simple.

As a sanity check, one can verify the following equivalent construction of $\mfk{C}(\pi_1, \pi_2)$. We consider a partition $p \in \salg{P}(2k)$ as an element of the symmetric group $\mfk{S}_{2k}$ by associating a block $b = \{\ell_1, \ldots, \ell_{q(b)}\}$ with the cycle $(\ell_1\text{ }\cdots\text{ }\ell_{q(b)})$. A pair $(p_1, p_2)$ as before then partitions the edges of $T$ into anti-directed cycles
\begin{equation}\label{eq:A.8}
\begin{aligned}
\mfk{C}(\pi_1, \pi_2) = \{((v_\ell, w_\ell), &(v_{p_2(\ell)}, w_{p_2(\ell)}), \\
&(v_{p_1 p_2(\ell)}, w_{p_1 p_2(\ell)}), (v_{p_2 p_1 p_2(\ell)}, w_{p_2 p_1 p_2(\ell)}), \ldots) : \ell \in [2k]\}.
\end{aligned}
\end{equation}
Note that the cycle decomposition of the permutation $p_1p_2 \in \mfk{S}_{2k}$ further splits each cycle $C$ in \eqref{eq:A.8} into a pair
\begin{equation*}
(w_\ell, w_{p_1p_2(\ell)}, w_{(p_1p_2)^2(\ell)}, \ldots) \quad \text{and} \quad (v_{p_2(\ell)}, v_{p_2p_1(p_2(\ell))}, v_{(p_2p_1)^2(p_2(\ell))}, \ldots).
\end{equation*}
In terms of \eqref{eq:A.7}, this corresponds to the cycle decomposition of the permutation $\pi_1\pi_2$ of the edges, namely, 
\[
(e, \pi_1\pi_2(e), (\pi_1\pi_2)^2(e), \ldots) \quad \text{and} \quad (\pi_2(e), (\pi_2\pi_1)\pi_2(e), (\pi_2\pi_1)^2\pi_2(e), \ldots).
\]
This implies that
\begin{equation}\label{eq:A.9}
\frac{\#(p_1p_2)}{2} = \frac{\#(\pi_1\pi_2)}{2} = \#(\mfk{C}(\pi_1, \pi_2)),
\end{equation}
where $\#(p_1p_2)$ denotes the number of cycles of $p_1p_2$. We assume hereafter that the partitions $p_1$ and $p_2$ satisfy $\delta_{\mbf{i}}(p_1) = \delta_{\mbf{j}}(p_2) = 1$.

\phantomsection\label{fig32_cycle}
\begin{center}
\begingroup%
  \makeatletter%
  \providecommand\color[2][]{%
    \errmessage{(Inkscape) Color is used for the text in Inkscape, but the package 'color.sty' is not loaded}%
    \renewcommand\color[2][]{}%
  }%
  \providecommand\transparent[1]{%
    \errmessage{(Inkscape) Transparency is used (non-zero) for the text in Inkscape, but the package 'transparent.sty' is not loaded}%
    \renewcommand\transparent[1]{}%
  }%
  \providecommand\rotatebox[2]{#2}%
  \ifx\svgwidth\undefined%
    \setlength{\unitlength}{468bp}%
    \ifx\svgscale\undefined%
      \relax%
    \else%
      \setlength{\unitlength}{\unitlength * \real{\svgscale}}%
    \fi%
  \else%
    \setlength{\unitlength}{\svgwidth}%
  \fi%
  \global\let\svgwidth\undefined%
  \global\let\svgscale\undefined%
  \makeatother%
  \begin{picture}(1,0.57692308)%
    \put(0,0){\includegraphics[width=\unitlength,page=1]{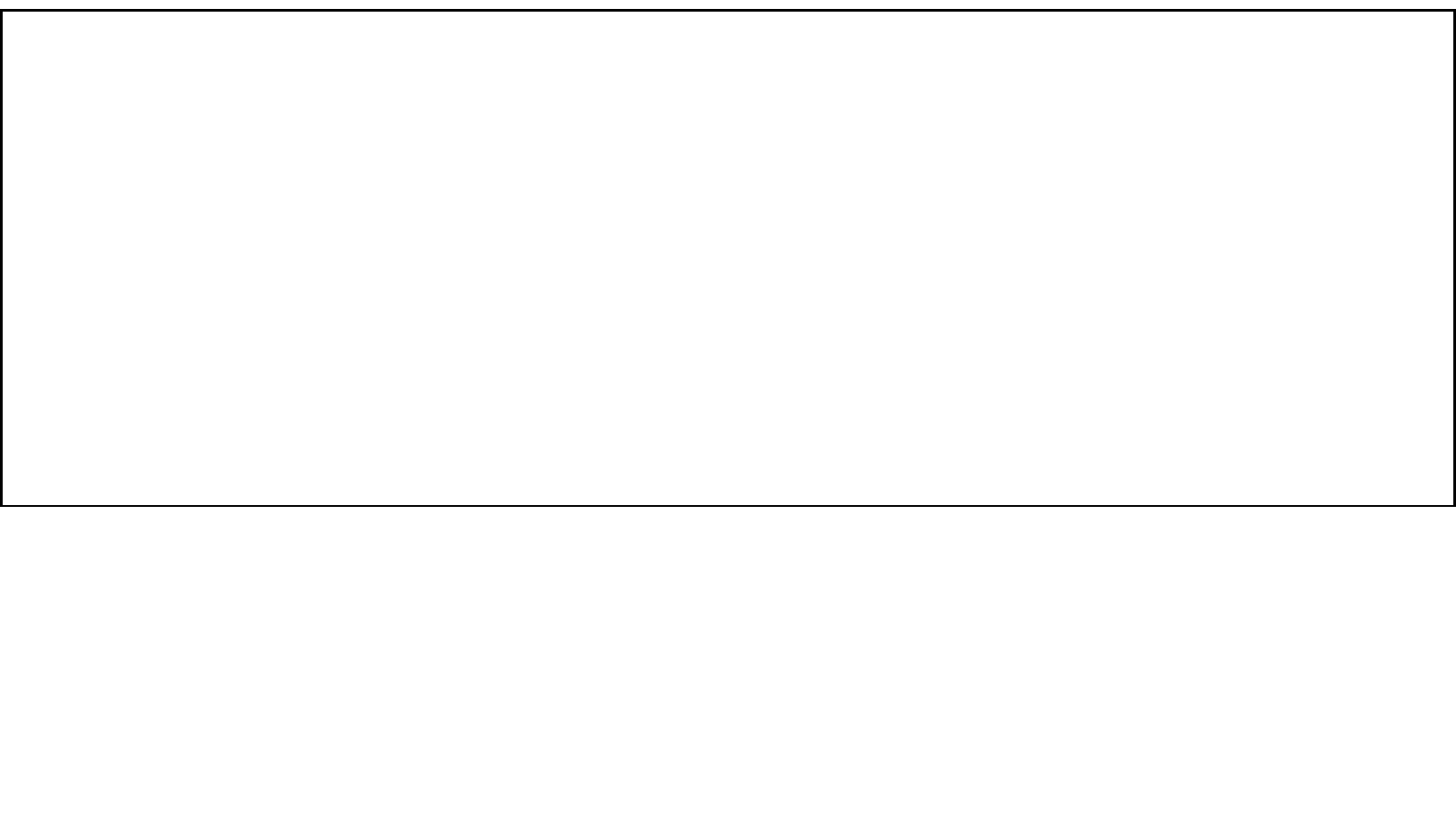}}%
    \put(-0.00141487,0.2147549){\color[rgb]{0,0,0}\makebox(0,0)[lt]{\begin{minipage}{0.99822199\unitlength}\raggedright Figure 28: An example of the construction of $\mfk{C}(\pi_1, \pi_2)$. Here, we start with a test graph $T$ that itself is already a (non-simple) anti-directed cycle $C = (e_1, \ldots, e_6)$, where $e_\ell = (v_\ell, w_\ell)$. Any injective labeling $(i_\ell, j_\ell) = (\phi(w_\ell), \phi(v_\ell))$ of the vertices then generates the partitions $\ker(\mbf{i}) = \{\{1, 2\}, \{3, 4\}, \{5, 6\}\}$ and $\ker(\mbf{j}) = \{\{1, 6\}, \{2, 3\}, \{4, 5\}\}$. In this case, there is a unique pair partition $p_1 \leq \ker(\mbf{i})$, namely $p_1 = \ker(\mbf{i})$, and similarly for $p_2 \leq \ker(\mbf{j})$. One can then easily verify the corresponding permutation of the edges $\pi_1 = (e_1\text{ }e_2)(e_3\text{ }e_4)(e_5\text{ }e_6)$ (resp., $\pi_2 = (e_6\text{ }e_1)(e_2\text{ }e_3)(e_4\text{ }e_5)$), from which it follows that $\mfk{C}(\pi_1, \pi_2) = \{C\}$.\end{minipage}}}%
    \put(0.12170717,1.59361327){\color[rgb]{0,0,0}\makebox(0,0)[lt]{\begin{minipage}{0.11106843\unitlength}\raggedright \end{minipage}}}%
    \put(0,0){\includegraphics[width=\unitlength,page=2]{fig32_cycle.pdf}}%
    \put(0.18400942,0.43936308){\color[rgb]{0,0,0}\makebox(0,0)[lb]{\smash{$e_1$}}}%
    \put(0.22936201,0.48824128){\color[rgb]{0,0,0}\makebox(0,0)[lb]{\smash{$e_2$}}}%
    \put(0.27215045,0.43936308){\color[rgb]{0,0,0}\makebox(0,0)[lb]{\smash{$e_3$}}}%
    \put(0.27215045,0.35122205){\color[rgb]{0,0,0}\makebox(0,0)[lb]{\smash{$e_4$}}}%
    \put(0.22936201,0.3019432){\color[rgb]{0,0,0}\makebox(0,0)[lb]{\smash{$e_5$}}}%
    \put(0.18400942,0.35122205){\color[rgb]{0,0,0}\makebox(0,0)[lb]{\smash{$e_6$}}}%
    \put(0,0){\includegraphics[width=\unitlength,page=3]{fig32_cycle.pdf}}%
    \put(0.4939868,0.39314193){\color[rgb]{0,0,0}\makebox(0,0)[lb]{\smash{$v_6 = v_1$}}}%
    \put(0.68509258,0.45546697){\color[rgb]{0,0,0}\makebox(0,0)[lb]{\smash{$w_1 = w_2 = v_2 = v_3$}}}%
    \put(0.73961341,0.39315602){\color[rgb]{0,0,0}\makebox(0,0)[lb]{\smash{$w_3 = w_4$}}}%
    \put(0.68509258,0.33083088){\color[rgb]{0,0,0}\makebox(0,0)[lb]{\smash{$v_4  = v_5 = w_5 = w_6$}}}%
  \end{picture}%
\endgroup%

\end{center}

Strictly speaking, we should consider a pair partition $p \in \salg{P}_2(2k)$ as a basis element of the Brauer algebra (see, e.g., \cite{HR05}); however, we will only need the very basics of this structure. In particular, we consider a partition $p$ as a graph on $2k$ vertices. We arrange the vertices into two evenly distributed rows, the first of which we consider as given by $1, 2, \ldots, k$; the second by $k+1, k+2, \ldots, 2k$. We then connect the vertices in a given block of $p$ with a line. In this way, we obtain a graph with $k$ connected components, each of size two. For two pair partitions $p_1, p_2 \in \salg{P}_2(2k)$, we define $p_1 \circ p_2$ as the graph obtained by overlaying the two graphs corresponding to $p_1$ and $p_2$ respectively, which we can again interpret as a partition $p_1 \circ p_2 \in \salg{P}(2k)$. The correspondence \eqref{eq:A.7} and \eqref{eq:A.8} between the pairs $(p_1, p_2)$ and $(\pi_1, \pi_2)$ pushes forward to a correspondence between the blocks of $p_1 \circ p_2$ and the anti-directed cycles $\mfk{C}(\pi_1, \pi_2)$. In particular, we have a cardinality-preserving bijection
\begin{equation}\label{eq:A.10}
\op{blocks}(p_1 \circ p_2) \cong \mfk{C}(\pi_1, \pi_2), \qquad b \mapsto C_b,
\end{equation}
where $\#(b) = \#(C_b)$. Indeed, we construct this bijection as follows. For the partition $p_1$ (resp., $p_2$), we imagine the vertices $\ell \in [2k]$ in its graph as the vertices $w_\ell \in V$ (resp., $v_\ell \in V$). In this way, a block $b$ of $p_1 \circ p_2$ then naturally corresponds to a cycle $C \in \mfk{C}(\pi_1, \pi_2)$ in the form of \eqref{eq:A.8}.


Finally, we need to understand the asymptotics of the Weingarten term $\inn{p_1}{\op{Wg}_n(p_2)}$ in \eqref{eq:A.6}. Theorem 3.13 in \cite{CS06} shows that
\[
\inn{p_1}{\op{Wg}_n(p_2)} = n^{-2k + \frac{\#(p_1p_2)}{2}}\prod_{b \in \op{blocks}(p_1 \circ p_2)} (-1)^{\frac{\#(b)}{2}-1}c_{\frac{\#(b)}{2}} + O(n^{-2k + \frac{\#(p_1p_2)}{2} - 1}).
\]
We can rewrite this in terms of $\mfk{C}(\pi_1, \pi_2)$ grace of \eqref{eq:A.9} and \eqref{eq:A.10} to obtain the equivalent asymptotic
\[
\inn{p_1}{\op{Wg}_n(p_2)} = n^{-2k + \#(\mfk{C}(\pi_1, \pi_2))}\prod_{C \in \mfk{C}(\pi_1, \pi_2)} (-1)^{\frac{\#(C)}{2}-1}c_{\frac{\#(C)}{2}} + O(n^{-2k + \#(\mfk{C}(\pi_1, \pi_2)) - 1}).
\]
At this point, we reintroduce this asymptotic for our matrix integral \eqref{eq:A.6} back into the injective trace \eqref{eq:A.5}. This reduces the problem to computing
\begin{equation}\label{eq:A.11}
S_{(\pi_1, \pi_2)} = \lim_{n \to \infty} n^{\#(V) - 1 - 2k + \#(\mfk{C}(\pi_1, \pi_2))}\bigg(\prod_{C \in \mfk{C}(\pi_1, \pi_2)} (-1)^{\frac{\#(C)}{2}-1}c_{\frac{\#(C)}{2}} + O(n^{-1})\bigg)
\end{equation}
for a given pair $(\pi_1, \pi_2)$ as before. To this end, we introduce the bipartite multigraph $\mfk{G} = (\mfk{V}, \mfk{E})$, where $\mfk{V} = V \cup \mfk{C}(\pi_1, \pi_2)$ is the union of the vertices of our original graph $T$ and the anti-directed cycle partition $\mfk{C}(\pi_1, \pi_2)$ of the edges $E$ of $T$. We draw an edge between a vertex $v \in V$ and a cycle $C \in \mfk{C}(\pi_1, \pi_2)$ if $v$ is a vertex in the cycle $C$, in which case the edge comes with multiplicity equal to the number of occurrences of $v$ in $C$ as an undirected cycle. For example, if $C$ is a simple cycle, then we only draw one edge between $v$ and $C$.

\begin{center}
\begingroup%
  \makeatletter%
  \providecommand\color[2][]{%
    \errmessage{(Inkscape) Color is used for the text in Inkscape, but the package 'color.sty' is not loaded}%
    \renewcommand\color[2][]{}%
  }%
  \providecommand\transparent[1]{%
    \errmessage{(Inkscape) Transparency is used (non-zero) for the text in Inkscape, but the package 'transparent.sty' is not loaded}%
    \renewcommand\transparent[1]{}%
  }%
  \providecommand\rotatebox[2]{#2}%
  \ifx\svgwidth\undefined%
    \setlength{\unitlength}{468bp}%
    \ifx\svgscale\undefined%
      \relax%
    \else%
      \setlength{\unitlength}{\unitlength * \real{\svgscale}}%
    \fi%
  \else%
    \setlength{\unitlength}{\svgwidth}%
  \fi%
  \global\let\svgwidth\undefined%
  \global\let\svgscale\undefined%
  \makeatother%
  \begin{picture}(1,0.42307692)%
    \put(0,0){\includegraphics[width=\unitlength,page=1]{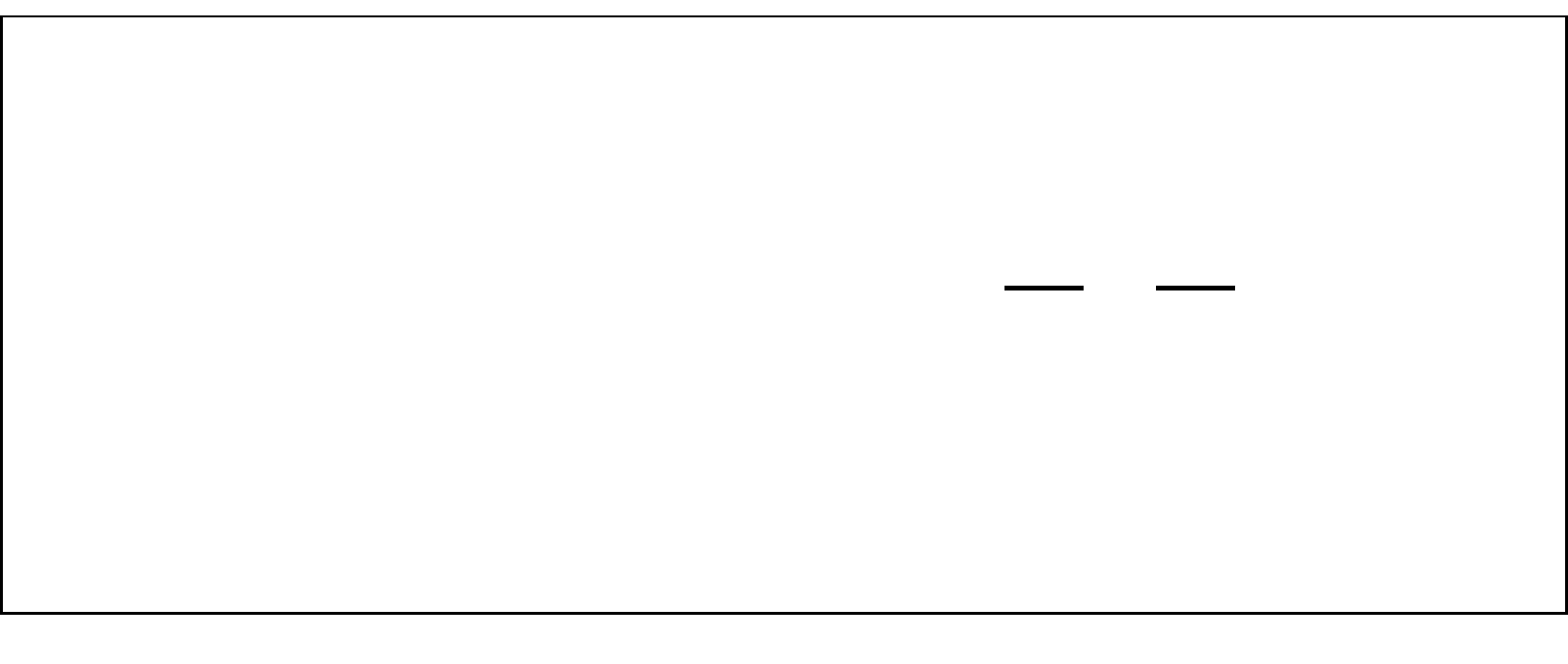}}%
    \put(-0.00141487,0.01905427){\color[rgb]{0,0,0}\makebox(0,0)[lt]{\begin{minipage}{0.99822199\unitlength}\raggedright Figure 29: An example of the construction of the graph $\mfk{G} = (\mfk{V}, \mfk{E})$. Here, we start with\end{minipage}}}%
    \put(0.12170717,1.4396419){\color[rgb]{0,0,0}\makebox(0,0)[lt]{\begin{minipage}{0.11106843\unitlength}\raggedright \end{minipage}}}%
    \put(0,0){\includegraphics[width=\unitlength,page=2]{fig33_bipartite.pdf}}%
    \put(0.24499982,0.0577527){\color[rgb]{0,0,0}\makebox(0,0)[lb]{\smash{$\mfk{C}(\pi, \pi_2)$}}}%
    \put(0.70429471,0.05600365){\color[rgb]{0,0,0}\makebox(0,0)[lb]{\smash{$\mfk{G}$}}}%
    \put(0,0){\includegraphics[width=\unitlength,page=3]{fig33_bipartite.pdf}}%
    \put(0.48878989,0.23262834){\color[rgb]{0,0,0}\makebox(0,0)[lb]{\smash{$\Rightarrow$}}}%
  \end{picture}%
\endgroup%

\end{center}

\noindent the anti-directed cycle $\mfk{C}(\pi_1, \pi_2) = \{C\}$ from Figure \hyperref[fig32_cycle]{28}. We color the vertices to clarify the construction.\vspace{.5cm}

By construction,
\[
\#(\mfk{V}) = \#(V) + \#(\mfk{C}(\pi_1, \pi_2)) \quad \text{and} \quad \#(\mfk{E}) = \#(E) = 2k.
\]
Moreover, the graph $\mfk{G}$ is clearly connected (by virtue of the connectedness of $T$), whence
\[
\#(\mfk{V}) \leq \#(\mfk{E}) + 1.
\]
This allows us to recast \eqref{eq:A.11} as
\begin{align*}
S_{(\pi_1, \pi_2)} &= \lim_{n \to \infty} n^{\#(\mfk{V}) - (\#(\mfk{E}) + 1)}\bigg(\prod_{C \in \mfk{C}(\pi_1, \pi_2)} (-1)^{\frac{\#(C)}{2}-1}c_{\frac{\#(C)}{2}} + O(n^{-1})\bigg) \\
&= \indc{\mfk{G} \text{ is a tree}}\prod_{C \in \mfk{C}(\pi_1, \pi_2)} (-1)^{\frac{\#(C)}{2}-1}c_{\frac{\#(C)}{2}}.
\end{align*}

Assume that $\mfk{G}$ is a tree. Of course, in this case, $\mfk{G}$ cannot have any multi-edges, which implies that each cycle $C \in \mfk{C}(\pi_1, \pi_2)$ is simple. In fact, the treeness of $\mfk{G}$ implies that $T$ is an orthogonal cactus. Indeed, the tree $\mfk{G}$ contains all of the information for how to properly grow the cactus $T$ from the simple anti-directed cycles $\mfk{C}(\pi_1, \pi_2)$. We describe this algorithm, as suggested at the beginning of the section. Start with an arbitrary pad $C_0 \in \mfk{C}(\pi_1, \pi_2)$ (level 0) and grow (i.e., attach) the pads $C_1 \in \mfk{C}(\pi_1, \pi_2)$ at distance two away from $C_0$ in $\mfk{G}$. Note that the pads introduced at level 1 cannot intersect outside of $C_0$ (this would contradict the treeness of $\mfk{G}$). We then introduce the pads $C_2 \in \mfk{C}(\pi_1, \pi_2)$ at distance four away from $C_0$ in $\mfk{G}$ (level 2). Each pad at level 2 is only  attached to a single pad at level 1 and can only intersect another pad at level 2 in a vertex of a pad $C_1$. We continue this process until we run out of pads. If we imagine rooting the graph $\mfk{G}$ at the vertex $C_0$ and orienting the rest of the graph upwards, then this process simply amounts to contracting the edges of $\mfk{V}$ as we move up.  

On the other hand, if $T$ is an orthogonal cactus, then there is a unique pair of pair partitions $(p_1, p_2)$ such that $\delta_{\mbf{i}}(p_1) = \delta_{\mbf{j}}(p_2) = 1$ in \eqref{eq:A.6}. The associated pair of partitions $(\pi_1, \pi_2)$ will then correspond precisely to the cycles of this cactus. In this way, we finally arrive at the prescribed limit \eqref{eq:A.4}.
\end{proof}

Naturally, one can of course ask the same question for a family of independent $n \times n$ Haar orthogonal matrices $(\mbf{O}_n^{(i)})_{i \in I}$. We can use the same approach to prove the existence of a joint LTD, now supported on \emph{colored orthogonal cacti} (i.e., cacti with anti-directed pads such that each pad is of a uniform color). We leave the details to the interested reader. Instead, we note that the same result can be obtained via Theorem \hyperref[thm2.5.5]{2.5.5}. One need only to prove the factorization property \eqref{eq:2.15} for $\mbf{O}_n$, which now follows as in the unitary case \cite[Proposition 6.2]{Mal11}. In particular, we note that the family $(\mbf{O}_n^{(i)})_{i \in I}$ is asymptotically traffic independent.

\begin{center}
\begingroup%
  \makeatletter%
  \providecommand\color[2][]{%
    \errmessage{(Inkscape) Color is used for the text in Inkscape, but the package 'color.sty' is not loaded}%
    \renewcommand\color[2][]{}%
  }%
  \providecommand\transparent[1]{%
    \errmessage{(Inkscape) Transparency is used (non-zero) for the text in Inkscape, but the package 'transparent.sty' is not loaded}%
    \renewcommand\transparent[1]{}%
  }%
  \providecommand\rotatebox[2]{#2}%
  \ifx\svgwidth\undefined%
    \setlength{\unitlength}{468bp}%
    \ifx\svgscale\undefined%
      \relax%
    \else%
      \setlength{\unitlength}{\unitlength * \real{\svgscale}}%
    \fi%
  \else%
    \setlength{\unitlength}{\svgwidth}%
  \fi%
  \global\let\svgwidth\undefined%
  \global\let\svgscale\undefined%
  \makeatother%
  \begin{picture}(1,0.52403846)%
    \put(0,0){\includegraphics[width=\unitlength,page=1]{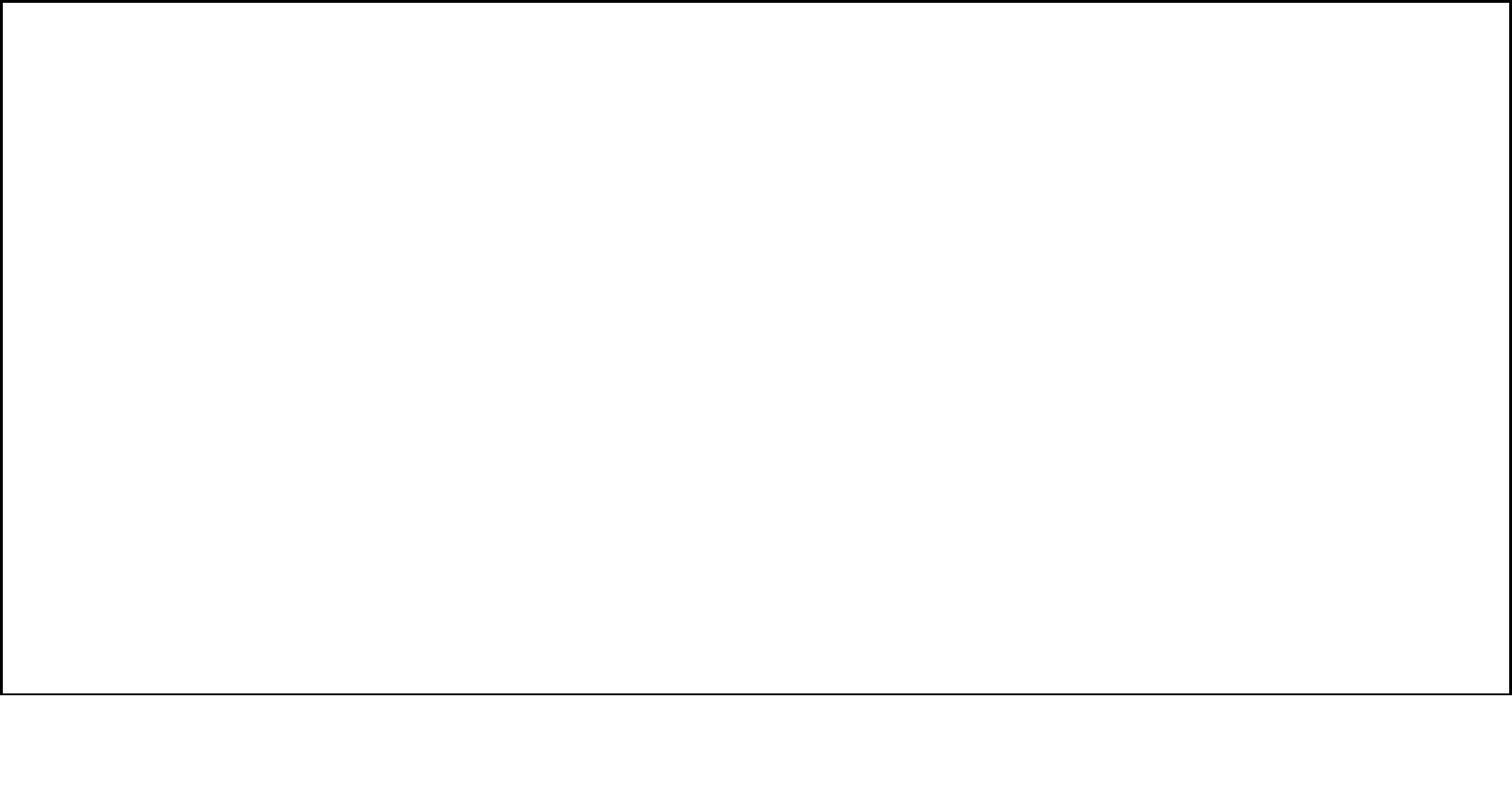}}%
    \put(-0.00141487,0.04885193){\color[rgb]{0,0,0}\makebox(0,0)[lt]{\begin{minipage}{0.99822199\unitlength}\raggedright Figure 30: An example of the construction of the graph $\mfk{G}$ for a colored version $T$ of the orthogonal cactus in Figure \hyperref[fig31_cacti]{27}. We color the edges of the tree $\mfk{G}$ to clarify the construction. \end{minipage}}}%
    \put(0.12170717,1.42761725){\color[rgb]{0,0,0}\makebox(0,0)[lt]{\begin{minipage}{0.11106843\unitlength}\raggedright \end{minipage}}}%
    \put(0.26643412,0.09427655){\color[rgb]{0,0,0}\makebox(0,0)[lb]{\smash{$T$}}}%
    \put(0.72316491,0.09466426){\color[rgb]{0,0,0}\makebox(0,0)[lb]{\smash{$\mfk{G}$}}}%
    \put(0,0){\includegraphics[width=\unitlength,page=2]{fig34_bipartite.pdf}}%
  \end{picture}%
\endgroup%

\end{center}

\bibliographystyle{amsalpha}
\bibliography{traffics}

\providecommand{\bysame}{\leavevmode\hbox to3em{\hrulefill}\thinspace}
\providecommand{\MR}{\relax\ifhmode\unskip\space\fi MR }
\providecommand{\MRhref}[2]{%
  \href{http://www.ams.org/mathscinet-getitem?mr=#1}{#2}
}
\providecommand{\href}[2]{#2}
\begin{thebibliography}{BMP91}

\bibitem[AGZ10]{AGZ10}
Greg~W. Anderson, Alice Guionnet, and Ofer Zeitouni,
  \emph{\href{http://www.wisdom.weizmann.ac.il/~zeitouni/cupbook.pdf}{An
  introduction to random matrices}}, Cambridge Studies in Advanced Mathematics,
  vol. 118, Cambridge University Press, Cambridge, 2010. \MR{2760897
  (2011m:60016)}

\bibitem[AM]{AM17}
Benson Au and Camille Male, \emph{Canonical structures in traffic spaces: with
  a view toward random matrices}, in preparation.

\bibitem[Bai99]{Bai99}
Z.~D. Bai, \emph{\href{http://www.jstor.org/stable/24306605 }{Methodologies in
  spectral analysis of large-dimensional random matrices, a review}}, Statist.
  Sinica \textbf{9} (1999), no.~3, 611--677, With comments by G. J. Rodgers and
  Jack W. Silverstein; and a rejoinder by the author. \MR{1711663
  (2000e:60044)}

\bibitem[BDJ06]{BDJ06}
W{\l}odzimierz Bryc, Amir Dembo, and Tiefeng Jiang,
  \emph{\href{http://dx.doi.org/10.1214/009117905000000495}{Spectral measure of
  large random {H}ankel, {M}arkov and {T}oeplitz matrices}}, Ann. Probab.
  \textbf{34} (2006), no.~1, 1--38. \MR{2206341 (2007c:60039)}

\bibitem[BEYY]{BEYY16}
Paul Bourgade, L{\'a}szl{\'o} Erd{\H{o}}s, Horng-Tzer Yau, and Jun Yin,
  \emph{Universality for a class of random band matrices}, preprint.
  \href{http://arxiv.org/abs/1602.02312v2}{arXiv:1602.02312v2 [math.PR]}.

\bibitem[BGS02]{BGS02}
Anis Ben~Ghorbal and Michael Sch{\"u}rmann,
  \emph{\href{http://dx.doi.org/10.1017/S0305004102006072}{Non-commutative
  notions of stochastic independence}}, Math. Proc. Cambridge Philos. Soc.
  \textbf{133} (2002), no.~3, 531--561. \MR{1919720 (2003k:46096)}

\bibitem[Bia97]{Bia97}
Philippe Biane, \emph{\href{http://dx.doi.org/10.1512/iumj.1997.46.1467}{On the
  free convolution with a semi-circular distribution}}, Indiana Univ. Math. J.
  \textbf{46} (1997), no.~3, 705--718. \MR{1488333 (99e:46084)}

\bibitem[BMP91]{BMP91}
L.~V. Bogachev, S.~A. Molchanov, and L.~A. Pastur,
  \emph{\href{http://dx.doi.org/10.1007/BF01158263}{On the density of states of
  random band matrices}}, Mat. Zametki \textbf{50} (1991), no.~6, 31--42, 157.
  \MR{1150631}

\bibitem[BS96]{BS96}
Marek Bo{\.z}ejko and Roland Speicher,
  \emph{\href{http://dx.doi.org/10.1007/PL00004258}{Interpolations between
  bosonic and fermionic relations given by generalized {B}rownian motions}},
  Math. Z. \textbf{222} (1996), no.~1, 135--159. \MR{1388006 (97g:81031)}

\bibitem[BY88]{BY88}
Z.~D. Bai and Y.~Q. Yin,
  \emph{\href{http://www.jstor.org/stable/2243989}{Necessary and sufficient
  conditions for almost sure convergence of the largest eigenvalue of a
  {W}igner matrix}}, Ann. Probab. \textbf{16} (1988), no.~4, 1729--1741.
  \MR{958213}

\bibitem[CDM]{CDM16}
Guillaume C\'{e}bron, Antoine Dahlqvist, and Camille Male, \emph{Universal
  constructions for spaces of traffics}, preprint.
  \href{http://arxiv.org/abs/1601.00168v1}{arXiv:1601.00168v1 [math.PR]}.

\bibitem[C{\'S}06]{CS06}
Beno{\^{\i}}t Collins and Piotr {\'S}niady,
  \emph{\href{http://dx.doi.org/10.1007/s00220-006-1554-3}{Integration with
  respect to the {H}aar measure on unitary, orthogonal and symplectic group}},
  Comm. Math. Phys. \textbf{264} (2006), no.~3, 773--795. \MR{2217291}

\bibitem[DJ10]{DJ10}
Xue Ding and Tiefeng Jiang,
  \emph{\href{http://dx.doi.org/10.1214/10-AAP677}{Spectral distributions of
  adjacency and {L}aplacian matrices of random graphs}}, Ann. Appl. Probab.
  \textbf{20} (2010), no.~6, 2086--2117. \MR{2759729}

\bibitem[Dyk93]{Dyk93}
Ken Dykema, \emph{\href{http://dx.doi.org/10.1006/jfan.1993.1025}{On certain
  free product factors via an extended matrix model}}, J. Funct. Anal.
  \textbf{112} (1993), no.~1, 31--60. \MR{1207936 (95a:46085)}

\bibitem[EY]{EY16}
L{\'a}szl{\'o} Erd{\H{o}}s and Horng-Tzer Yau, \emph{Dynamical approach to
  random matrix theory}, in preparation. Preliminary version available at
  \href{http://www.math.harvard.edu/~htyau/Random-Matrix-Aug-2016.pdf}{http://www.math.harvard.edu/~htyau/Random-Matrix-Aug-2016.pdf}.

\bibitem[FM91]{FM91}
Yan~V. Fyodorov and Alexander~D. Mirlin,
  \emph{\href{http://dx.doi.org/10.1103/PhysRevLett.67.2405}{Scaling properties
  of localization in random band matrices: a {$\sigma$}-model approach}}, Phys.
  Rev. Lett. \textbf{67} (1991), no.~18, 2405--2409. \MR{1130103}

\bibitem[HM05]{HM05}
Christopher Hammond and Steven~J. Miller,
  \emph{\href{http://dx.doi.org/10.1007/s10959-005-3518-5}{Distribution of
  eigenvalues for the ensemble of real symmetric {T}oeplitz matrices}}, J.
  Theoret. Probab. \textbf{18} (2005), no.~3, 537--566. \MR{2167641
  (2006h:15023)}

\bibitem[HR05]{HR05}
Tom Halverson and Arun Ram,
  \emph{\href{http://dx.doi.org/10.1016/j.ejc.2004.06.005}{Partition
  algebras}}, European J. Combin. \textbf{26} (2005), no.~6, 869--921.
  \MR{2143201}

\bibitem[Jia12a]{Jia12.1}
Tiefeng Jiang, \emph{\href{http://dx.doi.org/10.1007/s00440-011-0357-4}{Low
  eigenvalues of {L}aplacian matrices of large random graphs}}, Probab. Theory
  Related Fields \textbf{153} (2012), no.~3-4, 671--690. \MR{2948689}

\bibitem[Jia12b]{Jia12.2}
\bysame, \emph{\href{http://dx.doi.org/10.1142/S2010326312500049}{Empirical
  distributions of {L}aplacian matrices of large dilute random graphs}}, Random
  Matrices Theory Appl. \textbf{1} (2012), no.~3, 1250004, 20. \MR{2967963}

\bibitem[Jon]{Jon99}
Vaughan~F.R. Jones, \emph{Planar algebras, {I}}, preprint.
  \href{https://arxiv.org/abs/math/9909027v1}{arXiv:math/9909027v1 [math.QA]}.

\bibitem[Mal]{Mal11}
Camille Male, \emph{Traffics distributions and independence: the permutation
  invariant matrices and the notions of independence}, preprint.
  \href{http://arxiv.org/abs/1111.4662v6}{arXiv:1111.4662v6 [math.PR]}.

\bibitem[Mal17]{Mal17}
\bysame, \emph{\href{http://dx.doi.org/10.1016/j.jfa.2016.10.001}{The limiting
  distributions of large heavy {W}igner and arbitrary random matrices}}, J.
  Funct. Anal. \textbf{272} (2017), no.~1, 1--46. \MR{3567500}

\bibitem[May97]{May97}
J.~P. May, \emph{\href{http://dx.doi.org/10.1090/conm/202/02587}{Definitions:
  operads, algebras and modules}}, Operads: {P}roceedings of {R}enaissance
  {C}onferences ({H}artford, {CT}/{L}uminy, 1995), Contemp. Math., vol. 202,
  Amer. Math. Soc., Providence, RI, 1997, pp.~1--7. \MR{1436912 (97m:18001)}

\bibitem[Meh04]{Meh04}
Madan~Lal Mehta, \emph{Random matrices}, third ed., Pure and Applied
  Mathematics (Amsterdam), vol. 142, Elsevier/Academic Press, Amsterdam, 2004.
  \MR{2129906}

\bibitem[MS12]{MS12}
James~A. Mingo and Roland Speicher,
  \emph{\href{http://dx.doi.org/10.1016/j.jfa.2011.12.010}{Sharp bounds for
  sums associated to graphs of matrices}}, J. Funct. Anal. \textbf{262} (2012),
  no.~5, 2272--2288. \MR{2876405 (2012m:15031)}

\bibitem[Mur03]{Mur03}
Naofumi Muraki, \emph{\href{http://dx.doi.org/10.1142/S0219025703001365}{The
  five independences as natural products}}, Infin. Dimens. Anal. Quantum
  Probab. Relat. Top. \textbf{6} (2003), no.~3, 337--371. \MR{2016316}

\bibitem[NS06]{NS06}
Alexandru Nica and Roland Speicher,
  \emph{\href{http://dx.doi.org/10.1017/CBO9780511735127}{Lectures on the
  combinatorics of free probability}}, London Mathematical Society Lecture Note
  Series, vol. 335, Cambridge University Press, Cambridge, 2006. \MR{2266879
  (2008k:46198)}

\bibitem[PV00]{PV00}
L.~Pastur and V.~Vasilchuk,
  \emph{\href{http://dx.doi.org/10.1007/s002200000264}{On the law of addition
  of random matrices}}, Comm. Math. Phys. \textbf{214} (2000), no.~2, 249--286.
  \MR{1796022}

\bibitem[Shl96]{Shl96}
Dimitri Shlyakhtenko,
  \emph{\href{http://dx.doi.org/10.1155/S1073792896000633}{Random {G}aussian
  band matrices and freeness with amalgamation}}, Internat. Math. Res. Notices
  (1996), no.~20, 1013--1025. \MR{1422374}

\bibitem[Spe97]{Spe97}
Roland Speicher,
  \emph{\href{http://www.mast.queensu.ca/~speicher/papers/universal.ps}{On
  universal products}}, Free probability theory ({W}aterloo, {ON}, 1995),
  Fields Inst. Commun., vol.~12, Amer. Math. Soc., Providence, RI, 1997,
  pp.~257--266. \MR{1426844 (98c:46141)}

\bibitem[Sta12]{Sta12}
Richard~P. Stanley, \emph{Enumerative combinatorics. {V}olume 1}, second ed.,
  Cambridge Studies in Advanced Mathematics, vol.~49, Cambridge University
  Press, Cambridge, 2012. \MR{2868112}

\bibitem[VDN92]{VDN92}
D.~V. Voiculescu, K.~J. Dykema, and A.~Nica, \emph{Free random variables}, CRM
  Monograph Series, vol.~1, American Mathematical Society, Providence, RI,
  1992, A noncommutative probability approach to free products with
  applications to random matrices, operator algebras and harmonic analysis on
  free groups. \MR{1217253 (94c:46133)}

\bibitem[Voi85]{Voi85}
Dan Voiculescu, \emph{\href{http://dx.doi.org/10.1007/BFb0074909}{Symmetries of
  some reduced free product {$C^\ast$}-algebras}}, Operator algebras and their
  connections with topology and ergodic theory ({B}u\c steni, 1983), Lecture
  Notes in Math., vol. 1132, Springer, Berlin, 1985, pp.~556--588. \MR{799593
  (87d:46075)}

\bibitem[Voi91]{Voi91}
\bysame, \emph{\href{http://dx.doi.org/10.1007/BF01245072}{Limit laws for
  random matrices and free products}}, Invent. Math. \textbf{104} (1991),
  no.~1, 201--220. \MR{1094052 (92d:46163)}

\bibitem[Wig55]{Wig55}
Eugene~P. Wigner, \emph{\href{http://dx.doi.org/10.2307/1970079}{Characteristic
  vectors of bordered matrices with infinite dimensions}}, Ann. of Math. (2)
  \textbf{62} (1955), 548--564. \MR{0077805}

\bibitem[Wig58]{Wig58}
\bysame, \emph{\href{http://dx.doi.org/10.2307/1970008}{On the distribution of
  the roots of certain symmetric matrices}}, Ann. of Math. (2) \textbf{67}
  (1958), 325--327. \MR{0095527}

\end{thebibliography}

\end{document}